\newcommand{\vardbtilde}[1]{\tilde{\raisebox{-2.5pt}[0.65\height]{$\tilde{#1}$}}}
\newcommand{\varbtilde}[1]{\tilde{\raisebox{-0.5pt}[0.85\height]{$\tilde{#1}$}}}
\definecolor{darkblue}{rgb}{0,0,.5}
\definecolor{darkgreen}{rgb}{.2,0.5,.2}
\numberwithin{equation}{section}
\font\tencyr=wncyr10 %scaled \magstephalf
\font\tencyi=wncyi10 %scaled \magstephalf
\font\tencysc=wncysc10 %scaled \magstephalf
\def\rus{\tencyr\cyracc}
\def\rusi{\tencyi\cyracc}
\def\rusc{\tencysc\cyracc}
\newtheorem{lm}{Lemma}[section]
\newtheorem{thm}[lm]{Theorem}%[section]
\newtheorem{conj}[lm]{Conjecture}%[section]
\newtheorem{cl}[lm]{Corollary}
\newtheorem{prop}[lm]{Proposition}
\theoremstyle{remark}
\newtheorem{ex}[lm]{Example}%[section]
\newtheorem{rmk}[lm]{Remark}%[section]
\theoremstyle{definition}
\newtheorem*{Rem}{Remark}%[section]
\newcommand {\beq}{\begin{equation}}
\newcommand {\eeq}{\end{equation}}
\newcommand {\vp}{\varphi}
\newcommand{\gt}{\mathfrak}
\newcommand{\GL}{{\rm GL}}
\newcommand{\ind}{{\rm ind\,}}
\newcommand{\codim}{\mathrm{codim\,}}
\newcommand{\rk}{\mathrm{rk\,}}
\newcommand{\Lie}{\mathrm{Lie\,}}
\newcommand{\Pol}{\mathrm{Pol}}
\newcommand{\gr}{\mathrm{gr\,}}
\newcommand{\wg}{{\widehat{\gt g}}}
\newcommand{\wq}{{\widehat{\gt q}}}
\newcommand {\cG}{{\mathcal G}}
\newcommand {\oG}{\overline{\mathcal G}}
\newcommand{\esi}{\varepsilon}
\newcommand {\eus}{\EuScript}
\newcommand {\ca}{{\mathcal A}}
\newcommand {\cF}{{\mathcal F}}
\newcommand {\cH}{{\mathcal H}}
\newcommand {\gS}{{\mathcal S}} 
\newcommand{\zu}{\mathcal{ZU}}
\newcommand {\cP}{{\mathcal P}}
\newcommand {\eP}{{\eus P}}
\newcommand {\eR}{{\eus R}}
\newcommand {\eL}{{\eus L}}
\newcommand {\cX}{{\mathcal X}}
\newcommand {\ec}{{\eus C}}
\newcommand {\mW}{{\mathbb W}}
\newcommand {\mA}{{\mathbb A}}
\newcommand {\mX}{{\mathbb X}}
\newcommand {\mP}{{\mathbb P}}
\newcommand {\cz}{{\mathcal Z}}
\newcommand {\gZ}{{\eus Z}}
\newcommand {\gzu}{{\gZ\!\mathscr{v}\!}}%%\eus Z}\upsilon}
\newcommand{\bb}{\boldsymbol{b}}
\newcommand{\dd}{\boldsymbol{j}}
\newcommand{\bH}{\boldsymbol{H}}
\newcommand{\bF}{\boldsymbol{F}}
\newcommand{\bh}{\boldsymbol{h}}
\newcommand{\br}{\boldsymbol{r}}
\newcommand {\calv}{{\mathscr{V}\!}}
\newcommand {\Z}{{\mathbb Z}}
\newcommand{\ap}{\alpha}
\newcommand{\ups}{\upsilon}
\renewcommand{\le}{\leqslant}
\renewcommand{\ge}{\geqslant}
\font\euszw=eusm10 scaled 1200%
\font\eusac=eusm7 scaled 1200%
\font\eusacc=eusm7 scaled 1000%
\newcommand{\U}{\mathcal{U}}
\newcommand {\trdeg}{{\mathrm{tr.deg\,}}}
\newcommand {\mK}{{\Bbbk}}
\newcommand {\qqn}{{\q\langle n\rangle}}
\newcommand {\qqk}{{\q\langle k\rangle}}
\newcommand {\qgk}{{\gt g\langle k\rangle}}
\newcommand {\qqmi}{{\q\langle m_i\rangle}}
\newcommand {\qqi}{{\q\langle 1\rangle}}
\newcommand{\q}{\mathfrak{q}}
\begin{document}
\hfill {\scriptsize   August 5,  2021} %%  \today}
\vskip1ex

\title[A bi-Hamiltonian nature of the Gaudin algebras]{A bi-Hamiltonian nature of the Gaudin algebras}
\author[O.\,Yakimova]{Oksana Yakimova}
\address[O.\,Yakimova]{Institut f\"ur Mathematik, Friedrich-Schiller-Universit\"at Jena,  07737 Jena,
Deutschland}
\thanks{This work  is funded by  the Deutsche Forschungsgemeinschaft (DFG, German Research Foundation) --- project  number  454900253.}
\begin{abstract}
Let $\q$ be a Lie algebra over a field $\mK$ and $p,\tilde p\in\mK[t]$ two different normalised polynomials  
of degree $n\ge 2$. As  vector spaces both quotient Lie algebras $\q[t]/(p)$ and $\q[t]/(\tilde p)$ can be identified 
with $\mW=\q{\cdot}1\oplus\q\bar t\oplus\ldots\oplus\q\bar t^{n-1}$. %% stand for any of these spaces.
If $\deg(p-\tilde p)\le 1$, then the Lie brackets
$[\,\,,\,]_p$, $[\,\,,\,]_{\tilde p}$ induced on $\mW$ by $p$ and $\tilde p$, respectively, are compatible.  
%%By a general method, known as 
Making use of 
the Lenard--Magri scheme, we construct a 
subalgebra $\gZ=\gZ(p,\tilde p)\subset\gS(\mW)^{\q{\cdot}1}$ such that $\{\gZ,\gZ\}_p=\{\gZ,\gZ\}_{\tilde p}=0$. 
If $\trdeg\gS(\q)^{\q}=\ind\q$ and $\q$ has the {\sl codim}--$2$ property, then $\trdeg\gZ$ takes the maximal possible value, which is $\frac{n-1}{2}\dim\q+\frac{n+1}{2}\ind\q$. If $\q=\gt g$ is semisimple, then $\gZ$ contains  
the Hamiltonians of  a suitably chosen Gaudin model. 
Furthermore,  if $p$ and $\tilde p$ do not have common roots, then there is a Gaudin subalgebra $\eus C\subset\U(\gt g^{\oplus n})$ such that 
$\gZ=\gr\!(\eus C)$, up to  a certain identification. 
%%% Therefore, 
In a non-reductive case, we obtain a completely integrable
generalisation of Gaudin models. 

For a wide class of Lie algebras, which extends the reductive setting, 
$\gZ(p,p+t)$ coincides with the image of  the Poisson-commutative algebra $\gZ(\wq,t)=\gS(t\q[t])^{\q[t^{-1}]}$ %in 
under the quotient map $\psi_p\!:\gS(\q[t])\to\gS(\mW)$,  providing $p(0)\ne 0$. 
\end{abstract}
\maketitle

\section*{Introduction}

The ground field $\mK$ is algebraically closed and $\mathsf{char}\,\mK=0$.
Let $\q$ be a  Lie algebra defined over $\mK$.  Most of the time we assume that $\dim\q<\infty$. 
The symmetric algebra $\gS(\q)$ of $\q$ carries the standard Poisson structure. 
A subalgebra $\ca\subset\gS(\q)$ is said to be {\it Poisson-commutative} if $\{\ca,\ca\}=0$.  
Poisson-commutative subalgebras  attract a great deal of attention, because of
their relationship to integrable systems and, more recently, to geometric representation theory. 
There are several prominent constructions of Poisson-commutative subalgebras. In this paper, we use two of them. 

The first approach is to replace $\q$ with something larger, for instance, with the current algebra $\q[t]$. 
The symmetric algebra $\gS(\gt q[t])$ contains a large Poisson-commutative 
subalgebra $\gZ(\wq,t)$, see Section~\ref{sec-inf}.  Then, by taking an appropriate quotient, one obtains a 
Poisson-commutative subalgebra of $\gS(\q)$ or of $\gS(\q^{\oplus n})$. 

The second approach employs {\it compatible} Poisson brackets on $\gS(\q)$ and is widely known as
the {\it Lenard--Magri scheme}. An advantage of this method is a well-developed geometric machinery, see 
e.g. \cite{GZ,kruks}. 
A really nice situation happens if both approaches produce one and the same result.

We are interested in $\q[t]/(p)$, where $p\in\mK[t]$ is a normalised polynomial of degree $n\ge 2$.  
As a vector space  $\q[t]/(p)$ can be identified with $\mW=\mW(\gt q,n)=\gt q{\cdot}1\oplus\gt q\bar t\oplus\ldots\oplus\gt q \bar t^{n-1}$. The Lie structure on it depends on $p$. We let $[\,\,,\,]_p$ stand for the Lie bracket of 
$\q[t]/(p)$.  
If $p$ has pairwise distinct roots, then $(\mW,[\,\,,\,]_p)\cong\q^{\oplus n}$. In an opposite case, where $p=t^n$,\,
$\gt q[t]/(t^n)$ is  a (generalised) {\it Takiff algebra} modelled on $\gt q$. 
These Lie algebras have been studied since 1971, first in case $\gt q=\gt g$ is reductive 
 \cite{takiff,rt,Geof,Wil,GM} and then for a more general class of $\q$ \cite{ap,k-T}. 
For any $p$, $(\mW,[\,\,,\,]_p)$ is a direct sum of several Takiff algebras, which may include copies of $\q$. 

Our first observation is that the brackets $[\,\,,\,]_p$ and $[\,\,,\,]_{\tilde p}$ on $\mW$ are compatible if 
$p,\tilde p\in\mK[t]$ are of degree $n$ and $\deg(p-\tilde p)\le 1$. Here we assume that $p\ne\tilde p$. 
Let $\gZ=\gZ(p,\tilde p)\subset\gS(\mW)$ be the Poisson-commutative subalgebra constructed according 
to the  Lenard--Magri scheme. Then $\gZ\subset\gS(\mW)^{\q{\cdot}1}$, see Section~\ref{sub-Z}.  
Thereby $\trdeg\gZ$ is bounded by a certain number $\bb(\q,n)$ related to $\q$ and $n$ \eqref{bound}.  

For an arbitrary $\q$, one cannot say much about $\gZ$. We need to select  a nice class of Lie algebras.
In this selection, the {\it symmetric invariants} $\gS(\q)^{\q}=\cz(\q)$ of $\q$  and properties of $\gt q^*$ play a major r\^ole. 
The index of $\q$ is defined by $\ind\q=\min_{\gamma\in\q^*}\dim\q_\gamma$, where $\q_\gamma$ is the stabiliser of $\gamma$. Then \[\q^*_{\sf sing}=\{ \gamma\in\q^* \mid \dim\q_\gamma>\ind\q\}.
\] 
Suppose that $\trdeg\gS(\q)^{\q}=\ind\q$ and $\dim\q^*_{\sf sing}\le \dim\q-2$, then 
$\trdeg\gZ=\bb(\q,n)$, see Theorem~\ref{trdeg}. 
The symmetric invariants $\cz_{\vardbtilde{p}}$ of $(\mW,[\,\,,\,]_{\vardbtilde{p}})$ with 
${\varbtilde{p}}=ap+(1-a)\tilde p$ and $a\in\mK$ are building blocks for $\gZ$. In order to describe  
generators of $\gZ$, one needs to understand  algebras $\cz_{\vardbtilde{p}}$. 

If $\qgk\cong\gt g[t]/(t^k)$ is a Takiff algebra modeld on a reductive $\gt g$, then 
$\cz(\qgk)$ is a polynomial ring with $k{\cdot}\rk\gt g=\ind\qgk$ generators \cite{takiff,rt}. 
A similar statement holds for certain non-reductive $\gt q$
%%in 
\cite{ap,k-T}. %% for some . 
Our main results for these (very suitable) Lie algebras 
are listed below. 

Suppose that $\gS(\q)^\q=\mK[F_1,\ldots,F_m]$, where  $F_i$ are  homogeneous algebraically independent polynomials with $\deg F_i=d_i$,  \ $m=\ind\q$, %%  
and $\sum_{i=1}^m d_i=\bb(\q)=\frac{1}{2}(\dim\q+\ind\q)$. 
\begin{itemize}
\item[$\diamond$] If $\dim\q^*_{\sf sing}\le \dim\q-2$, then  $\gZ=\gZ(p,\tilde p)$ is a polynomial ring with 
$\bb(\q,n)$ generators. Furthermore, $\gZ$ has a set of algebraically independent generators 
$\{F_{i,u}\}$ such that each $F_{i,u}$ is a {\it polarisation} of $F_i$, cf. Theorem~\ref{free}. 
\item[$\diamond$] If $\dim\q^*_{\sf sing}\le \dim\q-3$, then $\gZ$  is a maximal (w.r.t. inclusion) Poisson-commutative 
subalgebra of $(\gS(\mW),\{\,\,,\,\}_p)^{\gt q{\cdot}1}$, see Theorem~\ref{max}.
\end{itemize}

The assumptions on $\q$ in \cite{k-T} are slightly weaker than in \cite{ap}.  The requirement  $\dim\q^*_{\sf sing}\le \dim\q-2$, imposed in \cite{ap}, is replaced by 
a similar condition on the differentials $\textsl{d}F_i$ with $1\le i\le m$,
see Theorem~\ref{thm:k-T}  for details. %%% 
Also the sum $\sum_{i=1}^m d_i$ is allowed to be smaller than or equal to $\bb(\q)$. 
In Sections~3 and 4 of \cite{k-T}, one can find examples of Lie algebras that  
satisfy the assumptions of Theorem~\ref{thm:k-T}, but do not have the {\sl codim}--$2$ property. 

We prove that the assumptions of \cite[Thm~0.1]{k-T} imply the identity  $\psi_p(\gZ(\wq,t))=\gZ(p,p+t)$ %% if 
for the quotient map $\psi_p\!:\gS(\q[t])\to \gS(\mW)$  if %% and 
$p(0)\ne 0$, see Theorem~\ref{sovp-t}.  Changing the variable $t\mapsto \esi t+1$, we obtain a new 
Poisson-commutative subalgebra $\gZ(\wq,\esi t +1)$ of $\gS(\q[t])$ (here $\esi\in\mK)$. 
Set $\gzu:=\lim_{\esi\to 0}\gZ(\wq,\esi t +1)$. Then $\{\gzu, \gzu\}=0$. 
If $\q$ satisfies  the assumptions of Theorem~\ref{thm:k-T}, then $\psi_p(\gzu)=\gZ(p,p+1)$ for any $p$, see  
%%\colorbox{red}{???}  Proposition~\ref{123} and 
Theorem~\ref{ft-gzu}. 

Suppose  now that $\gt q=\gt g$ is a finite-dimensional simple (non-Abelian)  Lie algebra. 
Note that the reductive Lie algebras satisfy all 
assumptions of \cite{ap,k-T} and we have also $\dim\gt g^*_{\sf sing}=\dim\gt g-3$, cf.~\cite[Remark~1.1]{kruks}.
Suppose further that 
$p$ has pairwise distinct nonzero roots $a_1,\ldots,a_n$.  Then $\gt g[t]/(p)\cong \gt h=\gt g^{\oplus n}$ and 
$\psi_p(\gZ(\wg,t))\subset\gS(\gt h)$  identifies with the associated graded algebra  $\gr\!(\ec(\vec{z}))$ of a well-studied {\it Gaudin algebra} $\ec(\vec z)\subset\U(\gt h)$.    Gaudin algebras were introduced in 
\cite{FFRe}. 
The %%r \colorbox{yellow}{
purpose of that paper  was to present a new method of diagonalisation of Gaudin Hamiltonians. 
Each Gaudin algebra depends on a vector $\vec z=(z_1,\ldots,z_n)\in(\mK^\times)^n$. 
It is commutative and contains the quadratic Gaudin Hamiltonians associated with $\gt h$ and  $\vec z$,
see  Section~\ref{sb-GH}. 
With its help one  shows 
%% Gaudin algebras show 
quantum complete integrability of the corresponding Gaudin model.  
%%% associated with . 
Due to some change of notation,  %%we have
$\psi_p(\gZ(\wg,t))=\gZ(p,p+t)$ identifies  with $\gr\!(\ec(\vec{z}))$ for  $\vec z=(a_1^{-1},\ldots,a_n^{-1})$, see 
Proposition~\ref{G-psi}.   Using certain limit constructions, we can conclude that 
$\gZ(p,p+1)=\gr\!(\eus C(\vec a))$, up to the same identification, for $\vec a=(a_1,\ldots,a_n)$, see
Section~\ref{s-gzu}. 

%%% ........ \colorbox{yellow}{{\it some limits}} .......... 

%%A Gaudin model related to $\gt h$ consists of $n$ linearly  dependent (or, equivalently, $n-1$ linearly
%% independent) quadratic Hamiltonians depending on $\vec z$, 
Suppose that $\q$ is a quadratic Lie algebra. Then it is possible to define a Gaudin model related to 
$\q^{\oplus n}$ with practically the same formulas as in the reductive case. 
In Section~\ref{sec-nrG}, we express these Gaudin models in terms of
$(\mW,[\,\,,\,]_p)$ and show that they are completely integrable if $\trdeg\gZ(p,p+t)=\bb(\q,n)$.   
Some types of Lie algebras, where this applies, are presented in Example~\ref{qv-dim2}. 
The setting makes sense also if $p$ has iterated roots. In this way, we obtain new limits of Gaudin models,
where the underlying  Lie algebra is no longer $\q^{\oplus n}$, but a direct sum of various Takiff algebras 
$\qqk$. 
%%%%  ..........  

%%%%%%%%%%%%%%%
\section{Preliminaries on Lie algebras} %%% 
\label{sect:prelim}

%\noindent
Let $\gt q$ be a %%finite-dimensional 
Lie algebra over $\mK$.
%% Let $Q$
The  symmetric algebra of 
$\q$ over $\mK$ is $\mathbb N_{0}$-graded, i.e., $\gS(\q)=\bigoplus_{i\ge 0}\gS^i(\q)$. 
The standard  Lie--Poisson bracket on $\gS(\q)$ is defined on $\gS^1(\q)=\q$ by $\{x,y\}:=[x,y]$. It is then extended to higher degrees via the Leibniz rule. Hence $\gS(\q)$ has the usual associative-commutative structure and additional Poisson structure.  
Whenever we refer to {\sl subalgebras\/} of $\gS(\q)$, we always mean the associative-commutative structure.

The {\it centre\/} of the Poisson algebra $(\gS(\q), \{\,\,,\,\})$ is 
\beq \label{PZ1}
    \cz(\q):=\{H\in \gS(\q)\mid \{H,F\}=0 \ \ \forall F\in\gS(\q)\} .
\eeq
Using the Leibniz rule, we obtain that $\cz(\q)$ is a graded Poisson-commutative subalgebra of $\gS(\q)$, which coincides with the algebra of symmetric invariants of $\q$, i.e.,
\[
    \cz(\q)=\{H\in \gS(\q)\mid \{H,x\}=0 \ \ \forall x\in\q\}=:\gS(\q)^\q. %%=\mK[\q^*]^\q .
\] 
In order to avoid confusion, we let $\zu(\gt q)$ stand for the centre of the enveloping algebra $\U(\gt q)$.
If $\gt l\subset\gt q$ is a Lie subalgebra, then $\gS(\gt q)^{\gt l}=\{ H\in\gS(\gt q) \mid \{H,x\}=0 \ \ \forall x\in\gt l\}$.

If $F\in \gS(\q)$ and $V\subset \gS(\q)$ is a subspace, then the {\it Poisson centraliser of $F$ in $V$} is the sub\-space
$\{ H\in V \mid \{F,H\}=0\}$.   

\subsection{The coadjoint representation}
\label{subs:coadj}
%%%
%%%%
From now until the end of this section, assume that 
$\gt q$ is finite-dimensional.  Set  
\[\bb(\q)=(\dim\q+\ind\q)/2.\] Clearly, $\bb(\q)$ is an integer.
 If $\q$ is reductive, then  
$\ind\q=\rk\q$ and $\bb(\q)$ equals the dimension of a Borel subalgebra.  
%% It is well 
If $\ca\subset\gS(\gt q)$ is Poisson-commutative, then $\trdeg\ca\le \bb(\q)$, see e.g.~\cite[0.2]{vi90}.

We  identify $\gS(\gt q)$ with the 
algebra of polynomial functions on the dual 
space $\q^*$, and we also write $\mK[\q^*]=\bigoplus_{i\ge 0}\mK[\q^*]_i$ for it. 
The set of {\it regular\/} elements of $\q^*$ is 
\beq       \label{eq:regul-set}
    \q^*_{\sf reg}=\{\eta\in\q^*\mid \dim \q_\eta=\ind\q\} =\q^*\setminus \q^*_{\sf sing}.
\eeq
It is a dense open subset of $\q^*$. 
We say that $\q$ has the {\sl codim}--$n$ property if $\codim \q^*_{\sf sing}\ge n$. 
The {\sl codim}--$2$ property  is going to be most important for us. 
%By~\cite{ko63}, 

Suppose that $\gt q=\Lie Q$, where $Q$ is a  connected algebraic group. Then 
$Q$ acts on $\gt q^*$ via the coadjoint representation and $\gS(\q)^\q=\gS(\q)^{Q}=\mK[\q^*]^Q$.
We have also \[\ind\gt q=\dim\gt q-\max_{\xi\in\gt q^*}\dim(Q\xi).\] 
Write $\mK(\q^*)^Q$ for the field of $Q$-invariant rational functions on $\q^*$.
By the Rosenlicht theorem (see~\cite[IV.2]{spr}), one  has 
$\ind\q=\trdeg\mK(\q^*)^Q$. 
Since the quotient field of
$\mK[\q^*]^Q$ is contained in $\mK(\q^*)^Q$, we deduce from the Rosenlicht theorem that
\beq    \label{eq:neravenstvo-ind}
    \trdeg (\gS(\q)^\q)\le \ind\q .
\eeq

For $\gamma\in\q^*$, let $\hat\gamma$ be the skew-symmetric bilinear form on $\q$ defined by 
$\hat\gamma(\xi,\eta)=\gamma([\xi,\eta])$ for $\xi,\eta\in\q$. It follows that
$\ker\hat\gamma=\q_\gamma$. The $2$-form $\hat\gamma$ is related to 
the {\it Poisson tensor (bivector)} $\pi$ of the Lie--Poisson bracket $\{\,\,,\,\}$ as follows.

Let $\textsl{d}H$ denote the differential of $H\in \gS(\q)=\mK[\q^*]$. Then 
$\pi$ is defined by the formula
$\pi(\textsl{d}H\wedge \textsl{d}F)=\{H,F\}$ for $H,F\in\gS(\q)$. Then 
$\pi(\gamma)(\textsl{d}_\gamma H\wedge \textsl{d}_\gamma F)=\{H,F\}(\gamma)$ and therefore
$\hat\gamma=\pi(\gamma)$.
In this terms, $\ind\q=\dim\q-\rk\pi$, where $\rk\pi=\max_{\gamma\in\q^*}\rk\pi(\gamma)$. 

For a subalgebra $A\subset\gS(\q)$ and $\gamma\in\gt q^*$, set 
$\textsl{d}_\gamma A=\left<\textsl{d}_\gamma F \mid F\in A \right>$. By the definition of the Poisson centre 
$\cz(\q)$, we have 
\begin{equation} \label{incl}
\textsl{d}_\gamma \cz(\q)\subset \ker\pi(\gamma)
\end{equation}
 for each $\gamma\in\q^*$. 

\subsection{Contractions and invariants}
\label{subs:contr-&-inv} 
We refer to \cite[Ch.\,7,\,\S\,2]{t41} for basic facts on contractions of Lie algebras.
%% In this . 
Let $\mK^\times=\mK\setminus\{0\}$ be 
the multiplicative group of $\mK$ and 
%let 
$\vp: \mK^\times\to \GL(\q)$, $s\mapsto \vp_s$, a polynomial representation. That is, 
%group 
the matrix entries of $\vp_{s}:\q\to \q$ are polynomials in $s$ w.r.t. some (any) basis of $\q$.
Define a new Lie algebra structure on the vector space $\q$ and the associated Lie--Poisson bracket by 
\beq       \label{eq:fi_s}
      [x, y]_{(s)}=\{x,y\}_{(s)}:=\vp_s^{-1}[\vp_s( x), \vp_s( y)], \ x,y \in \q, \ s\in\mK^\times.
\eeq
The corresponding Lie algebra is denoted by $\q_{(s)}$. Then $\q_{(1)}=\q$ and all these algebras 
are isomorphic. The induced $\mK^\times$-action on the variety of structure constants is not necessarily 
polynomial, i.e., \ $\lim_{s\to 0}[x, y]_{(s)}$ may not exist for all $x,y\in\q$. Whenever such a limit exists, 
we obtain a new linear Poisson bracket $\lim_{s\to 0}\{\,\,,\,\}_{(s)}$, and thereby a new Lie algebra $\q_{(0)}$, 
which is said to be a {\it contraction\/} of $\q$. 

%There 
The map $\vp_s$, $s\in\mK^\times$, is naturally extended to an invertible transformation of 
$\gS^j(\q)$, which we also denote by $\vp_s$. The resulting graded map 
$\vp_s:\gS(\q)\to\gS(\q)$ is nothing but the comorphism associated with $s\in\mK^\times$ and
the dual representation
$\vp^*:\mK^\times\to \GL(\q^*)$.
Since $\gS^j(\q)$ has a basis that consists of $\vp(\mK^\times)$-eigenvectors, any $F\in\gS^j(\q)$  
can be written as $F=\sum_{i\ge 0}F_i$, %we have 
where the sum is finite and $\vp_s(F_i)=s^iF_i\in\gS^j(\q)$. Let $F^\bullet$ denote the nonzero component $F_i$ with maximal $i$.

\begin{prop}[{\cite[Lemma~3.3]{contr}}]     \label{prop:bullet}
If $F\in\cz(\q)$ and $\q_{(0)}$ exists, then $F^\bullet\in \cz(\q_{(0)})$. 
\end{prop}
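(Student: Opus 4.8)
The plan is to exploit the $\mathbb{K}^\times$-action $\varphi_s$ together with the grading of $\gS(\q)$ into $\varphi$-eigenspaces. Write $F=\sum_{i=0}^{N}F_i$ with $\varphi_s(F_i)=s^iF_i$ and $F^\bullet=F_N$ the top component. First I would show that the invariance condition $F\in\cz(\q)=\gS(\q)^\q$ can be rephrased $\varphi_s$-equivariantly: for any $x\in\q$ and $s\in\mK^\times$, apply $\varphi_s$ to $\{F,x\}=0$ and use $\varphi_s(\{G,H\})$ versus $\{\varphi_s G,\varphi_s H\}_{(s)}$ — more precisely, the defining identity \eqref{eq:fi_s} extends to $\gS(\q)$ and gives $\varphi_s^{-1}\{\varphi_s G,\varphi_s H\}=\{G,H\}_{(s)}$. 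Hence $F\in\cz(\q)$ forces $\{F,y\}_{(s)}=0$ for all $y\in\q$ and all $s\in\mK^\times$, i.e.\ $F\in\cz(\q_{(s)})$ for every $s\ne 0$. (Alternatively, and equivalently, $\varphi_s(F)\in\cz(\q)$ for all $s$, since $\varphi_s$ is a Lie algebra automorphism of $\q$.)

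Next I would take the limit $s\to 0$. Since $\q_{(0)}$ exists, the brackets $\{\,\,,\,\}_{(s)}$ on $\q=\gS^1(\q)$ depend polynomially on $s$ and extend by the Leibniz rule to a family of Poisson brackets on $\gS(\q)$ depending polynomially on $s$, with $\{\,\,,\,\}_{(0)}$ the Lie--Poisson bracket of $\q_{(0)}$. Now rescale: for fixed $G,H$ homogeneous, $\{\varphi_s G,\varphi_s H\}$ is a Laurent (in fact, after clearing, polynomial) expression in $s$; its top-degree term in $s$ picks out $\{G^\bullet,H^\bullet\}_{(0)}$ up to a nonzero scalar. Concretely, for $x,y\in\q$ one has $\{x,y\}_{(0)}=\lim_{s\to 0}\varphi_s^{-1}\{\varphi_s x,\varphi_s y\}$, and multiplying by the appropriate power of $s$ and letting $s\to 0$ extracts the top component of the bracket. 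Applying this with $G=F$ and $H=x\in\q$: since $\{F,x\}_{(s)}=0$ identically in $s$, every $s$-graded component of $\varphi_s\{F,x\}=\{\varphi_s F,\varphi_s x\}$ vanishes; isolating the component corresponding to the top $\varphi$-weight of $F$ times $x$ yields $\{F^\bullet,x\}_{(0)}=0$. As $x\in\q$ is arbitrary, $F^\bullet\in\cz(\q_{(0)})$.

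The one point that needs care — and the main (mild) obstacle — is bookkeeping the two gradings simultaneously: the $\varphi$-eigenvalue grading on $\gS(\q)$ and the $s$-expansion of the deformed bracket. One must check that the top $\varphi$-component of the product $F\cdot x$ is exactly $F^\bullet\cdot x'$, where $x'$ is the top $\varphi$-component of $x$, and that $\{F^\bullet,x'\}_{(0)}$ indeed appears as the leading term of the $s$-expansion of $\varphi_s^{-1}\{\varphi_s F,\varphi_s x\}$ with nonzero coefficient (no unexpected cancellation). This is handled by writing everything in a $\varphi(\mK^\times)$-eigenbasis of $\q$: in such coordinates $\varphi_s$ is diagonal, the bracket $\{x_a,x_b\}_{(s)}=\sum_c s^{d_a+d_b-d_c}c_{ab}^c x_c$ has a well-defined lowest power of $s$ ruling out the limit unless the negative-power coefficients vanish (which is precisely the hypothesis that $\q_{(0)}$ exists), and the surviving term at $s\to 0$ is $\{x_a,x_b\}_{(0)}$. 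Feeding $F=\sum_i F_i$ and comparing the coefficient of the top monomial in $s$ on both sides of the identity $0=\{F,x\}_{(s)}$ then gives the claim. I would remark that this is exactly the argument of \cite[Lemma~3.3]{contr}, which is cited as the source, so the write-up can be kept brief.
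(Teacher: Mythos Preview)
There is a genuine error in your first step. You claim that $F\in\cz(\q)$ implies $\{F,y\}_{(s)}=0$ for all $y$, equivalently $F\in\cz(\q_{(s)})$, and you justify this by asserting that ``$\varphi_s$ is a Lie algebra automorphism of $\q$''. But $\varphi_s$ is only a linear automorphism of the vector space $\q$; by the very definition \eqref{eq:fi_s} it is a Lie algebra isomorphism from $\q_{(s)}$ to $\q_{(1)}=\q$, not an automorphism of $\q$ itself. Hence $\varphi_s(F)$ need not lie in $\cz(\q)$, and $\{F,y\}_{(s)}=\varphi_s^{-1}\{\varphi_s F,\varphi_s y\}$ need not vanish. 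For a concrete instance, take a simple $\gt g=\gt g_0\oplus\gt g_1$ with a symmetric decomposition, $\varphi_s|_{\gt g_0}=\id$, $\varphi_s|_{\gt g_1}=s{\cdot}\id$ (so that $\q_{(0)}=\gt g_0\ltimes\gt g_1^{\rm ab}$ exists), and let $F$ be the Casimir: then $\varphi_s(F)=F_0+s^2F_1$ is not proportional to $F$ for $s^2\ne 1$, hence not in $\cz(\gt g)$. Your entire subsequent argument, including the final weight-bookkeeping paragraph, is built on the identity ``$0=\{F,x\}_{(s)}$'', which is therefore unavailable.

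The fix is a one-symbol change that realigns the whole argument. What the isomorphism $\varphi_s\!:\q_{(s)}\to\q$ actually gives is $\varphi_s^{-1}(F)\in\cz(\q_{(s)})$ for all $s\ne 0$. Writing $F=\sum_{i\le N}F_i$ with $F^\bullet=F_N$, one has $s^N\varphi_s^{-1}(F)=F_N+sF_{N-1}+\cdots+s^N F_0$, so $\{s^N\varphi_s^{-1}(F),x\}_{(s)}=0$ for every $x\in\q$ and every $s\ne 0$. Since $\q_{(0)}$ exists, the left-hand side is polynomial in $s$; evaluating at $s=0$ yields $\{F^\bullet,x\}_{(0)}=0$. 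Equivalently, in your eigenbasis picture: the condition that $\q_{(0)}$ exists says $c_{ab}^c=0$ whenever $d_c>d_a+d_b$, so the $\varphi$-weight-$(N{+}d_x)$ component of the \emph{original} identity $0=\{F,x\}$ (with $x$ a weight-$d_x$ eigenvector) is precisely $\{F_N,x\}_{(0)}$. Your coordinate computation is essentially correct once it is fed this identity rather than the false one.
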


\section{Quotients of the current algebra} \label{q-cur}

Let $\gt q$ be a Lie algebra over $\mK$ and let $\gt q[t]=\gt q\otimes\mK[t]$ be the associated current algebra,
where $[xt^k,yt^m]=[x,y]t^{k+m}$ for $x,y\in\gt q$.  Let $p\in\mK[t]$ be a normalised polynomial of 
degree $n\ge 1$. Then $\gt q[t]/(p)\cong\gt q\otimes(\mK[t]/(p))$ is a Lie algebra and as a vector space it is isomorphic to 
$$
\mW=\mW(\gt q,n)=\gt q{\cdot}1\oplus\gt q\bar t\oplus\ldots\oplus\gt q \bar t^{n-1},
$$
where $\bar t$ identifies with $t+(p)$.   Let $[\,\,,\,]_p$ be the Lie bracket on $\mW$ given by $p$, i.e., 
$\gt q[t]/(p)\cong (\mW,[\,\,,\,]_p)$ as a Lie algebra. 
We identify $\gt q$ with 
$\gt q{\cdot}1\subset \mathbb W$. 
In a particular case $p=t^n$, set 
$\qqn=\gt q[t]/(t^n)$. The Lie algebra $\qqn$ is known as a (generalised) {\it Takiff algebra} modelled on $\gt q$. 
Note that $\qqi\cong\gt q$. 
If $\dim\gt q<\infty$, then 
by~\cite[Thm\,2.8]{rt}, we have 
\beq \label{ind-RT}
\ind\qqn=n{\cdot}\ind\gt q.
\eeq 
We identify each $(\gt q\bar t^k)^*$ with $\gt q^*$.

\begin{prop}\label{roots}
Suppose $p=\prod_{i=1}^u (t-a_i)^{m_i}$, where $a_i\ne a_j$ for $i\ne j$ and  $m_i\ge 1$ for each $i\le u$.  
%%%$\sum_{i=1}^u m_i=n$. 
Then $\gt q[t]/(p)\cong  \bigoplus_{i=1}^u \qqmi$. 
\end{prop}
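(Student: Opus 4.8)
The plan is to reduce the statement to the Chinese Remainder Theorem for the polynomial ring $\mK[t]$. The key observation is that for any commutative associative $\mK$-algebra $A$ the Lie bracket on $\gt q\otimes_{\mK}A$ given by $[x\otimes a,\,y\otimes b]=[x,y]\otimes ab$ depends only on the $\mK$-algebra structure of $A$; hence any isomorphism of $\mK$-algebras $A\xrightarrow{\ \sim\ }A'$ induces an isomorphism of Lie algebras $\gt q\otimes_{\mK}A\xrightarrow{\ \sim\ }\gt q\otimes_{\mK}A'$, and this construction is additive, i.e. $\gt q\otimes_{\mK}(A'\times A'')\cong(\gt q\otimes_{\mK}A')\oplus(\gt q\otimes_{\mK}A'')$ as a direct sum of ideals.

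First I would note that since $a_i\ne a_j$ for $i\ne j$, the polynomials $(t-a_i)^{m_i}$ and $(t-a_j)^{m_j}$ are coprime in $\mK[t]$ for $i\ne j$: any common root would lie in $\{a_i\}\cap\{a_j\}=\emptyset$. Thus the ideals $\big((t-a_i)^{m_i}\big)$, $1\le i\le u$, are pairwise comaximal, and the Chinese Remainder Theorem yields an isomorphism of $\mK$-algebras
\[
\mK[t]/(p)\ \xrightarrow{\ \sim\ }\ \prod_{i=1}^{u}\mK[t]/\big((t-a_i)^{m_i}\big),\qquad
f+(p)\ \mapsto\ \big(f+((t-a_i)^{m_i})\big)_{i=1}^{u}.
\]
Tensoring this isomorphism with $\gt q$ over $\mK$ and invoking the remark above, I obtain an isomorphism of Lie algebras
\[
\gt q[t]/(p)=\gt q\otimes_{\mK}\big(\mK[t]/(p)\big)\ \cong\ \bigoplus_{i=1}^{u}\gt q\otimes_{\mK}\big(\mK[t]/((t-a_i)^{m_i})\big)=\bigoplus_{i=1}^{u}\gt q[t]/\big((t-a_i)^{m_i}\big).
\]

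Finally, for each $i$ the substitution $\mK[t]\to\mK[t]$, $f(t)\mapsto f(t+a_i)$, is a $\mK$-algebra automorphism sending $(t-a_i)^{m_i}$ to $t^{m_i}$, so it descends to an isomorphism $\mK[t]/\big((t-a_i)^{m_i}\big)\xrightarrow{\ \sim\ }\mK[t]/(t^{m_i})$; tensoring with $\gt q$ identifies $\gt q[t]/\big((t-a_i)^{m_i}\big)$ with the Takiff algebra $\qqmi$. Combining this with the previous display gives $\gt q[t]/(p)\cong\bigoplus_{i=1}^{u}\qqmi$, as claimed.

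There is no genuine obstacle here: the entire content is the Chinese Remainder Theorem, and the only point that deserves an explicit word is the compatibility of the coefficient‑ring isomorphisms with the Lie bracket, which is immediate once the bracket on $\gt q\otimes_{\mK}A$ is written as $[x\otimes a,\,y\otimes b]=[x,y]\otimes ab$. (One should of course also remember that $\bigoplus$ on the right-hand side is a direct sum of ideals, which is exactly what the product decomposition of $\mK[t]/(p)$ produces.)
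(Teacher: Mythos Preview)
Your proof is correct and follows essentially the same approach as the paper's: both invoke the Chinese Remainder Theorem for $\mK[t]$ (the paper phrases this as ``$\mK[t]$ is a principal ideal domain''), extend the resulting decomposition to the tensor product with $\gt q$, and then observe $\gt q[t]/((t-a_i)^{m_i})\cong\gt q[t]/(t^{m_i})$. Your version is simply more detailed, spelling out the compatibility of the Lie bracket with $\mK$-algebra isomorphisms of the coefficient ring and the explicit substitution $t\mapsto t+a_i$.
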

\begin{proof}
Since $\mK[t]$ is a principal ideal domain, we have $\mK[t]/(p)\cong   \bigoplus_{i=1}^u \mK[t]/((t-a_i)^{m_i})$. 
The isomorphism extends to the tensor product with $\gt q$.
Finally notice that $\gt q[t]/((t-a_i)^{m_i})\cong\gt q[t]/(t^{m_i})$. 
\end{proof}

In the following two statements, we assume that $\gt q$ is finite-dimensional. 

\begin{cl}[cf.~\eqref{ind-RT}] \label{roots-ind}
For any $p\in\mK[t]$ of degree $n$, we have $\ind \gt q[t]/(p)=n{\cdot}\ind\gt q$.  \qed
\end{cl}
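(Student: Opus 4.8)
The plan is to reduce to Proposition~\ref{roots} together with the Raïs--Tauvel index formula \eqref{ind-RT}. First I would invoke Proposition~\ref{roots}: writing $p=\prod_{i=1}^u(t-a_i)^{m_i}$ with the $a_i$ pairwise distinct and $m_i\ge 1$, we have a Lie algebra isomorphism $\gt q[t]/(p)\cong\bigoplus_{i=1}^u\qqmi$. Since $\sum_{i=1}^u m_i=\deg p=n$, it now suffices to know that the index is additive over direct sums of Lie algebras and that $\ind\qqmi=m_i{\cdot}\ind\gt q$.

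The second fact is exactly \eqref{ind-RT} applied to each summand, which is valid because $\gt q$ is assumed finite-dimensional and hence each $\qqmi$ is finite-dimensional. For the additivity of the index over direct sums, I would argue as follows: if $\gt a=\gt a_1\oplus\gt a_2$, then $\gt a^*=\gt a_1^*\oplus\gt a_2^*$ canonically, and for $\gamma=(\gamma_1,\gamma_2)$ the skew form $\hat\gamma$ on $\gt a$ is the orthogonal direct sum of $\hat\gamma_1$ on $\gt a_1$ and $\hat\gamma_2$ on $\gt a_2$, because the bracket of $\gt a$ has no mixed terms. Hence $\gt a_\gamma=(\gt a_1)_{\gamma_1}\oplus(\gt a_2)_{\gamma_2}$ and $\dim\gt a_\gamma=\dim(\gt a_1)_{\gamma_1}+\dim(\gt a_2)_{\gamma_2}$; minimising over $\gamma$ (the two components minimise independently) gives $\ind\gt a=\ind\gt a_1+\ind\gt a_2$. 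Iterating over the $u$ summands yields
\[
\ind\gt q[t]/(p)=\sum_{i=1}^u\ind\qqmi=\sum_{i=1}^u m_i{\cdot}\ind\gt q=n{\cdot}\ind\gt q,
\]
which is the claim.

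There is no real obstacle here: both ingredients are either cited (Proposition~\ref{roots}, \eqref{ind-RT}) or elementary (additivity of the index, a one-line computation with the Poisson tensor splitting as a block-diagonal sum). The only point that deserves a word is that Proposition~\ref{roots} was stated without the finite-dimensionality hypothesis, so one should note explicitly that $\ind$ and formula \eqref{ind-RT} require $\dim\gt q<\infty$, which is granted in the present corollary. Accordingly the proof can be left as the one-line "\texttt{qed}" that appears in the excerpt, the content being entirely contained in the preceding proposition and the index additivity.
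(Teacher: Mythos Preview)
Your proposal is correct and matches the paper's intended argument exactly: the corollary is marked with an immediate \qed, and the content is precisely the combination of Proposition~\ref{roots} with \eqref{ind-RT} (plus the elementary additivity of $\ind$ over direct sums, which the paper leaves implicit and you spell out). Nothing needs to be changed.
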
 

\begin{lm}\label{lm-codim}
For any $p\in\mK[t]$ of degree $n$, we have $n{\cdot}\dim\gt q-\dim(\gt q[t]/(p))^*_{\sf sing}=\dim\gt q-\dim\gt q^*_{\sf sing}$. 
\end{lm}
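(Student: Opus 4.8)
The plan is to reduce the general case to the two ``building block'' cases via Proposition~\ref{roots}, and then verify those directly. First I would write $p=\prod_{i=1}^u(t-a_i)^{m_i}$ as in Proposition~\ref{roots}, so that $\gt q[t]/(p)\cong\bigoplus_{i=1}^u\qqmi$ as Lie algebras, hence also $(\gt q[t]/(p))^*\cong\bigoplus_i(\qqmi)^*$ as $Q^{\times u}$-varieties. For a direct sum of Lie algebras $\gt a_1\oplus\dots\oplus\gt a_u$ one has $\gt a_\xi=\bigoplus_i(\gt a_i)_{\xi_i}$ for $\xi=(\xi_1,\dots,\xi_u)$, so an element $\xi$ is regular exactly when each $\xi_i$ is regular in $\gt a_i^*$; equivalently the singular set is $\bigcup_i(\dots\times(\gt a_i)^*_{\sf sing}\times\dots)$, a finite union of ``coordinate slabs''. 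Taking the codimension of a finite union as the minimum of the codimensions, I get
\[
\codim\,(\gt q[t]/(p))^*_{\sf sing}=\min_{1\le i\le u}\codim\,(\qqmi)^*_{\sf sing},
\]
where the codimension on the right is taken inside $(\qqmi)^*$. So the statement reduces to the single-root case: it suffices to prove $m{\cdot}\dim\gt q-\dim(\qqm)^*_{\sf sing}=\dim\gt q-\dim\gt q^*_{\sf sing}$ for every $m\ge 1$, since then every summand contributes the same codimension $\dim\gt q-\dim\gt q^*_{\sf sing}$ and the minimum equals that common value, giving the claim for $p$ after translating back from codimension to dimension via $\dim(\gt q[t]/(p))=n{\cdot}\dim\gt q$.

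For the Takiff case $\qqm=\gt q[t]/(t^m)$ I would describe $(\qqm)^*_{\sf reg}$ explicitly. Identifying $(\qqm)^*$ with $(\gt q^*)^{\oplus m}$ via the pairing $\langle xt^k,\gamma_j\rangle$, one computes the stabiliser of $\gamma=(\gamma_0,\gamma_1,\dots,\gamma_{m-1})$: the bracket $[xt^k,yt^l]=[x,y]t^{k+l}$ shows that $xt^{m-1}\in(\qqm)_\gamma$ iff $[x,\cdot\,]$ annihilates $\gamma_0$, i.e. $x\in\gt q_{\gamma_0}$, and more generally the ``leading'' component $\gamma_0$ controls regularity. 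The clean way is to note that $\gamma$ is regular in $(\qqm)^*$ iff $\gamma_0$ is regular in $\gt q^*$; indeed by Corollary~\ref{roots-ind}, $\ind\qqm=m{\cdot}\ind\gt q$, and a direct dimension count of $(\qqm)_\gamma$ shows $\dim(\qqm)_\gamma=m{\cdot}\dim\gt q_{\gamma_0}$ whenever $\gamma_0$ is such that the relevant ``Takiff'' filtration behaves generically — this is essentially the content of Raïs--Tauvel \cite{rt} and one can cite it. Granting $\gamma\in(\qqm)^*_{\sf reg}\iff\gamma_0\in\gt q^*_{\sf reg}$, the singular set is $\gt q^*_{\sf sing}\times(\gt q^*)^{\oplus(m-1)}$, so
\[
\dim(\qqm)^*_{\sf sing}=\dim\gt q^*_{\sf sing}+(m-1)\dim\gt q,
\]
and hence $m{\cdot}\dim\gt q-\dim(\qqm)^*_{\sf sing}=\dim\gt q-\dim\gt q^*_{\sf sing}$, as required.

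The main obstacle is the precise identification of $(\qqm)^*_{\sf reg}$ — specifically proving $\dim(\qqm)_\gamma=m{\cdot}\dim\gt q_{\gamma_0}$ for $\gamma_0$ regular, rather than just the generic bound $\ge$ that follows from semicontinuity together with $\ind\qqm=m{\cdot}\ind\gt q$. One slick route avoiding a hands-on stabiliser computation: use that $\gt q\mapsto\qqm$ sends $\gt q^*_{\sf reg}$ into $(\qqm)^*_{\sf reg}$ because the symmetric invariants of $\qqm$ separate the relevant orbits (their differentials at a point with $\gamma_0$ regular span a space of the expected dimension $m{\cdot}\ind\gt q$, by the polarisation description of $\cz(\qqm)$ from \cite{rt}), so the map $\gamma\mapsto\gamma_0$ pulls $\gt q^*_{\sf reg}$ back to an open subset of $(\qqm)^*_{\sf reg}$; combined with the reverse inclusion (if $\gamma_0$ is singular then so is $\gamma$, which is the easy direction since $\gt q\hookrightarrow\qqm$ and the $\gt q$-part of the stabiliser already jumps), this pins down the singular locus exactly. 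I would present whichever of these is shortest, most likely citing \cite{rt} for the structure of $(\qqm)^*_{\sf reg}$ and then doing only the elementary dimension bookkeeping above.
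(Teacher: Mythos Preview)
Your approach is essentially the paper's: reduce via Proposition~\ref{roots} to a direct sum of Takiff algebras, note that the singular locus of a direct sum is a union of coordinate slabs, and for each Takiff summand invoke the Ra\"is--Tauvel description of regular points. The paper does exactly this in two sentences, citing \cite[Thm~2.8]{rt} for the criterion and Proposition~\ref{roots} for the reduction.

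Two points to fix. First, your indexing is off: with $\gamma_j=\gamma|_{\q t^j}$, the computation $[x t^{m-1},y t^l]=[x,y]t^{m-1+l}$ shows that $x t^{m-1}\in(\q\langle m\rangle)_\gamma$ iff $x\in\q_{\gamma_{m-1}}$, not $\q_{\gamma_0}$. The component that governs regularity is the restriction to the \emph{top} piece $\q\bar t^{m-1}$; this is precisely what \cite[Thm~2.8]{rt} says and what the paper quotes. The final dimension count is unaffected (the singular locus is still a product with a single $\q^*_{\sf sing}$ factor), but your stated description of that locus, and your ``easy direction'' via $\q\hookrightarrow\q\langle m\rangle$, are not correct as written.

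Second, drop the ``slick route'' through differentials of symmetric invariants. The lemma is stated for arbitrary finite-dimensional $\q$, with no hypothesis that $\cz(\q)$ is polynomial or that $\trdeg\cz(\q)=\ind\q$; the polarisation description of $\cz(\q\langle m\rangle)$ you want to use is simply unavailable in this generality. The Ra\"is--Tauvel regularity criterion, by contrast, holds for any $\q$ and is all you need.
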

\begin{proof}
According to ~\cite[Thm\,2.8]{rt}, $\gamma\in\qqn^*$ is regular if and only if 
$\gamma|_{\gt q\bar t^{n{-}1}}\in\gt q^*_{\sf reg}$.  This takes care  of the case $p=t^n$. The general case follows in view of Proposition~\ref{roots}.  
\end{proof}

\begin{ex}\label{ex-diff}
Let us consider an important particular case, where %%the 
$m_i=1$ for each $i$. 
Set $r_i=\dfrac{p}{(t-a_i)}\prod\limits_{j\ne i}(a_i-a_j)^{-1}$.  Then $r_i^2 \equiv r_i \pmod p$. 
This is an explicit application of the Chinese remainder theorem. 
Each subspace $\gt q\bar r_i$ is a Lie  subalgebra of $\gt q[t]/(p)$, isomorphic to $\gt q$, and
\begin{equation} \label{sum-r}
\gt q[t]/(p)=\gt q \bar r_1\oplus \gt q \bar r_2\oplus\ldots\oplus \gt q \bar r_n. 
\end{equation} 
In  particular, $\gt g[t]/(p)$ is semisimple if $\gt g$ is semisimple.  
\end{ex}

\begin{lm} \label{contr0}
Each Lie bracket $[\,\,,\,]_p$ on $\mW$ can be contracted to $\ell_0=[\,\,,\,]_{t^n}$. 
\end{lm}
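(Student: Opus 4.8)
The plan is to realise the contraction explicitly via the one-parameter family $\vp_s$ on $\mW$ that rescales the graded components $\gt q\bar t^k$. First I would set $p=t^n+c_{n-1}t^{n-1}+\ldots+c_1t+c_0$ and define $\vp_s\in\GL(\mW)$ by $\vp_s(x\bar t^k)=s^k\,x\bar t^k$ for $x\in\gt q$ and $0\le k\le n-1$; this is clearly a polynomial representation of $\mK^\times$ in the sense of Section~\ref{subs:contr-&-inv}. The induced bracket is $[\,\,,\,]_{(s)}=\vp_s^{-1}[\vp_s(\,\cdot\,),\vp_s(\,\cdot\,)]_p$, and I would compute it on basis elements $x\bar t^i$, $y\bar t^j$. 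If $i+j\le n-1$, then $[x\bar t^i,y\bar t^j]_p=[x,y]\bar t^{i+j}$, and $\vp_s^{-1}$ contributes a factor $s^{i+j-(i+j)}=1$, so nothing changes. The interesting case is $i+j\ge n$: then $\bar t^{i+j}$ must be reduced modulo $p$, i.e. $\bar t^{i+j}=\sum_{k=0}^{n-1}\lambda_k^{(i+j)}\bar t^k$ for scalars $\lambda_k^{(i+j)}$ depending on the $c_\ell$'s, and $\vp_s^{-1}$ multiplies the $\bar t^k$-component by $s^{k-i-j}$. Since $k\le n-1<i+j$, every such term carries a strictly positive power of $s$ (namely $s^{i+j-k}$ with $i+j-k\ge 1$), so it tends to $0$ as $s\to 0$.

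Hence $\lim_{s\to 0}[x\bar t^i,y\bar t^j]_{(s)}$ exists for all basis vectors: it equals $[x,y]\bar t^{i+j}$ when $i+j\le n-1$ and $0$ when $i+j\ge n$. By bilinearity the limit exists on all of $\mW$, so by the definition in Section~\ref{subs:contr-&-inv} we obtain a contraction $\mW_{(0)}$. I would then observe that the limiting bracket is exactly the bracket of $\gt q[t]/(t^n)$: in $\qqn$ one also has $[x\bar t^i,y\bar t^j]=[x,y]\bar t^{i+j}$ for $i+j\le n-1$ and $=0$ for $i+j\ge n$ (since $\bar t^n=0$ there). Therefore $\mW_{(0)}=(\mW,[\,\,,\,]_{t^n})=\ell_0$, which is the claim.

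I do not expect a serious obstacle here; the statement is essentially a bookkeeping exercise and the grading $\vp_s$ does all the work. The only point requiring a little care is making sure the reduction $\bar t^{i+j}\bmod p$ never produces a $\bar t^k$ with $k\ge i+j$ — but that is immediate since $\deg p=n$ forces every reduced exponent to lie in $\{0,\dots,n-1\}$, and for $i+j\ge n$ we have $i+j>n-1\ge k$. One could alternatively phrase the whole argument without choosing a basis, by noting that $\vp_s$ multiplies the $\mK[t]/(p)$-factor by the substitution $t\mapsto s^{-1}t$ followed by truncation, and that the structure constants of $\mK[t]/(p)$ degenerate to those of $\mK[t]/(t^n)$ under this scaling; but the basis computation above is the cleanest route and is what I would write out.
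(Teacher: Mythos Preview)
Your argument is correct and is essentially identical to the paper's own proof: both use the grading $\vp_s(x\bar t^k)=s^k x\bar t^k$ and check that for $i+j\ge n$ every surviving term in $[x\bar t^i,y\bar t^j]_{(s)}$ carries a positive power of $s$. (One small slip: where you write that $\vp_s^{-1}$ contributes $s^{k-i-j}$ you mean $s^{i+j-k}$, as you correctly state a line later.)
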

\begin{proof}
Set $\varphi_s(x \bar t^k)=s^k x\bar t^k$ for $s\in\mK^\times$ and $x\in\gt q$, define further 
$$[\tilde x,\tilde y]_{p,s}=\varphi_s^{-1}([\varphi_s(\tilde x),\varphi_s(\tilde y)]_p) \ \ \text{ for } \ \ \tilde x,\tilde y\in\mW.$$ 
Take  $x,y\in \q$. 
Then $[x\bar t^a,y\bar t^b]_{p,s}=[x,y]\bar t^{a+b}$ if $a+b<n$ and $[x\bar t^a,y\bar t^b]_{p,s}=\sum\limits_{k=0}^{n-1}s^{a+b-k}c_k[x,y]\bar t^k$ with some $c_k\in\mK$ if  $a+b\ge n$. Therefore  
$\lim_{s\to 0}[\,\,,\,]_{p,s}=\ell_0$.     
\end{proof}   

Following the usual terminology of Poisson geometry, say that two Lie brackets on $\mW$ are
{\it compatible} if their sum (or, equivalently, any  linear combination of them) is again a Lie bracket; % In 
compatible   Lie brackets on $\mW$ lead to 
compatible Poisson brackets on $\gS(\mW)$, see e.g. \cite[Sect.~1.8.3]{duzu} or 
\cite[Sect.~1.1]{OY} for details. From now on, assume that $n\ge 2$.

\begin{prop} \label{comp}
Let $p=p_1\in\mK[t]$ be a normalised polynomial of degree $n$. Suppose that
$p_2=p_1+l$, where $l\in\mK[t]$ is a nonzero polynomial of degree at most $1$. %%  linear  
Then the brackets $[\,\,,\,]_{p_1}$ and $[\,\,,\,]_{p_2}$ are compatible.
\end{prop}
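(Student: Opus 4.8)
The plan is to verify the Jacobi identity for the pencil $[\,\,,\,]_{p_1}+\lambda[\,\,,\,]_{p_2}$ directly, reducing it to a statement purely about the commutative algebras $\mK[t]/(p_1)$ and $\mK[t]/(p_2)$ and then to the single polynomial $l=p_2-p_1$ of degree $\le 1$. First I would recall that a Lie bracket of the form $[x\bar t^a,y\bar t^b]=[x,y]\cdot(\overline{t^{a+b}})$, where the overline is reduction modulo the relevant polynomial, is automatically antisymmetric, so only the Jacobi identity is at stake; and since any linear combination of the two brackets is again antisymmetric, it suffices to show that $[\,\,,\,]_{p_1}+[\,\,,\,]_{p_2}$ (equivalently every combination) satisfies Jacobi. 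Write $\mu_i\colon \mK[t]/(p_i)\to\mW$ for the vector-space identification sending $\bar t^k\mapsto \bar t^k$ for $0\le k\le n-1$; concretely $\mu_i$ is the $\mK$-linear map induced by reducing a polynomial mod $p_i$ and reading off coefficients of $1,t,\dots,t^{n-1}$. The bracket $[\,\,,\,]_{p_i}$ on $\mW$ is then $[x\bar t^a,y\bar t^b]_{p_i}=[x,y]\otimes \mu_i(t^{a+b})$, using the canonical basis $1,\bar t,\dots,\bar t^{n-1}$ of $\mW$ as a free $\q$-module.

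The key computational step is this: for a bracket of the above ``multiplicative'' shape, the Jacobi expression on basis elements $x\bar t^a$, $y\bar t^b$, $z\bar t^c$ collapses, using the Jacobi identity in $\q$ itself, to
\[
\big([[x,y],z]\otimes\big(\text{(something depending only on }a,b,c\text{ and the multiplication on }\mW)\big)\big)
\]
plus its cyclic permutes, and the obstruction to Jacobi is exactly the failure of the induced ``multiplication'' $\bar t^a\cdot\bar t^b$ on $\mW$ to be associative. For a single $p_i$ this multiplication is the genuine ring multiplication on $\mK[t]/(p_i)$, hence associative, which reproves that each $[\,\,,\,]_{p_i}$ is a Lie bracket. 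For the sum, the relevant ``multiplication'' is $m(\bar t^a,\bar t^b)=\mu_1(t^{a+b})+\mu_2(t^{a+b})$, and the Jacobiator of $[\,\,,\,]_{p_1}+[\,\,,\,]_{p_2}$ is controlled by the associator of $m$. So the whole proposition reduces to: \emph{the bilinear map $m$ on $\mW$ extending $(\bar t^a,\bar t^b)\mapsto \mu_1(t^{a+b})+\mu_2(t^{a+b})$ is associative.} Equivalently, writing $r_i\colon\mK[t]\to\mK[t]$ for the remainder map modulo $p_i$ (which is $\mK[t]$-linear modulo the respective ideal), one needs $r_1(fg)+r_2(fg)$, viewed appropriately, to behave associatively on the span of $1,t,\dots,t^{n-1}$.

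I would handle that associativity by exploiting $\deg(p_2-p_1)\le 1$. Note $r_2(t^k)-r_1(t^k)$ lies in $(p_1)+(p_2)$ image-wise, and more usefully: modulo $(p_1 p_2)$ one has $t^k r_1$ and $t^k r_2$ differing by a multiple of $l=p_2-p_1$, and since $\deg l\le 1$, multiplication by $l$ shifts degrees by at most $1$. The clean way to organise this is to observe that both $[\,\,,\,]_{p_1}$ and $[\,\,,\,]_{p_2}$ are pushforwards, along the two different quotient maps $\mK[t]\twoheadrightarrow \mK[t]/(p_i)\cong\mW$, of the single current-algebra bracket on $\q[t]$; compatibility of two such pushforwards of one bracket is a general and soft fact once one checks that the two identifications $\mu_1,\mu_2\colon\mW\to\mK[t]/(p_i)$ are ``affinely related'' in the precise sense that $\mu_2\circ\mu_1^{-1}$, transported to $\mK[t]/(p_1 p_2)$ or to a common cover, is induced by a $t$-equivariant map — this is where $\deg(p_1-p_2)\le 1$ enters, guaranteeing that the transition respects the $\mK[t]$-module structure up to the allowed degree shift. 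The main obstacle I anticipate is precisely making this ``soft'' step honest: one must pin down the correct ambient space (I expect $\mK[t]/(p_1 p_2)$, or its degree-$\le n-1$ part, to be the right arena when $p_1,p_2$ are coprime, with a limiting/continuity argument or a direct remainder computation covering the general case) and check that the associator of $m$ vanishes there because $l$ has degree at most $1$. A fully elementary alternative — just expanding all six terms of the two Jacobiators on basis vectors $x\bar t^a,y\bar t^b,z\bar t^c$ and checking the scalar identities among the structure constants $c_k$ from Lemma~\ref{contr0}'s proof — will certainly work but is the routine calculation I would rather package into the associativity statement above.
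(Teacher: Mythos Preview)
Your reduction of Jacobi for $[\,\,,\,]_{p_1}+[\,\,,\,]_{p_2}$ to associativity of the commutative bilinear map $m=m_1+m_2$ on $\mW$ (with $m_i$ the multiplication of $\mK[t]/(p_i)$) is correct and is a reasonable way to think about the problem. The gap is that you never actually verify this associativity: you acknowledge this yourself (``the main obstacle\dots is precisely making this `soft' step honest'') and then propose machinery such as working in $\mK[t]/(p_1p_2)$ or a limiting argument. None of that is needed, and those suggestions do not obviously lead anywhere short of the direct expansion you wanted to avoid.

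The missing observation is a one--line degree count, and it is exactly what the paper does. For $0\le a,b\le n-1$ write $t^{a+b}=qp_1+R$ with $\deg R\le n-1$; since $a+b\le 2n-2$ one has $\deg q\le n-2$, hence $\deg(ql)\le n-1$. Therefore $t^{a+b}=q\,p_2+(R-ql)$ with $\deg(R-ql)\le n-1$, so $R-ql$ is the remainder of $t^{a+b}$ modulo $p_2$. It follows that for any $c_1+c_2=1$,
\[
c_1\,m_1(\bar t^a,\bar t^b)+c_2\,m_2(\bar t^a,\bar t^b)=c_1R+c_2(R-ql)
\]
is the remainder of $t^{a+b}$ modulo $c_1p_1+c_2p_2$; in other words $c_1m_1+c_2m_2$ is the genuine quotient--ring multiplication of $\mK[t]/(c_1p_1+c_2p_2)$, hence associative. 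Taking $c_1=c_2=\tfrac12$ gives that $m_1+m_2$ is twice an honest ring multiplication, which closes your argument. The paper phrases this as the identity $c_1[\,\,,\,]_{p_1}+c_2[\,\,,\,]_{p_2}=[\,\,,\,]_{c_1p_1+c_2p_2}$ for $c_1+c_2=1$, which is slightly stronger information (the whole affine pencil consists of brackets of the same type) and is used later. So your route is sound but the decisive step you left open is precisely the content of the paper's proof.
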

\begin{proof}
For $k\le 2n-2$, write $t^k=qp+R$, where $\deg R\le n-1$. Then we have $\deg q\le n-2$ and 
$\deg ql\le n-1$. Thereby $\deg(t^k-q(p+l))\le n-1$ as well.  
In other words, $R-ql$ is the remainder  of $t^k$ modulo $p_2$. 
If  $c_1,c_2\in\mK$ and  $c_1+c_2=1$, then  
$$t^k=q(c_1p_1+c_2p_2)+c_1R+c_2(R-ql),$$ 
where $c_1R+c_2(R-ql)$ is the remainder of $t^k$ modulo 
$c_1p_1+c_2p_2$. Thus here 
\begin{equation} \label{sum}
c_1[\,\,,\,]_{p_1}+c_2 [\,\,,\,]_{p_2}= [\,\,,\,]_{p_3} \ \ \text{ with } \ \ p_3=c_1p_1+c_2p_2.
\end{equation}
In particular, the (half-)sum of $[\,\,,\,]_{p_1}$ and $[\,\,,\,]_{p_2}$ is again a Lie bracket on $\mW$. 
\end{proof} 

%% If 
For  any $p\in\mK[t]$ of degree $n$, %% 
the restriction of $[\,\,,\,]_p$ to $\gt q=\q{\cdot}1\subset\mW$ is the Lie bracket of $\gt q$. However, %% 
$\ell(\gt q,\gt q)=0$ for a bi-linear operation 
 $\ell=[\,\,,\,]_{p_1}-[\,\,,\,]_{p_2}$, where $p_1,p_2\in\mK[t]$ are normalised polynomials of degree $n$.  %%. 
Furthermore, $\ell(\q\bar t^a,\q\bar t^b)=0$, whenever $a+b<n$.

\begin{ex} \label{inf}
Set $[\,\,,\,]_{\infty}=[\,\,,\,]_{t^n-1}-[\,\,,\,]_{t^n}$. Then 
$[x\bar t^a,y\bar t^b]_{\infty}=\begin{cases} 0, & \text{ if }\  a+b<n; \\
[x,y]\bar t^{a+b-n},  & \text{ if }\  a+b\ge n,  \end{cases}$ 
for $x,y\in\gt q$.
\end{ex}

For  $\ell=[\,\,,\,]_{p_1}-[\,\,,\,]_{p_2}$,  $s\in\mK^\times$, and $x,y\in\mW$ set 
$\ell_{(s)}(x,y)=\varphi_s^{-1}(\ell(\varphi_s(x),\varphi_s(y))$, where $\varphi_s\!:\mW\to\mW$ is the same map as in the proof of Lemma~\ref{contr0}. For any $k\in \Z$, we have 
\[s^k\ell_{(s)}(x,y)=s^{-k}\varphi_s^{-1}(\ell(s^k\varphi_s(x),s^k\varphi_s(y)).
\]
If $\ell$ is a Lie bracket and  $\lim_{s\to 0} s^k \ell_{(s)}$ exists, then it is a contraction of $\ell$ in the sense of \cite[Sect.~3]{contr}.

\begin{lm}\label{contr-}
If $p_2-p_1=1$, then $\lim_{s\to 0}s^{-n}\ell_{(s)}=[\,\,,\,]_{\infty}$; if 
$p_2-p_1= t+c$ with $c\in\mK$, then $\lim_{s\to 0}s^{1-n}\ell_{(s)}=[\,\,,\,]_{t^n- t}-[\,\,,\,]_{t^n}$.
\end{lm}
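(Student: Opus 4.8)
The plan is to compute $\ell_{(s)}$ explicitly on the basis vectors $x\bar t^a, y\bar t^b$ (with $x,y\in\q$) by tracking how the remainder of $t^{a+b}$ modulo $p_1$ versus modulo $p_2$ changes, and then isolate the power of $s$ at which the limit stabilises. By the remark preceding the lemma, $\ell(\q\bar t^a,\q\bar t^b)=0$ whenever $a+b<n$, so only the case $a+b\ge n$ contributes; moreover $a+b\le 2n-2$, so we are in exactly the situation of the proof of Proposition~\ref{comp}. Write $t^{a+b}=q p_1 + R$ with $\deg R\le n-1$ and $\deg q\le n-2$; then the remainder of $t^{a+b}$ modulo $p_2=p_1+l$ is $R-ql$, and hence $\ell(x\bar t^a,y\bar t^b)=-q(\bar t)l(\bar t)\,[x,y]$, interpreted in $\mW$. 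So $\ell$ is controlled entirely by multiplication by the fixed polynomial $-ql$ of degree $\le n-1$ (when $l=1$) or $\le n$ reduced mod $p_1$ (when $l=t+c$), depending only on $a+b$.

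First I would treat the case $p_2-p_1=1$, i.e. $l=1$. Here $R-q$ is the remainder mod $p_2$, and $\ell(x\bar t^a,y\bar t^b)=-\sum_{k}q_k\,[x,y]\bar t^k$ where $q=\sum_k q_k t^k$ has degree $\le n-2$. Since $\varphi_s(z\bar t^k)=s^k z\bar t^k$, conjugating the bilinear map $\ell$ scales the $\bar t^k$-component of the output (which sits in degree $k$) against the inputs (degrees $a$ and $b$): one gets $\ell_{(s)}(x\bar t^a,y\bar t^b)=-\sum_k s^{k-a-b}q_k[x,y]\bar t^k$. Multiplying by $s^{-n}$... wait, I want $s^{-n}\ell_{(s)}$, giving $-\sum_k s^{k-a-b-n}q_k[x,y]\bar t^k$; the exponents $k-a-b-n$ are $\le (n-2)-n-n<0$ for $a+b\ge n$, so naively this diverges — which tells me the correct normalisation is pinned down by the \emph{top} degree term of $q$, and I should instead argue from the structure $t^{a+b}\equiv$ (something of degree $a+b-n$ in the quotient by $t^n-1$) to see that $\ell$ restricted to the relevant bidegree agrees with $[\,\,,\,]_\infty$ up to a shift. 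The clean way: compare $\ell_{(s)}$ directly against $[\,\,,\,]_\infty$ from Example~\ref{inf}, using that $[\,\,,\,]_{t^n}=\ell_0$ is itself the $s\to 0$ contraction from Lemma~\ref{contr0}, so $\ell = [\,\,,\,]_{t^n-1}-[\,\,,\,]_{t^n} = [\,\,,\,]_\infty$ \emph{before} any rescaling when $p_1=t^n$, and for general $p_1$ with $p_2-p_1=1$ the difference $\ell-[\,\,,\,]_\infty$ lies in strictly lower $\varphi$-degree, hence is killed in the limit after dividing by the appropriate power $s^{-n}$.

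For the second case $p_2-p_1=t+c$, the same bookkeeping applies with $l=t+c$: now $ql$ has degree $\le n-1$ generically but may reach degree $n$, so one first reduces $q\cdot t$ modulo $p_1$, picking up a correction from the top coefficient of $p_1$; the dominant surviving term as $s\to 0$ is the one coming from the $t\cdot(\,\cdot\,)$ part, which reproduces $[\,\,,\,]_{t^n-t}-[\,\,,\,]_{t^n}$, and the scaling exponent shifts from $-n$ to $1-n$ precisely because $l$ has degree $1$ rather than $0$, so each output $\bar t^k$ is now paired against inputs summing to one degree higher. I expect the main obstacle to be purely organisational rather than deep: keeping the reduction-mod-$p_1$ corrections straight and verifying that everything except the claimed limit bracket sits in lower $\varphi_s$-weight, so that the stated power of $s$ is exactly the one for which $\lim_{s\to 0}s^{?}\ell_{(s)}$ exists and is nonzero. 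The cleanest route through this is to invoke Lemma~\ref{contr0} to replace $p_1$ by $t^n$ at the outset (legitimate since contracting $\ell$ commutes with first contracting each $[\,\,,\,]_{p_i}$ to $[\,\,,\,]_{t^n}$), after which both claims reduce to the two explicit identities $[\,\,,\,]_{t^n-1}-[\,\,,\,]_{t^n}=[\,\,,\,]_\infty$ and a direct one-line check for $[\,\,,\,]_{t^n-t}-[\,\,,\,]_{t^n}$ on basis elements, with the $s$-powers read off from the $\varphi_s$-homogeneity of these fixed brackets.
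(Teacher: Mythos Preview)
Your initial direct computation is exactly the paper's approach, but you flipped the sign of the exponent. Since $\varphi_s^{-1}(z\bar t^k)=s^{-k}z\bar t^k$, one has
\[
\ell_{(s)}(x\bar t^a,y\bar t^b)=s^{a+b}\,\varphi_s^{-1}\bigl(\ell(x\bar t^a,y\bar t^b)\bigr)=\sum_k s^{\,a+b-k}\,q_k[x,y]\bar t^k,
\]
with exponent $a+b-k$, not $k-a-b$. Now $t^{a+b}=qp_1+R$ with $p_1$ and $t^{a+b}$ monic forces $q$ monic of degree $u:=a+b-n$; hence $q_u=1$ and $q_k=0$ for $k>u$. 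Multiplying by $s^{-n}$ gives exponents $u-k\ge 0$, all nonnegative, so nothing diverges: the unique surviving term as $s\to 0$ is $k=u$, which yields $[x,y]\bar t^u=[\,\,,\,]_\infty(x\bar t^a,y\bar t^b)$. This is precisely the one-line computation the paper records (and the analogous argument with $l=t+c$ produces the leading term $[x,y]\bar t^{u+1}$ with normalisation $s^{1-n}$). Had you caught the sign you would have been done immediately.

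Because of this slip, your recovery attempts go off track. The assertion that ``$\ell-[\,\,,\,]_\infty$ lies in strictly lower $\varphi$-degree'' is morally the right statement (it encodes exactly that $q-t^u$ has degree $<u$), but your phrasing of the direction is ambiguous and you do not actually verify it; the verification \emph{is} the observation that $q$ is monic of degree $u$. More seriously, the proposed ``cleanest route'' of invoking Lemma~\ref{contr0} to replace $p_1$ by $t^n$ at the outset does not work: contracting each $[\,\,,\,]_{p_i}$ separately sends both to $[\,\,,\,]_{t^n}$, so their difference contracts to $0$, not to $[\,\,,\,]_\infty$. The rescaling by $s^{-n}$ (resp.\ $s^{1-n}$) is what rescues the nonzero limit, and it does not commute with taking the Lemma~\ref{contr0} limit of each summand first. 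So that shortcut is not a valid reduction; you must work with $\ell$ as a single object, as the paper does.
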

\begin{proof}
If $a+b<n$ and $x,y\in\q$, then $\ell(x\bar t^a,y\bar t^b)=\ell_{(s)}(x\bar t^a,y\bar t^b)=0$. Suppose next  that $a+b=n+u$ with $u\ge 0$. 
Assume first that $p_2=p_1+1$. Then \[s^{-n}\ell_{(s)}(x\bar t^a,y\bar t^b)=[x,y]\bar t^u+\sum_{k=0}^{u-1}s^{u-k}c_k[x,y]\bar t^k\]
 with some $c_k\in\mK$. This case is settled. 

Assume now that $p_2-p_1=t+c$. Then  $s^{1-n}\ell_{(s)}(x\bar t^a,y\bar t^b)=[x,y]\bar t^{u+1}+\sum_{k=0}^{u}s^{u+1-k}c_k[x,y]\bar t^k$ with some $c_k\in\mK$. The limit at zero is exactly the difference 
$[\,\,,\,]_{t^n- t}-[\,\,,\,]_{t^n}$.
\end{proof}

\subsection{Polarisations} \label{s-pol}
%%For  elements
Let $\vec{k}=(k_1,\ldots,k_d)$ be a 
$d$-tuple   of integers, such that   \[0\le k_1\le k_2\le\ldots\le k_d<n.\] 
Suppose $y_1,\ldots , y_d \in \gt q$. %% . 
If we consider the product $Y=\prod_{i} y_i\in\gS^d(\gt q)$, then there is no uniquely defined sequence of factors $y_i$.  
However, the {\it $\vec{k}$-polarisation}  
\[
Y[\vec{k}]:=%%%\frac{1}{m!}
(d!)^{-1}|{\tt S}_d{\cdot}{\vec{k}}|\sum\limits_{\sigma\in{\tt S}_d}   y_1\bar t^{\sigma(k_1)}\ldots y_d \bar t^{\sigma(k_d)}
\]
 %% \Upsilon
of $Y$ is  well-defined. %% element.   
We extend this notion to all elements of $\gS^d(\gt q)$ by linearity. 
%% Furthermore, 
For $F\in\gS^d(\gt q)$, set $\Pol(F)=\left<F[\vec{k}] \mid \vec{k} \ \,\text{as above}\,\right>$. 
%%Since  it,  
It would be convenient to use also the %set 
partition notation for $\vec{k}$, for example,
$(1,\ldots,1,2)$ one can write as $(1^{d{-}1},2)$.

By the construction, if $F\in\cz(\gt q)$, then $F[\vec{k}]\in\gS(\mW)^{\gt q}$ for any $\vec{k}$. 
However, $\gS(\mW)^{\gt q}$ may contain other $\gt q$-invariants, even such invariants  
that are not % linear 
polynomials in polarisations of $\gt q$-invariants in $\gS(\gt q)$. 

\subsection{Symmetric invariants} \label{sub-sym}
Let $\cz_{p}\subset \gS(\mW)$ denote the Poisson centre of  $(\gS(\mW), [\,\,,\,]_{p})$, see~\eqref{PZ1} for the definition.
As Example~\ref{ex-diff} shows, if the roots of $p$ are pairwise distinct, then 
$\cz_p\cong \cz(\gt q)^{\otimes n}$. In an opposite case, where $p=t^n$, the symmetric invariants
of  Takiff algebras $\qqn$ modelled on {\bf reductive} $\gt q$ have been studied in \cite{rt}.  
The answer is that $\cz(\qqn)=\cz_{t^n}$ is a polynomial ring in $n{\cdot}\rk\gt q$ generators. Similar results are obtained 
in \cite{ap,k-T} for some non-reductive $\gt q$. 

An open subset of $\gt q^*$ is said to be {\it big} if its complement does not contain divisors. 

\begin{thm}[\cite{k-T}]   \label{thm:k-T}
Suppose that $\mK[\q^*]^\q=\mK[F_1,\dots,F_m]$ is a graded 
polynomial ring, where $m=\ind\q$. Set 
$\Omega_{\q^*}=\{\xi\in\q^*\mid (\textsl{d}_\xi F_1)\wedge\ldots \wedge(\textsl{d}_\xi F_m)\ne 0\}$, and assume 
that $\Omega_{\q^*}$ is big. For any $n\ge 1$,  the Takiff algebra $\qqn$ has the same properties as $\q$, 
in particular,
%%\begin{itemize} \item 
$\mK[\qqn^*]^{\qqn}$ is a graded polynomial ring of Krull dimension $\ind\qqn=nm$.
%%% }
\end{thm}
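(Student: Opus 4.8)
The plan is to reduce the statement about the Takiff algebra $\qqn$ to the hypothesis on $\q$ itself by working with a convenient realisation of $\qqn^*$ and its invariants. First I would fix the identification $\qqn^* = \bigoplus_{k=0}^{n-1}\q^*\bar t^{k}$ coming from the excerpt, writing a generic point as $\gamma = (\gamma_0,\dots,\gamma_{n-1})$, and recall from \cite[Thm~2.8]{rt} (already quoted in the proof of Lemma~\ref{lm-codim}) that $\gamma\in\qqn^*$ is regular exactly when its "leading component" $\gamma_{n-1}\in\q^*_{\sf reg}$. Combined with Corollary~\ref{roots-ind} this gives $\ind\qqn = nm$ for free, so the real content is the polynomiality of $\mK[\qqn^*]^{\qqn}$ and the bigness of the analogous set $\Omega_{\qqn^*}$.

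For the generators I would use the polarisation construction of Section~\ref{s-pol}: starting from $F_i\in\cz(\q)$ with $\deg F_i = d_i$, produce the $n$ polarisations $F_i[\vec k]$ for $\vec k$ ranging over the tuples $(0^{d_i - j}, (n-1)^{j})$ with $j = 0,\dots,\text{(appropriate bound)}$ — more precisely the components of $F_i$ evaluated after the substitution $x\mapsto \sum_k x\bar t^k z^k$ and extracting coefficients in an auxiliary variable $z$, truncated mod $t^n$. This yields $nm$ elements of $\mK[\qqn^*]^{\qqn}$ (the invariance is noted at the end of Section~\ref{s-pol}). The key step is then to show these $nm$ polynomials are algebraically independent and generate the whole invariant algebra. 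Algebraic independence I would get by a differential/Jacobian argument: evaluate all the polarised differentials at a generic point of the form $\gamma = (0,\dots,0,\gamma_{n-1})$ with $\gamma_{n-1}\in\Omega_{\q^*}$, where the "graded leading" behaviour of the polarisations decouples and the Jacobian matrix becomes block-triangular with diagonal blocks governed by $\textsl{d}_{\gamma_{n-1}}F_1\wedge\ldots\wedge\textsl{d}_{\gamma_{n-1}}F_m\ne 0$; this simultaneously exhibits a point of $\Omega_{\qqn^*}$, so that set is nonempty, hence dense open.

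To upgrade "algebraically independent $nm$ invariants" to "they generate $\mK[\qqn^*]^{\qqn}$ and it is polynomial", I would invoke a codimension/Kostant-type criterion: by Lemma~\ref{lm-codim} the {\sl codim}--$2$ property (or, in the weaker \cite{k-T} setting, bigness of $\Omega_{\q^*}$) is inherited by $\qqn$, and a standard argument — bound the degree sum $\sum_{i,k}\deg F_i[\vec k]$ against $\bb(\qqn)$, or apply the criterion of \cite{k-T} that a big $\Omega$ plus a free family of invariants of the right size and degrees forces polynomiality — closes the loop. For the bigness of $\Omega_{\qqn^*}$ itself, one checks that its complement is the zero set of the Jacobian built above, and using the block structure together with bigness of $\Omega_{\q^*}$ one argues the complement contains no divisor; the grading by the $\mK^\times$-action $\varphi_s$ of Lemma~\ref{contr0} and Proposition~\ref{prop:bullet} (relating invariants of $\qqn$ and of its contraction) is the right bookkeeping device here.

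The main obstacle I expect is precisely this last bigness claim for $\Omega_{\qqn^*}$ and the surjectivity of polarisations onto all of $\mK[\qqn^*]^{\qqn}$: it is easy to see the polarised invariants land inside the invariant ring and are independent, but ruling out extra invariants (the excerpt explicitly warns in Section~\ref{s-pol} that $\gS(\mW)^{\q}$ can contain invariants that are \emph{not} polynomials in polarisations of $\cz(\q)$) requires the full strength of the codimension hypothesis and a careful degree count, rather than a soft argument. Everything else — the index formula, nonemptiness of $\Omega_{\qqn^*}$, invariance of the $F_i[\vec k]$ — is either already in the excerpt or a direct computation, so the proof is really an application of the \cite{k-T} machinery with the Takiff grading doing the heavy lifting.
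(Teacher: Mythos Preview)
The paper does not prove this theorem; it is stated with the citation \cite{k-T} and used as a black box throughout (the only ``proof'' content in the paper is the subsequent display \eqref{kT-gen} describing the generators). So there is no in-paper argument to compare your proposal against.

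That said, your sketch is broadly in the spirit of the actual \cite{k-T}/\cite{rt} argument, with two caveats worth flagging. First, the $nm$ generators are not the individual polarisations $F_i[\vec k]$ for tuples of the shape $(0^{d_i-j},(n{-}1)^j)$; they are the weighted sums $F_i^{[\dd]}=\sum_{|\vec k|=\dd}F_i[\vec k]$ for the top $n$ values of $\dd$, as recorded in \eqref{kT-gen}. Your parenthetical ``more precisely'' description via an auxiliary variable $z$ is the correct one---extracting $z$-coefficients does yield the $F_i^{[\dd]}$---but your first formula does not. Second, for the bigness of $\Omega_{\qqn^*}$ you invoke Lemma~\ref{lm-codim}, but that lemma concerns $\q^*_{\sf sing}$, not $\Omega_{\q^*}$, and the whole point of the \cite{k-T} hypotheses (as the Introduction of the present paper stresses) is that they are strictly weaker than the {\sl codim}--$2$ property; the \cite{k-T} proof works directly with the Jacobian of the $F_i^{[\dd]}$ and the bigness of $\Omega_{\q^*}$, without passing through $\qqn^*_{\sf sing}$. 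The block-triangular Jacobian evaluation at points with leading component in $\Omega_{\q^*}$ is indeed the key computation.
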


In the following, we always assume that each $F_i$ is homogeneous. 

For $\vec{k}$ and $F$ as defined in Section~\ref{s-pol},
set $|\vec{k}|=\sum\limits_i k_i$ and  %% and 
$F^{[\dd]}=\sum\limits_{|\vec{k}|=\dd}F[\vec{k}]$.
%%%, \ \. 
Under the assumptions of Theorem~\ref{thm:k-T}, the algebraically independent generators of 
$\cz(\qqn)$ can be described in the following way, which goes back to \cite{rt},   %% Then 
\begin{equation} \label{kT-gen}
\cz(\qqn)=\mK\!\left[ F_i^{[\dd]} \mid 1\le i\le m,    \  (n-1)\deg F_i-n < |\dd|\le (n-1)\deg F_i \right].
\end{equation}
From this description it is clear that the Poincar{\'e} series of  $\cz(\qqn)$ coincides with that of 
$\cz(\gt q^{\oplus n})$. 

In view of Proposition~\ref{roots}, Theorem~\ref{thm:k-T} leads now to the following statement.

\begin{prop} \label{sym-T}
Suppose $\gt q$ satisfies the assumptions of  Theorem~\ref{thm:k-T}. Then any
$\cz_p$  is a graded polynomial ring of Krull dimension $nm$ and its Poincar{\'e} series  coincides with that of 
$\cz(\gt q^{\oplus n})$. 
\qed
\end{prop}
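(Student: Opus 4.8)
The plan is to reduce the general statement about $\cz_p$ to the already-established Takiff case in Theorem~\ref{thm:k-T} via the decomposition of Proposition~\ref{roots}. First I would write $p=\prod_{i=1}^u(t-a_i)^{m_i}$ with the $a_i$ pairwise distinct and $m_i\ge 1$, so that $\sum_i m_i=n$. By Proposition~\ref{roots} we have a Lie algebra isomorphism $(\mW,[\,\,,\,]_p)\cong\bigoplus_{i=1}^u\qqmi$. Passing to symmetric algebras, this gives $\gS(\mW)\cong\bigotimes_{i=1}^u\gS(\qqmi)$ as Poisson algebras, and hence $\cz_p\cong\bigotimes_{i=1}^u\cz(\qqmi)$, since the Poisson centre of a direct sum (tensor product) is the tensor product of the Poisson centres. (The last fact is elementary: an invariant must be invariant under each summand separately, and the summands act on disjoint sets of variables.)

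Next I would invoke Theorem~\ref{thm:k-T}: since $\gt q$ satisfies its hypotheses, so does each Takiff algebra $\qqmi$, and therefore each $\cz(\qqmi)$ is a graded polynomial ring of Krull dimension $\ind\qqmi=m_i\cdot m$ by \eqref{ind-RT}. A tensor product of finitely many graded polynomial rings is again a graded polynomial ring, with Krull dimension the sum of the individual Krull dimensions; hence $\cz_p$ is a graded polynomial ring of Krull dimension $\sum_{i=1}^u m_i\cdot m=n\cdot m$, as claimed. This also matches Corollary~\ref{roots-ind}, which gives $\ind\gt q[t]/(p)=n\cdot\ind\gt q=nm$, so the Krull dimension of $\cz_p$ equals the index of the Lie algebra, as it should.

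For the Poincar\'e series, I would argue again by multiplicativity under tensor product. The Poincar\'e series of $\cz_p$ is the product over $i$ of the Poincar\'e series of $\cz(\qqmi)$. By the remark following \eqref{kT-gen} (or directly from the description \eqref{kT-gen}), the Poincar\'e series of $\cz(\qqmi)$ coincides with that of $\cz(\gt q^{\oplus m_i})=\cz(\gt q)^{\otimes m_i}$, which is the $m_i$-th power of the Poincar\'e series of $\cz(\gt q)$. Taking the product over all $i$ yields the $(\sum_i m_i)$-th power of the Poincar\'e series of $\cz(\gt q)$, i.e.\ exactly the Poincar\'e series of $\cz(\gt q)^{\otimes n}=\cz(\gt q^{\oplus n})$, since the roots of a separable polynomial are pairwise distinct and Example~\ref{ex-diff} identifies $\cz$ of such a quotient with $\cz(\gt q)^{\otimes n}$.

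The one point requiring a word of care — the ``main obstacle,'' though it is a mild one — is checking that the passage $\gS(\bigoplus_i\mathfrak a_i)\cong\bigotimes_i\gS(\mathfrak a_i)$ is an isomorphism of \emph{Poisson} algebras and that it carries Poisson centres to the tensor product of Poisson centres; this is where the grading and the fact that the Takiff summands $\qqmi$ act on mutually disjoint blocks of coordinates are used. Once that bookkeeping is in place, everything else is formal: Theorem~\ref{thm:k-T} supplies polynomiality and the Krull dimension for each factor, and multiplicativity of Krull dimension and of Poincar\'e series under tensor products of graded polynomial rings finishes the proof.
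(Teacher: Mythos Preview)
Your proof is correct and follows exactly the route the paper intends: decompose $(\mW,[\,\,,\,]_p)$ into Takiff summands via Proposition~\ref{roots}, apply Theorem~\ref{thm:k-T} to each summand, and then use multiplicativity of Krull dimension and Poincar\'e series under tensor products together with the remark after~\eqref{kT-gen}. The paper treats this as immediate and records no proof beyond the sentence preceding the proposition, so your write-up is simply a more explicit version of the same argument.
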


Suppose that we have a converging sequence $p_j\in\mK[t]$ with $j\in\mathbb N$ of normalised polynomials of degree 
$n$ and $p=\lim\limits_{j\to\infty}p_j$. Then, by the construction,  
\begin{equation} \label{eq-lim-pol}
\lim_{j\to\infty} [\,\,,\,]_{p_j}=[\,\,,\,]_p \ \ \text{ and, assuming that each $p_j$ has  distinct roots,} \  \lim_{j\to\infty} \cz_{p_j}\subset \cz_p,
\end{equation}
where the second limit is taken in the appropriate Grassmannians. 

\begin{prop} \label{sym-lim}
Suppose $\gt q$ satisfies the assumptions of  Theorem~\ref{thm:k-T}, in particular $\mK[\q^*]^\q=\mK[F_1,\dots,F_m]$.
If  $p=\lim\limits_{j\to\infty}p_j$, then  $\lim_{j\to\infty} \cz_{p_j} = \cz_p$. Furthermore, any $\cz_p$ has a set of algebraically independent 
generators $F_{i,\ups}$ with $1\le i\le m$,\, $1\le \ups\le n$ 
 such that $F_{i,\ups}\in \Pol(F_i)$ for all $i$ and $\ups$.
\end{prop}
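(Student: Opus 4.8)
The plan is to deduce the whole statement from Proposition~\ref{sym-T} together with the limit inclusion \eqref{eq-lim-pol}, using a dimension/Poincaré-series count to upgrade the inclusion to an equality. First I would fix $\gt q$ satisfying the hypotheses of Theorem~\ref{thm:k-T} and a convergent sequence $p_j\to p$ of normalised degree-$n$ polynomials; after passing to a subsequence I may assume every $p_j$ has pairwise distinct roots (the polynomials with a repeated root form a proper closed subset of the space of normalised degree-$n$ polynomials, so such $p_j$ are dense; and if $p$ itself already has distinct roots the statement is immediate from Example~\ref{ex-diff}, so nothing is lost). By Proposition~\ref{sym-T}, each $\cz_{p_j}$ is a graded polynomial ring whose Poincaré series equals that of $\cz(\gt q^{\oplus n})$, and the same is true of $\cz_p$ — crucially the Poincaré series is \emph{the same} for every $p_j$ and for $p$. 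The limit $\cz_\infty:=\lim_{j\to\infty}\cz_{p_j}$, taken degree by degree in the appropriate Grassmannians $\mathrm{Gr}(d_N,\gS^N(\mW))$ where $d_N$ is the common dimension of the degree-$N$ component, therefore exists (each relevant Grassmannian is compact/projective, so after a further subsequence each graded piece converges) and has, in every degree $N$, dimension equal to $\dim\cz_p\cap\gS^N(\mW)$. By \eqref{eq-lim-pol} we have $\cz_\infty\subset\cz_p$, and since the two graded subspaces agree in dimension in every degree, $\cz_\infty=\cz_p$. This already gives $\lim_{j\to\infty}\cz_{p_j}=\cz_p$ (the limit does not depend on the subsequence, since any subsequential limit equals $\cz_p$).

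For the second assertion — exhibiting algebraically independent generators $F_{i,\ups}$ lying in the polarisation spaces $\Pol(F_i)$ — I would argue as follows. When $p_j$ has distinct roots $a_1^{(j)},\ldots,a_n^{(j)}$, Example~\ref{ex-diff} gives the decomposition $\gt q[t]/(p_j)=\bigoplus_{\nu=1}^n\gt q\bar r_\nu^{(j)}$ into copies of $\gt q$, and hence $\cz_{p_j}$ is generated by the $mn$ elements $F_i(\,\cdot\,\text{ in the }\nu\text{-th summand})$, $1\le i\le m$, $1\le\nu\le n$; writing $\bar r_\nu^{(j)}=\sum_{k=0}^{n-1}\lambda^{(j)}_{\nu,k}\bar t^k$ for scalars $\lambda^{(j)}_{\nu,k}\in\mK$, each such generator is visibly a linear combination (with coefficients that are polynomials in the $\lambda^{(j)}_{\nu,k}$) of $\vec k$-polarisations $F_i[\vec k]$, i.e.\ it lies in $\Pol(F_i)$. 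Thus for each $j$, $\cz_{p_j}$ has a generating set $\{F^{(j)}_{i,\nu}\}$ with $F^{(j)}_{i,\nu}\in\Pol(F_i)$, and the span $\langle F^{(j)}_{i,\nu}: \nu\rangle$ has dimension $n$ for each fixed $i$ (the defining matrix $(\lambda^{(j)}_{\nu,k})$ is invertible, being essentially an invertible change of coordinates). Now I would take limits: each $\Pol(F_i)\cap\gS^{d_i}(\mW)$ is a fixed finite-dimensional space independent of $j$, so after a subsequence the $n$-dimensional subspace $\langle F^{(j)}_{i,\nu}:\nu\rangle$ converges to an $n$-dimensional subspace $L_i\subset\Pol(F_i)$, and picking any basis $F_{i,1},\ldots,F_{i,n}$ of $L_i$ gives the desired elements. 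It remains to check that $\{F_{i,\ups}: 1\le i\le m,\ 1\le\ups\le n\}$ generates $\cz_p$ and is algebraically independent.

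For generation and algebraic independence I would use the Poincaré-series equality once more. The subalgebra $\mathcal{R}:=\mK[F_{i,\ups}:i,\ups]\subset\cz_p$ is generated by $mn$ homogeneous elements with the same degree multiset $(d_1^{\times n},\ldots,d_m^{\times n})$ as the canonical generators of $\cz(\gt q^{\oplus n})$; hence the Poincaré series of $\mathcal R$ is termwise $\le$ that of a polynomial ring on generators of those degrees, which by Proposition~\ref{sym-T} is exactly the Poincaré series of $\cz_p$. Since $\mathcal R\subset\cz_p$ forces the reverse inequality termwise, the two series coincide, which is possible only if the inclusion $\mathcal R\hookrightarrow\cz_p$ is an equality and the generators $F_{i,\ups}$ are algebraically independent (any nontrivial relation would drop the Poincaré series strictly below that of the free polynomial ring). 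The main obstacle I anticipate is the bookkeeping in the limit argument of the previous paragraph: one must be careful that the limiting subspaces $L_i$ really have full dimension $n$ and that a choice of basis of $\bigoplus_i L_i$ survives as a generating set in the limit — this is where the compactness of Grassmannians, the degree-by-degree convergence, and the constancy of all the relevant dimensions (guaranteed by Proposition~\ref{sym-T} and the fact that $\ind\gt q[t]/(p)=nm$ by Corollary~\ref{roots-ind}) have to be combined cleanly. Once the subspaces are in hand, the Poincaré-series comparison makes everything else formal.
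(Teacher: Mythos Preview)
Your argument for the first assertion is essentially the paper's: constancy of the Poincar\'e series (Proposition~\ref{sym-T}) combined with the inclusion \eqref{eq-lim-pol} forces equality degree by degree.

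For the second assertion, your limit approach diverges from the paper's and contains a genuine gap. The Poincar\'e-series comparison does not close: from ``$\mathcal R$ is generated by $mn$ homogeneous elements of the prescribed degrees'' you get that the series of $\mathcal R$ is termwise $\le$ that of the free polynomial ring, hence $\le$ that of $\cz_p$; but the inclusion $\mathcal R\subset\cz_p$ gives \emph{the same} inequality, not the reverse one you claim. You are left with no lower bound on the series of $\mathcal R$, and hence no way to conclude either $\mathcal R=\cz_p$ or algebraic independence of the $F_{i,\ups}$. The difficulty is real: a limit of algebraically independent families need not remain algebraically independent, and Grassmannian compactness only guarantees that each $L_i$ stays $n$-dimensional, not that the union freely generates. (Concretely, as roots coalesce the idempotents $\bar r_\nu^{(j)}$ blow up, so one cannot simply pass individual generators to the limit and hope independence survives.)

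The paper sidesteps this entirely by treating the second assertion \emph{without} any limit. It invokes Proposition~\ref{roots} directly: for arbitrary $p$, the Lie algebra $(\mW,[\,\,,\,]_p)$ splits as a direct sum of Takiff algebras $\q\langle k_\alpha\rangle_{(\alpha)}$, one for each distinct root $\alpha$ of $p$ with multiplicity $k_\alpha$. For each summand the generators $F_i^{[\dd]}$ of $\cz(\q\langle k_\alpha\rangle)$ given by \eqref{kT-gen} already lie in $\Pol(F_i)$, and an explicit ring map $\Phi_\alpha\!:\mK[t]/(t^{k_\alpha})\to\mK[t]/(p)$ transports them into $\mW$ while keeping them in $\Pol(F_i)$. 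Generation and algebraic independence then come for free from the direct-sum decomposition together with Theorem~\ref{thm:k-T}. This structural route is both shorter and avoids the delicate limit bookkeeping you flagged.
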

\begin{proof}
According to Proposition~\ref{sym-T}, $\dim(\cz_p\cap\gS^d(\mW))$ is independent of $p$. Therefore there is a well defined limit $\lim\limits_{j\to\infty} \cz_{p_j}\subset\gS(\mW)$ for any converging sequence  of normalised polynomials $p_j$.  In case $\lim\limits_{j\to\infty} p_j=p$, we have $\lim\limits_{j\to\infty}\cz_{p_j}\subset \cz_p$, because $\lim\limits_{j\to\infty} [\,\,,\,]_{p_j}=[\,\,,\,]_p$. Since the Poincar{\'e} series of 
$\lim\limits_{j\to\infty}\cz_{p_j}$ and of $\cz_p$ coinside, $\lim\limits_{j\to\infty} \cz_{p_j} = \cz_p$. 
%% by . 

Suppose that a normalised polynomial  $\tilde p\in\mK[t]$ with $\deg\tilde p=n$ has $n$ distinct roots $a_1,\ldots,a_n$. Let $\bar r_1,\ldots,\bar r_n$ be the  polynomials from Example~\ref{ex-diff}.   Then 
\begin{equation}  \label{czr}
\cz_{\tilde p}=\cz(\gt q\bar r_1)\otimes\ldots\otimes\cz(\gt q\bar r_n) \  \ \text{ and } \ \  
\cz(\gt q\bar r_\ups)=\mK\left[F_1[\bar r_\ups],\ldots,F_m[\bar r_\ups]\right],
\end{equation}
where $F_i[\bar r_\ups]$ is obtained from $F_i$ by extending the canonical isomorphism $\gt q\to \gt q\bar r_j$ to 
$\gS(\gt q)$. Clearly $F_{i,\ups}:=F_i[\bar r_\ups]\in \Pol(F_i)$ for all $i$, $\ups$ and %Therefore 
$\cz_{\tilde p}$ is freely generated
by $F_{i,\ups}$ with $1\le i\le m$, $1\le \ups\le n$.  

In general, $(\mW,[\,\,,\,]_p)$  is a direct sum of Lie algebras, corresponding to roots of $p$, 
see Proposition~\ref{roots} for details. If $\alpha$ is a simple root, then the corresponding summand is of the form 
$\gt q\bar r_{(\alpha)}$
with $\bar r_{(\alpha)}\in\mK[t]/(p)$ and $\cz(\gt q\bar r_{(\alpha)})$ is generated by $F_i[\bar r_{(\alpha)}]\in\Pol(F_i)$ with $1\le i\le m$.

Suppose now that a root $\alpha$ has multiplicity $k\ge 2$.
Let $\q\langle k\rangle_{(\alpha)}\cong\q\langle k\rangle$ be the direct summand of $(\mW,[\,\,,\,]_p)$ 
corresponding to $\alpha$. 
Then there are $\bar r_{(\alpha,0)},\bar r_{(\alpha,1)}\in\mK[t]/(p)$ such that $\bar r_{(\alpha,0)}\bar r_{(\alpha,1)}=\bar r_{(\alpha,1)}$, \ $\bar r_{(\alpha,1)}^k=0$, 
\ $\bar r_{(\alpha,0)}^2=\bar r_{(\alpha,0)}$, and 
\[
\q\langle k\rangle_{(\alpha)}=\gt q\bar r_{(\alpha,0)}\oplus\gt q\bar r_{(\alpha,1)}\oplus\gt q\bar r_{(\alpha,1)}^2\oplus\ldots\oplus \gt q\bar r_{(\alpha,1)}^{k{-}1}.
\]
Let $F_i^{[\dd]}\in\cz(\q\langle k\rangle)$ be given by \eqref{kT-gen} with $n$ replaced by $k$. 
Define  $\Phi_\alpha\!:\mK[t]/(t^k)\to \mK[t]/(p)$ by setting $\Phi_\alpha(1)=\bar r_{(\alpha,0)}$ and 
$\Phi_\alpha(\bar t^u)=\bar r_{(\alpha,1)}^u$ for $1\le u<k$. The map $\Phi_\alpha$ extends to 
$\q\langle k\rangle$ and the polynomials $\Phi_\alpha(F_i^{[\dd]})$ with $1\le i\le m$, \ $(k-1)\deg F_i-k < |\dd|\le (k-1)\deg F_i$ 
generate $\cz(\q\langle k\rangle_{(\alpha)})$. 

Since $F_i^{[\dd]}\in\Pol(F_i)$ for all $i$ and $\dd$, this holds also for all $\Phi_\alpha(F_i^{[\dd]})$. The result about generators of 
$\cz_p$ follows now from Proposition~\ref{roots}.
%%%
%% }
\end{proof}

\begin{Rem}
Let us consider the contraction $\lim\limits_{s\to 0}[\,\,,\,]_{p,s}\to \ell_0$ presented in Lemma~\ref{contr0}.
For any $F\in \cz(\mW,[\,\,,\,]_p)$, we have then  $\varphi_s^{-1}(F)\in\cz(\mW,[\,\,,\,]_{p,s})$ and 
%%% we have 
$\lim\limits_{s\to 0} \mK \varphi_s^{-1}(F)=\mK F^\bullet $ in terms of Proposition~\ref{prop:bullet}.
Suppose that $\q$ satisfies  the assumptions of Theorem~\ref{thm:k-T}.
Arguing by induction on $\deg F_i$, %% one can show  
 one can find a generating set 
$\{F_{i,u}\}\subset    \cz(\mW,[\,\,,\,]_p)$ such that $\cz(\mW,\ell_0)$ is freely generated by the components $F_{i,u}^\bullet$. 
\end{Rem}

\begin{ex} \label{-t}
For the feature use, we describe generators of  $\cz_p$ in case $p=t^n-ct$ with $c\in\mK^\times$.
Fix  $\alpha\in\mK$ with $\alpha^{n-1}=c$ and let $\zeta\in\mK$ be a  primitive $(n{-}1)$-th root of unity.
Then $\bar r_1=\frac{-1}{c}(t^{n{-}1}-c)$ and 
\[
\bar r_i=\frac{1}{c(n{-}1)}t(t^{n-2}+\alpha\zeta^{i} t^{n{-}3}+\ldots+(\alpha\zeta^{i})^{n{-}3}t+(\alpha\zeta^{i})^{n{-}2}) \  
\text{ if } \ i\ge 2
\]
 %%for the polynomials from 
 in terms of 
 Example~\ref{ex-diff}. For each $1\le i\le n$, we have %%Therefore 
\[
\cz(\gt q\bar r_i)=\{ F[\bar r_i] \mid  F\in\cz(\gt q)\}.
\]
 For 
$F\in\gS^d(\gt q)^{\gt q}$, one obtains
\begin{align} 
& (-c)^d F[\bar r_1]=F[(n{-}1)^d]+\sum_{k=1}^{d} (-c)^k F[0^k,(n{-}1)^{d{-}k}]; \label{tn-c0} \\
& c^d(n{-}1)^d F[\bar r_i]=\sum_{\dd=d}^{d(n{-}1)}  (\alpha\zeta^i)^{d(n{-}1)-\dd}  \sum_{|\vec{k}|=\dd;\, k_1\ge 1} F[\vec{k}] \ \  \text{ if } \ i\ge 2.  \label{tn-c}
\end{align}
%%where formula. 
\end{ex}

\section{Poisson-commutative subalgebras} %% 
 \label{sub-PC}

Roughly speaking, 
a {\it bi-Hamiltonian system} is a pair of compatible Poisson structures
$\{\,\,,\,\}',\{\,\,,\,\}''$ (or rather a pencil $\{a\{\,\,,\,\}'+b\{\,\,,\,\}'' \mid a,b\in\mK\}$ spanned by them) \cite[Sect.~1.8]{duzu}, \cite{GZ}.  Let $\pi'$, $\pi''$ be the Poisson tensors of  $\{\,\,,\,\}',\{\,\,,\,\}''$.  
Then $\pi_{a,b}=a\pi'{+}b\pi''$ is the Poisson tensors of  $a\{\,\,,\,\}'+b\{\,\,,\,\}''$. 
For almost
all $(a,b)\in\mK^2$, $\rk\!(a\pi'{+}b\pi'')$  has one and the same (maximal) value, let it be $\br$,
 and we say that $a\{\,\,,\,\}'+b\{\,\,,\,\}''$ is
{\it regular} (or that $(a,b)$ is a {\it regular} point) if  $\rk\!(a\pi'{+}b\pi'')=\br$. %% is
%%% Let 
The Poisson centres of regular structures in the pencil generate a subalgebra, which  is Poisson-commutative w.r.t. 
all  Poisson brackets in the pencil,  see e.g. \cite[Sect.~10]{GZ} or \cite[Sect.~2]{OY} for an explanation of this phenomenon.   %%

Let $L=    \left<\{\,\,,\,\}',\{\,\,,\,\}''\right>_{\mK}$ be a pencil of compatible polynomial Poisson structures
on the affine space $\mA^N$, i.e., each $a\{\,\,,\,\}'+b\{\,\,,\,\}''$ is a Poisson bracket on the 
polynomial ring $\mK[\mA^N]$. Let 
$\mA^N_{(a,b),{\sf sing}}$ be the singular set of $a\{\,\,,\,\}'+b\{\,\,,\,\}''$, i.e.,
\[\mA^N_{(a,b),{\sf sing}}=\{\xi\in\mA^N \mid \rk\pi_{a,b}(\xi)<\rk\pi_{a,b}\}.
\] 
Set further $\mA^N_{L,{\sf sing}}=\bigcup_{(a,b)}    \mA^N_{(a,b),{\sf sing}}$ and 
$\mA^N_{L,{\sf reg}}=\mA^N\setminus \mA^N_{L,{\sf sing}}$. Since 
$c\pi_{a,b}=\pi_{ca,cb}$ for any $c$ in $\mK^\times$, the Zariski closed subset $\mA^N_{(a,b),{\sf sing}}$ depends on 
$(a\,{:}\,b)\in\mP^1$ and not  on its representative $(a,b)$. Also the numbers $\rk\pi_v$
and $\rk\pi_v(\xi)$ with $v\in\mP^1$, $\xi\in\mA^N$ are  well-defined. 
Consider now 
\[
\mX=\{(\xi,v)\mid v\in\mP^1, \xi\in \mA^N_{v,{\sf sing}}\}\subset \mA^N\times\mP^1.
\]

\begin{lm} \label{lm-X}
%%{\sf (i)} The subset $\mX$ is an algebraic subvariety of  $\mA^N\times\mP^1$. \\[.2ex]
{\sf (i)} If $\dim  \mA^N_{v,{\sf sing}} \le N-2$ for generic $v\in \mP^1$, then $\mA^N_{L,{\sf reg}}$
contains a non-empty open subset of $\mA^N$.
%%%\ne\varnothing$.
 \\[.2ex]
{\sf (ii)} If $\dim  \mA^N_{v,{\sf sing}} \le N-3$ for generic $v\in \mP^1$ and $\dim \mA^N_{v,{\sf sing}} \le N-2$ for all  $v\in \mP^1$, then we have $\dim  \overline{\mA^N_{L,{\sf sing}}}\le N-2$.
\end{lm}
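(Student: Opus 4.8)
\textbf{Proof proposal for Lemma~\ref{lm-X}.}
The plan is to analyse the incidence variety $\mX\subset\mA^N\times\mP^1$ by projecting it to each factor. Write $\mathrm{pr}_1\!:\mX\to\mA^N$ and $\mathrm{pr}_2\!:\mX\to\mP^1$ for the two projections. Note that $\mathrm{pr}_1(\mX)=\mA^N_{L,{\sf sing}}$ and $\mathrm{pr}_2^{-1}(v)=\mA^N_{v,{\sf sing}}\times\{v\}$. The key point is that the fibre $\mathrm{pr}_2^{-1}(v)$ is a Zariski closed subset of $\mA^N$ cut out by the vanishing of the appropriate minors of $\pi_v$; since $\pi_v$ depends algebraically (in fact polynomially, after clearing denominators in $v$) on $v$, the total space $\mX$ is Zariski closed in $\mA^N\times\mP^1$, and $\mathrm{pr}_2$ is a projective (hence closed) morphism. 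Consequently $\overline{\mA^N_{L,{\sf sing}}}=\overline{\mathrm{pr}_1(\mX)}=\mathrm{pr}_1(\mX)$ — but I only need the dimension bound, so it suffices to bound $\dim\mX$ and use $\dim\overline{\mathrm{pr}_1(\mX)}\le\dim\mX$.

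For the dimension count I would use the fibre-dimension theorem applied to $\mathrm{pr}_2$. Since $\mP^1$ is irreducible of dimension $1$, each irreducible component $Y$ of $\mX$ satisfies $\dim Y\le 1+\max_{v\in\mathrm{pr}_2(Y)}\dim(Y\cap\mathrm{pr}_2^{-1}(v))\le 1+\max_{v}\dim\mA^N_{v,{\sf sing}}$. For part~(i): the hypothesis gives $\dim\mA^N_{v,{\sf sing}}\le N-2$ for $v$ in a dense open $U\subset\mP^1$; thus any component of $\mX$ dominating $\mP^1$ has dimension $\le N-1$, so its image under $\mathrm{pr}_1$ has dimension $\le N-1$ and cannot be all of $\mA^N$. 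The components of $\mX$ not dominating $\mP^1$ lie over finitely many points $v_1,\dots,v_r\in\mP^1$; a priori for such $v_j$ the set $\mA^N_{v_j,{\sf sing}}$ could be a divisor in $\mA^N$, but it is still a \emph{proper} closed subset (the singular set of a single Poisson structure is nowhere dense), so each such component has dimension $\le N-1$ as well. Hence $\dim\mathrm{pr}_1(\mX)\le N-1$, i.e. $\mA^N_{L,{\sf reg}}=\mA^N\setminus\mA^N_{L,{\sf sing}}$ contains a non-empty (dense) open subset of $\mA^N$. This proves (i).

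For part~(ii) I would refine the same bookkeeping, separating the generic locus from the finitely many special $v$. Let $V\subset\mP^1$ be the dense open set on which $\dim\mA^N_{v,{\sf sing}}\le N-3$, and let $v_1,\dots,v_r$ be the finitely many points of $\mP^1\setminus V$; by the global hypothesis $\dim\mA^N_{v_j,{\sf sing}}\le N-2$ for each $j$. Decompose $\mX=\mX'\cup\mX''$, where $\mX''=\mathrm{pr}_2^{-1}(\{v_1,\dots,v_r\})=\bigsqcup_j \mA^N_{v_j,{\sf sing}}\times\{v_j\}$ has dimension $\le N-2$ by the global hypothesis, and $\mX'$ is the union of the components of $\mX$ dominating $\mP^1$. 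For each such component $Y\subset\mX'$, the generic fibre $Y\cap\mathrm{pr}_2^{-1}(v)$ with $v\in V$ has dimension $\le N-3$, so $\dim Y\le 1+(N-3)=N-2$; hence $\dim\mX'\le N-2$ and therefore $\dim\mX\le N-2$. Since $\overline{\mA^N_{L,{\sf sing}}}=\overline{\mathrm{pr}_1(\mX)}$ has dimension at most $\dim\mX$, we conclude $\dim\overline{\mA^N_{L,{\sf sing}}}\le N-2$, which is~(ii).

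The main obstacle is the bookkeeping at the special points $v_j\in\mP^1\setminus V$, i.e. ensuring that the components of $\mX$ lying entirely over these points do not violate the claimed bound. In part~(i) this is harmless — one only needs that each $\mA^N_{v_j,{\sf sing}}$ is a proper subvariety, which holds because the rank of a nonzero Poisson tensor drops only on a proper closed set — but in part~(ii) it is essential that the \emph{global} hypothesis $\dim\mA^N_{v,{\sf sing}}\le N-2$ for all $v$ is invoked precisely to control $\mX''$, and that the generic (codimension $\ge 3$) hypothesis is used only for the dominating components $\mX'$. One should also check the preliminary claim that $\mX$ is genuinely closed in $\mA^N\times\mP^1$ — i.e. that after multiplying $a\{\,\,,\,\}'+b\{\,\,,\,\}''$ by a suitable power of a homogeneous coordinate on $\mP^1$ the structure constants become bihomogeneous polynomials in $(\xi,(a\,{:}\,b))$, so that the minor conditions defining $\mX$ are algebraic — but this is routine once the pencil is written out in coordinates, and I would state it as an observation rather than belabour it.
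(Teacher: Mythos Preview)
Your approach is essentially the same as the paper's: set up the incidence variety $\mX\subset\mA^N\times\mP^1$, analyse its irreducible components according to whether they dominate $\mP^1$ or lie over a single point, and bound dimensions via the fibre-dimension theorem. The paper's proof is organised identically.

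One technical slip: your assertion that $\mX$ is Zariski closed is not justified and is in general false. The fibre $\mA^N_{v,{\sf sing}}$ is cut out by the vanishing of all $(\rk\pi_v)\times(\rk\pi_v)$ minors of $\pi_v$, and the size of these minors \emph{depends on $v$}: at a non-regular $v$ (where $\rk\pi_v<\br$) one uses smaller minors than at a regular $v$. So there is no uniform system of equations cutting out $\mX$, and over a non-regular $v$ the set $\{\xi\mid\rk\pi_v(\xi)<\br\}$ is all of $\mA^N$, strictly larger than $\mA^N_{v,{\sf sing}}$. What is true is that $\mX$ is constructible: over the open locus of regular $v$ it agrees with the closed set $\{(\xi,v)\mid\rk\pi_v(\xi)<\br\}$, and the remaining fibres sit over finitely many points. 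The paper handles this cleanly by working with $\overline{\mX}$ and its irreducible components rather than asserting closedness of $\mX$ itself; since taking closure does not change dimension, your dimension bounds survive unchanged once you make this adjustment. A second, smaller point: in your decomposition for (ii), a component of $\mX$ could in principle lie over a single \emph{generic} point $v\in V$ (neither dominating $\mP^1$ nor sitting over one of the $v_j$); such a component has dimension $\le N-3$ and is harmless, but your $\mX'\cup\mX''$ as written does not account for it.
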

\begin{proof}
%\noindent
 {\sf (i)} 
Let $\mathrm{pr}_1\!:\mA^N\times\mP^1\to \mA^N$, $\mathrm{pr}_2\!:\mA^N\times\mP^1\to \mP^1$
 be the projections on the first  and the second factors. Let $\mX_i\subset\overline{\mX}$ be an irreducible component. Then 
either $\overline{\mathrm{pr}_2(\mX_i)}=\mP^1$ or $\mathrm{pr}_2(\mX_i)$ is a point, say $v_i$. 
In the latter case, $\mX_i\cap \mA^N_{v_i,{\sf sing}}{\times}\{v_i\}$ is a dense subset of $\mX_i$. Hence 
$\mX_i\subset \mA^N_{v_i,{\sf sing}}{\times}\{v_i\}$ and 
$\dim\mX_i\le N-1$, because     
$\mA^N_{v_i,{\sf sing}}$ is a proper closed  subset of $\mA^N$.  

In the former case, 
\[
\mX_i \cap \{(\xi,v)\in  \mA^N{\times}\,\mP^1 \mid  \rk\pi_v=\br=\max_{a,b}\rk\pi_{a,b}, \ \rk\pi_v(\xi)<\br\}
%%
%%% \mA^N_{v,{\sf sing}}{\times} \{v\}\mid \}
\]
is a dense subset of $\mX_i$.  
In particular, $\mX_i$ is contained in the closed subset 
$$\{(\xi,v)\in  \mA^N{\times}\,\mP^1 \mid \rk\pi_v(\xi)<\br\}.$$ 
By the fibre dimension theorem,
$\dim\mX_i\le  N-1$.
By  definition,  $\mA^N_{L,{\sf sing}}=\mathrm{pr}_1(\mX)\subset\mathrm{pr}_1(\overline{\mX})$ and hence 
$\dim\overline{\mA^N_{L,{\sf sing}}}\le N-1$ as well.

%%The Poisson tensors $\pi'$ and $\pi''$ are polynomial bi-vector fields on $\mA^N$. 
%% %Hence $\mX$ is defined by polynomial equations on $\xi$ and $v=(a\,{:}\,b)$ that are 
%% homogeneous (of degree $\br=\max_{a,b}\rk\pi_{a,b}$) in $a$, $b$.  This settles {\sf (i)}. 

%\noindent
{\sf (ii)} In this case, we have  $\dim\mX_i\le N-2$ for each irreducible component of $\overline{\mX}$ and hence 
$\dim\overline{\mA^N_{L,{\sf sing}}}\le \dim\overline{\mX} \le N-2$. 
\end{proof}

Let  $\cP(\xi)=\{ \pi_{a,b}(\xi)\mid a,b\in\mK\}$ with $\xi\in\mA^N$ be a pencil of skew-symmetric $2$-forms on 
$T^*_\xi\mA^N$,
which is spanned by $\pi'(\xi)$ and $\pi''(\xi)$.  A $2$-form in this pencil is said to be {\it regular\/} if 
$\rk\pi_{a,b}(\xi)=\br$. Otherwise, it is {\it singular}. 
%% S big. 
Set 
\[V(\xi)=\sum_{a,b:\, \rk\pi_{a,b}(\xi)=\br} \ker\pi_{a,b}(\xi),
\] 
$\bb=\bb(L)=\frac{1}{2}\br + (N-\br)$. 

\begin{prop}[{\cite[Proposition~1.5]{kruks}}]       \label{P} 
If %%
$\xi\in\mA^N$ and $\rk\pi_v(\xi)=\br$ for some $v\in \mP^1$, then   
\begin{itemize}
\item[{\sf (i)}] $\dim V(\xi)=\bb$ if and only if all  nonzero $2$-forms in $\cP(\xi)$ are regular; 
\item[{\sf (ii)}] if \ $\mathcal P(\xi)$  contains a singular line, say
$\mK \omega$, % is a 
then $\dim V(\xi)\le N-\br+\frac{1}{2}\rk \omega$, 
\item[{\sf (iii)}] furthermore, $\dim V(\xi) = N-\br+\frac{1}{2}\rk\omega$  if and only if\/ 
\begin{itemize}
\item \ $\mK \omega$ is the unique 
 singular line in $\mathcal P(\xi)$, and 
\item \ $\rk\!(\pi(\xi)|_{\gt v})=\dim\gt v-(N-\br)$ for $\gt v=\ker \omega$ and any $\pi(\xi)\in\cP(\xi)\setminus \mK \omega$.  
\end{itemize}\end{itemize}
\end{prop}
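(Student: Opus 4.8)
\emph{Proof proposal.} The plan is to reduce the whole statement to the Jordan--Kronecker normal form of the pair of skew forms $\pi'(\xi),\pi''(\xi)$ and then read off \textsf{(i)}--\textsf{(iii)} from its numerical invariants. Fix $\xi$ and suppress it from the notation; write $W=T^*_\xi\mA^N$ (so $\dim W=N$), set $A=\pi'(\xi)$, $B=\pi''(\xi)$ and $\omega_v=aA+bB$ for $v=(a{:}b)\in\mP^1$. By the Jordan--Kronecker theorem (see \cite{GZ}), $W$ is a direct sum, simultaneously orthogonal for $A$ and $B$ and hence for every $\omega_v$, of indecomposable blocks of two kinds: Kronecker blocks $\mathcal K_\ell$ of odd dimension $2\ell+1$ ($\ell\ge 0$), on which every nonzero $\omega_v$ has rank $2\ell$; and Jordan blocks $\mathcal J_m(\lambda)$ of dimension $2m$ ($m\ge 1$, $\lambda\in\mP^1$), on which $\omega_v$ is nondegenerate for $v\ne\lambda$ and has a $2$-dimensional kernel for $v=\lambda$. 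Say there are $q$ Kronecker blocks with $L=\sum_j\ell_j$, and Jordan blocks $\mathcal J_{m_i}(\lambda_i)$, $1\le i\le s$, with $M=\sum_i m_i$; then $N=2L+q+2M$. For generic $v$ the rank of $\omega_v$ is $2L+2M$, so the hypothesis $\rk\pi_v(\xi)=\br$ for some $v$ forces $\br=2L+2M$, whence $N-\br=q$ and $\bb=L+M+q$. Finally, $\mK\omega_v$ is singular precisely when $v\in\{\lambda_1,\dots,\lambda_s\}$.

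First I would compute $V(\xi)$. A regular form is an $\omega_v$ with $v\notin\{\lambda_1,\dots,\lambda_s\}$; for such $v$ the kernel $\ker\omega_v$ meets every Jordan block trivially, so $\ker\omega_v=\bigoplus_j\ker(\omega_v|_{\mathcal K_{\ell_j}})$ is a sum of lines in the Kronecker part. On one Kronecker block $\mathcal K_\ell$ the assignment $v\mapsto\ker(\omega_v|_{\mathcal K_\ell})$ is, in the normal-form basis, a rational normal curve of degree $\ell$ inside the projectivisation of a fixed $(\ell+1)$-dimensional subspace $U_\ell\subset\mathcal K_\ell$; being irreducible and nondegenerate, it still spans $U_\ell$ after deleting the finitely many eigenvalue points. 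Hence $V(\xi)=\bigoplus_j U_{\ell_j}$ and $\dim V(\xi)=\sum_j(\ell_j+1)=L+q$. Part~\textsf{(i)} is now immediate: $\dim V(\xi)=\bb$ iff $L+q=L+M+q$ iff $M=0$ iff there are no Jordan blocks iff every nonzero form in $\cP(\xi)$ is regular.

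For \textsf{(ii)} and \textsf{(iii)}, suppose $\mK\omega$ is a singular line, $\omega=\omega_\lambda$, and set $p=\#\{i:\lambda_i=\lambda\}\ge 1$. Then $\rk\omega=2L+2M-2p=\br-2p$, so $N-\br+\tfrac12\rk\omega=q+L+M-p$, and the asserted bound $\dim V(\xi)\le N-\br+\tfrac12\rk\omega$ reads $L+q\le q+L+M-p$, i.e.\ $p\le M$ --- true since $M=\sum_i m_i\ge s\ge p$ --- with equality iff $M=p$, i.e.\ iff every Jordan block has eigenvalue $\lambda$ and size $1$. To match this with the two conditions in \textsf{(iii)}: the first, uniqueness of the singular line, says exactly that $\lambda_i=\lambda$ for all $i$. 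For the second, $\gt v=\ker\omega=\bigl(\bigoplus_j\ker(\omega|_{\mathcal K_{\ell_j}})\bigr)\oplus\bigl(\bigoplus_{\lambda_i=\lambda}\ker(\omega|_{\mathcal J_{m_i}(\lambda)})\bigr)$ has dimension $q+2p$, so $\dim\gt v-(N-\br)=2p$; and for every $\omega_\mu\in\cP(\xi)\setminus\mK\omega$ the restriction $\omega_\mu|_{\gt v}$ is block-diagonal, vanishes on each $1$-dimensional Kronecker summand, and --- by a short computation in the Jordan normal form --- restricts to a rank-$2$ (nondegenerate) form on $\ker(\omega|_{\mathcal J_{m_i}(\lambda)})$ when $m_i=1$ and to $0$ when $m_i\ge 2$. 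Hence $\rk(\omega_\mu|_{\gt v})=2\,\#\{i:\lambda_i=\lambda,\ m_i=1\}$, which equals $2p$ for every $\mu\ne\lambda$ iff all $\lambda$-Jordan blocks have size $1$; together with the first condition this is exactly $M=p$, proving \textsf{(iii)}.

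The genuine content sits in two local computations on the normal-form blocks: that on $\mathcal K_\ell$ the kernel curve spans an $(\ell+1)$-dimensional space, and that a form regular at $\lambda$ restricts nondegenerately to the $2$-dimensional kernel of $\omega_\lambda$ on $\mathcal J_m(\lambda)$ exactly when $m=1$. Both are short once the canonical matrices are written out; everything else is bookkeeping with the integers $q,L,M,p$. So the main obstacle is really just invoking and setting up the Jordan--Kronecker decomposition correctly --- in particular matching the local generic rank with the global $\br$ through the hypothesis $\rk\pi_v(\xi)=\br$, and keeping track of which forms in the pencil are regular.
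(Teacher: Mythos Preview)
Your argument is correct. The paper does not give its own proof of this proposition --- it is quoted verbatim from \cite[Proposition~1.5]{kruks} and used as a black box --- so there is nothing in the present paper to compare against. That said, your approach via the Jordan--Kronecker decomposition of the pair $(\pi'(\xi),\pi''(\xi))$ is exactly the standard route to such statements about pencils of skew forms and is the method used in the cited source; the block-by-block bookkeeping (Kronecker blocks contribute $L+q$ to $\dim V(\xi)$, Jordan blocks contribute nothing, and the equality analysis in \textsf{(iii)} reduces to ``all Jordan blocks share the eigenvalue $\lambda$ and have size $1$'') is carried out correctly. The one place worth a second look is the verification that $\omega_\mu|_{\ker\omega_\lambda}$ vanishes on a Jordan block $\mathcal J_m(\lambda)$ with $m\ge 2$: your ``short computation in the normal form'' is indeed short (in the standard basis the kernel is spanned by the two ``end'' vectors and they pair trivially under any $\omega_\mu$ once $m\ge 2$), but it is the only step that actually uses the fine structure of the Jordan block rather than just its rank, so it deserves to be written out.
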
 

\subsection{Subalgebras $\gZ(p,p+l)$}\label{sub-Z} 
From now on, assume that $[\gt q,\gt q]\ne 0$ and  $\dim\gt q<\infty$. 
Let $\{\,\,,\,\}_p$ be the Poisson bracket on $\gS(\mW)$ corresponding to $[\,\,,\,]_p$. 
Fix two different normalised polynomials
$p_1,p_2\in\mK[t]$ of degree $n\ge 2$ such that $\deg(p_1-p_2)\le 1$. 
Set  $L(p_1,p_2):=\left<\{\,\,,\,\}_{p_1},\{\,\,,\,\}_{p_2}\right>$.
By Proposition~\ref{comp},  
$$\{\,\,,\,\}_{a,b}:=a\{\,\,,\,\}_{p_1}+b\{\,\,,\,\}_{p_2}\in L(p_1,p_2)
$$ 
is a Poisson bracket on $\gS(\mW)$ for all   $a,b\in\mK$.
Therefore  $L(p_1,p_2)$ is  a pencil of compatible polynomial Poisson structures on $\mW^*\cong\mA^N$.
Here $N=n{\cdot}\dim\gt q$.
Let $\cz_{a,b}\subset \gS(\mW)$ denote the Poisson centre of  $(\gS(\mW), \{\,\,,\,\}_{a,b})$ and
let $\pi_{a,b}$ be the Poisson tensor of $\{\,\,,\,\}_{a,b}$. 
If $a+b=1$, then $\{\,\,,\,\}_{a,b}=\{\,\,,\,\}_{ap_1{+}bp_2}$. 
%%$ }.  
Corollary~\ref{roots-ind} implies now that 
$\rk\pi_{a,b}=n(\dim\gt q-\ind\gt q)$  if  $a\ne -b$. 
Hence $(a,b)$ is a regular point, whenever $a\ne -b$. %%% . %%% , then . 

Let $\gZ=\gZ(p_1,p_2)\subset\gS(\mW)$ be the subalgebra  generated by  $\cz_{a,b}$  with $(a,b)$ being regular.
%% $a+b=1$.   
Then $\{\gZ,\gZ\}_{a,b}=0$ for all $a,b$ according to the general method outlined at the beginning of Section~\ref{sub-PC}. 
%%above. %=\{\gZ,\gZ\}_{p_2}$. 
If $c\in\mK^\times$, then $\cz_{a,b}=\cz_{ca,cb}$ for all $a,b\in\mK$.
We have %also % $a+b=0$, then
$\gt q\subset \cz_{1,-1}$. Since $[\gt q,\gt q]_{p_1}\ne 0$, $(1,-1)$ is not a regular point. 
Thus $\gZ=\mathsf{alg}\langle \cz_{a,b} \mid a+b=1\rangle$.
The transcendence degree of $\gZ$ depends on the properties of
$\gt q$ and of 
$\ell=\{\,\,,\,\}_{1,-1}$.  
As we have already observed, $\gt q$ is a Lie subalgebra of $(\gS(\mW),[\,\,,\,]_{ap_1+bp_2})$, whenever $a+b=1$, cf.~\eqref{sum}. Thereby $\cz_{a,1-a}\subset \gS(\mW)^{\gt q}$ for each $a\in\mK$. Hence also 
 $\gZ\subset\gS(\mW)^{\gt q}$ and we have 
 %%  it is 
 \begin{equation} \label{bound}
\trdeg \gZ  \le  \bb(\mW,[\,\,,\,]_{p_1})-\bb(\gt q)+\ind\gt q=(n-1)\bb(\gt q)+\ind\gt q=:\bb(\q,n)
\end{equation}
according to \cite[Prop.\,1.1]{m-y} (for the computation of $\bb(\mW,[\,\,,\,]_{p_1})$, we have used Corollary~\ref{roots-ind}). 

In case $(p_1-p_2)\in\mK^\times$, we have $L(p_1,p_2)=L(p_1,p_1+1)$. 
If $\deg l=1$, then $L(p_1,p_2)=L(p_1,p_1+t+c)$ for some $c\in \mK$. By a change of variable, 
$t\mapsto t+c$, we can pass to the pencil $L(p,p+t)$. This will change the initial polynomial $p_1$.
However, if we consider a statement that is valid for all $p$, then in the proof we may safely assume that
$l$ is either $1$ or $t$. 

\begin{rmk} \label{rem-rd}
If $l=1$, then clearly almost all polynomials $p+\alpha\in\{ap+(1-a)(p+1)\mid a\in\mK\}$ have distinct roots. 
This is also true for $\tilde p\in \{ap+(1-a)(p+t)\mid a\in\mK\}$, although may not be obvious. 
A brief explanation is that the resultant $\mathrm{Res}(\tilde p,\partial_t \tilde p)$, where $\tilde p=p+\alpha t$,
 is a polynomial in 
$\alpha$ of degree $n$ with the leading coefficient $(1-n)^{n-1}$, hence it is nonzero for almost all 
$\alpha\in\mK$.  Another  explanation involves the homomorphism $\varphi_s\!: \mK[t]\to\mK[t]$ 
such that $\varphi_s(t)=st$ with $s\in\mK^\times$. This map extends to $\gt q[t]$ and is  defined on $\mW$ as well. 
We have already used  it in Section~\ref{q-cur}. 
\end{rmk}

According to Lemmas~\ref{contr0}, \ref{contr-},  
   %%%%%%% 
%% Then 
the two-dimensional space $L(p_1,p_2)$ can be contracted to 
$L(t^n,t^n-t)$ if $\deg l=1$ and to $L(t^n,t^n-1)$ if $\deg l=0$.   

\begin{prop}\label{bound-dim}
For any $F\in\gS^d(\gt q)^{\gt q}$, we have $\dim(\Pol(F)\cap \gZ(p_1,p_2))\ge d(n{-}1)+1$. 
\end{prop}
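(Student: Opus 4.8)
The plan is to count dimensions degree-by-degree, using the polarisation operators $F[\vec k]$ and the description of the Poisson centres $\cz_{\tilde p}$ from Proposition~\ref{sym-lim}. Fix $F\in\gS^d(\gt q)^{\gt q}$. For each $a\in\mK$ with $a+b=1$, the polynomial $\tilde p=ap_1+bp_2$ has degree $n$, and $F$ gives rise to an element of the Poisson centre $\cz_{\tilde p}$: concretely, writing $\tilde p=\prod(t-\alpha_i)^{m_i}$ and using the idempotent decomposition of Examples~\ref{ex-diff} and of the proof of Proposition~\ref{sym-lim}, the images $F[\bar r_{(\alpha)}]$ (for simple roots) and $\Phi_\alpha(F^{[\dd]})$ (for multiple roots) all lie in $\Pol(F)\cap\cz_{\tilde p}$. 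Since $\cz_{\tilde p}\subset\gZ(p_1,p_2)$ whenever $\tilde p$ corresponds to a regular point, and by Remark~\ref{rem-rd} almost every $\tilde p$ in the pencil has $n$ distinct roots, we get many elements of $\Pol(F)\cap\gZ(p_1,p_2)$ by letting $a$ vary.

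The key step is to make this "letting $a$ vary" quantitative. For $\tilde p=p+\alpha l$ with $l$ of degree $0$ or $1$, the elements $F[\bar r_i(\alpha)]$ are polynomial (or at worst rational, after clearing denominators) in $\alpha$; expanding in $\alpha$ as in the formulas \eqref{tn-c0}--\eqref{tn-c} of Example~\ref{-t}, the coefficients of the various powers of $\alpha$ are themselves elements of $\Pol(F)\cap\gZ$ — because $\gZ$ is a linear subspace and closed under taking limits of its members along the curve $\alpha\mapsto F[\bar r_i(\alpha)]$. So it suffices to exhibit, among the $F[\bar r_i(\alpha)]$ and their $\alpha$-expansion coefficients, a set of $d(n-1)+1$ linearly independent vectors in $\gS^{d(n-1)+\varepsilon}(\mW)$ (in fact they will be bihomogeneous of total degree $d$ in $\gt q$ and various degrees in the $\bar t$-grading). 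The natural spanning set is $\{F^{[\dd]}\mid 0\le \dd\le d(n-1)\}$ together with $F^{[0]}=F\cdot 1$; there are exactly $d(n-1)+1$ of these, and one shows they are linearly independent for generic $\tilde p$ and that each lies in $\gZ$.

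Linear independence of the $F^{[\dd]}$, $0\le\dd\le d(n-1)$: these live in distinct graded pieces of $\gS(\mW)$ with respect to the $\bar t$-degree grading $\deg(x\bar t^k)=k$ extended multiplicatively (the current grading), so they are automatically independent provided each $F^{[\dd]}$ is nonzero. Nonvanishing of $F^{[\dd]}$ follows from $F\neq 0$ and the fact that the top polarisation $F[(n-1)^d]$ is (a nonzero multiple of) $F$ with every factor pushed to $\bar t^{n-1}$, and the bottom one $F[0^d]=F\cdot 1$; more generally each intermediate $\dd$ is realised by some $\vec k$. Then one checks $F^{[\dd]}\in\gZ$: run the same contraction/limit argument as in the Remark after Proposition~\ref{sym-lim}, or directly, observe that the generator description \eqref{kT-gen} of $\cz_{t^n}$ already exhibits the $F^{[\dd]}$ for $(n-1)d-n<\dd\le (n-1)d$ as Poisson-central for $\ell_0=[\,\,,\,]_{t^n}$, and the remaining lower $\dd$ are obtained from the distinct-roots centres $\cz_{\tilde p}$ by the $\alpha$-expansion above; since $L(p_1,p_2)$ contracts to $L(t^n,t^n-t)$ or $L(t^n,t^n-1)$ and $\gZ$ is stable under the relevant limits, all $d(n-1)+1$ elements $F^{[\dd]}$ lie in $\gZ(p_1,p_2)\cap\Pol(F)$.

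I expect the main obstacle to be bookkeeping: verifying carefully that the coefficients extracted from the $\alpha$-expansion of $F[\bar r_i(\alpha)]$ genuinely land in $\gZ(p_1,p_2)$ — i.e., that $\gZ$, defined as the algebra generated by the $\cz_{a,b}$ at regular $(a,b)$, contains these limits — and that the $d(n-1)+1$ candidate vectors are not merely independent but exhaust a subspace detectably inside $\Pol(F)$. The cleanest route is probably to phrase everything through the single curve $a\mapsto\cz_{a,1-a}$ in a Grassmannian, use that $\dim(\cz_{\tilde p}\cap\gS^d(\mW))$ is constant in $\tilde p$ by Proposition~\ref{sym-T}, and take the span over all $a$ of the line $\mathbb K\,F[\bar r_i(\alpha)]$; the resulting subspace of $\gZ$ has dimension at least the number of distinct "leading $\alpha$-degrees", which is exactly $d(n-1)+1$.
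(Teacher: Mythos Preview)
Your proposal has the right instinct --- vary the parameter and collect independent elements --- but it contains two real gaps.

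First, your claim that each $F^{[\dd]}$ lies in $\gZ(p_1,p_2)$ is not justified. You invoke that ``$L(p_1,p_2)$ contracts to $L(t^n,t^n-t)$ or $L(t^n,t^n-1)$ and $\gZ$ is stable under the relevant limits'', but this has the direction of contraction backwards: limits of elements of $\gZ(p_1,p_2)$ may land in $\gZ(t^n,t^n-t)$ or $\gZ(t^n,t^n-1)$, not the other way around. Since $t^n$ is typically \emph{not} on the affine line $\{ap_1+(1-a)p_2\mid a\in\mK\}$, there is no inclusion $\cz_{t^n}\subset\gZ(p_1,p_2)$, and the particular polarisations $F^{[\dd]}$ need not belong to $\gZ(p_1,p_2)$ at all. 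What must be shown is only that \emph{some} $d(n-1)+1$ independent elements of $\Pol(F)$ lie in $\gZ$, not these specific ones.

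Second, the ``$\alpha$-expansion'' step assumes that $F[\bar r_i(\alpha)]$ is polynomial or rational in $\alpha$. It is neither: the roots of $p+\alpha l$, and hence the idempotents $\bar r_i(\alpha)$, are merely algebraic functions of $\alpha$, so you cannot extract coefficients of powers of $\alpha$ and declare them to lie in $\gZ$. Your fallback (``leading $\alpha$-degrees'') has the same defect.

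The paper's proof sidesteps both issues with one device: the scaling maps $\varphi_s$. From the identity $s^n\varphi_s^{-1}(p+ks^{1-n}t)=s^n\varphi_s^{-1}(p)+kt$ one sees that for each fixed $s\ne 0$ and each integer $k$, the polynomial $p+ks^{1-n}t$ still lies in the original pencil $L(p,p+t)$; hence $\varphi_s(F[\bar r_{i,s,k}])\in\gZ(p,p+t)$, where $\bar r_{i,s,k}$ are the idempotents for $p_{s,k}:=s^n\varphi_s^{-1}(p)+kt$. As $s\to 0$ these polynomials degenerate to $t^n+kt$, where the explicit idempotent formulas of Example~\ref{-t} make the span of the $F[\bar r_{i,k}]$ over $i$ and $k$ computable: it has dimension exactly $d(n-1)+1$. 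Semicontinuity in $s$ then yields some $s\ne 0$ with span of dimension $\ge d(n-1)+1$, and the invertible map $\varphi_s$ transports this back into $\gZ(p,p+t)$. The idea you are missing is precisely this deformation to a pencil where the dimension count becomes explicit.
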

\begin{proof}
We have no restriction on $p_1$, therefore there is no harm in assuming that %%%... the case of 
either $l=t$ or $l=1$. %%%%%%  .....
For $d=0$, the statement is obvious, thereby suppose that $d\ge 1$.  

Suppose first that $l=t$. Consider $p_{s,k}=s^n\varphi_{s}^{-1}(p)+kt$, where $k\in\Z$ and $1\le k\le dn$. 
Here $\lim\limits_{s\to 0} p_{s,k}=t^n+kt$. 
There is a non-empty open subset $U\subset \mK$ such that $0\in U$ and any $p_{s,k}$
%%%%If $s$ 
has distinct roots for each $s\in U$. 
Until the end of the proof, assume that $s\in U$.
Let $\bar r_{i,s,k}$ and $\bar r_{i,k}$ be polynomials from Example~\ref{ex-diff} associated with $p_{s,k}$ and 
with $t^n+kt$, respectively. 
The numbering is chosen in such a way that 
%% such that 
$\lim\limits_{s\to 0}\bar r_{i,s,k}=\bar r_{i,k}$ for all $i$ and $k$.  
%%%%% where 
Then clearly $\lim\limits_{s\to 0} F[\bar r_{i,s,k}]=F[\bar r_{i,k}]$. 
Using \eqref{tn-c0}, \eqref{tn-c}, we conclude that 
\[\dim\left< F[\bar r_{i,k}] \mid 1\le k\le dn\right>=d(n{-}1)+1.
\]
Thereby there is at least one $s\in U\setminus\{0\}$ such that $\dim\left< F[\bar r_{i.s,k}] \mid 1\le k\le dn\right>\ge d(n{-}1)+1$. 
%It 
By  definition,
$s^n\varphi_{s}^{-1}(p)+kt=s^n\varphi_{s}^{-1}(p+ks^{1-n}t)$.  Hence
$\varphi_s(F[\bar r_{i,s,k}])\in\gZ(p,p+t)$ for all $i$ and $k$.  Since $\varphi_s$ is an isomorphism, we are done in 
this case. 

The case $l=1$ can be treated similarly.  Instead of the polynomials $t^n+kt$, one works with 
$t^n+k$. Here $\left< F[\bar r_{i,k}] \mid 1\le k\le dn\right>=\left<F^{[\dd]}\mid  0\le \dd\le d(n-1) \right>$ and this space has dimension $d(n-1)+1$ as well. 
\end{proof}    
   
\begin{ex}[A curiosity] \label{n=2}   
   Suppose that $n=2$. 
Here any two Lie brackets    $[\,\,,\,]_{p_1}$ and $[\,\,,\,]_{p_2}$ are compatible. 
Assume that $p_1\ne p_2$. For any $F\in\gS^d(\q)$, 
we have  $\dim\Pol(F)=d+1$. If $F\in\gS^d(\q)^\q$,  then 
$\Pol(F)\subset \gZ(p_1,p_2)$ by Proposition~\ref{bound-dim}. 
Suppose that $\q$ satisfies the assumptions of Theorem~\ref{thm:k-T}, in particular, 
$\cz(\q)=\mK[F_1,\ldots,F_m]$. Then \[\gZ=\gZ(p_1,p_2)=\mathsf{alg}\langle \Pol(F_i) \mid  1\le i\le m\rangle\]
by Proposition~\ref{sym-lim}. 
 This $\gZ$ is independent of the choice of  $(p_1,p_2)$ and arises from a tri-Hamiltonian system
$(\mW,[\,\,,\,]_{t^2},[\,\,,\,]_{t^2+1},[\,\,,\,]_{t^2+t})$. 

Another feature of the case $n=2$ is directly related to {\it  Mishchenko--Fomenko subalgebras}, see e.g. \cite{vi90} for 
a definition. 
For $\gamma\in\q^*$, consider the homomorphism of commutative algebras 
$\varrho_\gamma\!:\gS(\mW)\to\gS(\q)$ with $\varrho_\gamma(x{\cdot}1+y{\cdot}\bar t)=x+\gamma(y)$ (hier $x,y\in\q$). Then $\varrho_\gamma(\gZ(p_1,p_2))$ contains the Mishchenko--Fomenko subalgebra $\cF_\gamma\subset\gS(\q)$ associated with $\gamma$. If  $\q$ satisfies the assumptions of Theorem~\ref{thm:k-T}, then 
$\varrho_\gamma(\gZ)=\cF_\gamma$. 
\end{ex}   
   
\begin{lm}\label{ind-}
For $\ell=[\,\,,\,]_{p_1}-[\,\,,\,]_{p_2}$, we have $\ind\!(\mW,\ell)=\dim\q+(n{-}1)\ind\gt q$.
\end{lm}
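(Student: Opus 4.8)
I want to compute the index of the Lie bracket $\ell = [\,,\,]_{p_1}-[\,,\,]_{p_2}$ on $\mW$, where $p_1,p_2$ are distinct normalised polynomials of degree $n$ with $\deg(p_1-p_2)\le 1$. First, as noted in the text just before the statement, we may apply the contractions of Lemmas~\ref{contr0} and~\ref{contr-} and a rescaling $\varphi_s$: since $\ind$ is lower-semicontinuous under contractions, it suffices to prove the equality $\ind\!(\mW,\ell)=\dim\q+(n-1)\ind\q$ for the two model brackets $\ell_\infty=[\,,\,]_\infty=[\,,\,]_{t^n-1}-[\,,\,]_{t^n}$ (case $\deg l=0$) and $\ell'=[\,,\,]_{t^n-t}-[\,,\,]_{t^n}$ (case $\deg l=1$), provided I can also produce an \emph{upper} bound $\ind\!(\mW,\ell)\le\dim\q+(n-1)\ind\q$ for the generic $\ell$ in the pencil. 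Actually it is cleaner to argue directly: lower-semicontinuity of the index under the contraction $\ell\rightsquigarrow\ell_{\mathrm{model}}$ gives $\ind\!(\mW,\ell)\le\ind\!(\mW,\ell_{\mathrm{model}})$, so it suffices to (a) compute $\ind$ of the model bracket explicitly and get $\le$, and (b) exhibit one $\gamma\in\mW^*$ with $\dim\mW_\gamma\le\dim\q+(n-1)\ind\q$ for the model bracket to get $\ge$; combining, the model has exactly this index, and then I separately need the same lower bound for a generic $\ell$ in the original pencil — but since the contraction goes from $\ell$ to the model, semicontinuity gives $\ind(\mW,\ell)\le\ind(\mW,\ell_{\mathrm{model}})$ the wrong way for a full reduction. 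So I will instead compute $\ind(\mW,\ell)$ for \emph{every} $\ell$ of this form directly by a stabiliser calculation, using the explicit description of $\ell$ from Example~\ref{inf} and Lemma~\ref{contr-}.

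\textbf{Key steps.} Write $\gamma=(\gamma_0,\gamma_1,\dots,\gamma_{n-1})\in\mW^*$ with $\gamma_k\in\q^*$ via the identification $(\q\bar t^k)^*\cong\q^*$. Consider first $\ell=\ell_\infty$, for which $[x\bar t^a,y\bar t^b]_\infty=0$ if $a+b<n$ and $=[x,y]\bar t^{a+b-n}$ if $a+b\ge n$. For $\xi=\sum_a x_a\bar t^a\in\mW$, the condition $\xi\in\mW_\gamma$ reads: for every $b$ and every $y\in\q$, $\sum_{a+b\ge n}\gamma_{a+b-n}([x_a,y])=0$, i.e. $\sum_{a\ge n-b}\mathrm{ad}^*(x_a)\gamma_{a+b-n}=0$ in $\q^*$ for each $b=0,\dots,n-1$. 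Reindexing, for $j=0,\dots,n-1$ one gets $\sum_{i=0}^{j}\mathrm{ad}^*(x_{n-1-i})\,\gamma_{j-i}=0$. This is a "triangular" system: the $j=0$ equation involves only $x_{n-1}$ (via $\mathrm{ad}^*(x_{n-1})\gamma_0=0$), then $j=1$ brings in $x_{n-2}$, etc.; crucially $x_0$ never appears. Choose $\gamma$ so that $\gamma_0\in\q^*_{\sf reg}$ and $\gamma_1,\dots,\gamma_{n-1}$ are in a suitable position. Then: $x_{n-1}$ ranges over $\q_{\gamma_0}$ (dimension $\ind\q$); given that, the $j=1$ equation $\mathrm{ad}^*(x_{n-2})\gamma_0=-\mathrm{ad}^*(x_{n-1})\gamma_1$ is solvable and its solution set for $x_{n-2}$ is a coset of $\q_{\gamma_0}$ (dimension $\ind\q$) — but one must check the RHS lies in $\mathrm{ad}^*(\q)\gamma_0$, the image of the coadjoint action, for a generic choice of $\gamma_1$; since $\mathrm{ad}^*(\q)\gamma_0 = (\q_{\gamma_0})^{\perp}$ has codimension $\ind\q$, this is a genericity/compatibility check. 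Iterating, $x_{n-2},\dots,x_1$ each contribute $\ind\q$ and $x_0$ is free, contributing $\dim\q$. Summing: $\dim\mW_\gamma = \dim\q + (n-1)\ind\q$, which is the asserted value, and this is the minimum since by Corollary~\ref{roots-ind} no bracket in the game can have index smaller than... — no: I must verify this is actually the \emph{minimum} over $\mW^*$, i.e. that $\dim\mW_\gamma\ge\dim\q+(n-1)\ind\q$ for all $\gamma$. For that: the map $\xi\mapsto\big(\sum_i\mathrm{ad}^*(x_{n-1-i})\gamma_{j-i}\big)_{j=0}^{n-1}$ from $\mW$ to $(\q^*)^n$ has image contained in $(\q^*)^{n}$ but, because $x_0$ is killed, really in a space of dimension at most $n\dim\q - \dim\q = (n-1)\dim\q$; moreover each "block row" $j$ has rank at most $\mathrm{rk}(\mathrm{ad}^*\text{ against }\gamma_0)=\dim\q-\ind\q$ after the triangular reduction, so the total rank is at most $(n-1)(\dim\q-\ind\q)$, giving $\dim\mW_\gamma\ge n\dim\q-(n-1)(\dim\q-\ind\q)=\dim\q+(n-1)\ind\q$. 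The case $\ell'=[\,,\,]_{t^n-t}-[\,,\,]_{t^n}$ is handled identically: here by Lemma~\ref{contr-} the bracket is $[x\bar t^a,y\bar t^b]=[x,y]\bar t^{a+b-n+1}$ when $a+b\ge n-1$ and $0$ otherwise (up to the contraction), so the same triangular analysis applies with the indices shifted by one, and the free coordinate is again a single copy of $\q$, yielding the same count. The general $\ell$ with $\deg(p_1-p_2)\le 1$ is conjugate under $\varphi_s$ (and a shift $t\mapsto t+c$) to one of these two up to taking a limit, and one checks the index is constant along the $\varphi_s$-orbit and lower-semicontinuous in the limit, but since the \emph{same} formula is obtained at both ends we get equality throughout; alternatively, redo the triangular computation with the general $\ell$ directly — the leading behaviour (which coordinate is free, which block-rows have which rank) is unchanged.

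\textbf{Main obstacle.} The delicate point is the lower bound $\dim\mW_\gamma\ge\dim\q+(n-1)\ind\q$ for \emph{all} $\gamma$, equivalently the upper bound on the rank of the "structure map". The triangular structure makes the generic (upper-bound) direction transparent, but to bound the rank from above uniformly I need to argue that the block-lower-triangular matrix built from the maps $x\mapsto\mathrm{ad}^*(x)\gamma_k$ cannot have rank exceeding $(n-1)(\dim\q-\ind\q)$ regardless of the $\gamma_k$: the diagonal blocks all have the form $\mathrm{ad}^*(\cdot)\gamma_0$ whose rank is $\le\dim\q-\ind\q$ by definition of the index, and the off-diagonal blocks do not help increase the rank of a block-triangular operator beyond the sum of the diagonal block ranks — this is a standard but slightly fiddly linear-algebra fact (rank of a block-triangular matrix is at least... we need \emph{at most}, which requires the argument that column operations using the strictly-lower blocks reduce to the diagonal). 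I expect to spend most of the effort making this rank estimate clean, and in checking the compatibility condition (that the recursively-appearing right-hand sides $-\sum_{i\ge1}\mathrm{ad}^*(x_{n-1-i})\gamma_{j-i}$ land in $\mathrm{Im}(\mathrm{ad}^*(\cdot)\gamma_0)$) for the generic $\gamma$ used in the matching lower bound on the minimum. The identity $\ind(\mW,[\,,\,]_p)=n\cdot\ind\q$ from Corollary~\ref{roots-ind} will serve as a useful consistency check on the bookkeeping but is not directly used, since $\ell$ is the \emph{difference} bracket, not one of the $[\,,\,]_p$.
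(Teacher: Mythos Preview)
Your plan for the upper bound $\ind(\mW,\ell)\le\dim\q+(n-1)\ind\q$ is correct and matches the paper. For the model $\ell_\infty$, choosing $\gamma$ with $\gamma_0\in\q^*_{\sf reg}$ and $\gamma_1=\cdots=\gamma_{n-1}=0$ sidesteps your compatibility worry entirely (all right-hand sides in the recursion vanish) and yields exactly the stabiliser in the paper's \eqref{1-1}; the general $\ell$ then inherits the bound via the contraction of Lemma~\ref{contr-}, since the index can only increase in the limit.

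The lower bound, however, has a genuine gap that your ``Main obstacle'' paragraph flags but does not resolve. The assertion that a block-lower-triangular matrix has rank bounded above by the sum of its diagonal block ranks is simply \emph{false}: the standard inequality runs the other way, and already $\bigl(\begin{smallmatrix}0&0\\1&0\end{smallmatrix}\bigr)$ breaks ``at most''. Nor does a row-by-row bound work: for $\q=\gt{sl}_2$, $n=3$, with $\gamma_0,\gamma_1$ both regular but having distinct coadjoint tangent spaces, the second block-row $(A_1,A_0)$ has image all of $\q^*$, of dimension $3>\dim\q-\ind\q=2$. (The desired inequality $\rk M\le(n-1)(\dim\q-\ind\q)$ \emph{is} true in this example---the rank comes out as $4$---but not for your stated reason.) No column or row operation over $\mK$ can kill the off-diagonal blocks unless they happen to be scalar multiples of $A_0$, so the ``triangular reduction'' you invoke is not available.

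The paper's proof of the lower bound is genuinely different and exploits the ambient pencil, not just the single bracket $\ell$. One takes a generic $\xi\in\mW^*$ and a sequence of regular points $v(k)\to(1{:}{-}1)$ in $\mP^1$; the limit $\gt v=\lim_k\ker\pi_{v(k)}(\xi)$ has dimension $n\,\ind\q$ by Corollary~\ref{roots-ind} (so that corollary is used essentially, not merely as a bookkeeping check) and lies inside $\ker\pi_{1,-1}(\xi)$. The crucial further input is that the restriction $\pi_{a,1-a}(\xi)|_{\mW\times\q}$ is independent of $a$, forcing $\pi_{1,0}(\xi)(\gt v,\q)=0$ and hence $\gt v\cap\q\subset\q_{\bar\xi}$, of dimension at most $\ind\q$. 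Combined with the trivial inclusion $\q\subset\ker\pi_{1,-1}(\xi)$ this gives $\dim\ker\pi_{1,-1}(\xi)\ge\dim(\q+\gt v)\ge\dim\q+(n-1)\ind\q$. This pencil-limit idea is the missing ingredient; I do not see a purely block-matrix substitute.
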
    
\begin{proof}
First we consider the particular cases of $[\,\,,\,]_{t^n-1}-[\,\,,\,]_{t^n}$ and $[\,\,,\,]_{t^n-t}-[\,\,,\,]_{t^n}$.
If $\ell=[\,\,,\,]_{t^n-1}-[\,\,,\,]_{t^n}$, then $(\mW,\ell)$ is an $\mathbb N$-graded Lie algebra, isomorphic to 
$(\tilde t\q[\tilde t])/(\tilde t^{n{+}1})$ and to %%, 
the nilpotent radical of 
$\q\langle n{+}1\rangle$, see Example~\ref{inf}. Take $\xi\in\mW^*$ such that 
$\bar\xi:=\xi|_{\gt q{\cdot}1}\in\gt q^*_{\sf reg}$. Then 
\begin{equation} \label{1-1}
\ker\pi_{1,-1}(\xi)=\gt q_{\bar\xi}\bar t^{n{-}1}\oplus\ldots\oplus\gt q_{\bar\xi}\bar t\oplus\gt q{\cdot}1
\end{equation}
 and $\dim\ker\pi_{1,-1}(\xi)=(n{-}1)\ind\q+\dim\q$. Thereby $\ind\!(\mW,\ell)\le  (n{-}1)\ind\q+\dim\q$ in this case. 

Suppose now that $\ell=[\,\,,\,]_{t^n-t}-[\,\,,\,]_{t^n}$. Then $(\mW,\ell)\cong\q\langle n{-}1\rangle\oplus\gt q^{\rm ab}$.  Here $\ind\!(\mW,\ell)=\ind\q\langle n{-}1\rangle+\ind\gt q^{\rm ab} = (n{-}1)\ind\q+\dim\q$ by \cite{rt}. 

According to Lemma~\ref{contr-}, a general $\ell$ can be contracted to either $[\,\,,\,]_{t^n-1}-[\,\,,\,]_{t^n}$ 
or $[\,\,,\,]_{t^n-t}-[\,\,,\,]_{t^n}$.  Under this procedure, the index cannot decrease,
%% under 
therefore always 
%%%%%......  
$\ind\!(\mW,\ell)\le\dim\q+(n{-}1)\ind\gt q$. %%% .................. 

Take now $\xi\in\mW^*$ such that $\rk\pi_{1,0}(\xi)=n(\dim\gt q-\ind\gt q)$ and $\bar\xi\in\q^*_{\sf reg}$. Then %% a holds 
$\rk\pi_{a,b}(\xi)=n(\dim\gt q-\ind\gt q)$ for all pairs $(a,b)$ from a 
 non-empty open subset of $\mK^2$. Note that $\ker\pi_{a,b}(\xi)$ depends only 
on $v=(a:b)\in\mP^1$ and not on its representative. There is a sequence of points $v(k)\in\mP^1\setminus\{(1\,{:}\,-1)\}$ such that 
$\dim\ker\pi_{v(k)}(\xi)=n{\cdot}\ind\q$ and $\lim_{k\to\infty}v(k)=(1\,{:}\,-1)$.  
Set $\gt v=\lim_{k\to\infty}\ker\pi_{v(k)}(\xi)$. Then $\dim\gt v=n{\cdot}\ind\q$ and $\gt v\subset\ker\pi_{1,-1}(\xi)$. 
Furthermore, since $\pi_{a,1-a)}(\xi)|_{\mW{\times}\q}=\pi_{1,0}(\xi)|_{\mW{\times}\q}$ for each $a\in\mK$, we have  
$\pi_{1,0}(\xi)(\gt v,\q)=0$.  
Thereby $\gt v\cap\gt q$ is contained in the stabiliser $\gt q_{\bar \xi}$ of $\bar\xi$ and hence  $\dim(\gt v\cap\gt q)\le\ind\q$. Since $\ell(\gt q,\mW)=0$, we have 
$\dim\ker\pi_{1,-1}(\xi)\ge \dim\q+(\dim\gt v-\ind\q)= \dim\q+(n{-}1)\ind\q$ and this inequality holds for generic points $\xi\in\mW^*$.
Thus  $\ind\!(\mW,\ell)\ge\dim\q+(n{-}1)\ind\gt q$.
\end{proof}
    
In the proof of Lemma~\ref{ind-}, we have seen that for generic $\xi\in\mW^*$,
\begin{equation} \label{eq-inf-v}
\ker\pi_{1,-1}(\xi)=\gt q+\gt v, \ \text{ where } \ \pi_{1,0}(\xi)(\gt q,\gt v)=0, \ \dim\gt v=n{\cdot}\ind\q,  \ \dim(\gt q\cap\gt v)=\ind\q.
\end{equation}    
In the above sentence, generic means that $\xi\in\mW^*_{(1,0),{\sf reg}}\cap\mW^*_{(1,-1),{\sf reg}}$ and  $\bar\xi\in\q^*_{\sf reg}$. 
    
\begin{thm} \label{trdeg}
Suppose that $\trdeg\gS(\gt q)^\gt q=\ind\gt q$ and that $\gt q$ has  the {\sl codim}--$2$ property.
Then $\trdeg\gZ(p,p+l)= \bb(\q,n)$ for any $p\in\mK[t]$ of degree $n$ and any nonzero
$l\in\mK[t]$ with $\deg l\le 1$. %%of .
\end{thm}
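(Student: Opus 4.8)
The plan is to prove the two inequalities $\trdeg\gZ(p,p+l)\le\bb(\q,n)$ and $\trdeg\gZ(p,p+l)\ge\bb(\q,n)$ separately; the upper bound is already recorded in \eqref{bound}, so the real content is the lower bound. By the reduction discussed after Remark~\ref{rem-rd}, it suffices to treat the two model pencils $L(p,p+1)$ and $L(p,p+t)$, and in fact (using the contraction of $L(p_1,p_2)$ to $L(t^n,t^n-t)$ or $L(t^n,t^n-1)$ from Lemmas~\ref{contr0}--\ref{contr-}, together with the fact that transcendence degree cannot drop under the limit of a family of subalgebras of fixed Poincar\'e-series type) I would first establish the estimate for a convenient generic $p$ with distinct roots and then transport it. The engine is the Lenard--Magri/Gelfand--Zakharevich machinery encoded in Proposition~\ref{P} and Lemma~\ref{lm-X}: one shows that for generic $\xi\in\mW^*$ the space $V(\xi)=\sum_{\rk\pi_{a,b}(\xi)=\br}\ker\pi_{a,b}(\xi)$ has the maximal dimension $\bb(L)$, and then that $\textsl{d}_\xi\gZ$ fills up $V(\xi)$, so that $\trdeg\gZ=\dim V(\xi)-(\text{number of Casimir-type relations})$ comes out to $\bb(\q,n)$.

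Concretely, the first step is to compute $\bb(L(p,p+l))$. We have $N=n\dim\q$ and, by Corollary~\ref{roots-ind}, $\br=\rk\pi_{a,b}=n(\dim\q-\ind\q)$ for all regular $(a,b)$, i.e.\ all $(a:b)\ne(1:-1)$; hence $\bb(L)=\tfrac12\br+(N-\br)=N-\tfrac12\br=\tfrac{n+1}{2}\ind\q+\tfrac{n-1}{2}\dim\q$, which is exactly $\bb(\q,n)$ after rewriting $\bb(\q)=\tfrac12(\dim\q+\ind\q)$. The second step is to check the dimension-of-singular-locus hypotheses of Lemma~\ref{lm-X}(i). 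For $v\ne(1:-1)$ the bracket $\{\,\,,\,\}_v$ is isomorphic to the Lie--Poisson bracket of $(\mW,[\,\,,\,]_{p'})$ for the corresponding $p'$ of degree $n$, so by Lemma~\ref{lm-codim} and the \emph{codim}--$2$ assumption on $\q$ one gets $\codim\mW^*_{v,{\sf sing}}\ge 2$ for these $v$; for the one remaining value $v=(1:-1)$ the bracket is $\ell=[\,\,,\,]_{p}-[\,\,,\,]_{p+l}$, and here Lemma~\ref{ind-} gives $\rk\ell=n\dim\q-(\dim\q+(n-1)\ind\q)$, which together with a \emph{codim}--$2$-type statement for $(\mW,\ell)$ (to be proven in the next step) keeps $\dim\mW^*_{v,{\sf sing}}\le N-2$. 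Thus Lemma~\ref{lm-X}(i) applies: $\mW^*_{L,{\sf reg}}$ contains a dense open set, and on it every nonzero form in the pencil $\cP(\xi)$ is regular, so $\dim V(\xi)=\bb(L)=\bb(\q,n)$ by Proposition~\ref{P}(i).

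The third step is the lower bound for $\trdeg\gZ$ itself. Fix a generic $\xi$ as above. Since $\gZ$ is generated by the $\cz_{a,1-a}$, the space $\textsl{d}_\xi\gZ$ contains $\sum_a\textsl{d}_\xi\cz_{a,1-a}$, and by \eqref{incl} each summand lies in $\ker\pi_{a,1-a}(\xi)$, hence $\textsl{d}_\xi\gZ\subset V(\xi)$. To get the matching lower bound I would argue that, because $\q$ satisfies $\trdeg\gS(\q)^\q=\ind\q$, each $\cz_{a,1-a}$ has transcendence degree equal to $\ind(\mW,[\,\,,\,]_{ap+(1-a)(p+l)})=n\ind\q$ (use Corollary~\ref{roots-ind} together with the transfer of the hypothesis $\trdeg\gS(\q)^\q=\ind\q$ through Proposition~\ref{roots}, exactly the mechanism behind Proposition~\ref{sym-T}), so that $\textsl{d}_\xi\cz_{a,1-a}$ is a generic $(N-\tfrac12\br)$-dimensional subspace of $\ker\pi_{a,1-a}(\xi)$ — in fact all of it, since $\dim\ker\pi_{a,1-a}(\xi)=N-\br+(\text{corank of }\cz)$ and the Casimir differentials exhaust the kernel at a regular point. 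Letting $a$ vary over the (infinitely many) regular values we obtain $\textsl{d}_\xi\gZ\supseteq\sum_a\ker\pi_{a,1-a}(\xi)=V(\xi)$, whence $\trdeg\gZ\ge\dim\textsl{d}_\xi\gZ=\dim V(\xi)=\bb(\q,n)$. Combined with \eqref{bound} this gives equality.

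The main obstacle I anticipate is the \emph{codim}--$2$ property for the degenerate member $(\mW,\ell)$ at $v=(1:-1)$: the Lie algebras $(\mW,\ell)$ are not of the form $\q[t]/(p')$, so Lemma~\ref{lm-codim} does not directly apply, and one must instead exploit the explicit description from \eqref{eq-inf-v} (and the identification of $(\mW,\ell)$ with a Takiff-type or nilradical Lie algebra from the proof of Lemma~\ref{ind-}) to bound $\dim\mW^*_{v,{\sf sing}}$. A clean way around it is to observe that $\mW^*_{v,{\sf sing}}$ for this single $v$ is automatically a proper closed subset, so $\dim\mW^*_{v,{\sf sing}}\le N-1$ for free; Lemma~\ref{lm-X}(i) only needs the $\le N-2$ bound for \emph{generic} $v$, which we already have, so in fact this single bad value is harmless and the argument goes through without a separate \emph{codim}--$2$ statement for $(\mW,\ell)$. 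The remaining delicate point is then purely the bookkeeping that $\sum_{a}\ker\pi_{a,1-a}(\xi)$ already equals $V(\xi)$ rather than a proper subspace — this is where one genuinely uses that the pencil has a \emph{unique} singular line (again Proposition~\ref{P}), so that the regular kernels span.
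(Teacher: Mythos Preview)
Your proposal has a genuine gap at the second step, and it stems from conflating two different notions of ``regular''. Being in $\mW^*_{L,{\sf reg}}$ means that $\rk\pi_v(\xi)=\rk\pi_v$ for every $v\in\mP^1$; it does \emph{not} mean that every nonzero form in $\cP(\xi)$ has rank $\br$. By Lemma~\ref{ind-}, $\ind(\mW,\ell)=\dim\q+(n{-}1)\ind\q$, so globally $\rk\pi_{1,-1}=(n{-}1)(\dim\q-\ind\q)$, which is strictly less than $\br=n(\dim\q-\ind\q)$ since $[\q,\q]\ne 0$. Hence at \emph{every} $\xi\in\mW^*_{L,{\sf reg}}$ the line $\mK\pi_{1,-1}(\xi)$ is singular in $\cP(\xi)$, and Proposition~\ref{P}{\sf(i)} is never applicable. (Your last paragraph actually acknowledges the unique singular line, contradicting what you wrote two paragraphs earlier.) This error is coupled with an arithmetic slip: $\bb(L)=N-\tfrac12\br=\tfrac{n}{2}(\dim\q+\ind\q)=n\bb(\q)$, not $\bb(\q,n)=\tfrac{n-1}{2}\dim\q+\tfrac{n+1}{2}\ind\q$; the two differ by $\tfrac12(\dim\q-\ind\q)>0$. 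Had your claim been correct you would have obtained $\trdeg\gZ\ge n\bb(\q)>\bb(\q,n)$, violating \eqref{bound}.

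What the paper actually does is invoke Proposition~\ref{P}{\sf(iii)} at a generic $\gamma$: with $\omega=\pi_{1,-1}(\gamma)$ one has $\rk\omega=(n{-}1)(\dim\q-\ind\q)$, and the target is $\dim V(\gamma)=N-\br+\tfrac12\rk\omega=n\ind\q+\tfrac{n-1}{2}(\dim\q-\ind\q)=\bb(\q,n)$. To reach this via {\sf(iii)} one must verify the extra condition $\rk\!\big(\pi_{1,0}(\gamma)|_{\widetilde{\gt v}}\big)=\dim\widetilde{\gt v}-(N-\br)=\dim\q-\ind\q$ for $\widetilde{\gt v}=\ker\omega$. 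This is where the structural input \eqref{eq-inf-v} is used: $\widetilde{\gt v}=\q\oplus\gt v_0$ with $\pi_{1,0}(\gamma)(\q,\gt v_0)=0$, and requiring $\bar\gamma=\gamma|_{\q{\cdot}1}\in\q^*_{\sf reg}$ forces the restricted rank to be exactly $\dim\q-\ind\q$. Your outline never touches this computation, and without it Proposition~\ref{P} gives only the upper bound $\dim V(\gamma)\le\bb(\q,n)$, not the equality you need. The remaining part of your third step (filling $V(\gamma)$ by $\textsl{d}_\gamma\cz_{a,1-a}$ for an open set of $a$ and then citing \cite[Appendix]{OY}) is essentially what the paper does; but the crux you are missing is the rank computation on $\ker\pi_{1,-1}(\gamma)$.
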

\begin{proof}
By Lemma~\ref{lm-codim}, each Lie algebra $(\mW,[\,\,,\,]_{\tilde p})$ with $\deg\tilde p=n$ also has  the {\sl codim}--$2$ property. Then 
by Lemma~\ref{lm-X}{\sf (i)}, $\mW^*_{L,{\sf reg}}$ contains a non-empty open subset of $\mW^*$ for any $L=L(p,p+l)$. 
Without loss of generality, we may assume that $p$ has distinct roots, cf. Remark~\ref{rem-rd}.  
Since $\trdeg\gS(\gt q)^\gt q=\ind\gt q$, there is another  non-empty open subset $U_{\sf inv}\subset\mW^*$ such that 
$\dim\textsl{d}_\gamma \cz_p=n{\cdot}\ind\q$ for each $\gamma\in U_{\sf inv}$. 

Let $\gamma\in \mW^*_{L,{\sf reg}}\cap U_{\sf inf}$ be such that $\bar\gamma:=\gamma|_{\q{\cdot}1}\in\q^*_{\sf reg}$. 
Consider the pencil \[\cP(\gamma)=\{ \pi_{a,b}(\gamma)\mid a,b\in\mK\}\] corresponding to $L$. 
If $a\ne -b$, then $\pi_{a,b}(\gamma)$ is a regular form. For the singular line $\mK \pi_{1,-1}(\gamma)$, we have
$\rk \pi_{1,-1}(\gamma)=(n-1)(\dim\q-\ind\q)$ by Lemma~\ref{ind-}. Furthermore, 
$\widetilde{\gt v}:=\ker\pi_{1,-1}(\gamma)=\gt q\oplus\gt v_0$, where $\pi_{1,0}(\gamma)(\q,\gt v_0)=0$, see \eqref{eq-inf-v}. Next we compute $\rk\!(\pi_{1,0}(\gamma)|_{\widetilde{\gt v}})$. Since $\bar\gamma\in\q^*_{\sf reg}$, 
the rank in question is larger than or equal to $\dim\gt q-\ind\gt q$; also 
\begin{equation} \label{rkv}
\rk\!(\pi_{1,0}(\gamma)|_{\widetilde{\gt v}})\le \dim\widetilde{\gt v} - n{\cdot}\ind\q=\dim\q-\ind\q
\end{equation}
by~\cite[Appendix]{OY}.  Thus, we have the equality here. By Proposition~\ref{P}, 
\begin{equation} \label{dimV}
\dim V(\gamma)=n{\cdot}\ind\q+\frac{n-1}{2}(\dim\q-\ind\q)=\bb(\q,n)
\end{equation}
 for $V(\gamma)=\sum_{a\in\mK}\ker\pi_{a,1-a}(\gamma)$. 

Since $\gamma\in U_{\sf inv}$ and in view of \eqref{incl}, we have $\textsl{d}_\gamma\cz_p=\ker\pi_{1,0}(\gamma)$. 
For almost all $\alpha\in\mK$, the polynomial $p+\alpha l$ has distinct roots, cf. Remark~\ref{rem-rd}. 
In other words,
 \[
 U_{\sf rt}=\{\alpha\in \mK \mid  (\mW,[\,\,,\,]_{p+\alpha l})\cong\gt q^{\oplus n}\}
   \]
  is a non-empty open subset of $\mK$. %%  be the 
If $\alpha\in   U_{\sf rt}$, then the Poisson centre $\cz_{p+\alpha p}$ depends on $\alpha$ continuously, cf. \eqref{eq-lim-pol}. Thereby 
$\textsl{d}_\gamma\cz_{a,1-a}=\ker\pi_{a,1-a}(\gamma)$ for almost all $a\in\mK$, i.e., there is a non-empty open subset 
$\Omega\subset\mK$ such that $\textsl{d}_\gamma\cz_{a,1-a}=\ker\pi_{a,1-a}(\gamma)$ for each $a\in\Omega$. 
Set $\gZ= \gZ(p,p+l)$.
Now 
\[
\textsl{d}_\gamma \gZ = \sum_{a\in\mK}\textsl{d}_\gamma \cz_{a,1-a} \supset 
\sum_{a\in \Omega}\textsl{d}_\gamma \cz_{a,1-a}=\sum_{a\in \Omega} \ker\pi_{a,1-a}(\gamma)=
\sum_{a\in\mK} \ker\pi_{a,1-a}(\gamma)= V(\gamma),
\]
because $\sum_{a\in \Omega} \ker\pi_{a,1-a}(\gamma)=
\sum_{a\in\mK} \ker\pi_{a,1-a}(\gamma)$ by \cite[Appendix]{OY}. Thus 
$\trdeg\gZ\ge \dim V(\gamma)= \bb(\q,n)$. Since also $\trdeg\gZ \le \bb(\q,n)$ by \eqref{bound}, the result follows. 
%%............. ........................
%%%%  
% 
\end{proof}

\subsection{Gaudin Hamiltonians}      \label{sb-GH}
Suppose that $\gt q=\gt g$ is semisimple. 
Let $\{x_i\mid 1\le i\le\dim\gt g \}$ be 
a basis of $\gt g$ that is orthonormal w.r.t. the Killing form $\kappa$. Set $\gt h=\gt g^{\oplus n} \cong \gt g[t]/(t^n-1)$ and 
let $x_i^{(k)}\in\gt h$ be a copy of $x_i$ belonging to the   $k$-th copy of $\gt g$. 
Choose a vector $\vec{z}=(z_1,\ldots,z_n)\in \mK^n$ such that $z_j\ne z_k$ for $j\ne k$ 
 and consider the quadratic elements 
\begin{equation}\label{eq-GH}
{\mathcal H}_k=\sum_{j\ne k} \frac{\sum_{i=1}^{\dim\gt g } x_i^{(k)} x_i^{(j)}}{z_k-z_j}  , \enskip 1\le k\le n\,,
\end{equation}
called {\it Gaudin Hamiltonians}. They can be regarded as elements of either 
$\U(\gt g)^{\otimes n}\cong\U(\gt h)$ or $\gS(\gt h)$. %%, 
Note that $\sum_{k=1}^n {\mathcal H}_k=0$. 

For $k\ne s$, we have
\[
\begin{array}{l}
[{\mathcal H}_k,{\mathcal H}_s]=\sum\limits_{\nu\ne k,s}\left( \dfrac{1}{(z_k-z_\nu)(z_s-z_\nu)}\sum\limits_{i,j} 
x_i^{(k)}x_j^{(s)} [x_i,x_j]^{(\nu)}\right. + \\
+\dfrac{1}{(z_k-z_s)(z_s-z_\nu)} \sum\limits_{i,j} x_i^{(k)} x_j^{(\nu)} [x_i,x_j]^{(s)} +
\dfrac{1}{(z_k-z_\nu)(z_s-z_k)} \left.\sum\limits_{i,j} x_i^{(\nu)} x_j^{(s)} [x_i,x_j]^{(k)} \right).\\
\end{array}
\]
Next $\sum\limits_{i,j} x_i^{(k)}x_j^{(s)}[x_i,x_j]^{(\nu)}=\sum\limits_{i,j,\ups}\kappa([x_i,x_j],x_{\ups}) x_i^{(k)}x_j^{(s)}x_{\ups}^{(\nu)}=-\sum\limits_{i,j,\ups}\kappa([x_i,x_{\ups}],x_j) x_i^{(k)}x_{\ups}^{(\nu)}x_j^{(s)}$. Thereby 
$[{\mathcal H}_k,{\mathcal H}_s]$ is equal to 
\[
\begin{array}{l}
\sum\limits_{\nu\ne k,s}\left( \dfrac{1}{(z_k-z_\nu)(z_s-z_\nu)}- \dfrac{1}{(z_k-z_s)}\left(\dfrac{1}{z_s-z_\nu}-\dfrac{1}{z_k-z_\nu}\right)\right)\sum\limits_{i,j,\ups}\kappa([x_i,x_j],x_\ups)x_i^{(k)}x_j^{(s)}x_{\ups}^{(\nu)}=\\
=\sum\limits_{\nu\ne k,s}\left( \dfrac{1}{(z_k-z_\nu)(z_s-z_\nu)}- \dfrac{1}{(z_s-z_\nu)(z_k-z_\nu)}\right)\sum\limits_{i,j,\ups}\kappa([x_i,x_j],x_\ups)x_i^{(k)}x_j^{(s)}x_{\ups}^{(\nu)}=0.
\end{array}
\]
As we have just seen,
%%% 
$[{\mathcal H}_k,{\mathcal H}_j]=0$ for any $j,k$. 
Therefore the Gaudin Hamiltonians also Poisson-commute with each other.
Higher Gaudin Hamiltonians are elements of $\U(\gt g)^{\otimes n}$ that commute with all ${\mathcal H}_k$.

By the construction, each ${\mathcal H}_k$ is an invariant of the diagonal copy of $\gt g$, i.e., of $\Delta \gt g\subset\gt h$.  %%% ... 
In \cite{FFRe}, an astonishing discovery was made:  {\it there is a large  commutative algebra
$\eus C\subset \U(\gt h)^{\Delta\gt g}$ that contains all ${\mathcal H}_k$.} 
Our goal is to reveal a bi-Hamiltonian nature of $\gr\!(\eus C)\subset\gS(\gt h)$.

\section%{% 
{The loop algebra and quadratic elements} 
\label{sec-inf}

Set $\gZ(\wq,t^{-1})=\gS(t^{-1}\gt q[t^{-1}])^{\gt q[t]}$, where $\gS(t^{-1}\gt q[t^{-1}])$ is regarded as the 
quotient of $\gS(\gt q[t,t^{-1}])$ by the ideal $(\gt q[t])$. In the down to earth terms, $\gZ(\wq,t^{-1})$ consists of the elements  
$Y\in\gS(t^{-1}\gt q[t^{-1}])$ such that $\{xt^{k},Y\}\in \gt q[t]\gS(\gt q[t,t^{-1}])$ for each $x\in\gt q$ and each $k\ge 0$.  

\begin{prop} \label{P-com-inf}
We have $\{\gZ(\wq,t^{-1}),\gZ(\wq,t^{-1})\}=0$. 
\end{prop}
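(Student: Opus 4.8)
The plan is to translate the quotient description into a commutativity statement inside $\gS(\gt{m})$ with $\gt{m}:=t^{-1}\q[t^{-1}]$, to exhibit a spanning family of $\gZ(\wq,t^{-1})$ by a generating-function (``Lax'') device, and to verify Poisson-commutativity on that family via the rational $r$-matrix.

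First I would fix the notation $\gt{g}:=\q[t,t^{-1}]$, $\gt{p}:=\q[t]$; then $\gt{g}=\gt{p}\oplus\gt{m}$ with both summands Lie subalgebras, $\gS(\gt{m})\cong\gS(\gt{g})/(\gt{p})$, and --- since $\gt{m}$ is a subalgebra --- the Lie--Poisson bracket of $\gS(\gt{g})$ restricts to $\gS(\gt{m})$, where it agrees with the Lie--Poisson bracket of $\gt{m}$; this is the bracket in the statement. Write $\varpi\colon\gS(\gt{g})\to\gS(\gt{m})$ for the projection with kernel $(\gt{p})$, which is the identity on $\gS(\gt{m})$. Then $\gZ:=\gZ(\wq,t^{-1})$ is the set of $Y\in\gS(\gt{m})$ with $\varpi(\{z,Y\})=0$ for all $z\in\gt{p}$, and we must show $\{Y_1,Y_2\}=0$ for $Y_1,Y_2\in\gZ$. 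Two cheap observations: the Leibniz rule, applied to products of generators of the ideal $(\gt{p})$, shows that $\{Y,-\}$ preserves $(\gt{p})$ when $Y\in\gZ$; and the Jacobi identity then gives $\{\gZ,\gZ\}\subseteq\gZ$. Neither yields vanishing.

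For the substance, for $x\in\q$ put $x\langle z\rangle:=\sum_{k\ge 1}(xt^{-k})\,z^{k-1}\in\gS(\gt{m})[[z]]$. A one-line summation gives the rational $r$-matrix relation
\[
\{x\langle z\rangle,\,y\langle w\rangle\}=\frac{[x,y]\langle z\rangle-[x,y]\langle w\rangle}{z-w},
\]
together with $\varpi\bigl(\{xt^{\,j},y\langle w\rangle\}\bigr)=w^{\,j}\,[x,y]\langle w\rangle$ for $j\ge 0$. Fix a basis $x_1,\dots,x_r$ of $\q$, write $F\in\cz(\q)=\gS(\q)^{\q}$ as a polynomial in the $x_\alpha$, and set $F\langle z\rangle:=F(x_1\langle z\rangle,\dots,x_r\langle z\rangle)\in\gS(\gt{m})[[z]]$; its coefficients are, up to scalars, the loop-polarisations $F^{[\dd]}$ (with all parts positive). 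The Casimir identity $\sum_\alpha(\partial_\alpha F)[x_\alpha,x_\beta]=0$ is an identity of polynomials, hence is preserved by the algebra homomorphism $x_\alpha\ard x_\alpha\langle u\rangle$; feeding it through the two relations above (Leibniz rule) yields, on one hand, $\varpi(\{z,F\langle z\rangle\})=0$ for all $z\in\gt{p}$ --- so the coefficients of every $F\langle z\rangle$ lie in $\gZ$ --- and, on the other hand, $\{F\langle z\rangle,G\langle w\rangle\}=0$ for all $F,G\in\cz(\q)$ --- so those coefficients Poisson-commute. It then remains to see that $\gZ$ is generated, as an algebra, by these coefficients, the ``$n=\infty$'' counterpart of \eqref{kT-gen}; I would obtain this by passing to the finite-dimensional truncations $\gt{m}/t^{-N-1}\q[t^{-1}]$ --- the nilpotent radical of the Takiff algebra $\q\langle N{+}1\rangle$ --- noting that the induced maps $\gS(\gt{m})\to\gS(\gt{m}/t^{-N-1}\q[t^{-1}])$ are Poisson, that $\{Y_1,Y_2\}=0$ can be tested on all of them simultaneously (a bracket of two fixed elements of $\gS(\gt{m})$ involves only finitely many of the $\q t^{-k}$), and that on each truncation the invariant algebra is controlled by the Takiff-invariant theory of \cite{rt,k-T}.

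The main obstacle is exactly that last point --- identifying the invariant algebra $\gZ(\wq,t^{-1})$, i.e. showing it does not exceed the algebra generated by the loop-polarisations of $\cz(\q)$. The $r$-matrix computation itself is routine once the relation for $x\langle z\rangle$ is written down, and it already gives a large Poisson-commutative \emph{subalgebra} of $\gS(\gt{m})^{\gt{p}}$; promoting this to all of $\gZ$ is where the structure of symmetric invariants enters and where care is needed for non-reductive $\q$, in the spirit of the hypotheses of Theorem~\ref{thm:k-T}. (A variant that avoids generators altogether: for generic $\xi\in\gt{m}^{*}\cong\gt{p}^{\perp}$ the differentials $\textsl{d}_\xi Y$, $Y\in\gZ$, fill the annihilator in $\gt{m}$ of the tangent to the $\gt{p}$-orbit of $\xi$, and one must check that this annihilator is isotropic for the $2$-form $\xi([\,\cdot\,,\,\cdot\,]_{\gt{g}})$ --- which holds because it sits inside the centraliser in $\gt{g}$ of the ``Lax element'' built from $\xi$; proving that isotropy cleanly is then the crux along this route.)
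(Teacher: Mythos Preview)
Your $r$-matrix computation is correct and does produce a Poisson-commutative subalgebra of $\gS(\gt m)$ generated by the coefficients of the $F\langle z\rangle$ with $F\in\cz(\q)$. The gap you yourself flag is genuine, however, and it is not a technicality: the proposition is stated for an \emph{arbitrary} Lie algebra $\q$, with no hypothesis on $\cz(\q)$, whereas the identification $\gZ(\wq,t^{-1})=\mK\bigl[\text{coefficients of }F\langle z\rangle\bigr]$ that your argument needs is exactly the content of Theorem~\ref{gen-gZ} and requires the assumptions of Theorem~\ref{thm:k-T}. For a general $\q$ (say with $\trdeg\cz(\q)<\ind\q$, or with no polynomial ring of invariants) there is no reason for $\gZ(\wq,t^{-1})$ to be generated by polarisations of central elements, and neither your truncation-to-Takiff plan nor your isotropy variant addresses this.

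The paper's proof avoids this issue entirely by a different mechanism. Given $F_1,F_2\in\gZ(\wq,t^{-1})$, pick $N$ so that both lie in $\gS(\q t^{-N}\oplus\cdots\oplus\q t^{-1})$, set $n=2N$, and pass to the quotient $\q[t^{-1}]/(t^{-n}-1)\cong(\mW,[\,\,,\,]_{t^n-1})$; since $n=2N$ the bracket $\{F_1,F_2\}$ is unchanged under this passage. By construction the images $\bar F_1,\bar F_2$ land in the Poisson centre $\cz_{t^n}$ of the \emph{other} bracket $\{\,\,,\,\}_{t^n}$. Now the two brackets $\{\,\,,\,\}_{t^n-1}$ and $\{\,\,,\,\}_{t^n}$ are compatible (Proposition~\ref{comp}) and $\{\,\,,\,\}_{t^n}$ is regular in the pencil, so the Lenard--Magri principle gives $\{\cz_{t^n},\cz_{t^n}\}_{t^n-1}=0$. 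This is a two-line argument once the compatible-bracket machinery of Section~\ref{q-cur} is in place, uses nothing about generators of $\cz(\q)$ or of $\gZ(\wq,t^{-1})$, and is precisely what the bi-Hamiltonian viewpoint of the paper buys here.
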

\begin{proof}
Suppose that $F_1,F_2\in \gZ(\wq,t^{-1})$. There is $N\ge 1$ such that 
$$F_1,F_2\in \gS(\gt q t^{-N}\oplus\gt q t^{1-N}\oplus\ldots\oplus\gt q t^{-1}).
$$ 
%%Since $, 
Set $n=2N$. Then $\{F_1,F_2\}=0$ if and only if we images $\bar F_1$, $\bar F_2$ of $F_1,F_2$ in 
$\gS(\gt q[t^{-1}])/(t^{-n}-1)$ Poisson-commute.  Next we identify $\overline{t^{1-n}}$ with $\bar t$. This leads to the 
identification 
$$
\gt q[t^{-1}]/(t^{-n}-1)\cong (\mW, [\,\,,\,]_{t^n-1}).$$  
By the construction, $\bar F_1,\bar F_2\in\cz_{t^n}\subset\gS(\mW)$. The brackets $\{\,\,,\,\}_{t^n-1}$ and
$\{\,\,,\,\}_{t^n}$ are compatible, furthermore   $\{\,\,,\,\}_{t^n}$ is a regular Poisson structure in the pencil
$L(t^n-1,t^n)$. Therefore  $\{\cz_{t^n},\cz_{t^n}\}_{t^n-1}=0$ and the result follows. 
\end{proof}

The evaluation at $t=1$ defines an isomorphism ${\sf Ev}_1\!: \gS(\gt q t^{-1})\to \gS(\gt q)$ of $\gt q$-modules. For $F\in\gS(\gt q)$, set $F[t^{-1}]:={\sf Ev}_1^{-1}(F)\in  \gS(\gt q t^{-1})$. 
If $F\in\gS(\gt q)^{\gt q}$, then  $F[t^{-1}]\in\gZ(\wq,t^{-1})$. Here is another well-known statement:  
$\partial_t$ acts on  $\gZ(\wq,t^{-1})$. Let us present a brief explanation. First of all, note that 
$\partial_t$ acts on the ideal $\gt q[t]\gS(\gt q[t,t^{-1}])$, since $\partial_t x=0$ if $x\in\gt q$.  
Suppose that $H\in\gZ(\wq,t^{-1})$. 
Then, for $a\ge 0$ and $x\in\gt q$, we have  
\begin{equation}\label{dt}
\{x t^a, \partial_t H\}=\partial_t(\{x t^a,H\})-a\{x t^{a-1},H\}=\partial_t 0=0
\end{equation}
in the quotient $\gS(\gt q[t,t^{-1}])/(\gt q[t])$. 

\begin{thm}\label{gen-gZ}
Suppose that $\gt q$ satisfies the assumptions of Theorem~\ref{thm:k-T}, in particular, 
$\mK[\q^*]^\q=\mK[F_1,\dots,F_m]$. Then $\gZ(\wq,t^{-1})$ is a polynomial ring generated by 
$\partial_t^k F_i[t^{-1}]$ with $1\le i\le m$ and $k\ge 0$.
%%% .......................
\end{thm}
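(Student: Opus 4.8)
The plan is to prove the statement in three stages: first record that the proposed generators lie in $\gZ(\wq,t^{-1})$; then control $\gZ(\wq,t^{-1})$ through an exhausting chain of finite-dimensional truncations, identifying each truncation with (a piece of) the symmetric invariants of a Takiff algebra; and finally pass to the limit, matching generators. For the setup, observe that $\gS(t^{-1}\q[t^{-1}])$ is bigraded by polynomial degree $d$ and by loop degree $w$ (the total power of $t$, always $\le -d$), and that the Poisson bracket with $xt^a\in\q[t]$ shifts the loop degree by $a$ (non-negative powers being killed) while preserving polynomial degree; hence $\gZ(\wq,t^{-1})$ is a bigraded subalgebra, and $\partial_t^kF_i[t^{-1}]$ is bihomogeneous of bidegree $(d_i,-d_i-k)$. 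By \eqref{dt} and the remark that $F_i[t^{-1}]\in\gZ(\wq,t^{-1})$ whenever $F_i\in\cz(\q)$, the subalgebra $\mathcal A:=\mK[\partial_t^kF_i[t^{-1}]\mid 1\le i\le m,\ k\ge 0]$ is contained in $\gZ(\wq,t^{-1})$; it remains to prove equality and freeness.

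For the truncations, set $\mathcal G_N=\gS(\q t^{-1}\oplus\ldots\oplus\q t^{-N})$. Each $\mathcal G_N$ is a $\q[t]$-stable subalgebra on which the $\q[t]$-action factors through the Takiff quotient $\q\langle N\rangle=\q[t]/(t^N)$, and $\gZ(\wq,t^{-1})=\bigcup_N\bigl(\gZ(\wq,t^{-1})\cap\mathcal G_N\bigr)$. Extending the embedding $\mathcal G_N\hookrightarrow\gS(\q[t^{-1}])\to\gS(\q[t^{-1}]/(t^{-2N}-1))$ exactly as in the proof of Proposition~\ref{P-com-inf}, i.e.\ sending $xt^{-j}\mapsto x\bar t^{\,2N-j}$, I would obtain a $\q[t]$-equivariant algebra embedding $\iota_N\colon\mathcal G_N\hookrightarrow\gS(\mW(\q,2N))$, where $\q[t]$ acts on the target through $\q[t]/(t^{2N})=\q\langle 2N\rangle$ with the Takiff bracket $[\,\,,\,]_{t^{2N}}$; it carries $\gZ(\wq,t^{-1})\cap\mathcal G_N$ into $\cz_{t^{2N}}$. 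By Theorem~\ref{thm:k-T} and formula \eqref{kT-gen}, $\cz_{t^{2N}}$ is a polynomial ring with the explicit free generators $F_i^{[\dd]}$. Describing the ``top-half'' subalgebra $\iota_N(\mathcal G_N)=\gS(\q\bar t^{\,N}\oplus\ldots\oplus\q\bar t^{\,2N-1})$, and carrying out a Poincar\'e-series count together with the polarisation formula for $\partial_t^kF_i[t^{-1}]$ (which expresses it, up to a scalar, as a sum of ``negative-power polarisations'' of $F_i$) and its image under $\iota_N$, one checks that $\gZ(\wq,t^{-1})\cap\mathcal G_N$ is itself a polynomial ring, freely generated by the projections of the $\partial_t^kF_i[t^{-1}]$ falling into $\mathcal G_N$.

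Since the bigraded components of $\gZ(\wq,t^{-1})$ stabilise along the chain $\mathcal G_1\subset\mathcal G_2\subset\ldots$, the finite-level generating sets accumulate to exactly $\{\partial_t^kF_i[t^{-1}]\mid 1\le i\le m,\ k\ge 0\}$, and algebraic independence of this infinite family follows from algebraic independence at each finite level, which is part of the freeness of $\cz_{t^{2N}}$. Hence $\gZ(\wq,t^{-1})=\mathcal A$ is the asserted polynomial ring.

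The hard part will be the middle stage: matching the free generators of $\gZ(\wq,t^{-1})\cap\mathcal G_N$, viewed inside $\cz_{t^{2N}}$ via $\iota_N$, with the elements $\partial_t^kF_i[t^{-1}]$, which requires careful bookkeeping of how the polarisations $F_i^{[\dd]}$ (and $F_i[\vec k]$) behave under $\partial_t$ and under $\iota_N$ and of which of them land in the top-half subalgebra. An alternative that bypasses $\iota_N$ is a direct induction on the loop degree: the base case is $\gZ(\wq,t^{-1})_{d,-d}=\gS^d(\q)^{\q}=\mathcal A_{d,-d}$, and for a bihomogeneous invariant $H$ of loop degree $w<-d$ one must produce an element of $\mathcal A$ of the same bidegree agreeing with $H$ modulo lower loop degree; there the obstruction is to show that $\partial_t$, together with multiplication, surjects onto the required graded pieces, which again reduces to a dimension count.
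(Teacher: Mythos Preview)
Your outline is workable, but you have taken an unnecessary detour. You correctly observe that the $\q[t]$-action on $\mathcal G_N=\gS(\q t^{-1}\oplus\cdots\oplus\q t^{-N})$ factors through $\q\langle N\rangle$; the paper simply uses this observation directly. The linear map $xt^{-j}\mapsto x\bar t^{\,N-j}$ (for $1\le j\le N$) is an isomorphism of $\q[t]$-modules $\mW_{-N}\xrightarrow{\ \cong\ }\q\langle N\rangle$, so
\[
\gZ(\wq,t^{-1})\cap\mathcal G_N=\gS(\mW_{-N})^{\q[t]}\ \cong\ \gS(\q\langle N\rangle)^{\q\langle N\rangle}=\cz(\q\langle N\rangle),
\]
which by Theorem~\ref{thm:k-T} and \eqref{kT-gen} is a polynomial ring in the $F_i^{[\dd]}$ with $(N-1)d_i-N<\dd\le (N-1)d_i$. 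A direct check shows that under this isomorphism these generators are exactly the images of $\frac{(-1)^k}{k!}\partial_t^kF_i[t^{-1}]$ for $0\le k<N$. Passing to the direct limit finishes the proof in two lines.

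Your route embeds $\mathcal G_N$ into $\gS(\mW(\q,2N))$ via the doubling trick of Proposition~\ref{P-com-inf} and then intersects $\cz_{t^{2N}}$ with the ``top-half'' subalgebra $\gS(\q\bar t^{\,N}\oplus\cdots\oplus\q\bar t^{\,2N-1})$. This is correct, but note that the top half is precisely the symmetric algebra of the abelian ideal $I\lhd\q\langle 2N\rangle$, on which $I$ acts trivially, so $\gS(I)^{\q\langle 2N\rangle}=\gS(I)^{\q\langle 2N\rangle/I}=\gS(I)^{\q\langle N\rangle}$; and $I\cong\q\langle N\rangle$ as $\q\langle N\rangle$-modules via the shift. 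You therefore land back at $\cz(\q\langle N\rangle)$ anyway. The doubling in Proposition~\ref{P-com-inf} was needed there to turn a Poisson-bracket question into a compatibility-of-brackets question; here no such device is required. What you flag as the ``hard part'' --- matching the free generators inside $\cz_{t^{2N}}$ --- dissolves once you use the direct $\q\langle N\rangle$-identification, and no separate Poincar\'e-series count is needed.
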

\begin{proof}
Set $\mW_{-N}=\gt q t^{-N}\oplus\gt q t^{1-N}\oplus\ldots\oplus\gt q t^{-1}\subset \gt q[t,t^{-1}]/\gt q[t]$. Then 
$\gZ(\wq,t^{-1})=\varinjlim \gS(\mW_{-N})^{\q[t]}$ has a direct limit structure and
each  $\gS(\mW_{-N})^{\q[t]}\cong \gS(\q[t]/(t^N))^{\q[t]}$ is a polynomial ring by  Theorem~\ref{thm:k-T}. 
Furthermore, the explicit generators of $\gS(\q[t]/(t^N))^{\q[t]}$ given in \eqref{kT-gen}, see also 
\cite[Sect.~2]{k-T}, are images of $\frac{(-1)^k}{k!}\partial_t^k F_i[t^{-1}]$ with $1\le i\le m$ and $0\le k < N$ under the canonical isomorphism $\mW_{-N}\xrightarrow{\cong}\q[t]/(t^N)$ of $\q[t]$-modules. 
\end{proof}

For any $r\in\mK[t,t^{-1}]$, let $\gZ(\wq,r)\subset\gS(\gt q[t,t^{-1}])$ be the subalgebra obtained from  
$\gZ(\wq,t^{-1})$ by the substitute $t\mapsto r$. If $r\in\mK[t]$, then clearly $\gZ(\wq,r)\subset\gS(\gt q[t])$. 
Let $p\in\mK[t]$ be the same as in Section~\ref{q-cur}, let further  
$\psi_p\!: \gt q[t]\to\gt q[t]/(p)$ be the quotient map, which we extend to $\gS(\gt q[t])$. 
If $p(0)\ne 0$, then %%
 $\psi_p$ extends to $\gS(\gt q[t,t^{-1}])$. 

Let $\gzu\subset\gS(\gt q[t])$ be the subalgebra generated by the lowest $t$-components of
elements $F\in\gZ(\wq,t+1)$.  An alternative description is $\gzu=\lim_{\esi\to 0} \gZ(\wq,\esi t+1)$. It follows easily from any of these two descriptions that $\{\gzu,\gzu\}=0$. 

\begin{rmk} \label{rem-univ}
{\sf (i)} By the construction, $\gS(\q)^\q\subset\gzu$.  It is quite probable that 
$\gzu$ coincides with $\gZ(\wq,[0]):=\gS(\gt q[t])^{\gt q[t^{-1}]}$, where  $\q[t]$ is regarded as a quotient of
$\q[t,t^{-1}]$.  If $\q=\gt g$ is  semisimple, then the identity holds, see Proposition~\ref{red-gzu}.  \\[.3ex]
%%%%  (This 
{\sf (ii)} There are compatible Poisson brackets on $\gS(\gt q[t])$, the usual one $\{\,\,,\,\}$ and
$\{\,\,,\,\}_{(1)}$, for which $\{x t^a,y t^b\}_{(1)}=[x,y]t^{a+b+1}$ if $x,y\in\q$.
They help to understand $\gZ(\wg,[0])$ \cite[Sect.~4]{ir}.   %%% 
Applying the map $\psi_p$, we  obtain two compatible brackets on $\gS(\mW)$:
$\{\,\,,\,\}_p$ and  $\{\,\,,\,\}_{p,(1)}$, where $\{x\bar t^a,x\bar t^b\}_{p,(1)}=[x,y](t^{a+b+1}+(p))$ if $x,y\in\q$.
%%  .........  
However, for this pair, 
the special r\^ole of $\gt q{\cdot}1\subset \mW$ disappears and the  Poisson-commutative subalgebra 
arising from $\left<\{\,\,,\,\}_p,\{\,\,,\,\}_{p,(1)}\right>$ equals $\cz_p$.
\end{rmk}

Let us consider $\gZ(\wq,t)$, replace $F[t^{-1}]\in\gS(\gt q t^{-1})^{\gt q}$ with 
$F[t]\in\gS(\gt q t)^{\gt q}$ and $\partial_t$ with $\tau=t^2\partial_t$. Then 
$\tau^k(F[t])\in\gZ(\wq,t)$ for all $k\ge 0$.  

\begin{lm}\label{pol-gZ}
The subspace $\left<\psi_p(\tau^k(F[t]))\mid k\ge 0\right>$ lies in $\Pol(F)$ and 
\[
\dim\left<\psi_p(\tau^k(F[t]))\mid k\ge 0\right>\ge d(n{-}1)+1\]
 if $F\in\gS^d(\q)$ and $C:=p(0)\ne 0$.
\end{lm}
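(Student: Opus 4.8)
The plan is to encode the whole sequence $\bigl(\psi_p(\tau^k(F[t]))\bigr)_{k\ge 0}$ in one generating function in an auxiliary variable $s$ and then to clear denominators. I would first record, for a decomposable $F=y_1\cdots y_d$ with $y_i\in\q$, that $F[t]=\prod_i(y_it)$ and $\tau^k(y_it)=k!\,y_it^{k+1}$, so that summing $\tfrac{s^k}{k!}\tau^k(F[t])$ over $k$ and applying $\psi_p$ gives
\[
G_F(s):=\sum_{k\ge 0}\tfrac{s^k}{k!}\,\psi_p(\tau^k(F[t]))=\prod_{i=1}^d\psi_p\Bigl(\tfrac{y_it}{1-st}\Bigr)=\prod_{i=1}^d\Bigl(\tfrac{\bar t}{1-s\bar t}\,y_i\Bigr).
\]
Here $\tfrac{\bar t}{1-s\bar t}=\sum_{m\ge 1}\overline{t^{m}}\,s^{m-1}$ is read in $(\mK[t]/(p))[[s]]$, so $G_F(s)\in\gS^d(\mW)[[s]]$; extending linearly in $F$ defines $G_F(s)$ for all $F\in\gS^d(\q)$, and $\psi_p(\tau^k(F[t]))=k!\,[s^k]G_F(s)$. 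Thus $\bigl\langle\psi_p(\tau^k(F[t]))\mid k\ge 0\bigr\rangle$ is exactly the $\mK$-span of the Taylor coefficients of the single rational function $G_F(s)$, and everything reduces to analysing it.

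For membership in $\Pol(F)$ I would use the expansion $\psi_p(\tau^k(F[t]))=k!\sum_{|\vec m|=k+d,\ \vec m\ge\vec 1}\prod_i(y_i\overline{t^{m_i}})$, reduce $\overline{t^{m_i}}=\sum_{j<n}c_{m_i,j}\bar t^{j}$, and observe that the coefficient of the monomial $\prod_i(y_i\bar t^{j_i})$ equals $\bigl[s^{k+d}\bigr]\prod_i\bigl(\sum_m c_{m,j_i}s^m\bigr)$, which is symmetric in $j_1,\dots,j_d$. Hence it depends only on the multiset $\{j_1,\dots,j_d\}$, and grouping the monomials by multiset turns the expression into a combination of polarisations $F[\vec k]$; by linearity this holds for every $F$.

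For the dimension estimate I would clear denominators. Let $\hat p(x)=x^np(1/x)/C$ be the reciprocal polynomial of $p$ rescaled to be monic of degree $n$; since $C=p(0)\neq 0$ one has $\hat p(0)=1/C\neq 0$ and $\hat p(\bar t^{-1})=0$ in $\mK[t]/(p)$. As $\bar t$ is a unit (again because $p(0)\neq 0$), $\tfrac{\bar t}{1-s\bar t}=(\bar t^{-1}-s)^{-1}$, and dividing $\hat p(s)$ by $s-\bar t^{-1}$ in $(\mK[t]/(p))[s]$ shows that $\hat p(s)\,\tfrac{\bar t}{1-s\bar t}=-\sum_{e<n}\hat q_e\,s^e$ is a polynomial in $s$ of degree $n-1$, where $\hat q_e\in\mK[t]/(p)$ is a \emph{monic} polynomial of degree $n-1-e$ in $u:=\bar t^{-1}$. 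Consequently, for decomposable $F$ and hence (by linearity) for all $F$, $P_F(s):=\hat p(s)^d\,G_F(s)$ is a polynomial in $s$ of degree $\le d(n-1)$ with coefficients $W_0,\dots,W_{d(n-1)}\in\gS^d(\mW)$; since $\hat p(0)\neq 0$, the series $\hat p(s)^{-d}$ lies in $\mK[[s]]$, so $\bigl\langle\psi_p(\tau^k(F[t]))\mid k\ge 0\bigr\rangle=\langle W_0,\dots,W_{d(n-1)}\rangle$. This span has dimension $\le d(n-1)+1$, and it remains to prove that $W_0,\dots,W_{d(n-1)}$ are linearly independent (which forces equality).

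The hard part is this linear independence, which I would obtain via an associated-graded argument. The algebra $\mK[t]/(p)=\mK[u]/(\hat p(u))$ carries the filtration by degree in $u$, with associated graded $\mK[u]/(u^n)$; it induces a filtration on $\gS^d(\mW)=\gS^d\bigl((\mK[t]/(p))\otimes\q\bigr)$ with associated graded $\gS^d(\qqn)$. From $W_E=(-1)^d\sum_{|\vec e|=E}\prod_i(\hat q_{e_i}y_i)$ (for decomposable $F$; extended linearly) and $\deg_u\hat q_e=n-1-e$ one reads off that $W_E$ lies in filtration degree $\le d(n-1)-E$, with leading part $(-1)^d F^{[\,d(n-1)-E\,]}$ in $\gS^d(\qqn)$, using the elementary identity $\sum_{\vec k\in\{0,\dots,n-1\}^d,\ |\vec k|=\dd}\prod_i(y_i\bar t^{k_i})=F^{[\dd]}$ for decomposable $F$. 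Now for each $\dd$ with $0\le\dd\le d(n-1)$ the polarisation sum $F^{[\dd]}$ is nonzero — this is the only place where one uses that $F$ need not be decomposable — because the evaluation homomorphism $\gS(\mW)\to\gS(\q)$, $y\bar t^{j}\mapsto y$, sends $F^{[\dd]}$ to $\#\{\vec f\in\{0,\dots,n-1\}^d\mid|\vec f|=\dd\}\cdot F$, whose coefficient is a positive integer, hence invertible in $\mK$. Therefore the leading parts of $W_0,\dots,W_{d(n-1)}$ are nonzero and sit in pairwise distinct filtration degrees, so these elements are linearly independent, giving $\dim\bigl\langle\psi_p(\tau^k(F[t]))\mid k\ge 0\bigr\rangle=d(n-1)+1$. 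The main obstacle is precisely this last independence; the generating-function identity together with the denominator-clearing is what reduces it to the visibly true non-vanishing of the polarisation sums $F^{[\dd]}$.
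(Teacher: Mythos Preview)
Your argument is correct and takes a genuinely different route from the paper's. The paper works directly with the elements $\psi_p(\tau^k(F[t]))$ for $0\le k\le d(n{-}1)$: it introduces a total order on the index set $K[0,n{-}1]$ of polarisations (replace each entry $0$ by $n$, then use the right lexicographic order) and computes, for each such $k$, the $\prec$-largest polarisation occurring in $\psi_p(\tau^k(F[t]))$ together with its coefficient, which turns out to be $k!\,C^u$ for an explicit $u$. Since these maximal polarisations are pairwise distinct, linear independence follows.

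You instead encode all $k$ at once in the generating series $G_F(s)$, clear denominators using the reciprocal polynomial $\hat p$, and pass to the new spanning set $W_0,\ldots,W_{d(n-1)}$. Your independence argument then relies on the $u$-degree filtration on $\mK[t]/(p)\cong\mK[u]/(\hat p(u))$, under which the graded pieces of the $W_E$ become $(-1)^dF^{[d(n-1)-E]}$. Both proofs are leading-term arguments, but with different bases and different gradings: the paper's $\prec$-order on individual polarisations versus your total $u$-degree. Your approach is more structural --- the role of the reciprocal polynomial and of the invertibility of $\bar t$ is made explicit --- and it gives the exact value $\dim=d(n{-}1)+1$ in one stroke, whereas the paper establishes only the inequality here and recovers the upper bound later via Corollary~\ref{cl-F}. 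On the other hand, the paper's computation produces the concrete leading coefficients $k!\,C^u$, which are occasionally useful elsewhere. One small remark on your write-up: when passing from decomposable $Y$ to general $F$, phrase the filtration step as ``$W_E$ lies in filtration degree $\le d(n{-}1)-E$ and its image in $\gr_{d(n-1)-E}$ equals $(-1)^dF^{[d(n-1)-E]}$'', since both of these are linear in $F$; then the non-vanishing of $F^{[\dd]}$ shows the image is nonzero.
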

\begin{proof}
The task is to show that each $\psi_p(\tau^k(F[t]))$ is a linear combination of polarisations 
$F[\vec{k}]\in\gS^d(\mW)$. For example, $\psi_p(F[t])=F[(1,\ldots,1)]$ and $\psi_p(\tau(F[t]))=F[(1,\ldots,1,2)]$, if $n>2$. Furthermore, 
$\psi_p(\tau^2(F[t]))=2F[(1,\ldots,1,3)]+2F[(1,\ldots,1,2,2)]$, if $n>3$. The general formula is more complicated,
since it involves  the coefficients of $p$. However, %%we
if $Y=y_1\ldots y_d\in\gS^d(\q)$, then 
$\tau^k(Y[t])$ is a linear combination of elements 
\begin{equation}\label{t-pol}
Y[\vec{k},t]=\sum_{\sigma\in{\tt S}_d} y_1 t^{\sigma(k_1)}\ldots y_d  t^{\sigma(k_d)},
\end{equation}
where $\vec{k}=(k_1,\ldots,k_d)\in\Z^d$ and $1\le k_1\le k_2\le\ldots\le k_d$. 
Here $\psi_p(Y[\vec{k},t])\in\Pol(Y)$ and hence 
the inclusion $\left<\psi_p(\tau^k(F[t]))\mid k\ge 0\right>\subset\Pol(F)$ takes place. 

On $K[0,n{-}1]:=\{\vec{k}=(k_1,\ldots,k_d) \mid  0\le k_1\le k_2\le\ldots\le k_d<n\}$, we will define a total order. 
First  to each $\vec{k}\in K[0,n{-}1]$ we associate a partition $\underline{\boldsymbol{k}}\in K[1,n]$ by replacing each %% 
$k_i=0$ with $n$. For example, if $\vec{k}=(0,0,1,\ldots,1)$, then $\underline{\boldsymbol{k}}=(1,\ldots,1,n,n)$. 
Next we say 
 that 
$\vec{k} \prec \vec{k}'$ if $\underline{\boldsymbol{k}}\prec\underline{\boldsymbol{k}'}$ in the right lexicographic order. 
%% for  
Then the largest component of $\psi(\tau^k(F[t]))$   is equal to 
$k!C^d F[(0,\ldots,0)]$ if $k=d(n{-}1)$ and to $k!C^u F[(\underbrace{0,\ldots,0}_{u \text{ times}},\underbrace{1,\ldots,1}_{d-u-1},1{+}z)]$ if $k=u(n{-}1)+z$ with  $0\le z< n{-}1$ and $0\le u\le d{-}1$.
These components are clearly linearly independent and there are $d(n{-}1)+1$ of them.
Thus indeed  $\dim\left<\psi_p(\tau^k(F[t]))\mid k\ge 0\right>\ge d(n{-}1)+1$.
\end{proof}

\begin{conj} \label{conj}
%%S
For any normalised  $p\in\mK[t]$ of degree $n$, we have \\[.2ex] %%%% Then
{\sf (i)}  $\psi_p(\gZ(\wq,t))=\gZ(p,p+t)$ if $p(0)\ne 0$, \\[.2ex]
{\sf (ii)}   $\psi_p(\gZ(\wq,l))=\gZ(p,p+l)$ for any $l\in\mK[t]$ of degree $1$ that does not divide $p$, \\[.2ex]
{\sf (iii)} $\psi_p(\gzu)=\gZ(p,p+1)$.
%%% f. 
\end{conj}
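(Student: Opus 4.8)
The plan is to establish part~(i) and to reduce (ii) and (iii) to it. For (ii), given $l=\alpha t+\beta$ of degree one with $l\nmid p$, the affine substitution $t\mapsto t-\beta/\alpha$ induces a ring automorphism of $\gS(\q[t])$ carrying $(p)$ to $(p')$ with $p'(t)=p(t-\beta/\alpha)$, carrying $\gZ(\wq,l)$ to $\gZ(\wq,t)$ (the scalar $\alpha$ is harmless), intertwining $\psi_p$ with $\psi_{p'}$, and sending the pencil $L(p,p+l)$ to $L(p',p'+t)$; since $p'(0)=p(-\beta/\alpha)\ne 0$ precisely because $l\nmid p$, transporting part~(i) for $p'$ back along this automorphism gives (ii). For (iii), I would apply (ii) with $l=\esi t+1$, legitimate for generic $\esi$, to obtain $\psi_p(\gZ(\wq,\esi t+1))=\gZ(p,p+\esi t+1)$, and then let $\esi\to 0$: the left side tends to $\psi_p(\gzu)$, the pencils $L(p,p+\esi t+1)$ tend to $L(p,p+1)$, so $\lim_{\esi\to 0}\gZ(p,p+\esi t+1)\supseteq\gZ(p,p+1)$, with equality by comparison of Poincar{\'e} series. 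For all of this to be meaningful one needs the relevant graded pieces to have dimension independent of the parameter, which is exactly what Proposition~\ref{sym-T}, Theorem~\ref{trdeg} and Lemma~\ref{pol-gZ} provide under the hypotheses of Theorem~\ref{thm:k-T}.

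For part~(i) itself I would first treat the case where $p$ has pairwise distinct nonzero roots and then pass to the general case by degeneration. For the degeneration, choose distinct-root polynomials $p_j\to p$ with $p_j(0)\ne 0$; since $\gZ(\wq,t)$ involves only nonnegative powers of $t$, the reductions $\psi_{p_j}$ depend polynomially on the coefficients of $p_j$ and converge to $\psi_p$, while $[\,\,,\,]_{p_j+ct}\to[\,\,,\,]_{p+ct}$ for every $c$; using the base case $\psi_{p_j}(\gZ(\wq,t))=\gZ(p_j,p_j+t)$ and the constancy of Poincar{\'e} series (Theorem~\ref{free}), both families converge in the appropriate Grassmannians, the first to $\psi_p(\gZ(\wq,t))$ and the second to a subalgebra which contains $\gZ(p,p+t)$ (lower semicontinuity of Poisson centres, \eqref{eq-lim-pol}) and has the same Poincar{\'e} series, hence equals it; this yields (i) for $p$. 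In the base case, $\mW\cong\q^{\oplus n}$ via Example~\ref{ex-diff}, the Poisson centres $\cz_{p+ct}$ have the explicit idempotent description \eqref{czr}, and $\psi_p(\gZ(\wq,t))$ is generated by the elements $\psi_p(\tau^k F_i[t])$ with $\tau=t^2\partial_t$, which by Lemma~\ref{pol-gZ} lie in $\Pol(F_i)$ and span, over $k\ge 0$, a subspace of dimension at least $d_i(n-1)+1$; on the other hand, by Theorem~\ref{free}, $\gZ(p,p+t)$ is a polynomial ring freely generated by polarisations, exactly $d_i(n-1)+1$ of which span $\Pol(F_i)\cap\gZ(p,p+t)$. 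Thus in the base case the equality $\psi_p(\gZ(\wq,t))=\gZ(p,p+t)$ reduces to the single inclusion $\psi_p(\gZ(\wq,t))\subseteq\gZ(p,p+t)$: once it holds, the generators $\psi_p(\tau^k F_i[t])$ span precisely $\Pol(F_i)\cap\gZ(p,p+t)$ and the two algebras are generated by the same subspaces.

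The heart of the matter, and the step I expect to be the obstacle, is this inclusion $\psi_p(\gZ(\wq,t))\subseteq\gZ(p,p+t)$. That $\psi_p(\gZ(\wq,t))$ Poisson-commutes with respect to $\{\,\,,\,\}_p$ is automatic, since $\psi_p$ is a Poisson morphism onto $(\gS(\mW),\{\,\,,\,\}_p)$ and $\gZ(\wq,t)$ is Poisson-commutative in $\gS(\q[t])$; what is genuinely new is commutativity with respect to the rest of the pencil $L(p,p+t)$, because $\{\,\,,\,\}_{p+t}$ is not the pushforward along $\psi_p$ of any bracket on $\gS(\q[t])$. For $\q=\g$ semisimple this can be extracted from the theory of Gaudin algebras (Section~\ref{sb-GH}, Proposition~\ref{G-psi}); for a general $\q$ satisfying the hypotheses of Theorem~\ref{thm:k-T} I would imitate that, writing the $\psi_p(\tau^k F_i[t])$ through the idempotent decomposition \eqref{czr} and checking the vanishing of all $\{\,\,,\,\}_{p+ct}$-brackets among them by a direct computation --- equivalently, showing at a generic $\gamma\in\mW^*$ that $\textsl{d}_\gamma\psi_p(\gZ(\wq,t))$ is isotropic for every form in the pencil $\cP(\gamma)$, hence lies in $V(\gamma)=\textsl{d}_\gamma\gZ(p,p+t)$, the latter identity being the content of the proof of Theorem~\ref{trdeg}. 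The reason the statement remains only a conjecture is that this argument, and the generator count that follows it, both lean on the hypotheses of Theorem~\ref{thm:k-T}: without a polynomial algebra of symmetric invariants one loses the constancy of Poincar{\'e} series that drives the degeneration, the freeness of $\gZ(p,p+t)$ that pins down $\Pol(F_i)\cap\gZ(p,p+t)$, and with them any handle on the reverse inclusion.
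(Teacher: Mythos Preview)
First, note that this is labelled a \emph{conjecture}: the paper does not prove it in full generality, only under the hypotheses of Theorem~\ref{thm:k-T} (Theorem~\ref{sovp-t} for~(i), Proposition~\ref{123} for the reduction~(ii)$\Rightarrow$(i), and Theorem~\ref{ft-gzu} for~(iii)). Your reduction of~(ii) to~(i) is correct and matches Proposition~\ref{123}{\sf(1)}. Your limit argument for~(iii) has a gap: from $\gzu=\lim_{\esi\to 0}\gZ(\wq,\esi t{+}1)$ one only gets $\psi_p(\gzu)\subset\lim_{\esi\to 0}\psi_p(\gZ(\wq,\esi t{+}1))$, not equality, so the ``left side tends to $\psi_p(\gzu)$'' step is unjustified; the paper closes this in Theorem~\ref{ft-gzu} by producing explicit generators of $\gZ(p,p{+}1)$ inside $\psi_p(\gzu)$.

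The main gap is in~(i). You correctly identify the inclusion $\psi_p(\gZ(\wq,t))\subset\gZ(p,p{+}t)$ as the crux, but you do not prove it: ``direct computation'' and ``checking isotropy of differentials'' are plans, not arguments, and your appeal to the Gaudin theory is circular, since Proposition~\ref{G-psi} itself invokes Theorem~\ref{sovp-t}. The paper's route is genuinely different and sidesteps this inclusion entirely. It observes that for any nonzero $F\in\gS^d(\q)$ the subspaces $V_{F,\eL}\subset\gZ(p,p{+}t)$ and $\tilde V_{F,p}=\psi_p\langle\tau^k F[t]\rangle$ are built from $F$ by universal linear combinations of polarisations whose coefficients depend only on $d$, $p$, $\eL$ --- the ``independence principles'' {\sf(Ip1)},~{\sf(Ip2)}. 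Hence whether $V_{F,\eL}=\tilde V_{F,p}$ can be tested on a \emph{single} $F$ of each degree; the paper takes $F=\det$ on $\gt{sl}_d$ and shows (Lemma~\ref{lin}, Theorem~\ref{dim-F}, Corollary~\ref{cl-F}) that in that case both subspaces coincide with the $(d(n{-}1){+}1)$-dimensional Poisson centraliser of $\bh$ in $\Pol(F)$. This forces $\tilde V_{F_i,p}=V_{F_i,\eL}$ for the basic invariants $F_i$, and since these subspaces generate the two algebras (Theorem~\ref{gen-gZ} and Proposition~\ref{sym-lim}), the algebras agree. Note also that this argument needs only the hypotheses of Theorem~\ref{thm:k-T}, whereas your generator count via Theorem~\ref{free} requires the additional {\sl codim}--$2$ property.
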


\begin{prop} \label{123}
{\sf (1)} Part {\sf (ii)}  of Conjecture~\ref{conj} follows from {\sf (i)}. \\[.2ex]
{\sf (2)} If $\q$ satisfies the assumptions of Theorem~\ref{thm:k-T}, then 
%%also {\sf (iii)} follows from {\sf (i)}. 
$\lim\limits_{\esi\to 0} \gZ(p,p{+}\esi t{+}1)=\gZ(p,p+1)$ for any $p$.
\end{prop}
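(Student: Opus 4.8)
For part {\sf (1)} the plan is to reduce to Conjecture~\ref{conj}{\sf (i)} by an affine substitution in $t$ that carries $l$ to $t$. Write $l=a(t-c)$ with $a\in\mK^\times$; since $l$ does not divide $p$, we have $p(c)\ne0$. Let $\sigma$ be the $\mK$-algebra automorphism of $\mK[t]$ with $\sigma(t)=a^{-1}(t-c)$, so that $\sigma(l)=t$, and let $\tilde p$ be the normalisation of $\sigma(p)$; then $(\tilde p)=(\sigma(p))$ and $\tilde p(0)$ is a nonzero scalar multiple of $p(c)$, hence $\tilde p(0)\ne0$. Extend $\sigma$ to an automorphism $\hat\sigma$ of $\gt q[t]$ and of $\gS(\gt q[t])$. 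Straight from the definition of $\gZ(\wq,r)$ by substitution in the variable, $\hat\sigma(\gZ(\wq,l))=\gZ(\wq,\sigma(l))=\gZ(\wq,t)$; and since $\hat\sigma(p\,\gt q[t])=\tilde p\,\gt q[t]$, the map $\hat\sigma$ descends to an isomorphism $\bar\sigma\colon\gt q[t]/(p)\xrightarrow{\ \sim\ }\gt q[t]/(\tilde p)$, extended to symmetric algebras, with $\bar\sigma\circ\psi_p=\psi_{\tilde p}\circ\hat\sigma$. As $\sigma$ is $\mK$-linear, it commutes with forming linear combinations of polynomials, so --- using \eqref{sum} and that $[\,\,,\,]_q$ depends only on the ideal $(q)$ --- one checks that $\bar\sigma$ maps the pencil $L(p,p+l)$ onto $L(\tilde p,\tilde p+t)$, hence $\bar\sigma(\gZ(p,p+l))=\gZ(\tilde p,\tilde p+t)$. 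Applying {\sf (i)} to $\tilde p$ (legitimate since $\tilde p(0)\ne0$) yields
\[
\bar\sigma\bigl(\gZ(p,p+l)\bigr)=\gZ(\tilde p,\tilde p+t)=\psi_{\tilde p}\bigl(\gZ(\wq,t)\bigr)=\psi_{\tilde p}\bigl(\hat\sigma(\gZ(\wq,l))\bigr)=\bar\sigma\bigl(\psi_p(\gZ(\wq,l))\bigr),
\]
and injectivity of $\bar\sigma$ gives $\psi_p(\gZ(\wq,l))=\gZ(p,p+l)$, i.e.\ Conjecture~\ref{conj}{\sf (ii)}. There is no real difficulty here; one only has to keep track of the fact that $\gZ(\wq,\cdot)$, the maps $\psi_{(\cdot)}$ and the pencils $L(\cdot,\cdot)$ (hence $\gZ(\cdot,\cdot)$) are all natural with respect to $\hat\sigma$.

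For part {\sf (2)} fix $p$ and put $\cz^\esi_b:=\cz_{p+b(\esi t+1)}$. Since $p+b(\esi t+1)=(1-b)p+b(p+\esi t+1)$, formula \eqref{sum} shows that $\cz^\esi_b$ is the Poisson centre of a member of $L(p,p+\esi t+1)$; as none of these members is singular, $\cz^\esi_b\subset\gZ(p,p+\esi t+1)$ for all $b\in\mK$, and by the description of $\gZ$ in Section~\ref{sub-Z} one has $\gZ(p,p+\esi t+1)=\mathsf{alg}\langle\cz^\esi_b\mid b\in\mK\rangle$, while $\gZ(p,p+1)=\mathsf{alg}\langle\cz^0_b\mid b\in\mK\rangle$. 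The plan is to show that every subsequential limit $\gZ'$ of $\esi\mapsto\gZ(p,p+\esi t+1)$ as $\esi\to0$ (the limit being taken degree by degree in the corresponding Grassmannians) equals $\gZ(p,p+1)$; since the graded components of $\gS(\mW)$ are finite-dimensional, this forces the limit itself to exist and to equal $\gZ(p,p+1)$.

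One inclusion is straightforward: by Proposition~\ref{sym-T} the graded dimensions of $\cz^\esi_b$ do not depend on $\esi$, so $\lim_{\esi\to0}\cz^\esi_b$ exists, and by Proposition~\ref{sym-lim} it is $\cz^0_b$; since $\cz^\esi_b\subset\gZ(p,p+\esi t+1)$, passing to the limit gives $\cz^0_b\subset\gZ'$, whence $\gZ(p,p+1)=\mathsf{alg}\langle\cz^0_b\mid b\rangle\subset\gZ'$. For the reverse inclusion I would use that, by Proposition~\ref{sym-lim}, each $\cz_{\tilde p}$ is generated by polarisations of $F_1,\dots,F_m$ depending continuously on $\tilde p$; hence $\gZ(p,p+\esi t+1)$ is generated by polarisations of the $F_i$, for each $i$ these lie in the fixed finite-dimensional space $\Pol(F_i)$, and as $\esi\to0$ the ones that occur converge into $\gZ(p,p+1)$. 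A finite-dimensionality argument, passing limits through $\mathsf{alg}\langle-\rangle$ degree by degree, then gives $\gZ'\subset\gZ(p,p+1)$, and combining the two inclusions yields $\gZ'=\gZ(p,p+1)$.

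The hard part is to make the reverse inclusion fully rigorous, which comes down to showing that no graded dimension of $\gZ(p,p+\esi t+1)$ jumps as $\esi\to0$ --- equivalently, that the Hilbert function of $\gZ(p,p+l)$ is independent of the admissible pair $(p,l)$. Concretely this is the equality $\dim\bigl(\gZ(p,p+l)\cap\Pol(F_i)\bigr)=d_i(n-1)+1$ for every $i$: the bound $\ge$ is Proposition~\ref{bound-dim}, and for the matching bound $\le$ I would reduce, via the contractions of Lemmas~\ref{contr0} and \ref{contr-}, to the pencils $L(t^n,t^n-1)$ and $L(t^n,t^n-t)$, and there count the linearly independent polarisations of $F_i$ produced by the idempotents of Example~\ref{ex-diff}, using the explicit formulas \eqref{tn-c0} and \eqref{tn-c} of Example~\ref{-t} together with the computation in the proof of Proposition~\ref{bound-dim}. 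Once this count is available, the family $\esi\mapsto\gZ(p,p+\esi t+1)$ is flat over the $\esi$-line and the limit statement follows.
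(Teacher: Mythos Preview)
Your argument for part~{\sf (1)} is correct and is precisely the paper's: a change of variable carrying $l$ to $t$, so that the condition $l\nmid p$ becomes $\tilde p(0)\ne0$ and part~{\sf(i)} applies.

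For part~{\sf (2)} the paper's proof is much terser than yours. It asserts that $\lim_{\esi\to0}\gZ(p,p{+}\esi t{+}1)\subset\gZ(p,p{+}1)$ is ``clear'', and then obtains the reverse inclusion --- hence equality --- from Proposition~\ref{sym-lim} (each centre $\cz_{p+\alpha(\esi t+1)}$ converges to $\cz_{p+\alpha}$). You treat the two inclusions in the opposite order: you get $\gZ(p,p{+}1)\subset\lim$ from Proposition~\ref{sym-lim}, matching the paper's second step, and then flag $\lim\subset\gZ(p,p{+}1)$ as the delicate direction. Your caution is reasonable --- a Grassmannian limit of algebras generated by converging subspaces can in principle strictly contain the algebra generated by the limit subspaces --- but your proposed completion has two soft spots. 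First, knowing $\dim\bigl(\gZ(p,p{+}l)\cap\Pol(F_i)\bigr)=d_i(n{-}1)+1$ only pins down the generator profile, not the Hilbert function of the generated algebra; for the latter you need the generators to be algebraically independent, which is Theorem~\ref{free} and uses the {\sl codim}--$2$ property, a hypothesis strictly stronger than the assumptions of Theorem~\ref{thm:k-T}. Second, your suggested route to the upper bound $\le d_i(n{-}1)+1$ (contract to $L(t^n,t^n{-}1)$ or $L(t^n,t^n{-}t)$ and ``count'' polarisations via the idempotents of Example~\ref{-t}) is not how the paper obtains this bound and does not obviously succeed, since the span runs over all $b\in\mK$ rather than a fixed finite set. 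The paper's actual device (Section~\ref{sec-Ind}, equation~\eqref{dimY1}) is the independence principle {\sf(Ip1)}, which reduces the question to the single invariant $\det\in\gS^d(\gt{sl}_d)$ and then invokes the Poisson-centraliser estimate of Corollary~\ref{cl-F} together with Lemma~\ref{lin}.
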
 
\begin{proof}
{\sf (1)} We obtain the implication {\sf (i)} $\Rightarrow$ {\sf (ii)} by the change of variable $t\mapsto l$.   The polynomial $p$ from $\gZ(p,p+l)$ is replaced by $\tilde p$ such that $\tilde p(l)=p$ and the condition 
$l\,{\nmid}\,p$ becomes $\tilde p(0)\ne 0$. 

{\sf (2)} 
Clearly $\lim\limits_{\esi\to 0} \gZ(p,p{+}\esi t{+}1)\subset\gZ(p,p+1)$. 
If $\q$ satisfies the assumptions of Theorem~\ref{thm:k-T}, then $\lim_{\esi\to 0}\cz_{p+\ap(\esi t+1)}=\cz_{p+\ap}$ for any 
fixed $\ap\in\mK$, see Proposition~\ref{sym-lim}. Hence in this case $\lim\limits_{\esi\to 0} \gZ(p,p{+}\esi t{+}1)=\gZ(p,p+1)$. 
\end{proof}

In Section~\ref{sec-Ind}, 
we show that parts {\sf (i)} and {\sf (iii)} of Conjecture~\ref{conj} hold for reductive Lie algebras and, more generally,  
for Lie algebras
satisfying the assumptions of Theorem~\ref{thm:k-T}. %%%  in particular, it holds in the reductive case.
As a preparation to that argument, we need to work out some facts about invariants of degree two.

\subsection{Quadratic Lie algebras}\label{sec-qv}  
Suppose that $\gt q=\Lie Q$, %%  
where $Q$ is an algebraic group, 
and that $(\,\,,\,)$ is a non-degenerate $Q$-invariant scalar product on $\gt q$. 
Let $\{x_i\}$ be an orthonormal basis of $\gt q$, i.e., $(x_i,x_j)=\delta_{i,j}$. For 
$a,b\in \Z$, set $\bH[a,b]=\sum_{i=1}^{\dim\gt q} x_i t^a x_i t ^b\in\gS(\gt q[t,t^{-1}])$.
Clearly, each $\bH[a,b]$ is a $\gt q$-invariant.  Set further $\bH=\bH[1,1]$.

For $a,b,c\in\Z$, set $\cX[a,b,c]:=\sum_{i,j,\ups} ([x_i,x_j],x_\ups) x_i t^a x_j t^b x_\ups t^c\in\gS(\gt q[t,t^{-1}])$. 
We have seen such sums in Section~\ref{sb-GH}. Note that 
$\cX[a,b,c]=-\cX[b,a,c]=\cX[b,c,a]$. By a straightforward calculation, 
\begin{equation}\label{cX}
\{\bH[a,b],\bH[c,d]\}=\cX[b,d,a{+}c]+\cX[b,c,a{+}d]+\cX[a,d,b{+}c]+\cX[a,c,b{+}d].
\end{equation}
Whenever $a, b, c$ are pairwise distinct, we have $\cX[a,b,c]\ne 0$, since $[\gt q,\gt q]\ne 0$.

For $0\le a,b<n$, let $\bh[a,b]=\sum_{i=1}^{\dim\gt q} x_i \bar t^a x_i \bar t^b\in\gS(\mW)$ be the
image of $\bH[a,b]$ under the map $\psi_p$.  Set further $\bh=\bh[1,1]$ and 
$\bh[\bar r_k,\bar r_s]=\sum_{i=1}^{\dim\gt q} x_i \bar r_k x_i \bar r_s$ for $\bar r_k,\bar r_s\in\mK[t]/(p)$. 

Let 
$
\eP^{(\,,\,)}_{\bh,p}$ %%%=
be the Poisson centraliser of $\bh$ w.r.t. the bracket $\{\,\,,\,\}_p$ in the span of \[\{\bh[a,b]\mid 0\le  a,b<n\}.\]

\begin{lm}\label{claim}
We have $\eP^{(\,,\,)}_{\bh,t^n}= \gZ(t^n{+}t,t^n)\cap \left<\bh[a,b]\mid 0\le a,b<n\right>$ and $\dim\eP^{(\,,\,)}_{\bh,t^n}= 2n-1$, furthermore  $\dim\eP^{(\,,\,)}_{\bh,p}\le 2n-1$
for any $p$ of degree $n$.
\end{lm}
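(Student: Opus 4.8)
The plan is to compute the Poisson bracket $\{\bh,\bh[a,b]\}_{t^n}$ explicitly using formula \eqref{cX}, extract from it exactly which linear combinations $\sum c_{a,b}\bh[a,b]$ Poisson-commute with $\bh$, and then transport the answer to the pencil picture.

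First I would specialise \eqref{cX} to the Takiff bracket $\{\,\,,\,\}_{t^n}$: here $\bh[a,b]=\psi_{t^n}(\bH[a,b])$ is zero as soon as $a$ or $b$ is $\ge n$, so $\{\bh[a,b],\bh[c,d]\}_{t^n}$ is obtained from the right-hand side of \eqref{cX} by keeping only the $\cX[\,\cdot\,,\cdot\,,\cdot\,]$ summands all of whose indices are $<n$, i.e. by truncating. Applying this with $(c,d)=(1,1)$ gives
\[
\{\bh,\bh[a,b]\}_{t^n}=\overline{\cX}[b,1,a{+}1]+\overline{\cX}[b,1,a{+}1]+\overline{\cX}[a,1,b{+}1]+\overline{\cX}[a,1,b{+}1]
=2\overline{\cX}[b,1,a{+}1]+2\overline{\cX}[a,1,b{+}1],
\]
where $\overline{\cX}$ means $\psi_{t^n}(\cX)$ (so it vanishes unless all three entries lie in $[0,n{-}1]$). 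Now I would write a general element $H=\sum_{0\le a\le b<n} c_{a,b}\,\bh[a,b]$ and collect the coefficient of each monomial $\overline{\cX}[i,1,k]$ in $\{\bh,H\}_{t^n}$. Because $\cX[i,1,k]$ is skew in its first two slots and cyclically symmetric, the independent basis monomials among the $\overline{\cX}[i,1,k]$ are indexed, up to sign, by the multiset $\{i,1,k\}$; one obtains a linear system on the $c_{a,b}$ whose solution space I expect to have dimension exactly $2n-1$ — the ``obvious'' solutions being $\bh[0,s]$ for $0\le s\le n{-}1$ together with $\bh[r,1]$ for $0\le r\le n{-}1$ (these visibly commute with $\bh=\bh[1,1]$ because sharing an index makes the relevant $\overline{\cX}$-monomials cancel in pairs or vanish), minus the one relation $\bh[0,1]$ appearing in both families. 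Counting carefully that no further cancellations occur — i.e. that the linear system has no extra solutions — is the one genuinely computational point, and I would do it by a triangularity/leading-term argument in the index $a{+}b$.

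Next I would identify $\eP^{(\,,\,)}_{\bh,t^n}$ with a slice of $\gZ(t^n{+}t,t^n)$. By \eqref{sum}, $\{\,\,,\,\}_{t^n{+}t}-\{\,\,,\,\}_{t^n}$ is the bracket $[\,\,,\,]_\infty$-type operation computed in Lemma~\ref{contr-}/Example~\ref{inf} for the pair $(t^n, t^n{+}t)$; concretely $\psi$ of $\cX[a,b,c]$ for $a{+}\cdots$ hitting degree $n$ gets shifted down. Since $\bh\in\gt q\bar t$ together with the two regular Poisson structures in the pencil $L(t^n{+}t,t^n)$ (all $(a,b)$ with $a\ne -b$ are regular by Corollary~\ref{roots-ind}) generate $\gZ$ on their Poisson-centre contributions, membership of $\sum c_{a,b}\bh[a,b]$ in $\gZ(t^n{+}t,t^n)$ is equivalent to Poisson-commuting with $\bh$ for \emph{both} brackets $\{\,\,,\,\}_{t^n}$ and $\{\,\,,\,\}_{t^n{+}t}$; but the space $\left<\bh[a,b]\right>$ is $\varphi_s$-graded, the two brackets are related by the contraction $\varphi_s$ of Lemma~\ref{contr-}, and a graded element commuting with $\bh$ for $\{\,\,,\,\}_{t^n}$ automatically does so for the deformed bracket on this finite-dimensional homogeneous piece. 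This gives $\eP^{(\,,\,)}_{\bh,t^n}=\gZ(t^n{+}t,t^n)\cap\left<\bh[a,b]\right>$, and combined with the dimension count yields $\dim\eP^{(\,,\,)}_{\bh,t^n}=2n-1$.

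Finally, for the bound $\dim\eP^{(\,,\,)}_{\bh,p}\le 2n-1$ for arbitrary $p$ of degree $n$, I would use the contraction of Lemma~\ref{contr0}: $\varphi_s$ carries $[\,\,,\,]_p$ to $[\,\,,\,]_{p,s}$ with $\lim_{s\to0}[\,\,,\,]_{p,s}=[\,\,,\,]_{t^n}$, and it sends the span $\left<\bh[a,b]\right>$ to itself (each $\bh[a,b]$ is a $\varphi_s$-eigenvector of weight $s^{a+b}$). Hence $\varphi_s$ maps $\eP^{(\,,\,)}_{\bh,p}$ isomorphically onto the Poisson centraliser of $\varphi_s^{-1}(\bh)$ w.r.t. $\{\,\,,\,\}_{p,s}$ inside $\left<\bh[a,b]\right>$; taking $\lim_{s\to0}$ of this subspace in the Grassmannian lands inside the Poisson centraliser of $\bh$ w.r.t. $\{\,\,,\,\}_{t^n}$, which is $\eP^{(\,,\,)}_{\bh,t^n}$, of dimension $2n-1$. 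Since dimension can only jump up in such a limit, $\dim\eP^{(\,,\,)}_{\bh,p}\le 2n-1$. The main obstacle I anticipate is the exactness of the dimension count in the first step — verifying there are no ``accidental'' invariants beyond the two evident families — which requires organising the $\overline{\cX}[i,1,k]$-monomials carefully and checking a nondegenerate triangular block in the resulting linear system.
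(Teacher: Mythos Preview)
Your contraction argument for the general-$p$ bound is essentially the paper's, and your plan to prove the upper bound $\dim\eP^{(\,,\,)}_{\bh,t^n}\le 2n-1$ by a leading-term/triangularity argument on the $\overline{\cX}$-monomials is also the right idea (the paper does exactly this, ordering by the largest difference $b-a$ among the summands $\bh[a,b]$ and observing that the leading contribution $\psi_{t^n}(\cX[1,a,b{+}1])$ must vanish, which forces $b=n{-}1$ or $a=1$ or $a=b=0$).

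However, your lower bound is wrong: the elements $\bh[0,s]$ and $\bh[r,1]$ do \emph{not} individually commute with $\bh$ under $\{\,\,,\,\}_{t^n}$. For instance $\{\bh,\bh[0,s]\}_{t^n}=2\overline{\cX}[0,1,s{+}1]$, which is nonzero for $1\le s\le n{-}2$, and $\{\bh,\bh[1,r]\}_{t^n}=2\overline{\cX}[r,1,2]$, nonzero for $3\le r\le n{-}1$. Sharing an index with $\bh=\bh[1,1]$ kills only one of the two $\cX$-terms, not both. The actual basis of $\eP^{(\,,\,)}_{\bh,t^n}$ consists of $\bh[0,0]$, $\bh[0,n{-}1]$, $\bh$, and the \emph{sums} $\sum_{1\le a\le b,\,a+b=k}\bh[a,b]$ for $3\le k\le 2n{-}2$; only in these sums do the $\cX$-terms telescope. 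The paper obtains these elements not by guessing but by reading off the quadratic part of $\cz_{t^n+kt}$ from the explicit formulas \eqref{tn-c0}, \eqref{tn-c} in Example~\ref{-t}.

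This also affects your identification step: the paper's logic runs in the opposite direction from yours. It first exhibits $2n-1$ explicit elements of $\gZ(t^n{+}t,t^n)\cap\langle\bh[a,b]\rangle$ (which lie in $\eP^{(\,,\,)}_{\bh,t^n}$ simply because $\bh\in\gZ$ and $\gZ$ is Poisson-commutative), then proves the upper bound $\le 2n-1$ separately; equality of the two subspaces follows by dimension. Your proposed argument that commuting for $\{\,\,,\,\}_{t^n}$ ``automatically'' implies commuting for $\{\,\,,\,\}_{t^n+t}$ on this graded piece is not justified and is not needed.
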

\begin{proof}
From computations presented in Section~\ref{sub-sym}, see in particular \eqref{tn-c0} and \eqref{tn-c}, one deduces that
$V(\bh):=\gZ(t^n{+}t,t^n)\cap \left<\bh[a,b]\mid 0\le a,b<n\right>$ is the span of 
\[
\{\bh[0,0], \ \bh[0,n{-}1], \ \bh, \ \sum_{1\le a\le b;\,a+b=k}\bh[a,b] \mid 3\le k\le 2n{-}2\}.
\] 
In particular, $\dim V(\bh)=2n{-}1$. Since the algebra $\gZ(t^n{+}t,t^n)$ is Poisson-commutative,  we have the 
inclusion $V(\bh)\subset  \eP^{(\,,\,)}_{\bh,t^n}$ and hence
$\dim \eP^{(\,,\,)}_{\bh,t^n}\ge 2n{-}1$. 

Next we show  
that $\dim \eP^{(\,,\,)}_{\bh,t^n}\le 2n{-}1$. %% 
Since $\deg_t \{\bh,F\}_{t^n}=\deg_t F+1$ for any $F$ in $\gS(\mW)$, the Poisson centraliser $\eP^{(\,,\,)}_{\bh,t^n}$
is a homogeneous in $t$ subspace. 
For a homogeneous in $t$ element $F\in\eP^{(\,,\,)}_{\bh,t^n}$, let $C\bh[a,b]$ be its nonzero summand  with $b\ge a$ and with the largest difference $b-a$.
%% Replacing 
By \eqref{cX}, $\{\bh,F\}_{t^n}$ is a linear combination of $\psi_{t^n}(\cX[1,\alpha,\beta+1])$
with $\alpha,\beta\ge 0$. The largest difference $(\beta+1)-\alpha$ that can occur here %% in %%% $\{\bh,F\}_{t^n}$ 
is $b+1-a$ and the corresponding summand of $\{\bh,F\}_{t^n}$ is  either $C\psi_{t^n}(\cX[1,a,b+1])$, if $a\ne b$, 
or $2C\psi_{t^n}(\cX[1,a,a+1])$, if $a=b$. Since 
 $\{\bh,F\}_{t^n}=0$, we must have $\psi_{t^n}(\cX[1,a,b+1])=0$. This implies   that either 
$b=n{-}1$ or $a=1$ or  $a=b=0$. Altogether we have 
$n+(n{-}2)+1=2n-1$ possibilities for $(a,b)$. The inequality is settled for $p=t^n$.  

With the help of the linear maps $\varphi_s$ with $s\in\mK^\times$, any Lie bracket $[\,\,,\,]_p$ can be contracted to 
$[\,\,,\,]_{t^n}$, see Lemma~\ref{contr0}. We have $\varphi_s(\bh[a,b])=s^{a+b}\bh[a,b]$ for any $a,b$. 
Thereby 
 $\eP^{(\,,\,)}_{\bh,\tilde p}=\varphi_s^{-1}(\eP^{(\,,\,)}_{\bh,p})$ if $\tilde p=s^n\varphi_s^{-1}(p)$ and 
 $\dim\eP^{(\,,\,)}_{\bh,\tilde p}=\dim \eP^{(\,,\,)}_{\bh,p}$ in this case. Furthermore, 
$\lim_{s\to 0}  \varphi_s^{-1}(\eP^{(\,,\,)}_{\bh,p})\subset \eP^{(\,,\,)}_{\bh,t^n}$ and thus
$\dim \eP^{(\,,\,)}_{\bh,p}\le \dim\eP^{(\,,\,)}_{\bh,t^n}=2n-1$.
\end{proof}

\begin{lm}\label{claim2}
Let $\eP^{(\,,\,)}_{\bh[0,1],p}$ stand for the Poisson centraliser of $\bh[0,1]$ w.r.t. the bracket $\{\,\,,\,\}_p$ in the 
span of $\{\bh[a,b]\mid 0\le  a,b<n\}$. Then 
%%% In 
$\dim\eP^{(\,,\,)}_{\bh[0,1],p}\le 2n-1$.
\end{lm}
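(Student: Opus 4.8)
The plan is to imitate the proof of Lemma~\ref{claim}, with $\bh=\bh[1,1]$ replaced by $\bh[0,1]$. First I reduce to $p=t^n$ exactly as there: the contractions $\varphi_s$ of Lemma~\ref{contr0} satisfy $\varphi_s(\bh[a,b])=s^{a+b}\bh[a,b]$, so they preserve the span $\left<\bh[a,b]\mid 0\le a,b<n\right>$, and $\varphi_s(\bh[0,1])=s\,\bh[0,1]$ is a nonzero multiple of $\bh[0,1]$. Consequently $\eP^{(\,,\,)}_{\bh[0,1],\tilde p}=\varphi_s^{-1}\bigl(\eP^{(\,,\,)}_{\bh[0,1],p}\bigr)$ for $\tilde p=s^n\varphi_s^{-1}(p)$, so these spaces have the same dimension; letting $s\to 0$, so that $\tilde p\to t^n$, we get $\lim_{s\to 0}\varphi_s^{-1}\bigl(\eP^{(\,,\,)}_{\bh[0,1],p}\bigr)\subset\eP^{(\,,\,)}_{\bh[0,1],t^n}$ and hence $\dim\eP^{(\,,\,)}_{\bh[0,1],p}\le\dim\eP^{(\,,\,)}_{\bh[0,1],t^n}$. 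It remains to bound the latter by $2n-1$.

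For $p=t^n$ the operator $F\mapsto\{\bh[0,1],F\}_{t^n}$ raises the $t$-degree by exactly $1$, so $\eP^{(\,,\,)}_{\bh[0,1],t^n}$ is graded by $t$-degree and I bound each graded piece separately. From \eqref{cX} applied to the arguments $[0,1]$ and $[a,b]$, together with the identities $\cX[a,b,c]=-\cX[b,a,c]=\cX[b,c,a]$ (which yield $\cX[1,b,a]+\cX[1,a,b]=0$), one gets
\[
\{\bh[0,1],\bh[a,b]\}_{t^n}=\psi_{t^n}\bigl(\cX[0,b,a+1]+\cX[0,a,b+1]\bigr)\qquad(0\le a\le b<n),
\]
where $\psi_{t^n}(\cX[i,j,k])$ is nonzero precisely when $i,j,k$ are pairwise distinct with $\max(i,j,k)<n$, and $\cX$'s with different index-multisets are linearly independent.

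Now fix $N$, take a nonzero $F=\sum_{a+b=N,\,a\le b}c_{a,b}\bh[a,b]$ in $\eP^{(\,,\,)}_{\bh[0,1],t^n}$, and let $(a_0,b_0)$, $a_0+b_0=N$, be the pair with $c_{a_0,b_0}\ne 0$ and $b_0$ maximal. I claim $a_0=0$ or $b_0=n-1$. Otherwise $1\le a_0\le b_0\le n-2$; every summand $\bh[a,b]$ with $b<b_0$ contributes to $\{\bh[0,1],F\}_{t^n}$ only $\cX$-terms of top index at most $b+1\le b_0$, while $\bh[a_0,b_0]$ contributes the term $\psi_{t^n}(\cX[0,a_0,b_0+1])$ (doubled if $a_0=b_0$), of top index $b_0+1$, which is nonzero because $0,a_0,b_0+1$ are pairwise distinct and $b_0+1<n$; this would prevent $\{\bh[0,1],F\}_{t^n}$ from vanishing. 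Hence the leading pair of any element of $\eP^{(\,,\,)}_{\bh[0,1],t^n}$ lies in $S=\{(0,b)\mid 0\le b<n\}\cup\{(a,n-1)\mid 0\le a<n\}$. For each $N$ in the range $0\le N\le 2n-2$ there is exactly one pair in $S$ with $a+b=N$; so if the degree-$N$ piece of $\eP^{(\,,\,)}_{\bh[0,1],t^n}$ were $\ge 2$-dimensional, a suitable combination of two linearly independent elements would be a nonzero element whose leading pair lies strictly below that unique pair, hence outside $S$ --- contradiction. Thus each graded piece is at most one-dimensional and $\dim\eP^{(\,,\,)}_{\bh[0,1],t^n}\le 2n-1$.

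I expect the main obstacle to be the middle computation: tracking which $\psi_{t^n}(\cX[\cdot,\cdot,\cdot])$ actually survive and which monomial has the top $t$-index in $\{\bh[0,1],F\}_{t^n}$, and handling the boundary cases $a_0=b_0$, $b_0=n-1$ (where $\psi_{t^n}$ annihilates some $\cX$'s), as well as small $n$ --- for $n=2$ the span $\left<\bh[a,b]\mid 0\le a,b<n\right>$ is itself $3$-dimensional, so the bound $2n-1=3$ is vacuous (but consistent).
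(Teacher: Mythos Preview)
Your proof is correct and follows essentially the same route as the paper's: reduce to $p=t^n$ via the contraction $\varphi_s$ (exactly as in Lemma~\ref{claim}), use $t$-homogeneity, and for a homogeneous element pick the summand $\bh[a_0,b_0]$ with extremal indices to force $a_0=0$ or $b_0=n-1$, yielding $2n-1$ admissible leading pairs. The only cosmetic difference is that the paper phrases the extremal choice as ``largest difference $b-a$'' (equivalent to your ``largest $b$'' once the $t$-degree $a+b$ is fixed) and records the conclusion as ``$\psi_{t^n}(\cX[0,a,b+1])=0$'' without writing out the intermediate cancellation $\cX[1,b,a]+\cX[1,a,b]=0$ that you made explicit.
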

\begin{proof}
Our argument here is very similar to the one used  in the case of $\bh$. %%Again, 
%%%We 
For  a homogeneous in $t$ element $F\in\eP^{(\,,\,)}_{\bh[0,1],t^n}$, let $C\bh[a,b]$ be its nonzero summand  with $b\ge a$ and with the largest difference $b-a$.
%% Replacing 
%% ........   ................. 
Then $\psi_{t^n}(\cX[0,a,b+1])=0$, which means that either 
$b=n{-}1$ or $a=0$. Altogether we have 
$n+(n{-}1)=2n-1$ possibilities for $(a,b)$.

The inequality 
$\dim \eP^{(\,,\,)}_{\bh[0,1],p}\le \dim\eP^{(\,,\,)}_{\bh[0,1],t^n}$ holds by the same reason as in the proof of Lemma~\ref{claim}. 
%% ...... ....... 
\end{proof}

\begin{lm} \label{com-H}
For any $\xi t^c\in\gt q[t,t^{-1}]$ with $\xi\in\gt q$, we have $$\{\bH[a,b],\xi t^c\}=\sum_{j,i}(\xi,[x_j,x_i]) (x_j t^{a+c}  x_i t^b + x_j t^{b+c} x_i t^a).$$
\end{lm}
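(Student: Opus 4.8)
The plan is a straightforward application of the Leibniz rule followed by a change of basis using the invariance of $(\,,\,)$. First I would fix an index $i$ and expand, using the Leibniz rule and the fact that on linear elements the Poisson bracket is the Lie bracket $\{yt^k,zt^m\}=[y,z]t^{k+m}$,
\[
\{x_i t^a x_i t^b,\,\xi t^c\}=x_i t^a\,[x_i,\xi]\,t^{b+c}+[x_i,\xi]\,t^{a+c}\,x_i t^b .
\]
Summing over $i$ then expresses $\{\bH[a,b],\xi t^c\}$ as a sum of two families of monomials indexed by $i$.

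Next I would rewrite $[x_i,\xi]$ in the orthonormal basis. Since $(\,,\,)$ is $\gt q$-invariant, $([x_i,\xi],x_j)=-(\xi,[x_i,x_j])=(\xi,[x_j,x_i])$, so $[x_i,\xi]=\sum_j(\xi,[x_j,x_i])x_j$. Substituting this into both summands and collecting gives
\[
\{\bH[a,b],\xi t^c\}=\sum_{i,j}(\xi,[x_j,x_i])\bigl(x_i t^a x_j t^{b+c}+x_j t^{a+c} x_i t^b\bigr).
\]
Finally, since $\gS(\gt q[t,t^{-1}])$ is commutative, $x_i t^a x_j t^{b+c}=x_j t^{b+c} x_i t^a$; substituting this in the first monomial of each term turns the right-hand side into $\sum_{j,i}(\xi,[x_j,x_i])(x_j t^{a+c} x_i t^b+x_j t^{b+c} x_i t^a)$, which is exactly the claimed formula.

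There is essentially no obstacle here; the only points requiring care are the sign convention in the invariance identity $([z,x],y)=-(x,[z,y])$ and the observation that the two monomial families match the stated expression only after reordering factors inside the commutative ring $\gS(\gt q[t,t^{-1}])$. I expect to apply this lemma in the sequel with $\bH[a,b]$ replaced by its image $\bh[a,b]=\psi_p(\bH[a,b])$, where the identical computation survives the quotient (with $t$-exponents read in $\mK[t]/(p)$), but the statement as formulated lives in $\gS(\gt q[t,t^{-1}])$ and needs nothing beyond the above.
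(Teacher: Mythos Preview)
Your proof is correct and follows essentially the same approach as the paper: Leibniz expansion, rewriting $[x_i,\xi]$ in the orthonormal basis via the invariance identity $([x_i,\xi],x_j)=(\xi,[x_j,x_i])$, and commutativity in $\gS(\gt q[t,t^{-1}])$. The only cosmetic difference is that the paper invokes commutativity tacitly at the outset (writing the two Leibniz terms directly as $\{x_i,\xi\}t^{a+c}x_it^b+\{x_i,\xi\}t^{b+c}x_it^a$), whereas you make it explicit at the end.
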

\begin{proof}
By definition, $\{\bH[a,b],\xi t^c\}=\sum_{i} (\{x_i,\xi\}t^{a+c} x_i t^b+\{x_i,\xi\}t^{b+c} x_i t^a)$. Furthermore  
$$
\sum_{i} \{x_i,\xi\}t^{a+c} x_i t^b = \sum_{i,j} ([x_i,\xi],x_j)x_j t^{a+c} x_i t^b=\sum_{j,i} (\xi,[x_j,x_i]) x_j t^{a+c}x_it^b.
$$
Since the summand $\sum_i \{x_i,\xi\}t^{b+c} x_i t^a$ decomposes similarly, the result follows. 
\end{proof}

For $\xi\in\gt q$ and $\tilde a,\tilde b\ge 0$ set,
$$Y_{\xi}[\tilde a,\tilde b]=\sum_{j,i}(\xi,[x_j,x_i])  x_j  t^{\tilde a}  x_i t^{\tilde b} \in\gS(\gt q[t]).$$ 
Note that 
$Y_{\xi}[\tilde a,\tilde b]=-Y_{\xi}[\tilde b,\tilde a]$, in particular, $Y_{\xi}[\tilde a,\tilde a]=0$.
By Lemma~\ref{com-H}, we have 
\[\{\bH[\tilde a,\tilde b],\xi t\}=Y_\xi[\tilde a{+}1,\tilde b]+Y_{\xi}[\tilde b{+}1,\tilde a].\] 
Assuming %%
that $0\le a,b<n$, set $\bar Y_\xi[a,b]=\psi_p(Y_\xi[a,b])\in\gS(\mW)$. 
If $(\xi,[\gt q,\gt q])=0$, then each $Y_{\xi}[\tilde a,\tilde b]$ is zero. If  $(\xi,[\gt q,\gt q])\ne 0$, then 
$\{ \bar Y_\xi[a,b] \mid 0\le a<b<n\}$ is a set of linearly independent elements. %%%polynomials. 

\begin{lm} \label{lm-Y2}
Suppose $p=t^n-(c_{n-1}t^{n-1}+\ldots +c_1t+c_0)$. 
For $3\le k\le n$, set 
\begin{align*}
& X_k=\bh[k{-}1,2]+\bh[k{-}2,3]+\ldots+\bh[a{+}1,a] \ \text{  if } \ k=2a \ \text{ is even}; \\
& X_k=\bh[k{-}1,2]+\bh[k{-}2,3]+\ldots+\bh[a{+}1,a{-}1]+\frac{1}{2}\bh[a,a] \ \text{ if } \
k=2a-1 \ \text { is odd.} 
\end{align*} 
Then  $X:=c_0\bh[1,0]+\frac{1}{2} c_1\bh -\left(\sum_{k=3}^{n-1} c_k X_k\right)+ X_n$ 
belongs to $\cz_p$. 
\end{lm}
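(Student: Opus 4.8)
The plan is to show directly that $\{X,\,\xi\}_p=0$ for every $\xi\in\mW$; since $X$ is a $\gt q$-invariant of degree $2$, it suffices to test against $\xi=x\bar t$ with $x\in\gt q$ arbitrary, because $\gZ(t^n+t,t^n)$ already contains $\bh,\bh[1,0],\ldots$ as $\gt q{\cdot}1$-invariants, and the bracket $\{\,\,,\,\}_p$ with $\gt q{\cdot}1$ is built from the structure constants of $\gt q$ alone. More precisely, each $\bh[a,b]$ is a $\gt q$-invariant, so $\{X,x{\cdot}1\}_p=0$ automatically; and $\mW$ as a Lie algebra is generated by $\gt q{\cdot}1$ together with $\gt q\bar t$ (since $\bar t^k$ is a polynomial in $\bar t$), so once $\{X,x\bar t\}_p=0$ for all $x$, Leibniz and the Jacobi identity give $\{X,\mW\}_p=0$, hence $X\in\cz_p$.

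First I would expand $\{X,x\bar t\}_p$ using Lemma~\ref{com-H} in the form $\{\bh[\tilde a,\tilde b],x\bar t\}_p=\bar Y_x[\tilde a{+}1,\tilde b]+\bar Y_x[\tilde b{+}1,\tilde a]$, applied to each term of $X$, remembering to reduce exponents $\ge n$ modulo $p$: namely $\bar t^{\,n}=\sum_{j=0}^{n-1}c_j\bar t^{\,j}$, so an index $n$ appearing in a $\bar Y_x$ must be rewritten, producing the coefficients $c_j$. The key bookkeeping is that a term $\bh[\tilde a,\tilde b]$ of $X$ with $\tilde a+\tilde b=k$ contributes, after bracketing with $x\bar t$, terms $\bar Y_x[\tilde a{+}1,\tilde b]$ and $\bar Y_x[\tilde b{+}1,\tilde a]$ with index sum $k+1$; only when $\tilde a{+}1=n$ or $\tilde b{+}1=n$ does the reduction kick in, sending the contribution down to index-sum level $k+1-n+j$ for each $j<n$. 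So I would group the output of $\{X,x\bar t\}_p$ by the value $\ell$ of the index sum of $\bar Y_x[\,\cdot\,,\cdot\,]$ and show each graded piece vanishes.

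The main obstacle — and the heart of the lemma — is precisely the telescoping cancellation inside $X_k$. By design $X_k=\sum_{a<b,\,a+b=k}\bh[a,b]$ (with the $\tfrac12$ fixing the diagonal term when $k$ is odd), so $\{X_k,x\bar t\}_p$, before reduction, is a sum of $\bar Y_x[a{+}1,b]+\bar Y_x[b{+}1,a]$ over $a<b$, $a+b=k$; using $\bar Y_x[u,v]=-\bar Y_x[v,u]$ this collapses to $\bar Y_x[k{+}1,0]$ minus a boundary term at the top, i.e. the ``bulk'' is a clean telescope leaving only the extreme indices. Then the three pieces of $X$ — the $c_0\bh[1,0]$ piece producing an index-$n{\equiv}\sum c_j\bar t^j$ contribution, the $\tfrac12 c_1\bh$ piece, the $-\sum c_kX_k$ pieces whose telescoped boundary terms reach index $n$ after reduction, and the $X_n$ piece — must all be matched level by level: the reduction of $X_n$'s top boundary term at index $n{+}1$ exactly feeds back, via $\bar t^{n+1}=\bar t\cdot\bar t^n=\sum c_j\bar t^{j+1}$, into the lower levels with coefficients $c_j$, cancelling against the $-c_jX_j$ contributions and against the $c_0\bh[1,0]$, $\tfrac12 c_1\bh$ terms. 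I would organize this as: (i) compute $\{X_k,x\bar t\}_p$ in closed form for $3\le k\le n-1$ (no reduction needed, pure telescope), (ii) compute $\{X_n,x\bar t\}_p$ including the single reduction of the index-$n$ entry, (iii) compute $\{c_0\bh[1,0]+\tfrac12c_1\bh,\,x\bar t\}_p$, and (iv) add everything and check the coefficient of each $\bar Y_x[\ell,\ell']$ is zero, the nontrivial identities being exactly the ones forced by $\bar t^{\,n}=\sum_{j=0}^{n-1}c_j\bar t^{\,j}$. I expect step (iv) to be where the definition of $X_k$ and the fractional coefficients are seen to be precisely what makes the sum vanish; everything else is routine.
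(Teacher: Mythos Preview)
Your plan for the calculation of $\{X,x\bar t\}_p$ is essentially the paper's own first step, though your bookkeeping contains slips: the indices in $X_k$ actually sum to $k{+}1$, not $k$, and the telescoped result is $\{X_k,\xi\bar t\}_p=\bar Y_\xi[k,2]$ (for $3\le k<n$), not $\bar Y_\xi[k{+}1,0]$. These are fixable, and steps (i)--(iv) would go through once you correct them.

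The genuine gap is your reduction to testing only against $\gt q\bar t$. Your justification --- ``$\mW$ as a Lie algebra is generated by $\gt q{\cdot}1$ together with $\gt q\bar t$ (since $\bar t^k$ is a polynomial in $\bar t$)'' --- confuses associative multiplication in $\mK[t]/(p)$ with Lie-algebra generation. The bracket gives $[\gt q\bar t,\gt q\bar t^{k}]_p\subset[\gt q,\gt q]\bar t^{\,k+1}$, which equals $\gt q\bar t^{\,k+1}$ only when $[\gt q,\gt q]=\gt q$. The lemma is stated for an arbitrary quadratic Lie algebra $\gt q$, and such $\gt q$ can have nontrivial centre (e.g.\ Takiff algebras), in which case $[\gt q,\gt q]\subsetneq\gt q$ and your generation argument fails.

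The paper handles this as follows. After establishing $\{X,\xi\bar t\}_p=0$ exactly as you propose, it observes that the generation argument \emph{does} work when $\gt q$ is semisimple, so $X\in\cz_p$ in that case. Then comes the key point: for fixed $k\ge 2$ and fixed $p$, one has $\{\xi\bar t^k,X\}_p=\sum_{a<b}C_{a,b}\,\bar Y_\xi[a,b]$ where the scalars $C_{a,b}$ depend only on $k$, $p$, and the combinatorics of the $\bh$'s --- not on $\gt q$ or on $\xi$. Since these $C_{a,b}$ all vanish in the semisimple case (where the $\bar Y_\xi[a,b]$ are linearly independent for nonzero $\xi$), they vanish identically, and hence $\{\xi\bar t^k,X\}_p=0$ for every quadratic $\gt q$. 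This independence-of-$\gt q$ principle is what your argument is missing.
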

\begin{proof}
First we show that $\{X,\xi \bar t\}_p=0$ if $\xi\in\q$.   If $3\le k<n$, then $\{X_k,\xi \bar t\}_p=\bar Y_{\xi}[k,2]$.
Furthermore, $\{X_n,\xi \bar t\}_p=c_0 \bar Y_{\xi}[0,2]+c_1 \bar Y_{\xi}[1,2]+c_3 \bar Y_{\xi}[3,2]+\ldots+c_{n{-}1}\bar Y_{\xi}[n{-}1,2]$. 
Summing up,
\[
\begin{array}{l}
\{X,\xi \bar t\}_p=c_0 \bar Y_{\xi}[2,0]+c_1 \bar Y_{\xi}[2,1]-(c_3\bar Y_{\xi}[3,2]+\ldots+c_{n{-}1}\bar Y_{\xi}[n{-}1,2])+ \\ 
\enskip + c_0 \bar Y_{\xi}[0,2]+c_1 \bar Y_{\xi}[1,2]+c_3 \bar Y_{\xi}[3,2]+\ldots+c_{n{-}1}\bar Y_{\xi}[n{-}1,2]=0.
\end{array}
\]
%% Now 
Thus $\{\gt q\oplus\gt q\bar t,X\}_p=0$. If %%T
 $[\gt q,\gt q]=\gt q$, then %%
$\gt q \bar t^{k+1}=[\gt q\bar t,\gt q\bar t^k]_p$ as long as $k<n-1$. %\ and 
Hence 
we have  $X\in\cz_p$ in that particular case. %% Note that t
This covers semisimple Lie algebras. 

Suppose that $2\le k<n$ and $p$ are fixed. Then
%%In 
$\{\xi\bar t^k, X\}_p=\sum_{a<b} C_{a,b} \bar Y_\xi [a,b]$, where the coefficients $C_{a,b}\in\mK$ depend only on 
$a$ and $b$; they are independent of $\xi$ and $\q$.
In case $\gt q$ is semisimple and $\xi\ne 0$, we must have $C_{a,b}=0$ for all  $0\le a<b\le n-1$. Hence
$\{\xi\bar t^k, X\}_p=0$ for any $\gt q$ and any $\xi\in\gt q$. %%, 
\end{proof}

\begin{prop} \label{prop-qh11}
%%Suppose 
For any $p\in\mK[t]$ as above, we have $\bh\in\cz_p+\cz_{p+t}\subset\gZ(p,p+t)$.
\end{prop}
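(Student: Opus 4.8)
The plan is to apply Lemma~\ref{lm-Y2} twice --- once to $p$ and once to $\tilde p:=p+t$ --- and to subtract the two resulting central elements. First I would record the routine preliminaries: $p$ and $\tilde p$ are both normalised of degree $n$ with $\deg(p-\tilde p)=1$, so Proposition~\ref{comp} gives the compatibility of $\{\,\,,\,\}_p$ and $\{\,\,,\,\}_{\tilde p}$ and $\gZ(p,p+t)$ is defined; moreover $\cz_p=\cz_{1,0}$ and $\cz_{p+t}=\cz_{0,1}$, and the points $(1,0)$ and $(0,1)$ are regular for the pencil $L(p,p+t)$ (since $1\ne 0$), so that $\cz_p,\cz_{p+t}\subset\gZ(p,p+t)$ by the very definition of $\gZ$ in Section~\ref{sub-Z}. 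It then remains only to prove $\bh\in\cz_p+\cz_{p+t}$.

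Next I would write $p=t^n-(c_{n-1}t^{n-1}+\ldots+c_1t+c_0)$; then $\tilde p=t^n-(c_{n-1}t^{n-1}+\ldots+c_2t^2+(c_1-1)t+c_0)$, i.e.\ $\tilde p$ has exactly the same coefficients as $p$ except that the coefficient of $t$ is decreased by $1$. Let $X\in\cz_p$ and $\tilde X\in\cz_{\tilde p}$ be the elements produced by Lemma~\ref{lm-Y2} for $p$ and for $\tilde p$, respectively. The summands $X_k$ and $X_n$ entering both formulas are fixed linear combinations of the elements $\bh[a,b]$ with $0\le a,b<n$ and carry no coefficients of the defining polynomial; since $a,b<n$, each $\bh[a,b]$ is one and the same element of $\gS(\mW)$ whether it is read as $\psi_p(\bH[a,b])$ or $\psi_{\tilde p}(\bH[a,b])$. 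As $p$ and $\tilde p$ also share the constant term, all summands of $X$ and of $\tilde X$ coincide apart from the $\bh$-term, so
\[
X-\tilde X=\tfrac12 c_1\bh-\tfrac12(c_1-1)\bh=\tfrac12\bh ,
\]
whence $\bh=2X-2\tilde X$ with $2X\in\cz_p$ and $2\tilde X\in\cz_{\tilde p}=\cz_{p+t}$, giving $\bh\in\cz_p+\cz_{p+t}\subset\gZ(p,p+t)$.

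I expect no genuine obstacle: the whole argument is bookkeeping once Lemma~\ref{lm-Y2} is available, the one real point being the observation that under $p\mapsto p+t$ it is precisely the coefficient multiplying $\bh$ that changes, and by exactly $1$. The single case needing a separate line is the degenerate $n=2$, where the formula of Lemma~\ref{lm-Y2} is vacuous; there I would argue directly in $\mW=\q\oplus\q\bar t$ with $\bar t^2=c_1\bar t+c_0$: a short Leibniz computation using $[x_i\bar t,\xi\bar t]_p=[x_i,\xi](c_1\bar t+c_0)$ shows that $a_0\bh[0,0]+a_1\bh[0,1]+a_2\bh[1,1]$ lies in $\cz_p$ exactly when $2a_0+c_1a_1-2c_0a_2=0$, and solving this condition for $p$ and for $\tilde p$ together with $X-\tilde X=\bh$ yields explicit $X\in\cz_p$, $\tilde X\in\cz_{p+t}$ (for instance $X=c_0(1-c_1)\bh[0,0]+2c_0\bh[0,1]+\bh$ and $\tilde X=X-\bh$), so that $\bh\in\cz_p+\cz_{p+t}$ in this case as well.
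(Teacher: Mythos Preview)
Your proof is correct and is essentially the paper's own argument: apply Lemma~\ref{lm-Y2} to $p$, observe that the element produced by the same lemma for $\tilde p=p+t$ is exactly $X-\tfrac{1}{2}\bh$ (only $c_1$ changes to $c_1-1$), and conclude $\bh=2X-2\tilde X\in\cz_p+\cz_{p+t}$. Your separate treatment of $n=2$ is unnecessary: with the natural empty-sum convention one has $X_2=0$, so for $n=2$ Lemma~\ref{lm-Y2} still yields $X=c_0\bh[1,0]+\tfrac{1}{2}c_1\bh\in\cz_p$, and the same subtraction works.
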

\begin{proof}
Write $p=t^n-(c_{n-1}t^{n-1}+\ldots +c_1t+c_0)$. 
Let $X\in\cz_p$ be the same as in Lemma~\ref{lm-Y2}. 
By a similar argument, $X-\frac{1}{2}\bh\in\cz_{p+t}$. Hence $\bh\in \cz_p+\cz_{p+t}$. 
\end{proof}

\subsection{Complete integrability of non-reductive Gaudin models} \label{sec-nrG}

Suppose  $p=\prod_{i=1}^n(x-a_i)$, where the roots $a_i$ are distinct and nonzero. Let $\bar r_k$ be the same as in 
\eqref{sum-r}. Then 
\[\bh=2\left(\sum_{k<i} a_k a_i\bh[\bar r_k,\bar r_i]\right) + \sum_{k} a_k^2\bh[\bar r_k,\bar r_k]\]
and here
$\bh[\bar r_k,\bar r_k]\in\cz(\gt q[t]/(p))$ for each $k$. Recall that we identify $\gt q[t]/(p)$ with $\gt q^{\oplus n}$.

Let ${\mathcal H}_k\in\gS(\gt q^{\oplus n})$ be defined by \eqref{eq-GH}  with $\gt q$ in place of $\gt g$, i.e., ${\mathcal H}_k=\sum_{j\ne k} \frac{1}{z_k-z_j} \bh[\bar r_k,\bar r_j]$. 
For distinct $i,j,k$, set  
\[\bar\cX[\bar r_i,\bar r_j,\bar r_k]:=\sum_{\iota,\nu,\ups} ([x_\iota,x_\nu],x_\ups) x_\iota \bar r_i x_\nu \bar r_j  x_\ups \bar r_k.
\]
 Since $\gt q\bar r_i$, $\gt q\bar r_j$, $\gt q\bar r_k$ pairwise commute, we can regard 
$\bar\cX[\bar r_i,\bar r_j,\bar r_k]$  as an element of either  
$\gS(\mW)$ or $\U(\mW,[\,\,,\,]_p)$. The same applies to all $\cH_i$. 
By the same calculation as we performed in Section~\ref{sb-GH}, 
\[
[{\mathcal H}_k,{\mathcal H}_i]=\sum_{j\ne k,i}\left(\frac{1}{(z_k-z_j)(z_i-z_j)}-\frac{1}{(z_k-z_i)(z_i-z_j)}-\frac{1}{(z_k-z_j)(z_i-z_k)}
\right)\bar\cX[\bar r_k,\bar r_i,\bar r_j]=0
\] 
for  $k\ne i$. 
Suppose now that   $z_i=a_i^{-1}$ for each $i$, then  
${\mathcal H}_k=\sum\limits_{j\ne k} \dfrac{-a_ja_k}{a_k-a_j} \bh[\bar r_k,\bar r_j]$. 
Furthermore, 
\begin{equation} \label{bhGo}
\bh=-2\left(\sum_k a_k {\mathcal H}_k\right)+\sum_{k} a_k^2\bh[\bar r_k,\bar r_k].
\end{equation} 

\begin{thm}
 {\sf (i)} We have $\dim\eP^{(\,,\,)}_{\bh,p} = 2n-1$ for any $p$ of degree $n$. \\[.2ex]
{\sf (ii)}  If  $p=\prod_{i=1}^n(x-a_i)$ with $a_i\ne a_j$ for $i\ne j$, \ $p(0)\ne 0$,  and the Hamiltonians $\cH_k$ are associated with 
$\vec{z}=(a_1^{-1},\ldots,a_n^{-1})$, then  $\cH_k\in\psi_p(\gZ(\wq,t))$ and 
$\cH_k\in\gZ(p,p+t)$ for each $k$. 
\end{thm}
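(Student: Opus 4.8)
The plan is to realise the Poisson centraliser $\eP^{(\,,\,)}_{\bh,p}$ as the space of polarisations of the Casimir element and to trap it between two explicit subspaces, both of dimension $2n-1$.

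For part {\sf (i)}, put $F=\sum_{i}x_i^2\in\gS^2(\q)^\q$, the Casimir attached to the invariant form $(\,,\,)$. Its $\vec k$-polarisations are, up to nonzero scalars, precisely the elements $\bh[a,b]$ with $0\le a\le b<n$, so that $\Pol(F)=\langle\bh[a,b]\mid 0\le a,b<n\rangle$ is exactly the ambient space occurring in the definition of $\eP^{(\,,\,)}_{\bh,p}$. First I would invoke Proposition~\ref{bound-dim} with $d=2$ to get $\dim\bigl(\Pol(F)\cap\gZ(p,p+t)\bigr)\ge 2(n-1)+1=2n-1$. Next, by Proposition~\ref{prop-qh11} one has $\bh\in\gZ(p,p+t)$, and $\gZ(p,p+t)$ is Poisson-commutative for $\{\,\,,\,\}_p$; hence every element of $\Pol(F)\cap\gZ(p,p+t)$ is a linear combination of the $\bh[a,b]$ Poisson-commuting with $\bh$, i.e. lies in $\eP^{(\,,\,)}_{\bh,p}$. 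Combined with the bound $\dim\eP^{(\,,\,)}_{\bh,p}\le 2n-1$ of Lemma~\ref{claim}, this forces the equalities $\eP^{(\,,\,)}_{\bh,p}=\Pol(F)\cap\gZ(p,p+t)$ and $\dim\eP^{(\,,\,)}_{\bh,p}=2n-1$, which is {\sf (i)}. Observe that this uses nothing about $p(0)$.

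For part {\sf (ii)} the roots of $p$ are distinct and nonzero, so $p(0)=\prod_i(-a_i)\ne 0$ and $\psi_p$ extends to a homomorphism of Poisson algebras $\gS(\q[t,t^{-1}])\to\gS(\mW)$, induced by the surjective Lie algebra map $\q[t,t^{-1}]\to\q[t,t^{-1}]/(p)\cong\mW$. Since $\gZ(\wq,t)$ is Poisson-commutative (it arises from $\gZ(\wq,t^{-1})$ by a substitution, cf.\ Proposition~\ref{P-com-inf}), its image $\psi_p(\gZ(\wq,t))$ is Poisson-commutative for $\{\,\,,\,\}_p$; it contains $\psi_p(\bH)=\bh$ and, for $\tau=t^2\partial_t$, all elements $\psi_p(\tau^m(\bH))$ with $m\ge0$, because $F[t]=\bH$ and $\tau^m(F[t])\in\gZ(\wq,t)$. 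By Lemma~\ref{pol-gZ} (available as $p(0)\ne0$) the span of the $\psi_p(\tau^m(\bH))$ lies in $\Pol(F)$ and has dimension at least $2n-1$; being contained in $\Pol(F)$ and in the Poisson centraliser of $\bh$, it lies in $\eP^{(\,,\,)}_{\bh,p}$, which by {\sf (i)} has dimension $2n-1$. Hence $\eP^{(\,,\,)}_{\bh,p}=\langle\psi_p(\tau^m(\bH))\mid m\ge0\rangle\subseteq\psi_p(\gZ(\wq,t))$, while at the same time $\eP^{(\,,\,)}_{\bh,p}=\Pol(F)\cap\gZ(p,p+t)\subseteq\gZ(p,p+t)$ by {\sf (i)}.

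It then remains to place each $\cH_k$ inside $\eP^{(\,,\,)}_{\bh,p}$. Since every $\bar r_\ell$ is a polynomial in $\bar t$, each $\bh[\bar r_k,\bar r_j]$ belongs to $\langle\bh[a,b]\rangle=\Pol(F)$, so $\cH_k\in\Pol(F)$. For the Poisson bracket with $\bh$ I would use formula~\eqref{bhGo}: writing $\bh=-2\sum_\ell a_\ell\cH_\ell+\sum_\ell a_\ell^2\bh[\bar r_\ell,\bar r_\ell]$, the $\cH_\ell$ Poisson-commute pairwise (the computation of Section~\ref{sec-nrG}, carried out as in Section~\ref{sb-GH}), while each $\bh[\bar r_\ell,\bar r_\ell]$ lies in $\cz_p$ and is therefore Poisson-central for $\{\,\,,\,\}_p$; hence $\{\cH_k,\bh\}_p=0$. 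Thus $\cH_k\in\eP^{(\,,\,)}_{\bh,p}$, and by the previous paragraph $\cH_k\in\psi_p(\gZ(\wq,t))$ and $\cH_k\in\gZ(p,p+t)$ for each $k$. The only points needing genuine care are the verification that $\psi_p$ is a Poisson map once $p(0)\ne0$ and the bookkeeping of the chain of inclusions among subspaces of equal dimension $2n-1$; everything substantive is already packaged into Lemma~\ref{claim}, Proposition~\ref{bound-dim}, Proposition~\ref{prop-qh11} and Lemma~\ref{pol-gZ}.
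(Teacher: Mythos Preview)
Your proof is correct and follows essentially the same route as the paper: for {\sf (i)} you sandwich $\eP^{(\,,\,)}_{\bh,p}$ between $\Pol(F)\cap\gZ(p,p+t)$ (dimension $\ge 2n-1$ by Proposition~\ref{bound-dim}, and contained in the centraliser via Proposition~\ref{prop-qh11}) and the upper bound of Lemma~\ref{claim}; for {\sf (ii)} you use Lemma~\ref{pol-gZ} to identify $\eP^{(\,,\,)}_{\bh,p}$ with $\langle\psi_p(\tau^m(\bH))\rangle\subset\psi_p(\gZ(\wq,t))$, and then place each $\cH_k$ in $\eP^{(\,,\,)}_{\bh,p}$ via~\eqref{bhGo}, exactly as the paper does. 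One small remark: the extension of $\psi_p$ to $\gS(\q[t,t^{-1}])$ is not actually needed here, since $\gZ(\wq,t)\subset\gS(t\q[t])$; the hypothesis $p(0)\ne 0$ enters only through Lemma~\ref{pol-gZ}.
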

\begin{proof}  By Proposition~\ref{prop-qh11}, $\bh\in\gZ(p,p+t)$ for any $p$ of degree $n$. 
Hence  \[\gZ(p,p+t)\cap\Pol(F)\subset \eP^{(\,,\,)}_{\bh,p} \ \ \text{ for } \ F=\sum_{i} x_i^2. \] 
By Proposition~\ref{bound-dim}, $\dim \gZ(p,p+t)\cap\Pol(F)\ge 2n-1$. This leads to %%the
$\dim \eP^{(\,,\,)}_{\bh,p}\ge 2n-1$. Now part {\sf (i)}  follows from Lemma~\ref{claim}.
We have also $\gZ(p,p+t)\cap\Pol(F)= \eP^{(\,,\,)}_{\bh,p}$. \\[.3ex]
{\sf (ii)} By the construction, $\bH\in \gZ(\wq,t)$. Hence $\bh\in \psi_p(\gZ(\wq,t))$ for any $p$. 
Set \[\hat V_p(\bh):=\psi_p(\gZ(\wq,t))\cap \left<\bh[a,b]\mid 0\le a,b<n\right>.
\]
We are assuming that $p(0)\ne 0$. Therefore 
 $\dim\hat V_p(\bh)\ge 2n-1$ by Lemma~\ref{pol-gZ}. Since $\hat V_p(\bh)\subset \eP^{(\,,\,)}_{\bh,p}$, we have 
$\hat V_p(\bh)=\eP^{(\,,\,)}_{\bh,p}$.  
From \eqref{bhGo} we deduce that $\cH_k\in  \eP^{(\,,\,)}_{\bh,p}$ for each $k$. Thus 
$\cH_k\in \gZ(p,p+t)$ and $\cH_k\in  \psi_p(\gZ(\wq,t))$. 
\end{proof}

For any $p$, the subspace 
$\hat V_p(\bh)$ contains the following linearly 
independent elements:
\begin{equation}\label{el}
\bh  \ \text{ and the sums } \  \psi_p(\tau^{k-2}(\bH))=(k-2)!\!\!\!\sum_{1\le a,b;\,a+b=k}\bh[a,b] \ \text{ with } \ 3\le k\le n.
\end{equation}
In case $p$ has  distinct roots, there are 
 $\bar r_i=r_i+(p)$ with $1\le i\le n$ defined in  Example~\ref{ex-diff},  %%  Then 
 and $\bh[\bar r_i,\bar r_i]\in \eP^{(\,,\,)}_{\bh,p}$ for each $i$, since $\bh[\bar r_i,\bar r_i]\in\cz_p$. 
Write $r_i=c_{i,n{-}1}t^{n-1}+\ldots+c_{i,1}t+c_{i,0}$ and assume that $p(0)\ne 0$. Then any 
$c_{i,0}=r_i(0)\ne 0$ and 
\begin{multline*}
\bh[\bar r_i,\bar r_i]=r_i(0)(c_{i,0}\bh[0,0]+2c_{i,1}\bh[0,1]+\ldots+2c_{i,n-1}\bh[0,n-1])+\\(\text{summands } \,C_{a,b}\bh[a,b] \text{ with } \,1\le a\le b). 
\end{multline*}
Since  the polynomials $\bar r_1,\ldots,\bar r_n$ are linearly independent, we conclude that 
the elements listed in \eqref{el} together with  $\bh[\bar r_i,\bar r_i]$, where $1\le i\le n$, form a basis 
of $\hat V_p(\bh)=\eP^{(\,,\,)}_{\bh,p}$.  Clearly 
$\{{\mathcal H}_1,\ldots,\cH_{n{-}1},\bh[\bar r_i,\bar r_i] \mid 1\le i\le n\}$ is another basis of 
$\eP^{(\,,\,)}_{\bh,p}$.
Our  discussion leads to the following statement.
\begin{itemize}
\item[$(\cG\bh)$] If $p$ has nonzero distinct roots, then the Gaudin model 
$(\q^{\oplus n}, [\,\,,\,], \cH_1,\ldots,\cH_{n})$ is   equivalent to 
$\mathscr{G}_p:=(\mW, [\,\,,\,]_p,\bh, \psi_p(\tau(\bH)),\ldots,\psi_p(\tau^{n{-}2}(\bH)))$. 
\end{itemize}

Next we would like to understand the quadratic part of $\gzu$. Since 
$\bH[1,1]$  and $\bH[1,2]$ are elements of $\gZ(\wq,t)$, we have 
\begin{multline} \label{gzu-qv}
\esi^2\bH[1,1]+2\esi\bH[0,1]+\bH[0,0]\in\gZ(\wq,\esi t+1) \ \text{ and} \\
\esi^3\bH[2,1]+\esi^2\bH[2,0]+2\esi^2\bH[1,1]+3\esi\bH[0,1]+\bH[0,0] \in\gZ(\wq,\esi t+1).
\end{multline}
Hence $\bH[0,1]\in \gzu$\,. This implies $\{\gzu,\bH[0,1]\}=0$. For $\dd\ge 2$, set 
$\bH^{[\dd]}=\sum\limits_{a,b\ge 1,\,a{+}b=\dd} \bH[a,b]$; then for $\dd\ge 0$, set  
$\widetilde{\bH}^{[\dd]}= \sum\limits_{a,b\ge 0,\,a{+}b=\dd} \bH[a,b]$. Let $\Psi_\esi\!:\q[t]\to \gt q[t]$ with $\esi\in\mK$ be defined
by $$
\Psi_{\esi}(xt^k)=x(\esi t+1)^k$$ 
for $x\in\q$ 
and $k\ge 0$. Set further $\Psi=\Psi_1$. We extend this maps to $\gS(\q[t])$. Finally let 
$\Psi(F)_\bullet$ stand for  the lowest $t$-component of $\Psi(F)$ with $F\in\gS(\q[t])$. In this terms,  
$\bH^{[2]}=\bH[1,1]$, \ $\bH^{[3]}=2\bH[1,2]$, and
$\Psi(\bH[1,1])_\bullet=\bH[0,0]$,  \ $\Psi(\bH[1,2]-\bH[1,1])_\bullet=\bH[0,1]$. 

\begin{lm} \label{P-H}
Let $\eP^{(\,,\,)}_{\bH[0,1]}$ be the Poisson centraliser of $\bH[0,1]$ in the 
span of $\{\bH[a,b]\mid   a,b\ge 0\}$. Then 
%%% In 
$\eP^{(\,,\,)}_{\bH[0,1]}=\left< \widetilde{\bH}^{[\dd]} \mid \dd\ge 0\right>$.
\end{lm}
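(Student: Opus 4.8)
The plan is to compute the Poisson bracket $\{\bH[0,1],\bH[a,b]\}$ explicitly using formula \eqref{cX} and then analyse when a linear combination of the $\bH[a,b]$ lies in the kernel of $\ad(\bH[0,1])$. By \eqref{cX} with $(c,d)=(0,1)$, for $a\le b$ one has
\[
\{\bH[0,1],\bH[a,b]\}=\cX[1,b,a]+\cX[0,b,a{+}1]+\cX[1,a,b]+\cX[0,a,b{+}1].
\]
Using the symmetries $\cX[\alpha,\beta,\gamma]=-\cX[\beta,\alpha,\gamma]=\cX[\beta,\gamma,\alpha]$, I would rewrite each term so that the three indices are in a normalised order (say, as a multiset with a chosen sign convention), and record the total coefficient of each "unordered monomial" $\cX[\alpha,\beta,\gamma]$. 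The key bookkeeping observation, exactly as in Lemmas~\ref{claim} and \ref{claim2}, is that $\deg_t\{\bH[0,1],F\}=\deg_t F+1$, so the centraliser $\eP^{(\,,\,)}_{\bH[0,1]}$ is homogeneous in $t$; hence it suffices to work degree by degree, i.e. to determine, for each fixed $\dd\ge 2$, which linear combinations of $\{\bH[a,b]\mid a+b=\dd,\ 0\le a\le b\}$ are killed by $\ad(\bH[0,1])$.

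Next I would show that $\widetilde{\bH}^{[\dd]}=\sum_{a+b=\dd,\,a,b\ge 0}\bH[a,b]$ lies in the centraliser. The clean way is to argue that $\widetilde{\bH}^{[\dd]}$ is, up to a scalar, $\psi$-free image of a genuine invariant on a Takiff-type quotient, or — more elementarily — to observe that $\bH[0,0]=\widetilde{\bH}^{[0]}$ is central (it is $\sum_i x_i\cdot x_i\in\cz(\q)\subset\gS(\q)$, pulled up), that $\bH[0,1]=\widetilde{\bH}^{[1]}/1$ trivially commutes with itself, and that for $\dd\ge 2$ the element $\widetilde{\bH}^{[\dd]}$ is the coefficient of $\lambda^\dd$ in $\sum_i (x_i\cdot\sum_{k\ge 0}\lambda^k t^k)^{\cdot 2}$, i.e. $\widetilde{\bH}^{[\dd]}=\Psi$-component of $\bH[1,1]$-type elements; since $\bH[1,1],\bH[1,2],\ldots\in\gZ(\wq,t)$ Poisson-commute with $\bH[0,1]\in\gzu$ after applying $\Psi_\esi$ and taking $\esi\to 0$ (cf. the computation around \eqref{gzu-qv}), each lowest-$t$ component $\widetilde{\bH}^{[\dd]}$ commutes with $\bH[0,1]$. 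I would prefer, for a self-contained proof, to instead just verify $\{\bH[0,1],\widetilde{\bH}^{[\dd]}\}=0$ directly from \eqref{cX}: summing the four $\cX$-terms above over all $a+b=\dd$, the terms pair up and cancel by the index symmetries of $\cX$, in the same spirit as the cancellation displayed at the end of Section~\ref{sb-GH}. This gives $\left<\widetilde{\bH}^{[\dd]}\mid\dd\ge0\right>\subseteq\eP^{(\,,\,)}_{\bH[0,1]}$, and in particular $\dim\eP^{(\,,\,)}_{\bH[0,1]}\ge$ (number of attainable $\dd$).

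Finally I would establish the reverse inclusion by a dimension count identical in structure to Lemma~\ref{claim2}. For a homogeneous-in-$t$ element $F\in\eP^{(\,,\,)}_{\bH[0,1]}$ of $t$-degree $\dd$, let $C\bH[a,b]$ with $b\ge a$ be its nonzero summand with the largest difference $b-a$. From the bracket formula, the summand of $\{\bH[0,1],F\}$ with the largest index spread is $C\cX[0,a,b{+}1]$ (up to a factor $2$ when $a=b$, exactly as in Lemma~\ref{claim2}), and since the bracket vanishes we need $\cX[0,a,b{+}1]=0$, i.e. two of $\{0,a,b{+}1\}$ coincide — forcing $a=0$ or $b=n{-}1$ in the truncated setting, but in the present untruncated setting ($a,b\ge 0$ arbitrary) forcing $a=0$. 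Iterating (subtract off the leading block and repeat), one sees $F$ is forced to be a multiple of $\widetilde{\bH}^{[\dd]}$ within each $t$-degree. Hence $\eP^{(\,,\,)}_{\bH[0,1]}=\left<\widetilde{\bH}^{[\dd]}\mid\dd\ge0\right>$, as claimed.

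The main obstacle I anticipate is the bookkeeping in the direct cancellation argument for $\{\bH[0,1],\widetilde{\bH}^{[\dd]}\}=0$: one has to be careful that, after normalising the index multisets of all four $\cX$-contributions and summing over the partitions $a+b=\dd$, every term indeed cancels (including boundary cases where two indices collide, where a term is simply zero, and the diagonal case $a=b$ which contributes with multiplicity). This is precisely the phenomenon that the chosen linear combination $\widetilde{\bH}^{[\dd]}$ — taking \emph{all} $\bH[a,b]$ with $a+b=\dd$ and $a,b\ge0$, with equal coefficients — is engineered to make work, mirroring how $X_k$ was engineered in Lemma~\ref{lm-Y2}; spelling out the cancellation cleanly, rather than waving at it, is the real content of the proof.
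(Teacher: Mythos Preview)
Your proposal is correct and follows essentially the same approach as the paper: compute $\{\bH[0,1],\bH[a,b]\}$ via \eqref{cX}, verify directly that each $\widetilde{\bH}^{[\dd]}$ lies in the centraliser, use $t$-homogeneity, and for the reverse inclusion pick the summand $\bH[a,b]$ with maximal $b$ (equivalently, maximal $b-a$ since $a+b=\dd$) and observe that the contribution $\cX[0,a,b{+}1]$ cannot be cancelled, forcing $a=0$. Two small simplifications the paper exploits: your four $\cX$-terms collapse to two, since $\cX[1,b,a]+\cX[1,a,b]=0$ by total antisymmetry, leaving $\{\bH[0,1],\bH[a,b]\}=\cX[a{+}1,0,b]+\cX[b{+}1,0,a]$; and your ``iterating'' is in fact a single step, because once $a=0$ is forced, subtracting the appropriate multiple of $\widetilde{\bH}^{[\dd]}$ leaves an element whose maximal $b'$ is strictly less than $\dd$, hence with $a'>0$, which is immediately a contradiction.
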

\begin{proof}
Using~\eqref{cX}, one sees that 
$\{\bH[0,1],\bH[a,b]\}=\cX[a+1,0,b]+\cX[b+1,0,a]$ and checks
readily  that $\left< \widetilde{\bH}^{[\dd]} \mid \dd\ge 0\right>\subset\eP^{(\,,\,)}_{\bH[0,1]}$.

The subspace $\eP^{(\,,\,)}_{\bH[0,1]}$ is spanned by homogeneous in $t$ elements. 
Let $F\in \eP^{(\,,\,)}_{\bH[0,1]}$ be homogeneous in $t$. Without loss of generality we may assume that 
$\bH[a,b]$ with $b\ge a$ is a summand of $F$ with the largest $b$. Now 
$\cX[1+b,0,a]$ is a summand of $\{\bH[0,1],F\}$ with the largest difference $b+1-a$. Hence $\cX[1+b,0,a]=0$. %%%it has to be zero.
Since $b+1>a$ and $b+1>0$, this is possible if and only if $a=0$.  Thus $F\in \mK \widetilde{\bH}^{[\dd]}$ with 
$\dd=\deg_t F$. 
\end{proof}

\begin{lm} \label{gzu-des}
For any $\dd\ge 2$, there are $c_{\dd-1},\ldots,c_2\in\mK$ such that 
$\Psi(\bH^{[\dd]}-\sum\limits_{u=2}^{\dd-1} c_u \bH^{[u]})_\bullet =C\widetilde{\bH}^{[\dd-2]}$ with 
$C\in\mK^\times$.
\end{lm}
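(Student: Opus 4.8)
The plan is to compute $\Psi$ on the building blocks $\bH[a,b]$, split everything into $t$-homogeneous parts, and then pick $c_2,\dots,c_{\dd-1}$ so that all parts of $\Psi\big(\bH^{[\dd]}-\sum_u c_u\bH^{[u]}\big)$ of $t$-degree $<\dd-2$ vanish. Since $\Psi$ is the algebra endomorphism of $\gS(\q[t])$ induced by $xt^k\mapsto x(t{+}1)^k$, one has
\[
\Psi(\bH[a,b])=\sum_i x_i(t{+}1)^a\,x_i(t{+}1)^b=\sum_{0\le j\le a,\ 0\le k\le b}\binom{a}{j}\binom{b}{k}\,\bH[j,k].
\]
Summing over $a+b=u$ with $a,b\ge1$ and collecting by $t$-degree $m=j+k$, the coefficient of $\bH[j,k]$ is $\sum_a\binom{a}{j}\binom{u-a}{k}$, evaluated by the standard identity $\sum_a\binom{a}{j}\binom{n-a}{k}=\binom{n+1}{j+k+1}$, with a one-term correction at the boundary indices $j=0$ or $k=0$ where the range of $a$ is shorter. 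Writing $X_{(m)}$ for the $t$-homogeneous component of degree $m$, this gives
\[
\Psi(\bH^{[u]})_{(m)}=\binom{u+1}{m+1}\bH^{[m]}+2\binom{u}{m+1}\bH[0,m]\quad(m\ge1),\qquad \Psi(\bH^{[u]})_{(0)}=(u-1)\bH[0,0],
\]
with $\bH^{[1]}:=0$. As the monomials $\bH[a,b]$ lie in pairwise distinct weight subspaces of $\gS(\q[t])$, the elements $\bH^{[m]}$ (supported on pairs with both entries $\ge1$) and $\bH[0,m]$ are linearly independent and nonzero for $m\ge2$, while $\widetilde{\bH}^{[m]}=\bH^{[m]}+2\bH[0,m]\ne0$ for $m\ge1$ and $\widetilde{\bH}^{[0]}=\bH[0,0]$; here I use $\bH[0,m]=\bH[m,0]$ in $\gS(\q[t])$.

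Next I encode $(c_2,\dots,c_{\dd-1})$ by the polynomial $R(z)=(1+z)^{\dd}-\sum_{u=2}^{\dd-1}c_u(1+z)^{u}$ and let $B_m$ be the coefficient of $z^m$ in $R$; by Pascal's rule, $B_m+B_{m+1}$ is the coefficient of $z^{m+1}$ in $(1+z)R$. A direct substitution then yields, for $G:=\bH^{[\dd]}-\sum_{u=2}^{\dd-1}c_u\bH^{[u]}$ and every $m\ge0$,
\[
\Psi(G)_{(m)}=(B_m+B_{m+1})\,\widetilde{\bH}^{[m]}-2B_m\,\bH[0,m],
\]
the cases $m=0,1$ degenerating compatibly with the formulas above. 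By the linear independence just noted, the conditions $\Psi(G)_{(0)}=\dots=\Psi(G)_{(\dd-3)}=0$ are therefore equivalent to
\[
B_0=B_1,\qquad B_2=B_3=\dots=B_{\dd-2}=0,
\]
that is, $\dd-2$ linear equations in the $\dd-2$ unknowns $c_u$: the overlapping pairs of conditions coming from $\widetilde{\bH}^{[m]}$ and $\bH[0,m]$ chain up to $B_2=\dots=B_{\dd-2}=0$, so the system stays square rather than becoming overdetermined.

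To solve this system I argue in the $(\dd{+}1)$-dimensional space of polynomials of degree $\le\dd$. The map $(c_u)_u\mapsto R$ is affine with injective linear part and image $(1+z)^{\dd}+\left<(1+z)^2,\dots,(1+z)^{\dd-1}\right>$, while the target condition is that $R$ lie in $z^{\dd}+\left<z^{\dd-1},\,z+1\right>$. Both direction subspaces consist of polynomials of degree $\le\dd-1$, they meet only in $0$ (compare the $(1+z)^0$-coefficients), and their dimensions sum to $\dd$, so together they span all polynomials of degree $\le\dd-1$; since $(1+z)^{\dd}-z^{\dd}$ has degree $\le\dd-1$, the two affine subspaces intersect, which produces the required $c_u$. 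For such a choice $\Psi(G)$ starts in $t$-degree $\dd-2$ with $\Psi(G)_{(\dd-2)}=B_{\dd-1}\,\widetilde{\bH}^{[\dd-2]}$ (using $B_{\dd-2}=0$ for $\dd\ge4$, resp.\ the degeneracy $\widetilde{\bH}^{[1]}=2\bH[0,1]$ when $\dd=3$); and $R(-1)=0$ holds automatically, every $(1+z)^{k}$ with $k\ge1$ vanishing at $z=-1$, so feeding the explicit shape of $R$ into $R(-1)=0$ forces $B_{\dd-1}=1$. Hence $\Psi(G)_\bullet=\widetilde{\bH}^{[\dd-2]}$, i.e.\ $C=1$. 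The case $\dd=2$ is the empty sum, where $\Psi(\bH[1,1])_\bullet=\bH[0,0]=\widetilde{\bH}^{[0]}$ directly.

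The fiddliest part will be pushing the exact binomial coefficients cleanly through the $t$-grading and treating the low-degree degeneracies $m=0,1$ (where $\bH^{[m]}$ vanishes and $\widetilde{\bH}^{[m]}$ collapses to a scalar multiple of $\bH[0,m]$), together with checking that the uniform formula $\Psi(G)_{(m)}=(B_m+B_{m+1})\widetilde{\bH}^{[m]}-2B_m\bH[0,m]$ is valid down to $m=0$ so that the bookkeeping of equations is correct.
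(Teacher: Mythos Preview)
Your argument is correct and takes a genuinely different route from the paper's. The paper proceeds by induction on $\dd$: it first uses the inductive hypothesis to arrange $\deg_t\Upsilon\ge\dd-2$, then invokes the Poisson-centraliser description of Lemma~\ref{P-H} (so that $\Upsilon$ must be a scalar multiple of some $\widetilde{\bH}^{[k]}$), and finally rules out $k=\dd-1$ by showing that a certain $(\dd{-}1)\times(\dd{-}1)$ matrix of binomial coefficients has determinant $1$. Your proof bypasses all of this: you compute $\Psi(\bH^{[u]})_{(m)}$ explicitly, repackage the vanishing conditions $B_0=B_1,\ B_2=\dots=B_{\dd-2}=0$ as membership of the generating polynomial $R(z)=(1+z)^{\dd}-\sum c_u(1+z)^{u}$ in an explicit affine subspace, and solve the resulting linear system by a transversality argument in $\mK[z]_{\le\dd}$. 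The observation $R(-1)=0$ then gives the constant $C=B_{\dd-1}=1$ for free.

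What each approach buys: the paper's argument uses the ambient Poisson structure (via $\{\bH[0,1],\Upsilon\}=0$) and is thematically tied to the rest of the section, with the same determinant trick reappearing in Proposition~\ref{red-gzu}; yours is self-contained, avoids Lemma~\ref{P-H} entirely, and yields the sharper conclusion $C=1$ rather than merely $C\ne0$. One small remark on presentation: your claim that $\bH^{[m]}$ and $\bH[0,m]$ are linearly independent for $m\ge2$ is used to split the single vanishing condition $\Psi(G)_{(m)}=0$ into the pair $B_m=0$, $B_{m+1}=0$; it would be worth saying explicitly that this holds because the $\bH[a,b]$ with $0\le a\le b$ occupy distinct multi-degrees in $\gS(\q[t])$ and $[\q,\q]\ne0$, so no accidental cancellation can occur.
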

\begin{proof}
For $\dd=2$ and $\dd=3$, we have checked the statement, see~\eqref{gzu-qv}.  Therefore 
suppose  that $\dd\ge 4$. 
%%Then $$ \Psi(\bH^{[4]})=2\bH[1,3]+\bH[2,2]+2\bH[0,3]+10\bH[1,2]+10\bH[1,1]+8\bH[0,2]+12\bH[0,1]+3\bH[0,0]
% $$
% and $\Psi(\bH^{[4]}-6\bH[1,2]+3\bH[1,1])_\bullet=2\bH[0,2]+\bH[1,1]=\widetilde{\bH}^{[2]}$.
Suppose further %% now that $\dd\ge 5$ and 
that the statements holds for all $u$ such that $\dd>u\ge 2$. 
Then 
$$
\lim_{\esi\to 0} \left<\Psi_{\esi}(\bH^{[u]})\mid 2\le u<\dd\right>=\left<\widetilde{\bH}^{[u]} \mid 0\le u\le \dd-3\right>.
$$
Thereby there are $c_{\dd-1},\ldots,c_2\in\mK$ such that
$\deg_t\!\Upsilon\ge \dd-2$ for
 $\Upsilon\!:=\Psi(\bH^{[\dd]}-\sum\limits_{u=2}^{\dd-1} c_u \bH^{[u]})_\bullet$.  Our  goal is to prove that 
 $\deg_t\!\Upsilon= \dd-2$.
 
Set $k:=\deg_t\!\Upsilon$. Since $\{\bH[0,1],\Upsilon\}=0$, we have $\Upsilon\in\mK \widetilde{\bH}^{[k]}$
by Lemma~\ref{P-H}. %By the same lemma, 
Since $\Upsilon$ is nonzero, it must have a nonzero summand proportional to 
$\bH[0,k]$. Thereby $k\le \dd-1$. Assume that $k=\dd-1$. 

If $u>b>0$, then the coefficient of $\bH[0,b]$ in $\Psi(\bH^{[u]})$ is equal to
\begin{equation}\label{coef}
2\sum_{i=1}^{u-b} \binom{u-i}{b}=2\left(\underbrace{1+\binom{b+1}{b}}_{\binom{b+2}{b+1}}+\binom{b+2}{b}+\ldots+\binom{u-1}{b}\right)=2\binom{u}{b+1}.
 \end{equation}
The coefficient of $\bH[0,0]$ in $\Psi(\bH^{[u]})$ is equal to $u-1$. 
%%% Since $\Upsilon$ does not contain nonzero summands proportional to $\bH[0,u]$ with $u\le \dd-2$, 
Thereby 
we must have
\[
(-1,c_{\dd-1},c_{\dd-2},\ldots,c_2)A=0
\]
for 
\[
A=A_{\dd}=\begin{pmatrix}
\binom{\dd}{\dd-1}  & \binom{\dd}{\dd-2} & \binom{\dd}{\dd-3} & \ldots & \binom{\dd}{2} & \dd-1 \\  
1  & \binom{\dd-1}{\dd-2} & \binom{\dd-1}{\dd-3} & \ldots & \binom{\dd-1}{2} & \dd-2 \\
0 & 1 & \binom{\dd-2}{\dd-3} &   \ldots & \binom{\dd-2}{2} & \dd-3 \\
0 & 0 & 1 & \ldots  & \binom{\dd-3}{2} & \dd-4 \\
\vdots & . & . & . &.  & \vdots\\
0 & . & . & 1 & 3 & 2 \\
0 & . & . & 0 & 1 & 1 \\
\end{pmatrix}.
\]
Elementary manipulations with lines of $A$ show that $\det(A)=\det(A_{\dd-1})$. 
Since $A_4=\begin{pmatrix} 4 &  6 & 3  \\ 1& 3 & 2 \\ 0 & 1 & 1 \end{pmatrix}$, we have $\det(A_{\dd})=\begin{vmatrix}
4 & 3 \\ 1 & 1 
\end{vmatrix}=1$ for all $\dd\ge 4$. 
This is a contradiction and hence $\deg_t\!\Upsilon=\dd-2$. Since $\Upsilon\ne 0$, indeed 
$\Upsilon=C \widetilde{\bH}^{[\dd-2]}$ with $C\in\mK^\times$. %%% commutes with $\bH[0,1]$ and 
%%% has a summand 
%%% 
\begin{comment}
We have
\begin{multline*}
\Psi(\bH^{[\dd]})=2\dd\bH[0,\dd{-}2] +2\left({\binom{\dd-1}{2}}+
\dd{-}2 +1\right)  \bH[0,\dd{-}3]+ \\
+ 
2\left({\binom{\dd-1}{2}}+2(\dd{-}2)+3 \right)\bH[1,\dd{-}3]
%%2\bH[0,\dd{-}1] + 2((\dd-1)+2) \bH[1,\dd{-}2] +\ap((\dd-2)+3)\bH[2,\dd{-}3] + 
+(\,\text{some other terms with different
$\bH[a,b]$}\,), \\ 
\Psi(\bH^{[\dd{-}1]})=2\bH[0,\dd{-}2] + 2(\dd{-}1) \bH[0,\dd{-}3] +2(\dd{-}2+2)\bH[1,\dd{-}3]
%%2 \bH[1,\dd{-}2]+\ap\bH[2,\dd{-}3]  
+ \\
+(\,\text{some other terms with different
$\bH[a,b]$}\,), \text{ and } \\
\Psi(\bH^{[\dd{-}2]})=2\bH[0,\dd{-}3]+2\bH[1,\dd{-}3]+(\,\text{some other terms with different
$\bH[a,b]$}\,).
\end{multline*}
%%where $\ap=2$, if $\dd>5$, and $\ap=1$, if $\dd=5$.
%% Note that $\Psi(\bH^{[\dd-1]})$ has  no nonzero summand proportional to $\bH[0,\dd{-}1]$. 
Since $\Psi(\bH^{[u]})$ with $u<\dd{-}1$, has  no nonzero summand proportional to $\bH[0,\dd{-}2]$, 
we obtain $c_{\dd-1}=-\dd$. Since $\Psi(\bH^{[u]})$ with $u<\dd{-}2$, has  no summands of $t$-degree 
$\dd{-}2$, as well as no nonzero summand proportional to $\bH[0,\dd{-}3]$, there are the equalities:
$$
\left\{
\begin{array}{l}
\binom{\dd}{2}-\dd(\dd-1)+c_{\dd-2}=0, \\
\binom{\dd+1}{2}-\dd^2+c_{\dd-2}=0.
\end{array}
\right.
$$
But $\binom{\dd+1}{2}-\binom{\dd}{2}=\dd$ 
....
...
....
and 
$\Upsilon=2\bH[0,\dd{-}1]+2\bH[1,\dd{-}2]+\ap\bH[2,\dd{-}3] $ ......... {\it does not work!}
 
%% as well as none 
 %% of $t$-degree ,  
\end{comment} 
\end{proof}

\begin{thm} \label{Ham-gzu}
 If  $p=\prod_{i=1}^n(x-a_i)$ with $a_i\ne a_j$ for $i\ne j$, %\ $p(0)\ne 0$,  
 and the Hamiltonians $\tilde\cH_k$ are associated with 
$\vec{z}=(a_1,\ldots,a_n)$, then  $\tilde\cH_k\in\psi_p(\gzu)$ and $\tilde\cH_k\in\gZ(p,p+1)$ for each $k$. 
\end{thm}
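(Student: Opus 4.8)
The plan is to reduce the statement about the new limit Hamiltonians $\tilde\cH_k$ (associated with $\vec z=(a_1,\dots,a_n)$, where now $p$ may have $p(0)=0$) to the analysis of the quadratic piece of $\gzu$, exactly as the previous theorem reduced the Gaudin Hamiltonians $\cH_k$ to $\eP^{(\,,\,)}_{\bh,p}$. First I would record that, by the construction of $\gzu$ and Lemma~\ref{gzu-des}, each $\Psi(\bH^{[\dd]}-\sum_{u} c_u\bH^{[u]})_\bullet = C\,\widetilde{\bH}^{[\dd-2]}$ lies in $\gzu$ for $\dd\ge 2$, so that $\psi_p(\gzu)$ contains $\psi_p(\widetilde{\bH}^{[\dd]})$ for all $\dd\ge 0$, and in particular $\bh[0,0]=\psi_p(\widetilde{\bH}^{[0]})$, $\bh[0,1]=\tfrac12\psi_p(\widetilde{\bH}^{[1]})$, and the higher sums $\sum_{a,b\ge 0,\,a+b=\dd}\bh[a,b]$. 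Thus the subspace $\hat V_p^{\gzu}(\bh):=\psi_p(\gzu)\cap\langle \bh[a,b]\mid 0\le a,b<n\rangle$ contains the $2n-1$ elements $\widetilde{\bH}^{[\dd]}$ reduced mod $p$ with $0\le\dd\le 2n-2$; by Lemma~\ref{claim2} the Poisson centraliser $\eP^{(\,,\,)}_{\bh[0,1],p}$ has dimension at most $2n-1$, and since $\bh[0,1]\in\gzu$ forces $\hat V_p^{\gzu}(\bh)\subset\eP^{(\,,\,)}_{\bh[0,1],p}$, I would conclude $\hat V_p^{\gzu}(\bh)=\eP^{(\,,\,)}_{\bh[0,1],p}$ is exactly $(2n-1)$-dimensional and spanned by the images of $\widetilde{\bH}^{[\dd]}$.

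Next I would identify the $\tilde\cH_k$ inside this space. Writing $\bar r_i$ for the idempotents of Example~\ref{ex-diff} attached to the (distinct) roots $a_i$ — note here $p(0)$ is allowed to vanish, i.e.\ one of the $a_i$ may be $0$ — the diagonal terms $\bh[\bar r_i,\bar r_i]$ lie in $\cz_p\subset\gZ(p,p+1)$ and, by the Gaudin computation reproduced in Section~\ref{sec-nrG} with $\vec z=(a_1,\dots,a_n)$ in place of $(a_1^{-1},\dots,a_n^{-1})$, one has a linear identity of the shape $\bh=\sum_i a_i^2\,\bh[\bar r_i,\bar r_i]-2\sum_k \tilde\cH_k$ (the analogue of \eqref{bhGo}, with the factor $-a_ja_k/(a_k-a_j)$ replaced by the factor coming from $z_i=a_i$). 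Since $\bh=\psi_p(\widetilde{\bH}^{[2]})\in\hat V_p^{\gzu}(\bh)$ and each $\bh[\bar r_i,\bar r_i]\in\cz_p$, and since the Poisson centraliser of $\bh$ in $\langle\bh[a,b]\rangle$ has dimension $\le 2n-1$ by Lemma~\ref{claim}, the same dimension count as in the previous theorem shows that the span of $\{\tilde\cH_1,\dots,\tilde\cH_{n-1},\bh[\bar r_i,\bar r_i]\mid 1\le i\le n\}$ coincides with $\eP^{(\,,\,)}_{\bh,p}=\hat V_p^{\gzu}(\bh)$. Hence each $\tilde\cH_k$ lies both in $\psi_p(\gzu)$ and in $\gZ(p,p+1)$, which is the assertion.

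The step I expect to be the main obstacle is making the linear-algebra bookkeeping airtight when $p(0)=0$, i.e.\ when one root, say $a_n$, equals $0$: then $\bar r_n$ has zero constant term, the expansion of $\bh[\bar r_n,\bar r_n]$ in the basis $\{\bh[a,b]\}$ degenerates, and the clean argument of the previous theorem (which used $p(0)\ne 0$ to show the $\bh[\bar r_i,\bar r_i]$ together with \eqref{el} form a basis) must be replaced. I would handle this by a limiting argument: the subspace $\eP^{(\,,\,)}_{\bh,p}$ and the elements $\tilde\cH_k$ depend continuously on $\vec a$, the identity $\bh=\sum a_i^2\bh[\bar r_i,\bar r_i]-2\sum\tilde\cH_k$ is polynomial in $\vec a$, and for $\vec a$ with all $a_i$ nonzero the previous theorem (applied with the roles of $\vec z$ and $\vec a$ swapped, or via the substitution $t\mapsto \esi t+1$ and Proposition~\ref{123}(2), which gives $\lim_{\esi\to 0}\gZ(p,p+\esi t+1)=\gZ(p,p+1)$) already yields $\tilde\cH_k\in\gZ(p,p+1)$ and $\tilde\cH_k\in\psi_p(\gzu)$; both $\gZ(p,p+1)$ and $\psi_p(\gzu)$ are closed, so passing to the limit $a_n\to 0$ removes the restriction. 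Alternatively, and perhaps more cleanly, I would avoid the $\bar r_i$ altogether on the $\gzu$ side and argue purely via Lemma~\ref{P-H}/Lemma~\ref{gzu-des}: $\psi_p(\gzu)$ contains all $\psi_p(\widetilde{\bH}^{[\dd]})$, the $\tilde\cH_k$ are built from the $\bh[\bar r_k,\bar r_j]$ with $j\ne k$ which together with $\widetilde{\bH}^{[0]},\widetilde{\bH}^{[1]}$ and the $\bh[\bar r_i,\bar r_i]$ span the same $(2n-1)$-space, and $\bh[\bar r_i,\bar r_i]\in\cz_p\subset\psi_p(\gzu)$ since $\cz_p=\cz_{p+0\cdot 1}$ is a regular centre in the pencil $L(p,p+1)$ — so $\tilde\cH_k\in\psi_p(\gzu)\cap\gZ(p,p+1)$ directly, the only care being to check the $(2n-1)\times(2n-1)$ change-of-basis matrix between $\{\tilde\cH_k,\bh[\bar r_i,\bar r_i]\}$ and $\{\psi_p(\widetilde{\bH}^{[\dd]})\}$ is invertible, which I would verify by a determinant computation in the spirit of the matrix $A_\dd$ in Lemma~\ref{gzu-des}.
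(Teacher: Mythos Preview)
Your first paragraph matches the paper's approach and is essentially correct. The argument breaks in your second paragraph, where you switch the pivot from $\bh[0,1]$ to $\bh=\bh[1,1]$.

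First, the claim $\bh=\psi_p(\widetilde{\bH}^{[2]})$ is false: by definition $\widetilde{\bH}^{[2]}=2\bH[0,2]+\bH[1,1]$, so $\psi_p(\widetilde{\bH}^{[2]})=2\bh[0,2]+\bh$. In fact $\bh\notin\eP^{(\,,\,)}_{\bh[0,1],p}$ for $n\ge 3$, since $\{\bh[0,1],\bh\}=2\cX[0,1,2]$ by~\eqref{cX}; hence $\bh\notin\hat V_p^{\gzu}(\bh)$ and the identity $\eP^{(\,,\,)}_{\bh,p}=\hat V_p^{\gzu}(\bh)$ you write at the end of paragraph~2 equates two genuinely different $(2n{-}1)$-spaces. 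Second, your asserted relation ``$\bh=\sum_i a_i^2\bh[\bar r_i,\bar r_i]-2\sum_k\tilde\cH_k$'' does not hold: writing $\sum_i c_i\tilde\cH_i=\sum_{i<j}\frac{c_i-c_j}{a_i-a_j}\bh[\bar r_i,\bar r_j]$, matching the off-diagonal part of $\bh$ forces $c_i-c_j=a_ia_j(a_i-a_j)$ for all $i\ne j$, which is inconsistent already for $n=3$.

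The paper keeps $\bh[0,1]$ throughout. It first shows $\bh[0,1]\in\gZ(p,p+1)$ directly via Lemma~\ref{lm-Y2} (the element $X\in\cz_p$ there satisfies $X-\bh[0,1]\in\cz_{p+1}$), and then Proposition~\ref{bound-dim} together with Lemma~\ref{claim2} give $\gZ(p,p+1)\cap\Pol(F)=\eP^{(\,,\,)}_{\bh[0,1],p}$ for $F=\sum_i x_i^2$; your first paragraph supplies $\psi_p(\gzu)\cap\Pol(F)=\eP^{(\,,\,)}_{\bh[0,1],p}$. The identity that actually places the $\tilde\cH_k$ in this common space is
\[
\bh[0,1]=\sum_{i<j}(a_i+a_j)\,\bh[\bar r_i,\bar r_j]+\sum_i a_i\,\bh[\bar r_i,\bar r_i]
=\sum_i a_i^2\,\tilde\cH_i+\sum_i a_i\,\bh[\bar r_i,\bar r_i],
\]
since $\sum_i a_i^2\tilde\cH_i=\sum_{i<j}\frac{a_i^2-a_j^2}{a_i-a_j}\bh[\bar r_i,\bar r_j]$. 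Because the $\tilde\cH_i$ pairwise commute and commute with each $\bh[\bar r_j,\bar r_j]\in\cz_p$, this yields $\tilde\cH_k\in\eP^{(\,,\,)}_{\bh[0,1],p}$, hence $\tilde\cH_k\in\gZ(p,p+1)$ and $\tilde\cH_k\in\psi_p(\gzu)$.

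Your third-paragraph worry about $p(0)=0$ is misplaced: unlike the previous theorem, nothing here passes through $\psi_p(\gZ(\wq,t))$, so no hypothesis on $p(0)$ is needed and no limiting argument is required.
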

\begin{proof}
Let $X$ be the same as in Lemma~\ref{lm-Y2}. Then $X\in\cz_p$ and $X-\bh[0,1]\in \cz_{p+1}$. Thereby 
$\bh[0,1]\in \gZ(p,p+1)$. Hence  \[\gZ(p,p+1)\cap\Pol(F)\subset \eP^{(\,,\,)}_{\bh[0,1],p} \ \ \text{ for } \ F=\sum_{i} x_i^2. \] 
By Proposition~\ref{bound-dim}, $\dim \gZ(p,p+1)\cap\Pol(F)\ge 2n-1$.
This leads to %%the
$\dim \eP^{(\,,\,)}_{\bh[0,1],p}=2n-1$ in view of Lemma~\ref{claim2}; and to the equality 
$\gZ(p,p+1)\cap\Pol(F)= \eP^{(\,,\,)}_{\bh[0,1],p}$. 
%%
%%We have
According to \eqref{gzu-qv}, $\bH[0,1]\in\gzu$\, and hence 
%%%  ....... \, and 
 $\bh[0,1]\in \psi_p(\gzu)$ for any $p$. 
Set \[\hat V_p(\bh[0,1]):=\psi_p(\gzu)\cap \left<\bh[a,b]\mid 0\le a,b<n\right>.
\]
 Using Lemma~\ref{gzu-des}, 
we show that $\psi_p(\gzu)$ contains the sums
\[
\left(\sum\limits_{0\le a,b\le n-1,\,a{+}b=k} \bh[a,b]\right)+(\,\text{terms with lower $\bar t$-degree}\,)
\] 
with $0\le k\le 2(n-1)$. 
% the assumption $p(0)\ne 0$,  and the \colorbox{red}{definition of $\gzu$}, we obtain the inequality 
Thereby  $\dim\hat V_p(\bh[0,1])\ge 2n{-}1$. Since $\hat V_p(\bh[0,1])\subset \eP^{(\,,\,)}_{\bh[0,1],p}$, this implies  
 the  identity 
$\hat V_p(\bh[0,1])=\eP^{(\,,\,)}_{\bh[0,1],p}$.  Next
\[
\bh[0,1]=\left(\sum_{i<j} (a_i+a_j)\bh[\bar r_i,\bar r_j]\right)+\sum_i  a_i \bh[\bar r_i,\bar r_i] =
\left(\sum_{i} a_i^2\tilde\cH_i\right)+\sum_i  a_i \bh[\bar r_i,\bar r_i].
\]
From this we deduce that $\tilde\cH_i\in  \eP^{(\,,\,)}_{\bh[0,1],p}$ for each $i$. Thus 
$\tilde\cH_i\in \gZ(p,p{+}1)$ and $\tilde\cH_i\in  \psi_p(\gzu)$. 
\end{proof}

Making use of the fact that  $\eP^{(\,,\,)}_{\bh[0,1],p}=\psi_p(\gzu)\cap\Pol(F)$, where $F=\sum_{i} x_i^2$, 
is a homogeneous in $\bar t$ subspace,
 it is easy to see that 
$F^{[\dd]}\in \eP^{(\,,\,)}_{\bh[0,1],p}$ if  $1\le \dd<n$. 
These elements are linearly independent. In case $\dd=1$, we obtain $F^{[1]}=2\bh[0,1]$. 
Suppose that $\xi\in\q$ and $[\xi,\q]\ne 0$. 
A straightforward calculation shows that  $\{F^{[\dd]},\xi\bar t\}_p=2\psi_p(Y_\xi[\dd{+}1,0])$. Thereby 
$\left<F^{[\dd]} \mid 1\le \dd<n\right>\cap \cz_p\ne\{0\}$
 only if $\psi_p(Y_\xi[n,0])$ does not have nonzero summands proportional to 
$\bar Y_\xi[1,0]$, i.e., only if $(\partial_t p)(0)=0$. 
  In all other cases, 
$\eP^{(\,,\,)}_{\bh[0,1],p}$ has a basis  
\[
\{ \bh[0,1], \ F^{[\dd]},  \ \bh[\bar r_i,\bar r_i] \mid 2\le \dd<n, 
\ 1\le i\le n\}. 
%%%%, 
\]
It also has a basis 
\[
\{\tilde\cH_1,\ldots,\tilde\cH_{n{-}1},  \bh[\bar r_1,\bar r_1], \ldots, \bh[\bar r_n,\bar r_n]\} 
\]
Thus the following statement is true. 

\begin{itemize}
\item[$(\cG\!\mathscr{v})$] If a polynomial $p$ has  distinct roots and $(\partial_t p)(0)\ne0$, then the 
 Hamiltonian system % 
$(\q^{\oplus n}, [\,\,,\,], \tilde\cH_1,{\ldots},\tilde\cH_{n})$  is   equivalent to 
$\mathscr{G\!v}_{\!p}:=\left(\mW, [\,\,,\,]_p,\bh[0,1], F^{[\dd]} \mid 2\le\dd\le n{-1} \right)$ with $F=\sum_{i} x_i^2$. 
\end{itemize}

%  $. 

\begin{thm} \label{integr}
Suppose that $\q$ has the  {\sl codim}--$2$ property and $\trdeg\cz(\q)=\ind\q$. Then
both systems, $\mathscr{G}_p$ and $\mathscr{G\!v}_{\!p}$, are completely integrable for any $p$.
\end{thm}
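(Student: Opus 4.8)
The plan is, for each of the two Hamiltonian systems, to exhibit a Poisson-commutative subalgebra of $(\gS(\mW),\{\,,\,\}_p)$ whose transcendence degree equals the maximal value $\bb(\mW,[\,,\,]_p)$ and which contains all the Hamiltonians of the system; this is exactly what complete integrability requires. By Corollary~\ref{roots-ind}, $\ind\mW=n\ind\q$, so $\bb(\mW,[\,,\,]_p)=\frac{1}{2}(n\dim\q+n\ind\q)=n\bb(\q)$. By the analysis preceding $(\cG\bh)$, the Hamiltonians $\bh,\psi_p(\tau(\bH)),\ldots,\psi_p(\tau^{n-2}(\bH))$ of $\mathscr{G}_p$ lie in $\eP^{(\,,\,)}_{\bh,p}=\gZ(p,p+t)\cap\Pol(F)$, and, by $(\cG\!\mathscr{v})$ and the proof of Theorem~\ref{Ham-gzu}, the Hamiltonians $\bh[0,1],F^{[\dd]}$ ($2\le\dd<n$) of $\mathscr{G\!v}_{\!p}$ lie in $\eP^{(\,,\,)}_{\bh[0,1],p}=\gZ(p,p+1)\cap\Pol(F)$, where $F=\sum_{i}x_i^2$. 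Write $\gZ$ for the appropriate one of these subalgebras, so $\gZ=\gZ(p,p+l)$ with $l=t$, resp. $l=1$. The hypotheses of the present theorem are exactly those of Theorem~\ref{trdeg}, so $\trdeg\gZ=\bb(\q,n)=(n-1)\bb(\q)+\ind\q$, which falls short of $n\bb(\q)$ by $\bb(\q)-\ind\q$. The remaining task is to enlarge $\gZ$, inside $\gS(\mW)$, to a Poisson-commutative subalgebra of transcendence degree $n\bb(\q)$.

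The enlargement I would use is a Mishchenko--Fomenko subalgebra of $\gS(\q)$. Since $\{\,,\,\}_p=\{\,,\,\}_{1,0}$ belongs to the pencil $L(p,p+l)$, the algebra $\gZ$ is Poisson-commutative with respect to $\{\,,\,\}_p$; moreover $\gZ\subset\gS(\mW)^{\q{\cdot}1}$ (Section~\ref{sub-Z}). As $\q{\cdot}1\subset\mW$ is a Lie subalgebra on which $[\,,\,]_p$ restricts to the bracket of $\q$, the subalgebra $\gS(\q{\cdot}1)\cong\gS(\q)$ carries the Lie--Poisson bracket of $\q$, and by the Leibniz rule $\{\gZ,\gS(\q{\cdot}1)\}_p=0$. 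Fix $\gamma\in\q^*$ in general position and let $\cF_\gamma\subset\gS(\q)=\gS(\q{\cdot}1)$ be the Mishchenko--Fomenko subalgebra attached to $\gamma$. Because $\q$ has the {\sl codim}--$2$ property and $\trdeg\cz(\q)=\ind\q$, the completeness criterion for Mishchenko--Fomenko subalgebras (see e.g.~\cite{vi90,kruks}) gives $\trdeg\cF_\gamma=\bb(\q)$; also $\cz(\q)\subset\cF_\gamma$. Put $\ca=\mathsf{alg}\langle\gZ,\cF_\gamma\rangle\subset\gS(\mW)$. By the above, $\ca$ is Poisson-commutative with respect to $\{\,,\,\}_p$, and it contains all the Hamiltonians of the system in question, since they already lie in $\gZ$.

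It remains to prove $\trdeg\ca=n\bb(\q)$; the inequality $\le$ is automatic since $\ca$ is Poisson-commutative, so the point is $\ge$. I would check this at a sufficiently generic $\xi\in\mW^*$ --- one that is regular for the pencil $L(p,p+l)$ and for the family $\cF_\gamma$, and for which $\bar\xi:=\xi|_{\q{\cdot}1}\in\q^*_{\sf reg}$ --- each of these being a non-empty open condition. There $\textsl{d}_\xi\ca=\textsl{d}_\xi\gZ+\textsl{d}_\xi\cF_\gamma$, with $\dim\textsl{d}_\xi\gZ=\bb(\q,n)$, $\dim\textsl{d}_\xi\cF_\gamma=\bb(\q)$, and $\textsl{d}_\xi\cF_\gamma\subset\q{\cdot}1$. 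From the proof of Theorem~\ref{trdeg} and formula~\eqref{eq-inf-v} one knows that $\textsl{d}_\xi\gZ\supseteq\gt v$ with $\gt v\cap(\q{\cdot}1)=\q_{\bar\xi}$ (Lemma~\ref{ind-}), and that $\textsl{d}_\xi\gZ$ is isotropic for $\pi_{1,0}(\xi)$ (as $\gZ$ is $\{\,,\,\}_{1,0}$-Poisson-commutative); hence $\textsl{d}_\xi\gZ\cap(\q{\cdot}1)$ is an isotropic subspace of $\q{\cdot}1$ containing $\q_{\bar\xi}$. On the other side $\textsl{d}_\xi\cF_\gamma\supseteq\textsl{d}_{\bar\xi}\cz(\q)=\q_{\bar\xi}$ (here $\trdeg\cz(\q)=\ind\q$ is used) and, modulo $\q_{\bar\xi}$, is a Lagrangian subspace of the symplectic space $\q{\cdot}1/\q_{\bar\xi}$ with respect to $\hat{\bar\xi}$. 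Since for $\gamma$ in general position this Lagrangian meets the fixed isotropic subspace $(\textsl{d}_\xi\gZ\cap\q{\cdot}1)/\q_{\bar\xi}$ trivially, we get $\textsl{d}_\xi\gZ\cap\textsl{d}_\xi\cF_\gamma=\q_{\bar\xi}$, of dimension $\ind\q$; therefore $\dim\textsl{d}_\xi\ca=\bb(\q,n)+\bb(\q)-\ind\q=n\bb(\q)$, so $\trdeg\ca=n\bb(\q)=\bb(\mW,[\,,\,]_p)$ and both $\mathscr{G}_p$ and $\mathscr{G\!v}_{\!p}$ are completely integrable.

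The main obstacle is this last transcendence-degree estimate: one must show that adjoining $\cF_\gamma$ genuinely raises the transcendence degree by the full $\bb(\q)-\ind\q$, which rests both on the fine structure of $\textsl{d}_\xi\gZ$ coming from the proof of Theorem~\ref{trdeg} and on choosing $\gamma$ so that the Mishchenko--Fomenko directions $\textsl{d}_\xi\cF_\gamma$ are in general position relative to $\textsl{d}_\xi\gZ$; equivalently, the completeness of $\cF_\gamma$ must be compatible with $\gZ$, and establishing that the corresponding ``bad'' locus in the $(\xi,\gamma)$-space is proper is the real content. Two further, routine, points: the genericity conditions on $\xi$ and $\gamma$ must be arranged simultaneously, which is fine as each is open and non-empty; and the case where $p$ has repeated roots --- so that $(\mW,[\,,\,]_p)$ is a sum of Takiff algebras rather than of copies of $\q$ --- needs no change, since everything invoked about $\gZ(p,p+l)$ (its transcendence degree, the inclusion $\gZ\subset\gS(\mW)^{\q{\cdot}1}$, and the structure of $\textsl{d}_\xi\gZ$) holds for arbitrary $p$ or may be deduced by the same limiting argument as in the proof of Theorem~\ref{trdeg}.
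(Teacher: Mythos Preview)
Your overall strategy matches the paper's: contain the Hamiltonians in $\gZ=\gZ(p,p+l)$ (with $l=t$ or $l=1$), then enlarge by a complete Poisson-commutative subalgebra $\ca\subset\gS(\q{\cdot}1)$ to reach transcendence degree $n\bb(\q)$. The paper does not argue the last step; it simply cites \cite{bols} for the existence of $\ca$ and \cite[Sect.~6.2]{OY} for $\trdeg\mathsf{alg}\langle\gZ,\ca\rangle=n\bb(\q)$, and then invokes \cite[Lemma~1.2]{kruks} for the restriction to generic coadjoint orbits. Your attempt to prove the transcendence-degree step directly is a reasonable addition.

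However, the ``main obstacle'' you flag is illusory, and your proposed resolution (general position of $\cF_\gamma$ relative to $\textsl{d}_\xi\gZ$) is both unjustified and unnecessary. You already use the inclusion $\gZ\subset\gS(\mW)^{\q{\cdot}1}$ to get $\{\gZ,\gS(\q{\cdot}1)\}_p=0$; the same inclusion says more, namely $\pi_p(\xi)(\textsl{d}_\xi\gZ,\q{\cdot}1)=0$. Hence if $v\in\textsl{d}_\xi\gZ\cap(\q{\cdot}1)$ then $\hat{\bar\xi}(v,\q)=0$, i.e.\ $v\in\q_{\bar\xi}$. So $\textsl{d}_\xi\gZ\cap(\q{\cdot}1)\subset\q_{\bar\xi}$ \emph{exactly} (not merely ``isotropic containing $\q_{\bar\xi}$''), and since $\textsl{d}_\xi\cF_\gamma\subset\q{\cdot}1$ one gets
\[
\dim\bigl(\textsl{d}_\xi\gZ\cap\textsl{d}_\xi\cF_\gamma\bigr)\le\ind\q
\]
for \emph{every} $\gamma$. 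Choosing $\xi$ generic so that $\dim\textsl{d}_\xi\gZ=\bb(\q,n)$ (Theorem~\ref{trdeg}) and with $\bar\xi$ generic so that $\dim\textsl{d}_{\bar\xi}\cF_\gamma=\bb(\q)$ (Bolsinov), the dimension formula gives $\dim\textsl{d}_\xi\ca\ge\bb(\q,n)+\bb(\q)-\ind\q=n\bb(\q)$, and you are done. No compatibility condition between $\gamma$ and $\textsl{d}_\xi\gZ$ is needed; the only use of ``isotropic'' in your argument is too weak, and replacing it by $\q$-orthogonality removes the gap.
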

\begin{proof}
The Hamiltonians of $\mathscr{G}_p$ are contained in $\eP^{(\,,\,)}_{\bh,p}\subset \gZ(p,p+t)$. 
The Hamiltonians of $\mathscr{G\!v}_{\!p}$ are contained in $\eP^{(\,,\,)}_{\bh[0,1],p}\subset \gZ(p,p+1)$. 
Both algebras, $\gZ(p,p+t)$ and $\gZ(p,p+1)$, are Poisson-commutative. They consists of $\q$-invariants 
and have  transcendence degree $\bb(\q,n)$, see Theorem~\ref{trdeg}.  In view of \eqref{bound}, one 
may already say that the systems are ``completely integrable". 

Our conditions on $\q$ imply that there is a (reasonably nice) Poisson-commutative subalgebra $\ca\subset\cz(\q{\cdot}1)$ such that 
$\trdeg\ca=\bb(\q)$ \cite{bols}.  Then $\mathsf{alg}\langle \gZ,\ca\rangle$ with $\gZ=\gZ(p,p+t)$ or $\gZ=\gZ(p,p+1)$
is still Poisson-commutative. One can show that $\trdeg\mathsf{alg}\langle \gZ,\ca\rangle=n\bb(\q)$, cf. \cite[Sect.~6.2]{OY}.

The most classical notion of an integrable system requires the underlying space to be a symplectic manifold (or variety). 
In case of $\q^*$, one takes the coadjoint orbits. Since $\q=\Lie Q$ is an algebraic Lie algebra, each 
$(\mW,[\,\,,\,]_p)$ is algebraic as well, i.e.,   $(\mW,[\,\,,\,]_p)=\Lie Q^{\times n,p}$. If $p$ has distinct roots, then 
$Q^{\times n,p}\cong Q^n$. 

%% and %%
%%% that 
The restriction of $\langle \gZ,\ca\rangle$ to a generic coadjoint orbit 
$Q^{\times n,p}{\cdot}\gamma\subset\mW^*$ has trancendence degree $\frac{1}{2}\dim(Q^{\times n,p}{\cdot}\gamma)$,
see e.g. \cite[Lemma~1.2]{kruks}. This means that  (the restrictions of) $\mathscr{G}_p$ and $\mathscr{G\!v}_{\!p}$ are completely integrable 
on $Q^{\times n,p}{\cdot}\gamma$.
%%
%% 
%%% ................. 
\end{proof}

\begin{ex} \label{qv-dim2}
A Takiff Lie algebra $\qgk$ modelled on  reductive $\gt g$ is a quadratic Lie algebra and, as we already know,  it has 
all nice 
%%the {\sl codim}--$3$ 
properties. Another example is provided by centralisres $\gt q=\gt g_e$ of rectangular nilpotent elements $e\in\gt g$, where 
either 
\begin{itemize}
\item[$\diamond$]
$\gt g=\gt{sp}_{2m}$ and $e$ has $2a$ Jordan blocks of odd size $b$ (i.e., $ab=m$) or 
\item[$\diamond$] $\gt g=\gt{so}_{2m}$ and $e$  has $2a$ Jordan blocks of odd size $b$ (i.e., $ab=m$) or 
\item[$\diamond$] $\gt g=\gt{so}_{2m+1}$ and $e$  has $a$ Jordan blocks of odd size $b$ (i.e., $ab=2m+1$).
\end{itemize}
In all these cases, $\gt g_e$ is a quadratic Lie algebra and has the  {\sl codim}--$2$ property, furthermore, 
$\trdeg\gS(\gt g_e)^{\gt g_e}=\ind\gt g_e=\rk\gt g$. 
\end{ex}

\section{Gaudin subalgebras: the semisimple case} \label{GA}
Let $\gt g$ be  semisimple.  
The enveloping algebra $\U(\gt g[t])$ contains a large commutative subalgebra, the 
{\it Feigin--Frenkel centre} $\gt z(\wg)=\gt z(\wg,t)$, the {\it FF-centre} for short.   
Historically, the FF-centre  was constructed as a %%c
  subalgebra  $\gt z(\wg,t^{-1})\subset\U(t^{-1}\gt g[t^{-1}])$ \cite{ff}. 
But it is more convenient for us to switch the variable $t^{-1}\mapsto t$. 
Then the original differential operator $-\partial_t$ involved in the construction of  Feigin and Frenkel 
has to be replaced  with $\tau=t^2\partial_t$. 
A description of $\gr\!(\gt z(\wg,t^{-1}))$ is obtained in \cite{ff}, in our notation 
$\gr\!(\gt z(\wg))=\gZ(\wg,t)\subset\gS(t\gt g[t])$. %% ,
Because of the change of variable, a slight modification in the description of the 
{\it Gaudin subalgebra} introduced in~\cite{FFRe} is needed as well. 

Let $\Delta\U(\gt g[t])\cong\U(\gt g[t])$ be the diagonal of 
$\U(\gt g[t])^{\otimes n}$. Set $\gt h=\gt g^{\oplus n}$. For $x\in\gt g$ and $1\le i \le n$, let 
$x^{(i)}\in\gt h$ be a copy of $x$ belonging to the   $i$-th copy of $\gt g$. 
Any vector $\vec a=(a_1,\ldots,a_n)\in \mK^n$ defines 
a natural homomorphism $\rho_{\vec a}\!:\Delta\U(\gt g[t]) \to \U(\gt g)^{\otimes n}$, where 
\[
 \rho_{\vec a}(x t^k) = a_1^k x^{(1)}+a_2^k x^{(2)}+\ldots + a_n^k x^{(n)}\in\gt g\oplus\gt g\oplus\ldots\oplus\gt g \ \text{ for } \ x\in\gt g.
\] 
Let ${\mathcal G}={\mathcal G}(\vec a)$ be the image of $\gt z(\wg)$ under $\rho_{\vec a}$. 
If $a_j\ne 0$ for each $j$ and $a_j\ne a_k$ for $j\ne k$, then ${\mathcal G}$
contains the Hamiltonians 
${\mathcal H}_k$ associated with $\vec z=(a_1^{-1},\ldots,a_n^{-1})$ \cite{FFRe},
see  Section~\ref{sb-GH} for the Definition of ${\mathcal H}_k$. %%%  for all $k$. 
%% ad 
In this case,  $[{\mathcal G}(\vec a),{\mathcal H}_k]=0$ for each $k$. %% $ c
One also has ${\mathcal G}\subset %%
\U(\gt h)^{\Delta\gt g}$  for any $\vec a$ by the construction. 
Let us say that  ${\mathcal G}(\vec a)$ is a {\it Gaudin subalgebra (or algebra)} if $\vec a\in(\mK^\times)^n$ and 
$a_j\ne a_k$ for $j\ne k$.  The Feigin--Frenkel centre  $\gt z(\wg)$ is a homogeneous in $t$ algebra \cite{ff}. Thereby 
$\cG(\vec a)=\cG(c\vec a)$ for any $c\in\mK^\times$. 

Several  other facts about  ${\mathcal G}(\vec a)$  are known.
 For instance, 
${\mathcal G}(\vec a)$ is a polynomial ring with $\bb(\gt g,n)$ generators
and $\mathcal{ZU}(\gt g{\cdot}1)\subset {\mathcal G}(\vec a)$, %%if  $
see  
%%% 
\cite[Prop.~1]{G-07}.

\begin{prop}\label{G-quo}
Assuming that $a_j\ne a_k$ for $j\ne k$,
set $p=\prod_{j=1}^n (t-a_j)$ and identify $\gt h$ with $\gt g[t]/(p)\cong(\mW,[\,\,,\,]_p)$ using~\eqref{sum-r}. Then 
the Gaudin subalgebra ${\mathcal G}(\vec a)$ identifies with the image of $\gt z(\wg)$ in the quotient 
$\U(\gt g[t])/(p)\cong\U(\gt g[t]/(p))$.
\end{prop}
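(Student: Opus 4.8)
The plan is to compare the two homomorphisms out of $\Delta\U(\gt g[t])\cong\U(\gt g[t])$: on one hand $\rho_{\vec a}\!:\U(\gt g[t])\to\U(\gt g)^{\otimes n}=\U(\gt h)$, whose image of $\gt z(\wg)$ is $\cG(\vec a)$ by definition; on the other hand the quotient map $\U(\gt g[t])\to\U(\gt g[t])/(p)\cong\U(\gt g[t]/(p))$. The key point is that, after the identification $\gt g[t]/(p)\cong\gt h$ furnished by the idempotents $\bar r_1,\dots,\bar r_n$ of Example~\ref{ex-diff}, these two maps of Lie algebras (hence of enveloping algebras) literally coincide. So the first step is to write down, for $x\in\gt g$, the image of $xt^k$ under the quotient map in the $\bar r_i$-coordinates. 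Since $\bar r_i^2\equiv\bar r_i$ and $\bar r_i\bar r_j\equiv 0$ for $i\ne j$ modulo $p$, and since $t\equiv\sum_i a_i\bar r_i\pmod p$ (because $t-\sum_i a_i\bar r_i$ vanishes at every root $a_j$ of the squarefree polynomial $p$, hence is divisible by $p$), we get $\bar t^k=\sum_i a_i^k\bar r_i$ in $\mK[t]/(p)$, and therefore $xt^k\mapsto\sum_i a_i^k\, x\bar r_i$. Under the canonical identification $\gt g\bar r_i\xrightarrow{\sim}\gt g^{(i)}\subset\gt h$ this is exactly $\sum_i a_i^k x^{(i)}=\rho_{\vec a}(xt^k)$.

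Second, I would note that both $\rho_{\vec a}$ and the quotient map are algebra homomorphisms $\U(\gt g[t])\to\U(\gt h)$ agreeing on the generating set $\{xt^k\mid x\in\gt g,\ k\ge 0\}$ of $\U(\gt g[t])$; by the universal property of the enveloping algebra (or simply because $\U(\gt g[t])$ is generated as an algebra by $\gt g[t]$), they are equal. Consequently the image of the Feigin--Frenkel centre $\gt z(\wg)\subset\U(\gt g[t])$ is the same under both, i.e.\ $\cG(\vec a)$ equals the image of $\gt z(\wg)$ in $\U(\gt g[t]/(p))$, which is the assertion. One small technical check is that the identification $\gt g[t]/(p)\cong\gt h$ used in the statement is precisely the one coming from \eqref{sum-r}, namely $\gt g\bar r_i\mapsto\gt g^{(i)}$; with that convention the constants $a_i^k$ land in the right slots, and no relabelling of the copies of $\gt g$ is needed.

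I do not expect any serious obstacle here: the proposition is essentially a bookkeeping identity, and the only thing to be careful about is the compatibility of conventions---the direction of the isomorphism $\mK[t]/(p)\cong\bigoplus_i\mK[t]/((t-a_i))$, the normalisation of $\bar r_i$ in Example~\ref{ex-diff}, and the switch of variable $t^{-1}\mapsto t$ (equivalently $-\partial_t\mapsto\tau$) in the definition of $\gt z(\wg)$ adopted at the start of Section~\ref{GA}. As long as these are fixed consistently, the computation $\bar t^k=\sum_i a_i^k\bar r_i$ together with the universal property finishes the proof in a few lines. If desired, one can phrase the whole argument at the level of Lie algebras first---$\gt g[t]/(p)\cong\gt h$ as Lie algebras, with $xt^k$ going to $\sum_i a_i^k x^{(i)}$---and then simply apply the functor $\U(-)$ and restrict to $\gt z(\wg)$.
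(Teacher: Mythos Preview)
Your proposal is correct and follows essentially the same approach as the paper: both compute that the image of $xt^k$ in $\gt g[t]/(p)$ is $\sum_i a_i^k\, x\bar r_i$, observe that this matches $\rho_{\vec a}(xt^k)$ under the identification $x^{(i)}\leftrightarrow x\bar r_i$, and conclude. Your write-up is in fact slightly more explicit than the paper's (you justify $\bar t^k=\sum_i a_i^k\bar r_i$ via the idempotent relations and extend from generators via the universal property), but the argument is the same.
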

\begin{proof}
In the notation of~\eqref{sum-r}, the image of $x t^k$ in $\gt g[t]/(p)$ is 
equal to $$a_1^k x \bar r_1+ a_2^k  x\bar r_2 +\ldots+ a_n^k  x\bar r_n.$$
This is the image of $\rho_{\vec a}(xt^k)$ under the isomorphism $\gt h\to \gt g[t]/(p)$
with  $x^{(i)}\mapsto x \bar r_i$.  
\end{proof}

In the future, we regard $\gt g=\gt g{\cdot}1\subset\mW$ as the diagonal copy of $\gt g$ in $\gt h$. 
Let us state a few standard facts, valid for all reductive Lie algebras, $\ind\gt g=\rk\gt g$; $\cz(\gt g)=\mK[F_1,\ldots,F_m]$, where $m=\rk\gt g=\trdeg\cz(\gt g)$ and each $F_i$ is homogeneous; the subset  
$\Omega_{\gt g^*}$ defined in Theorem~\ref{thm:k-T} coincides with $\gt g^*_{\sf reg}$ and 
$\dim\gt g^*_{\sf sing}\le \dim\gt g-3$, see e.g. \cite{ko63}, \cite[Sect~2]{OY}, \cite{kruks}.
By \cite{ff}, $\gt z(\wg)$ is freely generated by elements $\tau^k(S_i)$, where  
$1\le i\le m$ and $k\ge 0$, such that $\gr\!(S_i)=F_i[t]$ for each $i$.  

Suppose that both $\vec a$ and $\vec b$ have nonzero pairwise distinct entries. 
Let $p$ and $\bar r_i=r_i+(p)$ be the same as in \eqref{sum-r}. Set 
$r=\sum_i b_i r_i$. Let $\gt z(\wg,r)\subset \U(\gt g[t])$ be the subalgebra obtained from 
$\gt z(\wg)$ by the substitute $t^k\mapsto r^k$. It is still a commutative subalgebra. 

\begin{prop}\label{G-quo-2}
The Gaudin algebra ${\mathcal G}(\vec b)$ identifies with the image of $\gt z(\wg,r)$ in the quotient 
$\U(\gt g[t]/(p))$. %% , whe
\end{prop}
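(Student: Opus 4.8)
The plan is to reduce the statement to the single diagram identity
\[
\Phi\circ\rho_{\vec b}\;=\;\psi_p\circ\sigma_r \quad\text{as maps }\U(\gt g[t])\to\U(\gt g[t]/(p)),
\]
where $\Phi\colon \U(\gt h)\xrightarrow{\ \sim\ }\U(\gt g[t]/(p))$ is the isomorphism from Proposition~\ref{G-quo} (so $x^{(i)}\mapsto x\bar r_i$, using $p=\prod_j(t-a_j)$ and \eqref{sum-r}), $\psi_p\colon \U(\gt g[t])\to\U(\gt g[t]/(p))$ is the quotient map, and $\sigma_r$ is the algebra endomorphism of $\U(\gt g[t])$ induced by the substitution $t\mapsto r$. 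First I would record that $x t^k\mapsto x r^k$ really is a Lie-algebra endomorphism of $\gt g[t]$: since $[x t^a,y t^b]=[x,y]t^{a+b}$ is $\mK[t]$-bilinear and $\mK[t]$ is commutative, one has $[x r^a,y r^b]=[x,y]r^{a+b}$, so the substitution respects the bracket; hence it extends to an algebra endomorphism $\sigma_r$ of $\U(\gt g[t])$, and by definition $\gt z(\wg,r)=\sigma_r(\gt z(\wg))$. Likewise $\mathcal G(\vec b)=\rho_{\vec b}(\gt z(\wg))$ by definition of the Gaudin algebra, and "the image of $\gt z(\wg,r)$ in $\U(\gt g[t]/(p))$" means $\psi_p(\gt z(\wg,r))$. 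Thus the whole content of the proposition is the equality $\Phi(\mathcal G(\vec b))=\psi_p(\gt z(\wg,r))$, and this will follow at once from the displayed identity applied to $\gt z(\wg)$.

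The one substantive point is the computation of $\overline{r^k}\in\mK[t]/(p)$. By Example~\ref{ex-diff} we have $r_i\equiv 1\pmod{(t-a_i)}$ and $r_i\equiv 0\pmod{(t-a_j)}$ for $j\ne i$; hence $r=\sum_j b_j r_j\equiv b_i\pmod{(t-a_i)}$, so $r^k\equiv b_i^k\pmod{(t-a_i)}$ for every $i$, and the Chinese remainder theorem identifies the representative of degree $<n$ as $\sum_i b_i^k r_i$. Therefore $\overline{r^k}=\sum_{i=1}^n b_i^k\bar r_i$, and for $x\in\gt g$,
\[
(\Phi\circ\rho_{\vec b})(x t^k)=\Phi\Bigl(\textstyle\sum_i b_i^k x^{(i)}\Bigr)=\sum_i b_i^k\, x\bar r_i = x\,\overline{r^k}=\psi_p(x r^k)=(\psi_p\circ\sigma_r)(x t^k).
\]
Since the elements $x t^k$ generate $\U(\gt g[t])$ as an algebra and all four maps above are algebra homomorphisms, $\Phi\circ\rho_{\vec b}=\psi_p\circ\sigma_r$ on all of $\U(\gt g[t])$. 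Restricting to $\gt z(\wg)$ then gives $\Phi(\mathcal G(\vec b))=\Phi(\rho_{\vec b}(\gt z(\wg)))=\psi_p(\sigma_r(\gt z(\wg)))=\psi_p(\gt z(\wg,r))$, which is the claim.

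I do not expect a genuine obstacle here: the argument is a bookkeeping refinement of the proof of Proposition~\ref{G-quo}, with $\rho_{\vec a}$ replaced by $\rho_{\vec b}$ and the quotient map $\psi_p$ replaced by $\psi_p\circ\sigma_r$. The only thing requiring care is keeping the two vectors in their distinct roles — $\vec a$ fixes the polynomial $p$ and the identification $\Phi$ of Proposition~\ref{G-quo}, while $\vec b$ enters solely through the homomorphism $\rho_{\vec b}$ and the polynomial $r=\sum_i b_i r_i$ — and noting that the identification meant in the statement is precisely this $\vec a$-identification, not the "tautological" $\vec b$-one. The CRT identity $\overline{r^k}=\sum_i b_i^k\bar r_i$ is the sole computational input.
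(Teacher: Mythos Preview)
Your proof is correct and follows the same idea as the paper's own argument: both reduce to the single observation that the image of $xr^k$ in $\gt g[t]/(p)$ is $\sum_i b_i^k\,x\bar r_i$, which under the identification $\Phi$ of Proposition~\ref{G-quo} equals $\rho_{\vec b}(xt^k)$. The paper records this in one line, whereas you have unpacked it into an explicit commuting-diagram statement and verified the CRT computation in full; the content is the same.
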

\begin{proof}
By the construction, the image of $xr^k$ in $\gt h\cong \gt g[t]/(p)$ equals 
\[b_1^kx^{(1)}+b_2^kx^{(2)}+\ldots+b_n^kx^{(n)}=\rho_{\vec{b}}(xt^k)
\] 
for each $k\ge 0$.
\end{proof}

\begin{ex} 
Suppose  $n=2$, then  $\cG(a_1,a_2)=\cG(\frac{a_1}{1-a_1c},\frac{a_2}{1-a_2c})$ if 
$c$ belongs to $\mK\setminus\{a_1^{-1},a_2^{-1}\}$, see \cite[Prop.~1]{G-07}. 
Hence $\cG(\vec a)=\cG(\frac{2a_1a_2}{a_2-a_1},\frac{2a_1a_2}{a_1-a_2})=\cG(1,-1)$ if $a_1\ne a_2$ and $a_1,a_2\in\mK^\times$. Therefore $\psi_p(\gt z(\wg,r))=\psi_p(\gt z(\wg))$, whenever $p$ has distinct nonzero roots and $r=\alpha t+c$ (with $\alpha\in\mK^\times$, $c\in\mK$) does not divide $p$. 
\end{ex}

Set $\oG(\vec a)=\gr\!({\mathcal G}(\vec a))\subset\gS(\gt h)$. %%% 

\begin{prop}\label{G-psi}
Suppose that $p=\prod_i(x-a_i)$, where $a_i\ne a_j$ for $i\ne j$, and $p(0)\ne 0$. Then 
$\oG(\vec a)\subset\gS(\gt h)$ identifies with $\psi_p(\gZ(\wg,t))\subset\gS(\mW)$. %%  Under the  same assumptions on $p$, 
Furthermore, 
the associated graded algebra $\oG(\vec b)\subset\gS(\gt h)$ of the Gaudin algebra $\cG(\vec{b})$ identifies with 
$\psi_p(\gZ(\wg,r))$, where $\gZ(\wg,r)=\gr\!(\gt z(\wg,r))$ and 
$r=\sum_i b_i r_i$. 
\end{prop}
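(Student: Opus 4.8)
The plan is to reduce the statement to the two preceding propositions (Proposition~\ref{G-quo} and Proposition~\ref{G-quo-2}) by passing from $\U$ to $\gr\U=\gS$, and to track carefully how the quotient maps interact with taking associated graded algebras. First I would set up notation: since $p(0)\ne 0$, all $a_i\ne 0$, so the quotient map $\psi_p\!:\gS(\gt g[t])\to\gS(\mW)$ extends to $\gS(\gt g[t,t^{-1}])$, as recorded in Section~\ref{sec-inf}; and the isomorphism $\gt h\xrightarrow{\sim}\gt g[t]/(p)$ of \eqref{sum-r} sending $x^{(i)}\mapsto x\bar r_i$ induces an isomorphism $\gS(\gt h)\cong\gS(\mW)$ under which $\oG(\vec a)$ and $\psi_p(\gZ(\wg,t))$ are to be compared. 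By definition $\gZ(\wg,t)=\gr\!(\gt z(\wg,t))$ and, as stated just before Proposition~\ref{G-quo}, $\gZ(\wg,t)=\gr\!(\gt z(\wg))\subset\gS(t\gt g[t])$; this is the form of the Feigin--Frenkel description from \cite{ff} after the change of variable $t^{-1}\mapsto t$.

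The key step is: \emph{the quotient map $\U(\gt g[t])\to\U(\gt g[t]/(p))\cong\U(\gt h)$ is filtered, and its associated graded map is exactly the symmetric-algebra quotient $\psi_p\!:\gS(\gt g[t])\to\gS(\mW)\cong\gS(\gt h)$}. This is routine but must be spelled out: the PBW filtration on $\U(\gt g[t])$ pushes forward to the PBW filtration on $\U(\gt h)$ because the defining ideal $(p)\subset\U(\gt g[t])$ is generated by elements $xt^n-\sum_{k<n}c_k xt^k$ of degree $1$, hence is a filtered ideal whose symbol ideal in $\gS(\gt g[t])$ is precisely the ideal defining $\gS(\mW)$. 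Consequently $\gr$ of the image of a filtered subalgebra $\gt z\subset\U(\gt g[t])$ equals the image under $\psi_p$ of $\gr\!(\gt z)$, \emph{provided} the filtration on the image is the subspace filtration induced from $\U(\gt h)$ — and here one uses that $\gr$ is exact on the relevant short exact sequences, or more concretely that $\psi_p$ maps the degree-$\le d$ part of $\gS(\gt g[t])$ onto the degree-$\le d$ part of $\gS(\mW)$ and one can lift symbols. Applying this to $\gt z=\gt z(\wg)$ and invoking Proposition~\ref{G-quo}, which identifies $\cG(\vec a)$ with the image of $\gt z(\wg)$ in $\U(\gt g[t]/(p))$, gives
\[
\oG(\vec a)=\gr\!(\cG(\vec a))=\gr\!\bigl(\text{image of }\gt z(\wg)\bigr)=\psi_p\bigl(\gr\!(\gt z(\wg))\bigr)=\psi_p(\gZ(\wg,t)),
\]
which is the first claim. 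For the second claim I would repeat the argument verbatim with $\gt z(\wg,r)$ in place of $\gt z(\wg)$: the substitution $t^k\mapsto r^k$ is a filtered algebra homomorphism $\U(\gt g[t])\to\U(\gt g[t])$ (it is degree-preserving on $\gS$), so $\gr\!(\gt z(\wg,r))$ is obtained from $\gr\!(\gt z(\wg))=\gZ(\wg,t)$ by the same substitution, i.e. equals $\gZ(\wg,r)$ by the definition of $\gZ(\wg,r)$ in Section~\ref{sec-inf}; then Proposition~\ref{G-quo-2} identifies $\cG(\vec b)$ with the image of $\gt z(\wg,r)$ in $\U(\gt g[t]/(p))$, and the same $\gr$-commutes-with-$\psi_p$ computation yields $\oG(\vec b)=\psi_p(\gZ(\wg,r))$.

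The main obstacle I anticipate is the compatibility of filtrations, i.e. verifying that the image of $\gt z(\wg)$ in $\U(\gt h)$, given the \emph{induced} filtration from $\U(\gt h)$, has associated graded equal to $\psi_p(\gr\gt z(\wg))$ rather than something strictly larger. In general $\gr$ of a quotient can jump if the filtration is not ``split'' along the subalgebra. Here this is controlled because the generators $\tau^k(S_i)$ of $\gt z(\wg)$ have $\gr\!(S_i)=F_i[t]$ with controlled $t$-degree and the ideal $(p)$ only lowers $t$-degree without changing the PBW degree, so no collapse of leading terms occurs; alternatively, one can use that $\psi_p$ is surjective in each PBW degree together with the already-known fact (Proposition~\ref{G-quo}, \cite{G-07}) that $\cG(\vec a)$ and $\gZ(\wg,t)$ are both polynomial rings with $\bb(\gt g,n)$ generators and matching Poincar\'e series, so the inclusion $\psi_p(\gZ(\wg,t))\subseteq\oG(\vec a)$ forced by the filtered map must be an equality by a Hilbert-series count. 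I would present the Hilbert-series argument as the clean way to close the gap, citing Proposition~\ref{sym-T} / the remarks after \eqref{kT-gen} for the Poincar\'e series and \cite[Prop.~1]{G-07} for polynomiality of $\cG(\vec a)$.
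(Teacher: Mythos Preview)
Your inclusion $\psi_p(\gZ(\wg,t))\subseteq\oG(\vec a)$ is obtained exactly as in the paper: if $\psi_p(\gr\Xi)\ne 0$ then $\psi_p(\gr\Xi)=\gr(\psi_p\Xi)$, so symbols of elements of $\cG(\vec a)=\psi_p(\gt z(\wg))$ lie in $\psi_p(\gZ(\wg,t))$. The problem is the reverse inclusion. Your fallback Hilbert-series argument does not close it with the references you give. Proposition~\ref{sym-T} computes the Poincar\'e series of the Poisson centre $\cz_p$, which is a different (and much smaller) algebra than $\psi_p(\gZ(\wg,t))$; and the claim that ``$\gZ(\wg,t)$ is a polynomial ring with $\bb(\gt g,n)$ generators'' is simply false, since $\gZ(\wg,t)$ is freely generated by the infinitely many $\tau^k(F_i[t])$, $k\ge 0$. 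What you actually need is a lower bound on the Poincar\'e series of $\psi_p(\gZ(\wg,t))$ itself, and nothing you cite provides that. Your ``no collapse of leading terms'' heuristic is also not enough: the images $\psi_p(\tau^k(F_i[t]))$ for $k\ge 0$ do collapse (they span only a $(d_i(n{-}1){+}1)$-dimensional space by Lemma~\ref{pol-gZ}), and the issue is precisely whether these collapsed symbols are algebraically independent.

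The paper fills this gap by forward-referencing Section~\ref{sec-Ind}. First Theorem~\ref{sovp-t} identifies $\psi_p(\gZ(\wg,t))$ with the bi-Hamiltonian algebra $\gZ(p,p+t)$; then Theorem~\ref{max} shows that $\gZ(p,p+t)$ is a \emph{maximal} Poisson-commutative subalgebra of $(\gS(\mW),\{\,\,,\,\}_p)^{\gt g}$. Since $\oG(\vec a)$ is Poisson-commutative and lies in $\gS(\gt h)^{\gt g}$, the inclusion $\psi_p(\gZ(\wg,t))\subseteq\oG(\vec a)$ is forced to be an equality by maximality---no Poincar\'e-series bookkeeping for $\oG(\vec a)$ is needed. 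For the $\vec b$ part, the paper does use a Poincar\'e-series count, but only after reducing to the $\vec a$ case via the change of variable $t\mapsto r$ (so that $\psi_p(\gZ(\wg,r))\cong\psi_{\tilde p}(\gZ(\wg,t))$ with $\tilde p=\prod_i(t-b_i)$), and only after the $\vec a$ case has already supplied the reference Poincar\'e series. Your ``repeat verbatim'' for $\vec b$ would inherit the same gap unless the $\vec a$ case is first settled by these Section~\ref{sec-Ind} results.
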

\begin{proof}
The maps $\gr\!$ and $\psi_p$ %%\colorbox{red}{
almost commute. If we have $\psi_p(\gr\!(\Xi))\ne 0$ for $\Xi\in\U(\gt g[t])$, then 
$\psi_p(\gr\!(\Xi))=\gr\!(\psi_p(\Xi))$. Thereby  $\psi_p(\gZ(\wg,t))$
is contained in 
$\oG(\vec a)$, up to our usual identification, % if $p=\prod_i(x-a_i)$, 
see Proposition~\ref{G-quo}.

Set $d_i=\deg F_i$. Suppose that $p(0)\ne 0$. Then 
\[\dim\left<\psi_p{\circ}\tau^k(F_i[t])\mid 0\le k\le d_i(n{-}1)\right>\ge d_i(n{-}1)+1,
\]
 see the proof of 
Lemma~\ref{pol-gZ}. It follows from results of Section~\ref{sec-Ind}, see Theorems~\ref{sovp-t},\,\ref{free}, that 
%%%we will show that 
$\psi_p(\gZ(\wg,t))$ is freely generated by $\psi_p{\circ}\tau^k(F_i[t])$ with $1\le i\le m$ and $0\le k\le d_i(n{-}1)$.
Furthermore, $\psi_p(\gZ(\wg,t))$ is a maximal (w.r.t. inclusion) Poisson-commutative 
subalgebra of $(\gS(\mW),\{\,\,,\,\}_p)^{\gt g}$ by Theorems~\ref{sovp-t},\,\ref{max}.
Since $\oG(\vec a)\subset\gS(\gt h)^{\gt g}$, 
we obtain $\psi_p(\gZ(\wg,t))=\oG(\vec a)$.
%% the algebras are equal. 

The inclusion $\psi_p(\gZ(\wg,r))\subset \oG(\vec b)$  follows from Proposition~\ref{G-quo-2}. The change of variable 
$t\mapsto r$ leads to an isomorphism $\psi_{\tilde p}(\gZ(\wg,t))\cong\psi_p(\gZ(\wg,r))$, where 
$\tilde p=\prod_{i=1}^n (t-b_i)$. Since $\tilde p(0)\ne 0$, 
%%
%%% r = b_i (mod x-a_i) 
the subalgebra $\psi_p(\gZ(\wg,r))$ is  a polynomial ring and its  Poincar{\'e} series coincides 
with that of $\psi_p(\gZ(\wg,t))=\oG(\vec a)$ and of 
any Gaudin algebra. Hence $\psi_p(\gZ(\wg,r))=\oG(\vec b)$.  
%%
%%% also a maximal Poisson-commutative 
%% subalgebra of $(\gS(\mW),\{\,\,,\,\}_p)^{\gt g}$
%% Thus the assertion about $\cG(\vec{b})$ is proven.
%%%
% ... {\bf why equality? ??????}. 
\end{proof}

As a corollary of Proposition~\ref{G-psi}, we obtain the following two facts, which are not entirely new,
%and hence
\begin{gather}
{\mathcal G}(\vec a) \,\text{ is freely generated by the elements } \,\psi_p{\circ}\tau^k(S_i), \ \text{s.t. } 1\le i\le m, \ 0\le k\le d_i(n{-}1);  \label{G-gen} \\
\cG(\vec a) \,\text{ is a maximal commutative subalgebra of } \,\U(\gt h)^{\gt g}, 
\end{gather}
cf. \cite[Prop.~1]{G-07}, \cite[Sect.~7]{fo}.

\begin{ex} \label{tn}
In case $p=t^n-1$ and $r=t^{n-1}$, we have $\psi_p(\gZ(\wg,r))=\gZ(p,t^n)\subset\gS(\mW)$, see \cite{fo}. 
\end{ex}

The image $\psi_p(\gZ(\wg,t))\subset\gS(\mW)$ %%% 
definitely depends on $p$. The statement of Conjecture~\ref{conj}{\sf(i)} implies that $\psi_{p+t}(\gZ(\wg,t))=\psi_p(\gZ(\wg,t))$. 
It is difficult to interpret this equality in terms of $\gt g^{\oplus n}$, since our fixed isomorphism 
$(\mW,[\,\,,\,]_p)\cong\gt g^{\oplus n}$  depends on $p$. 
In the enveloping algebra, we have 
$\psi_p(\gt z(\wg))\subset \U(\mW,[\,\,,\,]_p)$  and there is no sense in comparing 
$\psi_p(\gt z(\wg))$ with $\psi_{p+t}(\gt z(\wg))$. %%%%%%It is 
However, we may consider $\psi_p(\gt z(\wg,r))\subset \U(\mW,[\,\,,\,]_p)$ for any $r\in\mK[t]$.

\subsection{Subalgebra $\gzu\subset \gS(\gt g[t])$}
%%%
By a remarkable result of L.\,Rybnikov \cite{r:un}, $\gt z(\wg)$ is the centraliser in $\U(t\gt g[t])$ of 
$\boldsymbol H= \sum_{i=1}^{\dim \gt g} (x_i t)^2$ and $\gZ(\wg,t)$ is the Poisson centraliser 
of $\boldsymbol H$ in $\gS(t\gt g[t])$.  By \cite[Prop.~4.9.]{ir}, 
$\gZ(\wg,[0])=\gS(\gt g[t])^{\gt g[t^{-1}]}$  is the Poisson centraliser  of $\bH[0,1]$ in $\gS(\gt g[t])^{\gt g}$. This recent   result 
helps us to understand $\gzu$.

Let $\vec{k}=(k_1,\ldots,k_d)$ be a tuple of non-negative integers. %%$. 
For  $Y=y_1{\ldots} y_d\in\gS^d(\gt g)$%% and
, let
$Y[\vec{k},t]\in\gS(\gt g[t])$ be the $\vec{k}$-polasiration of $Y$ defined in the same way as in Section~\ref{s-pol}, but with 
$t$ in place of $\bar t$.   The notation extends to all elements of $\gS^d(\gt g)$. For 
$F\in\gS^d(\gt g)$,  set $\Pol(F,t)=\left<F[\vec{k},t] \mid \vec{k}\right>$.

\begin{prop}\label{red-gzu}
For a  semismple Lie algebra $\gt g$, 
%%In
we have $\gzu=\gZ(\wg,[0])$.
\end{prop}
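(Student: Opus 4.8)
The plan is to prove the two inclusions separately. The inclusion $\gzu\subseteq\gZ(\wg,[0])$ is immediate from the characterisation of $\gZ(\wg,[0])$ recalled just above, while $\gZ(\wg,[0])\subseteq\gzu$ requires exhibiting a generating set of $\gZ(\wg,[0])$ inside $\gzu$ and is the substantial part. For the first inclusion: $\gzu$ is generated by lowest $t$-components of elements of $\gZ(\wg,t+1)$, each $\gZ(\wg,r)$ consists of $\gt g$-invariants, and the lowest $t$-component of a $\gt g$-invariant is again a $\gt g$-invariant (the $\gt g$-action preserves $t$-degree); hence $\gzu\subseteq\gS(\gt g[t])^{\gt g}$, and in fact $\gzu$ is $t$-bigraded. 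We also have $\{\gzu,\gzu\}=0$ and $\bH[0,1]\in\gzu$, the latter noted right after \eqref{gzu-qv}. Therefore $\{\gzu,\bH[0,1]\}=0$, so $\gzu$ is contained in the Poisson centraliser of $\bH[0,1]$ in $\gS(\gt g[t])^{\gt g}$, which equals $\gZ(\wg,[0])$ by \cite[Prop.~4.9]{ir}.

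For the reverse inclusion I would use that, by \cite{ir}, $\gZ(\wg,[0])$ is a polynomial ring whose free homogeneous generators can be taken to be the $t$-analogues $F_i^{[\dd],t}:=\sum_{|\vec{k}|=\dd}F_i[\vec{k},t]\in\Pol(F_i,t)$ of the Takiff generators \eqref{kT-gen}, with $1\le i\le m$ and $\dd\ge 0$; for $F_i=\sum_\iota x_\iota^2$ one has $F_i^{[\dd],t}=\widetilde{\bH}^{[\dd]}$, and Lemma~\ref{P-H} confirms that these span $\gZ(\wg,[0])\cap\langle\bH[a,b]\rangle$. It then suffices to show $F_i^{[\dd],t}\in\gzu$ for all $i,\dd$, and here the plan is to generalise Lemma~\ref{gzu-des}. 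Starting from the element $\sum_{|\vec{k}|=\dd+d_i,\,k_1\ge1}F_i[\vec{k},t]\in\gZ(\wg,t)$ (the higher-degree analogue of $\bH^{[\dd]}$) and subtracting a suitable $\mK$-linear combination of the corresponding lower elements of $\gZ(\wg,t)$, one arranges that the image under $\Psi$ has lowest $t$-component equal to $C\cdot F_i^{[\dd],t}$ with $C\in\mK^\times$; since $\Psi_\esi$ carries $\gZ(\wg,t)$ into $\gZ(\wg,\esi t+1)$, this lowest component lands in $\gzu$. The dimension estimate of Lemma~\ref{pol-gZ}, applied through the quotient maps $\psi_p$, guarantees that as $\dd$ varies these elements fill up the relevant graded components of $\Pol(F_i,t)$, so $\gzu$ contains all the $F_i^{[\dd],t}$; combined with the first inclusion this gives $\gzu=\gZ(\wg,[0])$.

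The main obstacle is the non-vanishing step in the second inclusion: showing that, after the corrections, the lowest $t$-component of $\Psi(\,\cdot\,)$ sits in the predicted $t$-degree $\dd$ — rather than a lower degree, where it might be annihilated — and equals a nonzero multiple of $F_i^{[\dd],t}$. For quadratic invariants this is exactly the explicit computation of Lemma~\ref{gzu-des}, reduced there to the determinant identity $\det A_\dd=1$; for higher-degree $F_i$ one needs the analogous, and heavier, combinatorial identity for $\Psi$ applied to degree-$d_i$ polarisations. An alternative, since $\gzu\subseteq\gZ(\wg,[0])$ is already available, is to avoid explicit constants by a dimension argument: for $p$ of large degree with distinct nonzero roots one has $\psi_p(\gzu)=\gZ(p,p+1)$ (Theorem~\ref{ft-gzu}), which is a maximal Poisson-commutative subalgebra of $(\gS(\mW),\{\,\,,\,\}_p)^{\gt g}$ (Theorem~\ref{max}), and a careful comparison of graded dimensions through these quotients forces equality upstairs. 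Either way the crux is controlling the bigraded Hilbert function of $\gzu$, precisely because the naive ``flat limit preserves the Hilbert function'' heuristic does \emph{not} apply here — the limit picks out lowest $t$-components rather than leading terms, which is exactly what makes $\gzu$ coincide with $\gZ(\wg,[0])$ instead of with $\gZ(\wg,t)$.
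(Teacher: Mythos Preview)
Your first inclusion and your main strategy for the reverse inclusion are exactly what the paper does. The paper uses the generators $\calv\circ\tau^k(F_j[t])$ of $\gZ(\wg,[0])$ (these are proportional to your $F_j^{[k],t}$), and proves by induction on $k$ that there exist $c_0,\ldots,c_{k-1}\in\mK$ with $\Psi\bigl(\tau^k(F_j[t])-\sum_i c_i\tau^i(F_j[t])\bigr)_\bullet$ a nonzero multiple of $\calv\circ\tau^k(F_j[t])$. The non-vanishing step you flag as the obstacle is carried out explicitly: the paper computes the coefficient of $F_j[(b,0,\ldots,0),t]$ in $\frac{1}{u!}\tau^u(F_j[t])$ as $\binom{u+d}{b+d-1}$, assembles these into a $(k{+}1)\times(k{+}1)$ matrix $A_{k,d}$ of binomial coefficients, and shows $\det A_{k,d}=1$ by the column reduction $\det A_{k,d}=\det A_{k-1,d+1}$ and the base case $\det A_{1,d'}=1$. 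So the ``heavier combinatorial identity'' you anticipate is in fact no harder than the quadratic case of Lemma~\ref{gzu-des}.

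Your proposed alternative is circular and should be dropped. Theorem~\ref{ft-gzu} is proved \emph{after} Proposition~\ref{red-gzu} and relies on it: the identity \eqref{T-1}, used in the proof of Theorem~\ref{ft-gzu}{\sf(i)} to place $\calv\circ\tau^k(F_j[t])$ inside $\gzu$, is obtained precisely by ``repeating the proof of Proposition~\ref{red-gzu}''. Invoking Theorem~\ref{ft-gzu} (and hence Theorem~\ref{max} downstream) to avoid the determinant computation therefore assumes what you are trying to prove. Likewise, your appeal to Lemma~\ref{pol-gZ} for a dimension count only gives information after applying $\psi_p$, and lifting that back to $\gS(\gt g[t])$ without already knowing the structure of $\gzu$ is exactly the gap the determinant argument fills.
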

\begin{proof}
Recall that $\gzu=\lim_{\esi\to 0} \gZ(\wg,\esi t+1)$. By~\eqref{gzu-qv},
%%In the proof of Theorem~\ref{Ham-gzu}, 
we have %%established that 
$\bH[0,1]\in \gzu$. Combining the facts that  
$\gzu\subset\gS(\gt g[t])^{\gt g}$ is Poisson-commutative and that $\gZ(\wg,[0])$  is the Poisson centraliser  of $\bH[0,1]$ in $\gS(\gt g[t])^{\gt g}$ \cite[Prop.~4.9.]{ir}, we obtain $\gzu\subset\gZ(\wg,[0])$. %%%, since 

Let 
$\calv\!: t \gt g[t]\to\gt g[t]$ be the linear map given by $\calv(x t^k)=x t^{k-1}$ for $x\in\q$ und $k\ge 1$. We extend it to a 
homomorphism from $\gS(t\gt g[t])$ to $\gS(\gt g[t])$. The algebra $\gZ(\wg,t)$ is generated by 
$\tau^k(F_j[t])$ with $1\le j\le m$ and $k\ge 0$; the algebra $\gZ(\wg,[0])$ is generated by 
$\calv{\circ}\tau^k(F_j[t])$ with $1\le j\le m$ and $k\ge 0$, cf. Theorem~\ref{gen-gZ}. 

Similar to the proof of Lemma~\ref{gzu-des}, we will show that for any $j$ and $k$, there are elements 
$c_{k-1},\ldots,c_0\in\mK$ such that 
\begin{equation} \label{psiC}
\Psi(\tau^k(F_j[t])-c_{k-1}\tau^{k-1}(F_j[t])-\ldots-c_0 F_j[t])_\bullet=C\calv{\circ}\tau^k(F_j[t])
\end{equation}
 with $C\in\mK^\times$. Let $j$ be fixed. It is  convenient to replace 
each $\tau^k(F_j[t])$ with $\bF_k=\frac{1}{k!}\tau^k(F_j[t])$. If $k=0$, then the statement is obvious, 
$\Psi(\bF_0)_\bullet=\calv(\bF_0)$. Therefore assume that $k\ge 1$ and that for all $u<k$ the statement is proven. 

For any $F\in\Pol(F_j,t)\cap \gZ(\wg,t)$, we have $\Psi(F)_\bullet\in\Pol(F_j,t)\cap \gZ(\wg,[0])$. 
Since the field $\mK$ is algebraically closed, $|\mK|=\infty$ and we can define an 
evaluation map ${\sf Ev}_{\vec c}\!: \gS(\gt g[t])\to \gS(\gt g)$, where $x t^\nu\mapsto xc_\nu$ 
if $x\in\gt g$ and $\nu\ge 0$, such that $c_u\ne c_i$ for $i\ne u$. Clearly 
${\sf Ev}_{\vec c}(\Pol(F_j,t))=\mK F_j$. 
It is also clear that for a nonzero $F\in\Pol(F_j,t)$, one can find a suitable vector $\vec c$ such that 
${\sf Ev}_{\vec c}(F)\ne 0$. Since the polynomials $F_1,\ldots,F_m$ are algebraically independent,
the existence of  ${\sf Ev}_{\vec c}$ leads to 
\begin{equation} \label{c}
\Pol(F_j,t)\cap\mK[\calv{\circ}\tau^k(F_i[t]) \mid i\ne j, \, k\ge 0]=\{0\}.
\end{equation}
The element $\Psi(F)_\bullet$ with $F\in\Pol(F_j,t)\cap \gZ(\wg,t)$  is a $t$-polarisation of $F_j$ and it is $t$-homo\-ge\-neous. Therefore 
$\Psi(F)_\bullet=C\calv{\circ}\tau^k(F_j[t])$ for some $k\ge 0$ and $C\in\mK$. 

%% As a graded commutative algebra $\gZ(\wg,[0])$ is isomorphic to $\gZ(\wg,t)$. 
%% Furthermore, both algebras are homogeneous in $t$ and, if $\gS_k(\gt g[t])\subset\gS(\gt g[t])$ stands for 
% the subspace generated by elements of $t$-degree $k$, then 
%% \[ \dim\left(\gZ(\wg,[0])\cap \gS_k^d(\gt g[t])\right)=\dim\left(\gZ(\wg,t)\cap \gS^d_{k{+}d}(\gt g[t])\right) \]
%% for all $d,k\ge 0$. 
%% Therefore 
%% $\gzu$ and $\gZ(\wg,[0])$ have  the same  Poincar{\'e} series, this implies  

By the inductive hypothesis, $\calv{\circ}\tau^u(F_j[t])\in\gzu$ for $u\le k-1$. Thereby 
there are $c_{i}\in\mK$ with $i\in\{0,\ldots,k-1\}$ such that 
$\deg_t\!\Upsilon\le k$ for $\Upsilon=\Psi(\bF_k-c_{k-1}\bF_{k-1}-\ldots -c_0 \bF_0)_\bullet$.  Our goal is to show that 
$\deg_t\!\Upsilon=k$. Assume that this is not the case.
Set $d=\deg F_j$. 
 The coefficient 
of $F_j[(b,0,\ldots,0),t]$ with $0<b \le u+1$ in $\bF_u$ is equal to 
\[
\sum_{i=0}^{u-b+1} \binom{b+i}{b} \binom{u-b-i+d-1}{d-2}=%%\sum_{i=0}^{u-b+1} \binom{b+i}{i} \binom{u-b-i+d-1}{u-b-i+1}=\ ??? \ 
\binom{u+d}{b+d-1}.
\]
The coefficient of $F_j=F_j[(0,\ldots,0),t]$ in $\bF_u$ is equal to 
$\binom{u+d-1}{d-1}$. 
Thereby we must have
\[
(-1,c_{k-1},c_{k-2},\ldots,c_0)A=0
\]
for 
\[
A=A_{k,d}=\begin{pmatrix}
\binom{k+d}{1}  & \binom{k+d}{2} & \binom{k+d}{3} & \ldots & \binom{k+d}{k} & \binom{k+d-1}{k} \\  
1  & \binom{k+d-1}{1} & \binom{k+d-1}{2} & \ldots & \binom{k+d-1}{k-1} &\binom{k+d-2}{k-1} \\
0 & 1 & \binom{k+d-2}{1} &   \ldots & \binom{k+d-2}{k-2} & \binom{k+d-3}{k-2} \\
0 & 0 & 1 & \ldots  & \binom{k+d-3}{k-3}& \binom{k+d-4}{k-3} \\
\vdots & . & . & . &.  & \vdots\\
0 & . & . & 1 & d{+}1 & d \\
0 & . & . & 0 & 1 & 1 \\
\end{pmatrix}.
\]
Subtracting the last column from the but-last one, we see that 
$\det(A_{k,d})=\det(A_{k-1,d+1})$. 
Since $A_{1,d'}=\begin{pmatrix}   d'{+}1 & d'  \\ 1&  1  \end{pmatrix}$ for any $d'\ge 1$, we have 
$\det(A_{k,d})=1$ for all $k\ge 1$ and $d\ge 1$. 
This is a contradiction and hence indeed $\deg_t\!\Upsilon=k$.  

%%%%
%% $\gzu =\gZ(\wg,[0])$. %%ey
%% Clearly $\dim\left(\gzu\cap\gS_0^d(\gt g[t])\right)=\dim\left(\gZ(\wg,t)\cap \gS_d^d(\gt g[t])\right)$. 

This proves that all $\calv{\circ}\tau^k(F_j[t])$ are elements of $\gzu$. Thus  
%%%Hence ......... the Poincar{\'e} series .........  of 
$\gzu=\gZ(\wg,[0])$.  %%% coincides with that of    $.  
\end{proof}

Consider now $\widetilde{\gzu}=\lim_{\esi\to 0} \gt z(\wg,\esi t+1)\subset \U(\gt g[t])^{\gt g}$.
This is clearly a commutative algebra. 
If we regard $\bH[0,1]$ as an element of $\U(\gt g[t])$, then $\bH[0,1]\in\widetilde{\gzu}$.
Thereby $\widetilde{\gzu}\subset \gt z(\wg,[0])$, where  $\gt z(\wg,[0])$ is a slight modification of the Feigin--Frenkel 
centre and the unique quantisation of $\gZ(\wg,[0])$, see \cite[Sect.~5]{ir}. Recall that 
$\gt z(\wg,t)=\mK[\tau^k(S_j)\mid 1\le j\le m, \ k\ge 0]$, where $\gr\!(S_j)=F_j[t]$
and each $S_{j,k}=\tau^k(S_j)$ is a homogeneous in $t$ element \cite{ff}.  %%In this terms
Quite similarly 
\[
\gt z(\wg,[0])=\mK[\widetilde{S}_{j,k} \mid 1\le j\le m, \ k\ge 0],
\]
 where $\gr\!(\widetilde{S}_{j,k})=\calv{\circ}\tau^k(F_j[t])$  and  each $\widetilde{S}_{j,k}$ is a homogeneous in $t$ element of $t$-degree $k$.  
Note that the maps $\Psi_\esi$ and $\Psi=\Psi_1$ extend from $\gt g[t]$ to $\U(\gt g[t])$. 

\begin{prop} \label{prop-gzu}
%% {\sf (i)} 
We have $\gr\!(\widetilde{\gzu})=\gzu$. %%\\[.2ex]
%%%%
%%% .......
%% By a \colorbox{yellow}{similar argument} one can prove that $\lim_{\esi\to 0} \gt z(\wg,\esi t+1)\subset \U(\gt g[t])^{\gt g}$
%%% is the .......  mentioned in .
%%% where  
\end{prop}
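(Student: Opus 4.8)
The plan is to prove the two inclusions $\gr\!(\widetilde{\gzu})\subseteq\gzu$ and $\gzu\subseteq\gr\!(\widetilde{\gzu})$, both by transplanting the argument of Proposition~\ref{red-gzu} one filtration level up, from $\gS(\gt g[t])$ to $\U(\gt g[t])$.

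For $\gr\!(\widetilde{\gzu})\subseteq\gzu$ I would argue as follows. Recall from the paragraph preceding the statement that $\bH[0,1]\in\widetilde{\gzu}$ and $\widetilde{\gzu}\subseteq\gt z(\wg,[0])$. Passing to associated graded algebras with respect to the PBW filtration, and using that $\gt z(\wg,[0])$ is the quantisation of $\gZ(\wg,[0])$, i.e. $\gr\!(\gt z(\wg,[0]))=\gZ(\wg,[0])$ (\cite[Sect.~5]{ir}), together with $\gZ(\wg,[0])=\gzu$ (Proposition~\ref{red-gzu}), this gives $\gr\!(\widetilde{\gzu})\subseteq\gr\!(\gt z(\wg,[0]))=\gZ(\wg,[0])=\gzu$.

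For the reverse inclusion it is enough to realise each generator $\calv{\circ}\tau^k(F_j[t])=\gr\!(\widetilde{S}_{j,k})$ of $\gzu=\gZ(\wg,[0])$ as the symbol of an element of $\widetilde{\gzu}$; since $\widetilde{\gzu}\subseteq\gt z(\wg,[0])=\mK[\widetilde{S}_{i,b}]$, this will in fact prove the stronger statement $\widetilde{\gzu}=\gt z(\wg,[0])$. I would proceed by induction on the pairs $(j,k)$ ordered by $d_j=\deg F_j$ first and $k$ second. Fix Feigin--Frenkel generators $S_j\in\gt z(\wg,t)$ with $\gr\!(S_j)=F_j[t]$, put $S_{j,u}=\tau^u(S_j)$, and note $\gr\!(S_{j,u})=\tau^u(F_j[t])$ because the derivation $\tau=t^2\partial_t$ preserves the PBW filtration. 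Then, with the very constants $c_0,\dots,c_{k-1}$ produced in the proof of Proposition~\ref{red-gzu} (those making $(-1,c_{k-1},\dots,c_0)A_{k,d_j}=0$ impossible, as $\det A_{k,d_j}=1$), I would set
\[
\Xi_{j,k}\ :=\ \Psi\Big(S_{j,k}-\sum_{u=0}^{k-1}c_u\,S_{j,u}\Big)_{\!\bullet}\ \in\ \widetilde{\gzu},
\]
where $\Psi$ is the extension to $\U(\gt g[t])$ of the filtered automorphism $xt^a\mapsto x(t{+}1)^a$ and $(\cdot)_\bullet$ is the lowest $t$-homogeneous component; membership in $\widetilde{\gzu}=\lim_{\esi\to 0}\gt z(\wg,\esi t{+}1)$ holds because $S_{j,k}-\sum c_uS_{j,u}\in\gt z(\wg,t)$.

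The hard part will be to identify $\gr\!(\Xi_{j,k})$, and I expect this to be the main obstacle. Since $\Psi$ preserves the PBW filtration and $\gr\circ\Psi=\Psi\circ\gr$, the symbol of $\Psi(S_{j,k}-\sum c_uS_{j,u})$ equals $\Psi\big(\tau^k(F_j[t])-\sum_{u<k}c_u\tau^u(F_j[t])\big)$, homogeneous of symmetric degree $d_j$, whose lowest $t$-component is $C\,\calv{\circ}\tau^k(F_j[t])$ with $C\in\mK^\times$ by the computation in Proposition~\ref{red-gzu}; in particular all its $t$-components of $t$-degree $<k$ have symmetric degree $<d_j$. The danger is therefore that $\Xi_{j,k}$ collapses to $t$-degree $<k$ and PBW degree $<d_j$. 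I would dispel this exactly as in the classical case: the offending part of $\Xi_{j,k}$ lies in $\gt z(\wg,[0])=\mK[\widetilde{S}_{i,b}]$ and, being $t$-homogeneous of PBW degree $<d_j$, is necessarily a polynomial in the generators $\widetilde{S}_{i,b}$ with $d_i<d_j$, all of which have already been placed in $\widetilde{\gzu}$; subtracting it keeps us inside $\widetilde{\gzu}$ and lets us assume $\deg_t\Xi_{j,k}\ge k$. A $t$-homogeneous $t$-polarisation of $F_j$ of PBW degree $d_j$ inside $\gZ(\wg,[0])$ is a scalar multiple of $\calv{\circ}\tau^{k'}(F_j[t])$ with $k'=\deg_t\Xi_{j,k}$, and the identity $\det A_{k,d_j}=1$ — now applied in $\U(\gt g[t])$, comparing the coefficients of the relevant monomials as in Proposition~\ref{red-gzu} — forces $k'=k$ and the scalar to be nonzero; hence $\gr\!(\Xi_{j,k})\in\mK^\times\,\calv{\circ}\tau^k(F_j[t])$ and, after removing the lower-degree corrections, $\widetilde{S}_{j,k}\in\widetilde{\gzu}$. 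Completing the induction yields $\widetilde{\gzu}=\gt z(\wg,[0])$, and therefore $\gr\!(\widetilde{\gzu})=\gZ(\wg,[0])=\gzu$.
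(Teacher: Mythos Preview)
Your proposal is correct and follows essentially the same route as the paper: the easy inclusion via $\widetilde{\gzu}\subset\gt z(\wg,[0])$, then an induction on $d_j$ that lifts the computation of Proposition~\ref{red-gzu} to $\U(\gt g[t])$, correcting the lower-PBW-degree ``noise'' in $\Psi(\Xi)_\bullet$ by elements of $\widetilde{\gzu}$ already produced. One minor simplification the paper makes: once the corrected element has PBW degree exactly $d_j$, its symbol is automatically the (unchanged) degree-$d_j$ part of the relevant $t$-component of $\Psi(\Xi)$, so a second appeal to $\det A_{k,d_j}=1$ ``in $\U(\gt g[t])$'' is not needed.
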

\begin{proof}
%%% {\sf (i)} 
As we have already observed, $\widetilde{\gzu}\subset\gt z(\wg,[0])$. Thereby 
$\gr\!(\widetilde{\gzu})\subset \gr\!(\gt z(\wg,[0]))=\gzu$. The task is to show that 
each $\calv{\circ}\tau^k(F_j[t])$ is an element of $\gr\!(\widetilde{\gzu})$. 

There is no harm in assuming that $\gt g$ is simple. We choose the numbering of  basic symmetric invariants 
of $\gt g$ in such a way that $\deg F_1\le\deg F_2\le\ldots\le\deg F_m$. Then $\deg F_1=2$ and $F_1=\sum_{i}x_i^2$ up to a nonzero scalar. %%% $ is proportional to the Killing form  
%% The centre of $\gt g$ is equal to $\{0\}$ and $ we must have 
Each  $\tau^k(F_1[t])=k!\bH^{[k+2]}$ can be regarded as an element of 
$\U(\gt g[t])$ and 
\[
\lim_{\esi\to 0}\left<\Psi_\esi(\bH^{[\dd]},\Psi_\esi(\bH^{[\dd-1]}),\ldots,\Psi_{\esi}(\bH^{[2]})\right> 
=\left<\widetilde{\bH}^{[u]} \mid 0\le u\le \dd-2\right>\subset \U(\gt g[t])
\]
for all $\dd\ge 2$, see Lemma~\ref{gzu-des}. 
This proves that $\calv{\circ}\tau^k(F_1[t])\in\gr\!(\widetilde{\gzu})$ for all $k\ge 0$. 
Suppose that the same holds for all $u<j$ and consider $F_j$. Set $d=\deg F_j$. 

Clearly $\gr\!(\Psi(S_j)_\bullet)=F_j=\calv{\circ}\tau^0(F_j[t])$.  %%%Since $\Psi(S_j)_\bullet\in\U_d(\gt g)$, we have 
%% Suppose that $\calv{\circ}\tau^u(F_j[t])\in\gr\!(\widetilde{\gzu})$ for all $u$ such that $k>u\ge 0$. 
Consider 
$$
\Upsilon=\Psi(\Xi)_\bullet \ \text{ with } \ \Xi=\tau^{k}(S_j)-c_{k-1}\tau^{k-1}(S_j)-\ldots -c_1\tau(S_j)-c_0 S_j,
$$ 
where $c_{k-1},\ldots,c_0\in\mK$. 
We have $\Upsilon\in\U_d(\gt g[t])\cap \gt z(\wg,[0])$.
Assume that $\deg\gr\!(\Upsilon)<d$. Then we can write $\gr\!(\Upsilon)\in\gZ(\wg,[0])$ as a polynomial in elements 
$\calv{\circ}\tau^\nu (F_i[t])$ with $i<j$ and $\nu\ge 0$. Since these elements belong to 
$\gr\!(\widetilde{\gzu})$, there is $S\in \U_{d-1}(\gt g[t])\cap\widetilde{\gzu}$
such that  $\deg_t S\le d+k$ and 
\[
\deg_t\!\Upsilon'>\deg_t\!\Upsilon \ \ \text{ or } \ \ \deg\gr\!(\Upsilon')<\deg\gr\!(\Upsilon) \ \ \text{ for } \ \ 
\Upsilon'=\Psi(\Xi-S)_\bullet.
\]
 If $\deg\gr\!(\Upsilon')<d$, we modify 
$S$ and produce a new element $\Upsilon'$, decreasing the degree of $\gr\!(\Upsilon')$ or increasing the $t$-degree
of $\Upsilon'$.  On the one hand, the degree of $\gr\!(\Upsilon')$ cannot decrease infinitely. On the other hand,
always $\deg_t\!\Upsilon'\le d+k$. %%\deg_t\!\Psi(\tau^k(F_j[t])-\sum_{i=0}^{k-1}c_i\tau^i(F_j[t])_\bullet$ and if
%%$\deg_t\!\Upsilon' = \deg_t\!\Psi(\tau^k(F_j[t])-\sum_{i=0}^{k-1}c_i\tau^i(F_j[t])_\bullet$, then
%%  $\deg\gr\!(\Upsilon')=d$. 
 
 Thus, there is $S\in \U_{d-1}(\gt g[t])\cap\widetilde{\gzu}$ such that $\deg\gr\!(\Upsilon')=d$
%%% ........ reduce to zero, which is a contradiction. 
%%
%%%Therefore $\deg\gr\!(\Upsilon)=d$ 
and hence 
$$\gr\!(\Upsilon')=\Psi(\tau^k(F_j[t])-\sum_{i=0}^{k-1}c_i\tau^i(F_j[t])_\bullet\,.
$$
For a suitable choice of $c_0,\ldots,c_{k-1}$, the lowest $t$-component 
$\Psi\big(\tau^k(F_j[t])-\sum_{i=0}^{k-1}c_i\tau^i(F_j[t])\big)_\bullet$ is equal to $\calv{\cdot}\tau^k(F_j[t])$ up to a nonzero scalar,
see the proof of  Proposition~\ref{red-gzu} and in particular \eqref{psiC}. Hence  $\calv{\cdot}\tau^k(F_j[t])$ belongs to $\gr\!(\widetilde{\gzu})$. 
The result follows by induction on $j$. 
%%%%......... almost finished  ........ 
%%
\end{proof}

\subsection{Computations with Poisson brackets}\label{sec-bra}
A direct generalisations to ${\mathcal G}(\vec a)$ and $\oG(\vec a)$
%% $\U(\gt h)$ and $\gS(\gt h)$
 of the description as a centraliser of a single element 
%% above statements 
 is not possible. The quadratic elements $\bh=\sum_{i=1}^{\dim\gt g} (x_i  \bar t)^2$ 
and $\bh[0,1]$ commute with 
$\gt g{\cdot}1\subset\gt h$. %%
Thereby we will consider the Poisson centraliser of  $\bh$ in the subspace spanned by the 
polarisations of basic $\gt g$-invariants.

Suppose that $\hat Y=\hat y_1\ldots \hat y_d\in\gS^d(\gt g[t])$ and  $\hat y_j=y_j t^{k_j}$ with $y_j\in\gt g$.
Then 
\[
P_{\hat Y}:= \{ {\bH}, \hat Y\} =2 \sum_{j=1,i=1}^{j=d,i=\dim\gt g}  [x_i t,\hat y_j] x_i t \frac{\hat Y}{\hat y_j} = 
 2 \sum_{j,i,u} \kappa([x_u,x_i],y_j)  x_u t^{1{+}k_j} x_i t \frac{\hat Y}{\hat y_j}\,. 
\]
Recall that  $\{\gt g,\bH\}=0$. Hence %%a Hence
\begin{equation} \label{P2}
P_{\hat Y}=2\sum_{i,u;\,j:\,k_j\ne 0} \kappa([x_u,x_i],y_j)   x_u t^{1{+}k_j} x_i t  \frac{\hat Y}{\hat y_j}\,. 
\end{equation}
The  Killing form $\kappa$  extends to a non-degenerate $\gt g$-invariant  scalar product $(\,\,,\,)$ on %%  
%%% 
$\gS(\gt g[t])$. We will assume that $(\gt gt^a,\gt gt^b)=0$ for $a\ne b$, that 
$(xt^a,yt^a)=\kappa(x,y)$ for $x,y\in\gt g$,  and 
that 
$$
(\xi_1\ldots\xi_{\boldsymbol k},\eta_1\ldots\eta_{\boldsymbol d})=  \delta_{{\boldsymbol k},{\boldsymbol d}}
\sum_{\sigma\in{\tt S_{\boldsymbol k}}} \kappa(\xi_1,\eta_{\sigma(1)})\ldots \kappa(\xi_{\boldsymbol k},\eta_{\sigma({\boldsymbol k})})
$$
if $\xi_j,\eta_j\in\gS(\gt g[t])$, ${\boldsymbol d}\ge {\boldsymbol k}$. 
Let ${\mathcal B}$ be a monomial basis of $\gS(\gt g[t])$ consisting of the elements 
$\hat v_1\ldots \hat v_{\boldsymbol k}$, where  $\hat v_j=v_j t^{\nu_j}$ and $v_j\in\{x_i\}$. 
Then ${\mathcal B}$ 
%% Note,  
is an orthogonal, but not an orthonormal basis. 
For instance, if $\Xi=x_1^{\gamma_1}\!\ldots x_{\boldsymbol k}^{\gamma_{\boldsymbol k}}$, then $(\Xi,\Xi)=\gamma_1!\ldots\gamma_{\boldsymbol k}!$. Next we present several constructions from \cite[Sect.~3.3]{sym}. 
%% For $k\ge 1$, set 

Let  $\vec{\alpha}=(\alpha_0,\ldots,\alpha_{M})$ be a tuple of non-negative integers such that 
%%% 
$\sum_i \alpha_i=d{+}1$. 
%% $
Set $\gS^{\vec{\alpha}}(\gt g[t]):=\prod_{j=0}^{M} \gS^{\ap_j}(\gt g t^j)\subset \gS(\gt g[t])$ and  
${\mathcal B}(\vec{\alpha}):={\mathcal B}\cap \gS^{\vec{\alpha}}(\gt g[t])$. 
Fix  different $i, j\in\{0,\ldots,M\}$ such that $\alpha_j,\alpha_i\ne 0$. %% 
Assume that a monomial 
$\mathbb V=\hat v_1\ldots \hat v_{d{+}1}\in{\mathcal B}(\vec{\alpha})$ with %%
$\hat v_k=v_k t^{\nu_k}$ is written in such a way that $\nu_k=i$ 
for   $1\le k\le \ap_i$ and $\nu_k=j$ for  $\ap_i<k\le \ap_i+\ap_j$.  
Finally suppose that $F\in\gS^d(\gt g)$. 
In this notation, set
\begin{align} 
&\mathscr{F}[\vec{\alpha},i, j] :=\sum_{{\mathbb V}\in{\mathcal B}(\vec{\alpha})} A(\mathbb V) \mathbb V     
\ \ \text{ with }  \nonumber  \\
 &  
\qquad \ A(\hat v_1\ldots \hat v_{d{+}1})= \big(\mathbb V,\mathbb V\big)^{-1}\!\!\!\!\!\!\!\!\!\!\!\!\!\!\!\!\sum_{{\footnotesize \begin{array}{c}1\le k\le \ap_i, \\ \ap_i< q \le \ap_i+\ap_j \end{array}}}\!\!\!\!\!\!\!\!\!\!\!\!(F,[v_k,v_q] \prod_{u\ne q,k} v_u)\in\mK. 
 \label{W} 
\end{align} 
If  $\alpha_i=0$ or $\alpha_j=0$, then we set $\mathscr{F}[\vec\ap,i,j]=0$. 
Clearly $\mathscr{F}[\vec\ap,i,j]=-\mathscr{F}[\vec\ap,j,i]$. 
Certain Poisson brackets $\{\bH[a,b],{\hat Y}\}$ can be expressed in terms of $\mathscr{F}[\vec\ap,i,j]$.

\begin{prop}[cf. {\cite[Prop.~3.10]{sym}}] \label{ff-br}
%%The 
If\/ $Y=y_1{\ldots} y_d\in\gS^d(\gt g)$, then 
$$
\frac{-1}{2}%
P_{Y[\vec{k},t]} =\frac{1}{2} %
\{Y[\vec{k},t],\bH\}
$$ 
equals  
the sum of \,$\mathscr{Y}[\vec{\alpha},1, j]$ over all tuples $\vec{\alpha}$ as above and  $2\le j\le M$
with $\ap_j\ne 0$ such that 
the multi-sets $\{0^{\alpha_0},1^{\alpha_1},{\ldots},(j{-}1)^{\ap_{j-1}+1}, j^{\ap_j-1},{\ldots}, M^{\alpha_{M}}\}$ and 
$\{1,k_1,\ldots,k_{d}\}$ coinside. \qed
\end{prop}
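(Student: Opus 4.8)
The plan is to prove Proposition~\ref{ff-br} by comparing the two sides degree by degree with respect to the $t$-grading. Both $\tfrac12\{Y[\vec{k},t],\bH\}$ and the sum on the right lie in $\gS^{d+1}(\gt g[t])$ and are $t$-homogeneous of $t$-degree $|\vec{k}|+2$, so it suffices to match, for each tuple $\vec\alpha=(\alpha_0,\ldots,\alpha_M)$ with $\sum_i\alpha_i=d+1$, the $\gS^{\vec\alpha}(\gt g[t])$-component of $\tfrac12\{Y[\vec{k},t],\bH\}=-\tfrac12 P_{Y[\vec{k},t]}$ with the corresponding term on the right. I would write $Y[\vec{k},t]=\sum_{\sigma\in{\tt S}_d}\hat Y_\sigma$ with $\hat Y_\sigma=y_1t^{k_{\sigma(1)}}\!\ldots y_dt^{k_{\sigma(d)}}$ and expand $-\tfrac12 P_{\hat Y_\sigma}$ using the Leibniz rule $\{(x_it)^2,F\}=2(x_it)\{x_it,F\}$ together with \eqref{P2}: a monomial of $-\tfrac12 P_{\hat Y_\sigma}$ is obtained by choosing a factor $y_lt^{k_{\sigma(l)}}$ (automatically of $t$-degree $\ge1$ here), deleting it, adjoining two factors of $t$-degrees $1+k_{\sigma(l)}$ and $1$, and attaching a structure-constant coefficient pairing $y_l$ via $\kappa$ with the commutator of two basis vectors.

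Next I would pin down which multidegrees $\vec\alpha$ occur. The multi-set of $t$-degrees of a monomial produced above is obtained from $K:=\{k_1,\ldots,k_d\}$ (the $t$-multidegree of every $\hat Y_\sigma$) by deleting a value $e\ge1$ and adjoining $e+1$ and $1$. Setting $j:=e+1\ge2$, this says precisely that the multi-set $\{0^{\alpha_0},1^{\alpha_1},\ldots,(j{-}1)^{\alpha_{j-1}+1},j^{\alpha_j-1},\ldots,M^{\alpha_M}\}$ equals $\{1,k_1,\ldots,k_d\}$, which forces $\alpha_j\ge1$; and $j$ is uniquely determined by such a $\vec\alpha$, since the displayed condition means that $\vec\alpha$ is obtained from the multiplicity vector of $\{1\}\cup K$ by transferring one unit from slot $j{-}1$ to slot $j$, and two distinct $j$ give two distinct tuples. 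Hence the multidegrees occurring in $\{Y[\vec{k},t],\bH\}$ are exactly the $\vec\alpha$ admitting such a $j$, and the right-hand side of the proposition is precisely the sum over these $\vec\alpha$ of the single terms $\mathscr{Y}[\vec\alpha,1,j]$.

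The core step is then to fix such a pair $(\vec\alpha,j)$ and verify that the $\gS^{\vec\alpha}(\gt g[t])$-component of $-\tfrac12 P_{Y[\vec{k},t]}$ equals $\mathscr{Y}[\vec\alpha,1,j]$ as given by \eqref{W}, with the two distinguished indices in \eqref{W} taken to be $1$ and $j$. On one side, \eqref{W} weights a basis monomial $\mathbb V\in\mathcal B(\vec\alpha)$ by $(\mathbb V,\mathbb V)^{-1}$ times the sum, over a $t$-degree-$1$ factor $v_k$ and a $t$-degree-$j$ factor $v_q$ of $\mathbb V$, of $(Y,[v_k,v_q]\prod_{u\ne k,q}v_u)$. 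On the other side, $\mathbb V$ arises from $-\tfrac12 P_{Y[\vec{k},t]}$ exactly by choosing one of its $t$-degree-$1$ factors to be the free half $x_it$ of a summand $(x_it)^2$ of $\bH$, choosing one of its $t$-degree-$j$ factors to be $x_ut^{1+k_{\sigma(l)}}$, and pairing the remaining factors of $\mathbb V$ (which, compared with $K$, lack a factor of $t$-degree $j{-}1=e$) with the $y_l$'s; using the above Leibniz identity, $\kappa([x_u,x_i],y_l)=-\kappa(y_l,[x_i,x_u])$, and the overall factor $-\tfrac12$, the coefficient is $\kappa(y_l,[x_i,x_u])$, exactly the contribution of $(v_k,v_q)=(x_i,x_u)$ in \eqref{W}. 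The one delicate point is the multiplicity with which a fixed $\mathbb V$ is hit: once for each ordered choice of a $t$-degree-$1$ and a $t$-degree-$j$ factor of $\mathbb V$ together with each ordering of the $y$'s matching the remaining factors, while $(\mathbb V,\mathbb V)^{-1}$ is the reciprocal of the number of symmetries of $\mathbb V$; that these counts agree is the bookkeeping carried out in \cite[Prop.~3.10]{sym}, which I would invoke. Summing over all occurring multidegrees, each with its unique $j$, then gives the identity.

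I expect the coefficient/multiplicity reconciliation of the previous paragraph to be the main obstacle: matching the normalisation $(\mathbb V,\mathbb V)^{-1}$ of \eqref{W} against the over-counting built into both $Y[\vec{k},t]=\sum_\sigma\hat Y_\sigma$ and the Leibniz expansion of the Poisson bracket. Everything else — the reduction to $\gS^{\vec\alpha}$-components, the determination of the occurring multidegrees, and the unicity of $j$ — is routine.
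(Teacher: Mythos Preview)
The paper does not give its own proof of this proposition: it is stated with a \qed and a reference to \cite[Prop.~3.10]{sym}, so there is nothing in the paper to compare against beyond that citation. Your outline is a faithful unpacking of exactly that argument: you decompose $-\tfrac12 P_{Y[\vec{k},t]}$ into its $\gS^{\vec\alpha}(\gt g[t])$-components via \eqref{P2}, identify the admissible pairs $(\vec\alpha,j)$ by the multiset manipulation, observe that $j$ is determined by $\vec\alpha$ (and $\vec k$), and then defer the coefficient/multiplicity reconciliation between the symmetrised Poisson bracket and the weight $(\mathbb V,\mathbb V)^{-1}$ in \eqref{W} to \cite[Prop.~3.10]{sym}. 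That is precisely the intended route, and your identification of the bookkeeping step as the only nontrivial part is accurate. One small wording point: your parenthetical ``automatically of $t$-degree $\ge 1$ here'' is correct only because you are invoking \eqref{P2}, not because the entries of $\vec k$ are assumed positive; it would read more cleanly to say that the $k_{\sigma(l)}=0$ contributions vanish by the $\gt g$-invariance of $\bH$ (equivalently, by the antisymmetry of $\kappa([x_u,x_i],y_l)$ in $u,i$ against the symmetry of $x_u t\cdot x_i t$).
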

%%, 

Let $F\in\gS^d(\gt g)^{\gt g}$ and $i$ be fixed. %% 
By \cite[Prop.~3.9]{sym}, we have 
\begin{equation}\label{univ}
\sum\limits_{j\ne i} \mathscr{F}[\vec\ap,i,j]=0
\end{equation}
for each $\vec\ap$. Set $\tilde M(\vec\ap):=|\{j\mid 0\le j\le M, \ \ap_j\ne 0\}|-1$. 

\begin{lm}\label{lin} Let us fix  $\vec{\ap}$ and $0\le i\le M$ such that $\ap_i\ne 0$. %%% be 
Suppose that $\gt g=\gt{sl}_d$, where $d+1=\sum_{i=0}^M \ap_i$, and that $F(\xi)=\det(\xi)$ for $\xi\in\gt g^*\cong\gt{sl}_d$. 
Then 
%%%
$
\dim\left<\mathscr{F}[\vec\ap,i,j] \mid j\ne i\right>=\tilde M(\vec\ap)-1$.
\end{lm}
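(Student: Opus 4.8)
The plan is to obtain the bound $\dim\le\tilde M(\vec\ap)-1$ for free from the identity \eqref{univ} and then to match it using a judicious choice of test monomials. Put $J=\{j\mid 0\le j\le M,\ j\ne i,\ \ap_j\ne0\}$, so $|J|=\tilde M(\vec\ap)$ and $\mathscr{F}[\vec\ap,i,j]=0$ for $j\notin J$. Consider the linear map $\Phi\colon\mK^{J}\to\gS^{\vec\ap}(\gt g[t])$, $(c_j)\mapsto\sum_{j\in J}c_j\mathscr{F}[\vec\ap,i,j]$. By \eqref{univ} the vector $(1,\dots,1)$ lies in $\ker\Phi$, hence $\dim\langle\mathscr{F}[\vec\ap,i,j]\mid j\ne i\rangle=|J|-\dim\ker\Phi\le\tilde M(\vec\ap)-1$ (the case $\tilde M(\vec\ap)=0$ being vacuous). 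So it is enough to prove $\ker\Phi=\mK(1,\dots,1)$.

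Write $A_j(\mathbb V)\in\mK$ for the coefficient of a monomial $\mathbb V\in{\mathcal B}(\vec\ap)$ in $\mathscr{F}[\vec\ap,i,j]$, computed via \eqref{W}; then $\sum_{j\in J}A_j(\mathbb V)=0$ for every $\mathbb V$, again by \eqref{univ}. Fix $j_0\in J$. I will produce, for each $j_1\in J\setminus\{j_0\}$, a monomial $\mathbb V=\mathbb V_{j_1}$ with $A_{j_1}(\mathbb V)\ne0$ and $A_j(\mathbb V)=0$ for all $j\in J\setminus\{j_0,j_1\}$. Given this, any $(c_j)\in\ker\Phi$ satisfies $0=\sum_{j\in J}c_jA_j(\mathbb V_{j_1})=c_{j_1}A_{j_1}(\mathbb V_{j_1})+c_{j_0}A_{j_0}(\mathbb V_{j_1})=(c_{j_1}-c_{j_0})A_{j_1}(\mathbb V_{j_1})$, the last step using $A_{j_0}(\mathbb V_{j_1})=-A_{j_1}(\mathbb V_{j_1})$ (all other $A_j(\mathbb V_{j_1})$ vanishing); thus $c_{j_1}=c_{j_0}$ for every $j_1$, i.e. $(c_j)\in\mK(1,\dots,1)$, as wanted.

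The construction of $\mathbb V_{j_1}$ exploits the shape of $F=\det$. Identifying $\gt{sl}_d^*\cong\gt{sl}_d$ by the trace form, a product of $d$ off-diagonal matrix units pairs nontrivially with $\det=\sum_{\tau\in{\tt S}_d}\mathrm{sgn}(\tau)\prod_{b}E_{b\tau(b)}$ exactly when its rows and its columns each exhaust $\{1,\dots,d\}$, i.e. when they form a permutation pattern. In $A_j(\mathbb V)$ one removes from $\mathbb V$ a matrix unit of $t$-degree $i$ and one of $t$-degree $j$ and reinserts their bracket; an elementary computation with row/column multisets shows the outcome can form a permutation pattern only if the two removed units ``share an index $\nu$'' (the column of the $t^i$-factor equals the row of the $t^j$-factor, or vice versa) and $\nu$ is the unique value occurring twice among the rows (equivalently, columns) of $\mathbb V$. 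I therefore take the underlying matrix units of $\mathbb V$ to be $E_{12},E_{21},E_{13},E_{34},E_{45},\dots,E_{d-1,d},E_{d1}$ --- a near-$d$-cycle through all vertices with the vertex $1$ visited twice --- whose rows and columns both give $\{1,1,2,3,\dots,d\}$, so that only the four units $E_{12},E_{21},E_{13},E_{d1}$ touch the index $1$. Placing $E_{d1}$ at $t$-degree $i$, $E_{12}$ at $t$-degree $j_1$, and the other two index-$1$ units $E_{21},E_{13}$ only at degrees in $\{i,j_0,j_1\}$ (or, when there is not enough room, pushing $E_{21}$ --- never $E_{13}$ --- onto a ``bad'' degree, which is harmless since $[E_{d1},E_{21}]=0$), one checks that the single surviving completion is $[E_{d1},E_{12}]\cdot(\text{rest})=E_{d2}\cdot(\text{rest})$, whose rows and columns form the $d$-cycle pattern $2\to1\to3\to4\to\cdots\to d\to2$; hence $A_{j_1}(\mathbb V)\ne0$, while $A_j(\mathbb V)=0$ for the remaining $j$.

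The main obstacle is exactly this last placement: distributing the $d+1$ matrix units over the $t$-degrees so as to respect all multiplicities $\ap_\ell$, keep $E_{d1},E_{12}$ at degrees $i,j_1$ (and $E_{21}$ off degree $i$), and keep every index-$1$ unit other than $E_{21}$ off the bad degrees. This needs a short case distinction on the size of $\ap_i+\ap_{j_0}+\ap_{j_1}$ (the binding case being $\ap_i=\ap_{j_0}=\ap_{j_1}=1$, where $E_{21}$ must go to a bad degree), together with a direct check of the small cases $d=2,3$; for $\gt g=\gt{sl}_2$, where $\det\propto h^2+ef$, one uses instead a monomial $e\,t^{i}\cdot f\,t^{j_1}\cdot h\,t^{\ell}$ with $i,j_1,\ell$ the active degrees, for which $A_{j_1}$ picks up the nonzero pairing $(\det,h^2)$.
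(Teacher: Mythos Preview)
Your overall strategy coincides with the paper's: the upper bound comes for free from the relation \eqref{univ}, and the lower bound is obtained by exhibiting, for each pair of indices, a test monomial that is ``seen'' by only two of the $\mathscr{F}[\vec\ap,i,j]$'s, so that no nontrivial relation beyond \eqref{univ} can hold.

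The difference is in the choice of test monomial. The paper places $(E_{11}{-}E_{22})t^i$, $E_{12}t^j$, $E_{21}t^k$ and fills the remaining $d-2$ slots with Cartan elements $E_{(u{-}1)(u{-}1)}{-}E_{uu}$. Because Cartan elements commute with one another, every bracket contributing to $A_q(\hat Y)$ with $q\ne j,k$ vanishes automatically; there is no placement issue and no case distinction, and the argument works uniformly for all $d\ge 2$. Your construction with an off-diagonal near-cycle $E_{12},E_{21},E_{13},E_{34},\dots,E_{d1}$ is correct in spirit, but it forces you into a combinatorial analysis of which brackets can complete to a permutation pattern, a case split on $\ap_i+\ap_{j_0}+\ap_{j_1}$, and separate checks for $d=2,3$. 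These are doable (your remark that $[E_{d1},E_{21}]=0$ handles the binding case, and one checks that $[E_{d1},E_{k,k+1}]$ for $k\ge 3$ never yields a row/column multiset containing the index~$d$), but the write-up leaves them as sketches. The Cartan-plus-root-pair monomial of the paper buys you a one-line verification in place of all of this.
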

\begin{proof}
If $\ap_k=0$, then $\mathscr{F}[\vec\ap,i,k]=0$, if $j=i$, then also $\mathscr{F}[\vec\ap,i,j]=0$. Thereby
we have at most $\tilde M=\tilde M(\vec\ap)$ nonzero Elements $\mathscr{F}[\vec\ap,i,j]$.
They satisfy \eqref{univ}, because $F\in\gS(\gt g)^{\gt g}$. Thus $\dim\left<\mathscr{F}[\vec\ap,i,j] \mid j\ne i\right>\le\tilde M-1$.  We need the equality 
here. If $\tilde M=1$, there is nothing to prove. Therefore suppose that $\tilde M\ge 2$. Then also $d\ge 2$. 

Assume that $\dim\left<\mathscr{F}[\vec\ap,i,j] \mid j\ne i\right> <\tilde M-1$. Then there are 
$j\ne k$ such that $j,k\ne i$, \  $\ap_k,\ap_j\ne 0$ and a relation $\sum_{q=0}^M C_q \mathscr{F}[\vec\ap,i,q]=0$ with  
$C_j=1$, $C_k=0$.

Now we work in the basis $\{E_{\nu\ups}, E_{\nu\nu}-E_{(\nu+1)(\nu+1)}\}$ of $\gt{sl}_d$ and
 pick out a suitable summand of  $\mathscr{F}[\vec\ap,i,j]$.  %%
To this end we construct a certain monomial $\hat Y\in\gS^{\vec\ap}(\gt g[t])$. 
Set $\hat y_1=(E_{11}{-}E_{22})t^i$, $\hat y_2=E_{12} t^j$, and $\hat y_3=E_{21}t^k$,  
let the factors $\hat y_u$ with $3<u \le d{+}1$ be elements of $\gt h[t]$, 
where $\gt h=\left<E_{11}-E_{uu} \mid 2\le u\le d\right>$. Assume further that 
$(E_{12}E_{21}y_4\ldots y_{d{+}1},F)\ne 0$. %%%%  $ .  
A suitable choice is $y_{u+1}=E_{(u-1)(u-1)}-E_{uu}$ for $u\ge 3$. 
The monomial 
$\hat Y=\hat y_1\ldots \hat y_{d+1}$ appears with a non-zero coefficient  in $\mathscr{F}[\vec\ap,i,j]$ and in
 $\mathscr{F}[\vec\ap,i,k]$, but in no $\mathscr{F}[\vec\ap,i,q]$ with $q\ne j,k$. Thus $C_j=0$. This contradiction finishes the proof.
 %%%    
\end{proof} 

It is quite probable that Lemma~\ref{lin} holds for all invariants 
$F\in\gS^d(\gt g)$, also for $\gt g\ne\gt{sl}_N$.
%%%%% 

Suppose now that the entries of $\vec{k}$ satisfy  $ 0\le k_i \le n{-}1$. %%
In Section~\ref{s-pol}, 
we have defined 
$\Pol(F)=\left<F[\vec{k}] \mid  0\le k_i \le n{-}1 \right>\subset\gS(\mW)^{\gt q}$ for $\gt q$. Now we  
use this object in case $\gt q=\gt g$. It is more convenient to rewrite $\vec{k}$ in a multi-set form $\vec{k}=(0^{\ups_0},1^{\ups_1},\ldots,(n{-}1)^{\ups_{n{-}1}})$ and set 
$\vec{\ups}(\vec{k})=\vec{\ups}=(\ups_0,\ldots\ups_{n{-}1})$. %% r 
On $\{\vec{\ups}\}$, we use the left lexicographic order. % 

Recall that $\ell_0=[\,\,,\,]_{t^n}$ is a Lie bracket on $\mW$. We use the same symbol for the Poisson 
bracket $\{\,\,,\,\}_{t^n}$ on $\gS(\mW)$. 

\begin{thm} \label{dim-F}
Let $F\in\gS^d(\gt g)$ be such that 
$\dim\left<\mathscr{F}[\vec\ap,i,j] \mid i\,\text{ is fixed and } \,j\ne i\right>=\tilde M(\vec\ap)-1$ whenever $\ap_i\ne 0$. Then 
%%% 
$\dim\eP \le     (n-1)d+1$ for $\eP=\{  f\in \mathrm{Pol}(F) \mid \ell_0(f,\bh)=0 \}$.
\end{thm}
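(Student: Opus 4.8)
The plan is to transport the problem to the current algebra $\gS(\gt g[t])$ through the quotient map, to expand the bracket with $\bh$ by means of Proposition~\ref{ff-br}, and to reduce the vanishing condition to a triangular system of linear conditions on the coefficients of the polarisations whose ``pivots'' are then counted. First I would record that $\psi_{t^n}\colon\gS(\gt g[t])\to\gS(\mW)$ is a homomorphism of Poisson algebras carrying the Lie--Poisson bracket to $\ell_0=\{\,\,,\,\}_{t^n}$ (the ideal generated by the $xt^k$ with $k\ge n$ is a Poisson ideal), that $\bh=\psi_{t^n}(\bH[1,1])$, that each $F[\vec k]$ is, up to a scalar, $\psi_{t^n}(F[\vec k,t])$, and that $\{x\bar t^a,y\bar t^b\}_{t^n}$ equals $[x,y]\bar t^{a+b}$ or $0$. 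Consequently $f\mapsto\ell_0(f,\bh)$ preserves the total $t$-degree, so $\eP=\bigoplus_{\dd=0}^{d(n-1)}\eP_\dd$, where $\eP_\dd$ is the solution space inside $\langle F[\vec k]\mid|\vec k|=\dd\rangle$; since there are exactly $d(n-1)+1$ values of $\dd$ it is enough to prove $\sum_\dd\dim\eP_\dd\le d(n-1)+1$.

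For $f=\sum_{|\vec k|=\dd}c_{\vec k}F[\vec k]$ I would compute $\ell_0(f,\bh)=\psi_{t^n}\bigl(\{\sum_{\vec k}c_{\vec k}F[\vec k,t],\bH\}\bigr)$ by Proposition~\ref{ff-br}: up to the factor $2$, $\{F[\vec k,t],\bH\}$ is the sum of the terms $\mathscr F[\vec\ap,1,j]$ over the tuples $\vec\ap$ with $\sum_i\ap_i=d+1$ and the indices $j$ attached to $\vec k$, and applying $\psi_{t^n}$ annihilates exactly the terms whose $\vec\ap$ has a nonzero entry in some position $\ge n$. Since the multigraded pieces $\gS^{\ap_0}(\gt g\bar t^0)\cdots\gS^{\ap_{n-1}}(\gt g\bar t^{n-1})$ of $\gS^{d+1}(\mW)$ are linearly independent, grouping the expansion by $\vec\ap$ turns $\ell_0(f,\bh)=0$ into one equation $\sum_{j\in J(\vec\ap)}c_{\vec k(\vec\ap,j)}\,\overline{\mathscr F}[\vec\ap,1,j]=0$ for every admissible $\vec\ap$; here $\overline{\mathscr F}$ is the $\psi_{t^n}$-image, $J(\vec\ap)=\{\,j:2\le j\le n-1,\ \ap_j\ge1\,\}$, and $j\mapsto\vec k(\vec\ap,j)$ is the injective map onto the monomials feeding the $\vec\ap$-block, obtained by bumping one copy of $j$ down to $j-1$ and deleting one copy of $1$ in the multiset of $\vec\ap$. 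The hypothesis, together with \eqref{univ} — so I take $F\in\gS^d(\gt g)^{\gt g}$, the case needed in the applications — says that the nonzero $\overline{\mathscr F}[\vec\ap,1,j]$ span a space of dimension $\tilde M(\vec\ap)-1$ whose only linear relation is $\sum_j\overline{\mathscr F}[\vec\ap,1,j]=0$. Hence the $\vec\ap$-equation is vacuous when $\tilde M(\vec\ap)\le1$; when $\tilde M(\vec\ap)\ge2$ and $0\notin\mathrm{supp}(\vec\ap)$ it says the coefficients $c_{\vec k(\vec\ap,j)}$, $j\in J(\vec\ap)$, all coincide; and when $\tilde M(\vec\ap)\ge2$ and $0\in\mathrm{supp}(\vec\ap)$ the index $j=0$ drops out of $J(\vec\ap)$, the remaining $\overline{\mathscr F}$'s are linearly independent, and it forces every $c_{\vec k(\vec\ap,j)}=0$.

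I would then fix the left-lexicographic order on the multiplicity vectors $\vec\ups(\vec k)$ (so that, at a given $t$-degree, the multiset with the most zeros is largest) and use the two facts that $0$ has the same multiplicity $\ap_0$ in $\vec\ap$ and in every $\vec k(\vec\ap,j)$, and that the feeding monomials satisfy $\vec k(\vec\ap,2)\succ\vec k(\vec\ap,3)\succ\cdots$ in this order. Thus a ``vanishing'' equation eliminates the variables $c_{\vec k}$ with $\vec k=\vec k(\vec\ap,j)$ (each leading variable is itself), and a ``coincidence'' equation, written as the differences between $\vec\ups$-consecutive feeding monomials, eliminates the $|J(\vec\ap)|-1$ variables $c_{\vec k(\vec\ap,j)}$ with $j\neq j_{\max}(\vec\ap)$ (leaving the $\vec\ups$-smallest feeding monomial free). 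Collecting these conditions, which have pairwise distinct leading variables, the variables $c_{\vec k}$ that survive are: those with $\vec k$ equal to some $(0^a,(n-1)^{d-a})$, $0\le a\le d$ (these feed no nontrivial block, hence $F[\vec k]\in\eP$ genuinely), together with one representative per ``coincidence class'' — i.e. per connected component of the incidence hypergraph — among the $\vec k$ with no entry equal to $n-1$. A count shows there are $d(n-1)+1$ surviving variables, so by triangularity $\dim\eP\le d(n-1)+1$; combined with Proposition~\ref{bound-dim} this is even an equality.

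I expect the main obstacle to be the combinatorial bookkeeping in the final step: making precise which multisets $\vec k$ survive — in particular showing that every $\vec k$ with at least one zero entry and at least one entry strictly between $0$ and $n-1$ is eliminated by a suitable ``vanishing'' block, and that the number of connected components of the incidence hypergraph on the remaining multisets is exactly $d(n-2)$, so that the total of surviving variables is $d(n-1)+1$. The $d=2$ instance of all of this is Lemma~\ref{claim}, and the explicit formulas \eqref{tn-c0}--\eqref{tn-c} together with the known basis of $\eP^{(\,,\,)}_{\bh,t^n}$ found there would serve as a guide and a consistency check while carrying out the general count.
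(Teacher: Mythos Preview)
Your approach is essentially the paper's, repackaged. Both lift to $\gS(\gt g[t])$ via $\psi_{t^n}$, expand $\{-,\bH\}$ through Proposition~\ref{ff-br}, group the result by the multiplicity vector $\vec\ap$, and exploit the left-lexicographic order on the $\vec\ups(\vec k)$. The difference is that the paper runs a \emph{leading-term} argument rather than building the full echelon system: given a nonzero $\bar f\in\eP$, it takes the $\prec$-minimal component $F[\vec k']$ and asks which $\vec k'$ can occur. For each candidate it picks the smallest available $j$ (namely $j=2$ when $\ups_1\ne0$, or $j=i+1$ when $\ups_0=\cdots=\ups_{i-1}=0$, $\ups_i\ne0$), so that the resulting $\vec\ap$ is itself $\prec$-minimal among all $\vec\ap$ reachable from components of $f$; then at most one or two feeding monomials contribute to that block, and the dimension hypothesis forces the support of $\vec\ups(\vec k')$ to be $\{0,n{-}1\}$ or a pair $\{i,i{+}1\}$. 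A short case split on the position of the first nonzero entry of $\vec\ups(\vec k')$ gives exactly the list $(0^a,(n{-}1)^{d-a})$ for $1\le a\le d$, $(i^a,(i{+}1)^{d-a})$ for $1\le i\le n{-}2$ and $1\le a\le d$, and $((n{-}1)^d)$, totalling $(n{-}1)d+1$. This \emph{is} the combinatorial count you flagged as the obstacle; the minimal-component packaging makes it a short case analysis rather than a hypergraph-component count.

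One remark on your added assumption: you invoke \eqref{univ} to pin down the unique relation as $\sum_{j}\mathscr F[\vec\ap,1,j]=0$, which indeed needs $F\in\gS^d(\gt g)^{\gt g}$. The paper's argument does not name the relation explicitly, but the step ``$\mathscr F[\vec\ap,1,2]=0$ forces $\vec\ap$ to have only two nonzero components'' is using the same strength --- the bare dimension hypothesis $\dim=\tilde M(\vec\ap)-1$ does not by itself preclude a single $\mathscr F[\vec\ap,1,j]$ from vanishing when $\tilde M\ge2$. So your making the invariance explicit is a clarification rather than a weakening, and is harmless for every application in the paper.
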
 
\begin{proof}
Each element $\bar f\in\eP$ is a linear combination 
$\sum_{\vec{k}} C_{\vec{k}} F[\vec{k}]$ with $C_{\vec{k}}\in\mK$. 
Replacing each summand $C_{\vec{k}} F[\vec{k}]$ with $C_{\vec k}F[\vec k,t]$, we obtain
$f\in\gS(\gt g[t])$ with the property  $\bar f=f+(p)$. 
Let $\bar f_{\diamond}$ be the minimal w.r.t. the left lexicographic order on $\{\vec{\ups}\}$ nonzero component of %%a 
$\bar f\in \eP$. Without loss of generality assume 
that $\bar f_{\diamond}=F[\vec{k}']$ for some $\vec{k}'$. The task is to prove that there are at most $(n{-}1)d+1$ possibilities for such 
$\vec{k}'$.

By Proposition~\ref{ff-br},  $\frac{1}{2}\{F[\vec k,t],\bH\}$ is a sum of $\mathscr{F}[\vec\ap,1,j]$ over 
some $\vec\ap$ depending on $\vec k$ and over $j\ge 2$ with $\ap_j\ne 0$. 
If $j$ is fixed and $\vec k=(0^{\ups_0},1^{\ups_1},\ldots,(n{-}1)^{\ups_{n{-}1}})$, then 
$\ap_1=\ups_1+1$, $\ap_{j-1}=\ups_{j-1}-1$, $\ap_j=\ups_j+1$, and $\ap_i=\ups_i$ for all other $i$.
%%%%  
If $\vec\ups(\vec k)>\vec\ups(\vec{k'})$ and $j\ge j'$, then $\vec\ap>\vec\ap'$ for $\vec\ap', \vec\ap$ obtained by the above recipe from $(\vec{k}',j')$ and $(\vec k,j)$, respectively. %% 

Let us consider $\vec{\ups}=\vec{\ups}(\vec{k}')$. Suppose first that $\ups_0 > 0$. 
In case $\ups_0=d$, we have an element of $\gS(\gt g{\cdot}1)$, which Poisson-commutes with $\bh$. This is 
a possibility for $\bar f_\diamond$.  If $\ups_0<d$, then there is $j\ge 2$ such that 
$2 \mathscr{F}[\vec\ap,1,j]$ is a summand of $\{F[\vec{k}',t],\bH\}$. 
The term $\mathscr{F}[\vec\ap,1,j]$ cannot be produced by any other component of $f$, because 
$\vec\ap$ and $j$ define $\vec k$ uniquely. 
We have $\ap_0=\ups_0\ne 0$ and the term $\mathscr{F}[\vec\ap,1,0]$ does not appear in 
$\{f,\bH\}$. Our assumptions on $F$ imply that 
there is no way to annihilate $2 \mathscr{F}[\vec\ap,1,j]$ in $\{f,\bH\}$, thereby 
$\psi_{t^n}(\mathscr{F}[\vec\ap,1,j])=0$. This is possible only if $j=n$. Therefore we must have  
%%%
%%............ 
%%. ......... if 
$\ups_{n{-}1}=d-\ups_0$. %%... 
Altogether in case $\ups_0>0$, we obtain  $d$ possibilities for $\vec{k}'$.   
This finishes the proof for $n=2$. From now on, assume that $n\ge 3$. 

Suppose now that $\ups_0=0$. If $\ups_1\ne 0$, then $\ell_0(\bar f_\diamond,\bh)$ has a summand 
$2\psi_{t^n}(\mathscr{F}[\vec\ap,1,2])$ with $\vec\ap=(0,\ups_1,\ups_2+1,\ups_3,\ldots,\ups_{n{-}1})$. 
No other component of $f$ can produce this $\vec\ap$, since the smallest possible value of
$j$ is $2$. Thus $\mathscr{F}[\vec\ap,1,2]$ must be zero, which means that $\vec\ap$ has only two nonzero components.
These components are $\ap_1$ and $\ap_2$. Thereby $\ups_2=d-\ups_1$. This gives us another $d$ 
possibilities for $\bar f_\diamond$. 

Suppose next that $\ups_0=\ldots=\ups_{i-1}=0$  for $i >1$ and $\ups_{i}\ne 0$. If
$i=n-1$, then we have just one possibility for $\bar f_\diamond$. %%  
Assume that  $i \ne n-1$. Then 
 $\ell_0(\bar f_\diamond,\bh)$ has a summand 
$2\psi_{t^n}(\mathscr{F}[\vec\ap,1,i{+}1])$ with $\vec\ap=(0,1,0,\ldots,0,\ups_i-1,\ups_{i+1}+1,\ups_{i+2},\ldots,\ups_{n{-}1})$. This $\vec\alpha$ can be seen also  in $\mathscr{F}[\vec\ap,1,i]$, which comes from another component of $f$,  providing 
$\ups_i\ge 2$. However, no further $\mathscr{F}[\vec\ap,1,j]$ comes into question. 
 Our assumptions on $F$ imply that 
in case $\ups_i=1$,   $\vec\ap$ has at most two  nonzero components: $\ap_1,\ap_{i+1}$; and in 
case $\ups_i>1$,  at most three  nonzero components: $\ap_1,\ap_{i},\ap_{i+1}$.
%% still 
Anyway we must have 
 $\ups_{i+1}=d-\ups_i$. 

Summing up, we have $d(n-1)+1$ possibilities for $\vec{k}'$. 
%%
%%% ....... 
\end{proof}

With the help of  linear maps $\varphi_s$ with $s\in\mK^\times$, any Lie bracket $[\,\,,\,]_p$ can be contracted to 
$[\,\,,\,]_{t^n}$, see Lemma~\ref{contr0}, and  we have $\varphi_s(\bh)=s^{2}\bh$.
Thereby any $\{f\in \mathrm{Pol}(F) \mid \{f,\bh\}_p=0\}$ can be contracted to $\eP$. Using this line of argument, see also the proof of Lemma~\ref{claim}, we obtain the next statement. 

\begin{cl} \label{cl-F}
If $F\in\gS^d(\gt g)^{\gt g}$ satisfies the assumption of Theorem~\ref{dim-F}, then for any $p$ of degree $n$, we have 
$\dim\{  f\in \mathrm{Pol}(F) \mid \{f,\bh\}_p=0 \}\le  (n-1)d+1$.
\qed 
\end{cl}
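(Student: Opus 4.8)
The plan is to deduce Corollary~\ref{cl-F} from Theorem~\ref{dim-F} by the same degeneration argument already used at the end of the proof of Lemma~\ref{claim}: realise the space attached to a general $p$ as a flat limit of a space attached to the bracket $[\,\,,\,]_{t^n}$, and invoke the fact that dimensions are preserved under such limits.

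First I would fix a normalised $p$ of degree $n$ and set $\eP_p:=\{f\in\Pol(F)\mid\{f,\bh\}_p=0\}$, so that $\eP_{t^n}=\eP$ in the notation of Theorem~\ref{dim-F}. Next I would recall from Lemma~\ref{contr0} the linear automorphisms $\varphi_s$ of $\mW$ ($s\in\mK^\times$) with $\varphi_s(x\bar t^k)=s^k x\bar t^k$, extended to $\gS(\mW)$, and the rescaled Lie brackets $[\,\,,\,]_{p,s}$ whose limit at $s=0$ is the contraction $\ell_0=[\,\,,\,]_{t^n}$; write $\{\,\,,\,\}_{p,s}$ for the induced Poisson bracket on $\gS(\mW)$. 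Two observations are key: $\varphi_s(F[\vec k])=s^{|\vec k|}F[\vec k]$, so $\varphi_s$ preserves $\Pol(F)$; and $\varphi_s(\bh)=s^2\bh$. Combining the defining identity $\{\varphi_s^{-1}(a),\varphi_s^{-1}(b)\}_{p,s}=\varphi_s^{-1}(\{a,b\}_p)$ with $\varphi_s^{-1}(\bh)=s^{-2}\bh$ gives $\{\varphi_s^{-1}(f),\bh\}_{p,s}=s^2\,\varphi_s^{-1}(\{f,\bh\}_p)$, whence
\[
\varphi_s^{-1}(\eP_p)=\{g\in\Pol(F)\mid\{g,\bh\}_{p,s}=0\},\qquad \dim\varphi_s^{-1}(\eP_p)=\dim\eP_p .
\]

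Then I would let $s\to 0$. The structure constants of $\{\,\,,\,\}_{p,s}$ are polynomials in $s$ (Lemma~\ref{contr0}) tending to those of $\ell_0$, so the condition $\{g,\bh\}_{p,s}=0$ cuts out a linear subspace of $\Pol(F)$ whose dimension is upper semicontinuous in $s$; since this dimension equals $\dim\eP_p$ for every $s\in\mK^\times$, its value at $s=0$, namely $\dim\eP$, is at least $\dim\eP_p$. Equivalently, in the Grassmannian of $(\dim\eP_p)$-dimensional subspaces of $\Pol(F)$ the family $\varphi_s^{-1}(\eP_p)$ has a limit point $V$ as $s\to0$, and for $g\in V$, writing $g=\lim_s g_s$ with $g_s\in\varphi_s^{-1}(\eP_p)$ and passing to the limit in $\{g_s,\bh\}_{p,s}=0$ yields $\ell_0(g,\bh)=0$, i.e. $V\subseteq\eP$. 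In either formulation Theorem~\ref{dim-F} gives $\dim\eP_p\le\dim\eP\le(n-1)d+1$.

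I do not expect any genuine obstacle: the entire content is the contraction of $[\,\,,\,]_p$ to $[\,\,,\,]_{t^n}$ from Lemma~\ref{contr0}, together with the elementary behaviour of $\bh$ and of $\Pol(F)$ under the $\varphi_s$. The only mildly technical point is the limit argument, which is carried out in identical form in the closing paragraph of the proof of Lemma~\ref{claim}, so I would simply refer to it rather than reproduce it.
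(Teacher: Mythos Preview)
Your proposal is correct and follows essentially the same route as the paper: the paragraph preceding the corollary invokes Lemma~\ref{contr0} to contract $[\,\,,\,]_p$ to $[\,\,,\,]_{t^n}$ via the maps $\varphi_s$, notes that $\varphi_s(\bh)=s^2\bh$, and appeals to the argument at the end of the proof of Lemma~\ref{claim} for the passage to the limit. You have simply spelled out in full the computation with $\varphi_s^{-1}(\eP_p)$ and the semicontinuity step that the paper leaves to the reader.
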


\section{Independence principles and their applications } \label{sec-Ind}

Consider the polynomial rings 
\[\eR=\mK[y \bar t^k \mid n{-1}\ge k\ge 0, \ y\in\q]
\ \ \text{ and } \ \ \tilde\eR=\mK[y t^k \mid k\ge 0, \ y\in\q],
\]
 where $\q$ is a vector space.  
%% with . 
 Both rings are left $\mK[t]$-modules 
with $t{\cdot} yt^k=tyt^k=yt^{k+1}$ and $t{\cdot} y\bar t^k=ty\bar t^k=y\bar t^{k+1}$. 
Our examples of $\eR, \tilde\eR$ are $\gS(\mW), \gS(\q[t])$. 
Any  normalised polynomial $p\in\mK[t]$ of degree $n$ defines the 
quotient map $\psi_p\!:\tilde\eR\to\eR$.
Fix  $Y=y_1\ldots y_d\in\gS^d(\q)$ of degree $d$,  choose 
$l\in\mK[t]$ such that $0\le\deg l\le 1$, and set 
\[
\eL=\eL(p,p+l)=\{ap+(1-a)(p+l)\mid a\in\mK\}.\] 
Set further $Y[t]=y_1t\ldots y_dt$ and $Y[\bar r]=y_1\bar r\ldots y_d\bar r$ for $\bar r\in\mK[t]/(p)$. 
 
We remark that all statements of this section are obviously true for $d=0$ and $d=1$ and  there is no need to consider these two instances. 
 
Take any $\tilde p\in\eL$.  %%%has 
We have $\mK[t]/(\tilde p)=\bigoplus_{\alpha}\gt m_\alpha$, where the sum is taken over 
distinct roots of $\tilde p$.
%%%% 
If $\alpha$ is a simple root, then $\gt m_\alpha=\mK\bar r_\ap$ with $r_\ap^2-r_\ap\in(\tilde p)$. 
If a root $\alpha$ has multiplicity $k\ge 2$, then 
\[
\gt m_\alpha=\mK \bar r_{(\alpha,0)}\oplus\mK\bar r_{(\alpha,1)}\oplus\mK\bar r_{(\alpha,1)}^2\oplus\ldots\oplus \mK\bar r_{(\alpha,1)}^{k{-}1},
\]
 where $\bar r_{(\alpha,0)}\bar r_{(\alpha,1)}=\bar r_{(\alpha,1)}$, \ $\bar r_{(\alpha,1)}^k=0$, and
 $\bar r_{(\alpha,0)}^2=\bar r_{(\alpha,0)}$. In that case we define  the linear map $\Phi_\alpha\!:\mK[t]\to \gt m_\ap$ by setting 
\begin{itemize}
\item[ ] $\Phi_\alpha(t^{k})=\bar r_{(\alpha,0)}$, 
\item[ ] $\Phi_\alpha(t^u)=\bar r_{(\alpha,1)}^{k-u}$ for $1\le u<k$, 
\item[ ] $\Phi_\alpha(t^u)=0$ for $u=0$ and for  $u>k$. 
\end{itemize}
The map $\Phi_\alpha$ extends to $\eR$.
Recall that $\tau=t^2\partial_t$.

For each 
$\tilde p\in\eL$, we define a subset $\mathbb S[Y,\tilde p]=\bigcup_\alpha \mathbb S[Y,\tilde p]_\ap$, where 
%%% 
\begin{itemize}
\item[$\diamond$]  $\mathbb S[Y,\tilde p]_\ap=Y[\bar r_\ap]$ if $\ap$ is a simple root, %%% ;
\item[$\diamond$] $\mathbb S[Y,\tilde p]_\ap=\{\Phi_\ap(\tau^u(Y[t]))\mid 0\le u<k\}$ if the multiplicity of $\ap$ is $k\ge 2$. 
\end{itemize} 
In any case, $|\mathbb S[Y,\tilde p]|=n$. 
Set next $V_{Y,\eL}=\left< \mathbb S[Y,\tilde p] \mid \tilde p\in \eL\right>$.
Each element of $\mathbb S[Y,\tilde p]_\ap$ is a linear combination $\sum_{\vec{k}} C_{\vec{k},\tilde p,\ap} Y[\vec{k}]$
and clearly %% 
\begin{itemize}
%\item
\item[{\sf(Ip1)}] 
the coefficients $C_{\vec{k},\tilde p,\ap} \in\mK$  do not depend on $Y$.
%%%% 
\end{itemize} 
Thus also 
$\dim V_{Y,\eL}$ is independent of $Y$.
Extending our new notation from monomials to polynomials by linearity, we may state that 
$\dim V_{F,\eL}$ is independent of $F\in\gS^d(\q)\setminus\{0\}$.   
 According to Proposition~\ref{bound-dim}, 
$\dim V_{Y,\eL}\ge d(n-1)+1$. If %
$F(\xi)=\det(\xi)$ for $\xi\in\q^*$,  in case $\q=\gt{sl}_d$, %%%
then $\dim V_{F,\eL}\le d(n-1)+1$ by Corollary~\ref{cl-F} and Lemma~\ref{lin}.
Thus always 
\begin{equation} \label{dimY1}
\dim V_{Y,\eL}=d(n-1)+1.
\end{equation}
We are interested also  in $\tilde V_{Y,p}=\left< \psi_{p}(\tau^k(Y[t]))\mid k\ge 0\right>$. 
Each element  $ \psi_{p}(\tau^u(Y[t]))$ is a linear combination $\sum_{\vec{k}} \tilde C_{\vec{k},p,u} Y[\vec{k}]$
and 
\begin{itemize}
\item[{\sf (Ip2)}] 
the coefficients $\tilde C_{\vec{k},p,u} \in\mK$ do not depend on $Y$.
\end{itemize} Thus also 
%%  ........... 
$\dim \tilde V_{Y,p}$ is independent of $Y$ and, more generally,
$\dim \tilde V_{F,p}$ is independent of $F\in\gS^d(\q)\setminus\{0\}$.
According to Lemma~\ref{pol-gZ},  
$\dim \tilde V_{Y,p}\ge d(n-1)+1$ if $p(0)\ne 0$. If we take again   $F$ as  
the function $\xi\mapsto\det(\xi)$ with $\xi\in\gt{sl}_d^*$, 
 %%%%% 
 then  $\dim \tilde V_{F,p}\le d(n-1)+1$ by Corollary~\ref{cl-F} and Lemma~\ref{lin}.
Thus 
\begin{equation} \label{dimY2}
\dim \tilde V_{Y,p}=d(n-1)+1 \ \text{ for any } \ p \ \text{ with } \ p(0)\ne 0. 
\end{equation}
Furthermore, if $p(0)\ne 0$,  \,$\q=\gt{sl}_d$, and $F$ is the same as above, then  
$\tilde V_{F,p}$ and $V_{F,\eL}$ coincide with the Poisson centraliser of $\bh$ in 
$\Pol(F)$ w.r.t. $\{\,\,,\,\}_p$; in particular, for such polynomials $p$, we have $\tilde V_{F,p}=V_{F,\eL}$.
Thus, by the independence principles {\sf (Ip1)}, {\sf (Ip2)},   
\begin{equation} \label{Y3}
%%%%The 
\tilde V_{Y,p}=V_{Y,\eL} \ \ \text{ whenever } \ p(0)\ne 0. 
\end{equation}

\begin{thm} \label{sovp-t}
Suppose that $\gt q$ satisfies the assumptions of Theorem~\ref{thm:k-T} and that $p(0)\ne 0$.
Then $\psi_p(\gZ(\wq,t))=\gZ(p,p+t)$.
\end{thm}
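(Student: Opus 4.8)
The plan is to exhibit both $\psi_p(\gZ(\wq,t))$ and $\gZ(p,p+t)$ as the subalgebra of $\gS(\mW)$ generated by one and the same family of finite‑dimensional subspaces of the polarisation spaces $\Pol(F_i)$, and then to close the loop with the independence principle \eqref{Y3}. Throughout, the hypothesis that $\q$ satisfies the assumptions of Theorem~\ref{thm:k-T} is used to invoke Theorem~\ref{gen-gZ} and Propositions~\ref{roots},~\ref{sym-lim}.

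First I would record the two descriptions. By Theorem~\ref{gen-gZ}, after the change of variable $t^{-1}\mapsto t$ (under which the derivation $\partial_t$ corresponds to $-\tau$, see Section~\ref{sec-inf}), the ring $\gZ(\wq,t)$ is freely generated by the elements $\tau^k(F_i[t])$ with $1\le i\le m$ and $k\ge 0$; since $\psi_p(\tau^k(F_i[t]))\in\Pol(F_i)$ by Lemma~\ref{pol-gZ}, this gives $\psi_p(\gZ(\wq,t))=\mathsf{alg}\langle \tilde V_{F_i,p}\mid 1\le i\le m\rangle$, where $\tilde V_{F_i,p}=\langle\psi_p(\tau^k(F_i[t]))\mid k\ge 0\rangle$. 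On the other hand, by the definition of the Lenard--Magri subalgebra, $\gZ(p,p+t)=\mathsf{alg}\langle\cz_{\tilde p}\mid \tilde p\in\eL\rangle$ for $\eL=\eL(p,p+t)=\{p+\beta t\mid\beta\in\mK\}$, and each such $\tilde p$ satisfies $\tilde p(0)=p(0)\ne 0$, so nothing degenerates. Because $p(0)\ne 0$, the principle \eqref{Y3} applies and yields $\tilde V_{F_i,p}=V_{F_i,\eL}$ for every $i$; hence it suffices to prove $\gZ(p,p+t)=\mathsf{alg}\langle V_{F_i,\eL}\mid 1\le i\le m\rangle$.

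The key lemma to establish is: for every $\tilde p\in\eL$, the set $\bigcup_{i=1}^m\mathbb{S}[F_i,\tilde p]$ is a set of $mn$ free generators of $\cz_{\tilde p}$. Granting it, and using that the algebra generated by a family of subsets coincides with the algebra generated by their linear spans, together with $V_{F_i,\eL}=\langle\mathbb{S}[F_i,\tilde p]\mid\tilde p\in\eL\rangle$, I obtain $\gZ(p,p+t)=\mathsf{alg}\langle\cz_{\tilde p}\mid\tilde p\rangle=\mathsf{alg}\langle V_{F_i,\eL}\mid i\rangle=\mathsf{alg}\langle\tilde V_{F_i,p}\mid i\rangle=\psi_p(\gZ(\wq,t))$; the inclusion $\subseteq$ also drops out, since $\mathbb{S}[F_i,\tilde p]\subset\cz_{\tilde p}\subseteq\gZ(p,p+t)$. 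To prove the key lemma I would use Proposition~\ref{roots} to split $(\mW,[\,\,,\,]_{\tilde p})$, hence $\cz_{\tilde p}$ and the set $\mathbb{S}[F_i,\tilde p]$, along the roots of $\tilde p$, reducing to a single root‑summand. For a simple root $\alpha$ this is immediate: $\mathbb{S}[F_i,\tilde p]_\alpha=F_i[\bar r_\alpha]$ is precisely a generator from \eqref{czr}. For a root $\alpha$ of multiplicity $k\ge 2$, after identifying the summand with $\q\langle k\rangle$ (so $\bar r_{(\alpha,0)}\leftrightarrow 1$, $\bar r_{(\alpha,1)}^u\leftrightarrow\bar t^u$) the map $\Phi_\alpha$ becomes $t^j\mapsto\bar t^{\,k-j}$ for $1\le j\le k$, and $t^j\mapsto 0$ for $j=0$ and $j>k$. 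Expanding $\tau^u(F_i[t])$ into polarisations $F_i[\vec k,t]$ via the generating‑function identity $\exp(s\tau)(y_\nu t)=y_\nu t\,(1-st)^{-1}$, applying $\Phi_\alpha$ (which annihilates the terms with some $k_\nu>k$) and reindexing the remaining ones by $\vec k\mapsto k{\cdot}(1,\ldots,1)-\vec k$, I expect the combinatorial coefficients to recombine to $\Phi_\alpha(\tau^u(F_i[t]))=u!\,F_i^{[d_i(k-1)-u]}$, so that $\{\mathbb{S}[F_i,\tilde p]_\alpha\mid 0\le u<k\}$ is, up to nonzero scalars, the generating set $\{F_i^{[\dd]}\mid (k-1)d_i-k<\dd\le(k-1)d_i\}$ of $\cz(\q\langle k\rangle)$ from \eqref{kT-gen} (Proposition~\ref{sym-lim}). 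Reassembling the root‑summands then gives the lemma, since a tensor product of polynomial rings is freely generated by the union of the generators of the factors.

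The main obstacle is exactly this last combinatorial identification for multiple roots: one must verify that the exponent‑reversing operator $\Phi_\alpha$ carries the $\tau$‑iterates of $F_i[t]$ onto the polarisations $F_i^{[\dd]}$ that generate the Takiff symmetric invariants, with no stray terms surviving. Everything else is bookkeeping with results already in place --- the generators of $\gZ(\wq,t)$ (Theorem~\ref{gen-gZ}), the independence principle \eqref{Y3}, the structure of $\cz_{\tilde p}$ (Propositions~\ref{roots} and \ref{sym-lim}), and the definition of $\gZ(p,p+t)$.
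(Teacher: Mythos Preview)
Your approach is essentially the paper's: express both sides as $\mathsf{alg}\langle V_{F_i,\eL}\mid 1\le i\le m\rangle$ using Theorem~\ref{gen-gZ}, Proposition~\ref{sym-lim}, and the identity~\eqref{Y3}; the paper simply invokes Proposition~\ref{sym-lim} for your ``key lemma'' in one sentence, without elaborating on the multiple-root case.

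Your ``main obstacle'' dissolves once you notice that the formula you expect, $\Phi_\alpha(\tau^u(F_i[t]))=u!\,F_i^{[d_i(k-1)-u]}$, is precisely the content of the last sentence in the proof of Theorem~\ref{gen-gZ}. After the substitution $t^{-1}\mapsto t$ (under which $\partial_t$ becomes $-\tau$), the map $t^j\mapsto\bar t^{\,k-j}$ for $1\le j\le k$ is exactly the canonical $\q[t]$-module isomorphism $\mW_{-k}\to\q[t]/(t^k)$ used there; and for $0\le u<k$ every factor in $\tau^u(F_i[t])$ has $t$-degree at most $1+u\le k$, so the zero-extension in $\Phi_\alpha$ (for $t^0$ and $t^{j>k}$) never intervenes. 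Hence no new combinatorics is needed --- the generators $\mathbb{S}[F_i,\tilde p]_\alpha$ are, up to nonzero scalars, the generators of $\cz(\q\langle k\rangle_{(\alpha)})$ already exhibited in Proposition~\ref{sym-lim}.
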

\begin{proof}
We have $\gS(\q)^\q=\mK[F_1,\ldots,F_m]$, where $m=\ind\q$. %% 
Set $\eL=\{p+\ap t\ \mid \ap\in\mK\}$.  By Proposition~\ref{sym-lim}, each $\cz_{\tilde p}$ with $\tilde p\in\eL$ has a 
set of (algebraically independent) generators  $F_{i,u}\in \Pol(F_i)$.  
Extending our new notation from monomials to polynomials, we may state that 
$\cz_{\tilde p}$ is generated by $\bigcup_{i=1}^m\mathbb S[F_i,\tilde p]$. This implies that %% 
$\gZ(p,p+t)$ is generated by 
$\bigoplus_{i=1}^m V_{F_i,\eL}$. 

By Theorem~\ref{gen-gZ}, $\gZ(\wq,t)$ is generated by $\tau^k(F_i[t])$ with $k\ge 0$ and  $1\le i\le m$. 
Therefore $\psi_p(\gZ(\wq,t))=\mathsf{alg}\langle \tilde V_{F_i,p} \mid 1\le i\le m\rangle$. Since $p(0)\ne 0$, we have 
$ \tilde V_{F_i,p}=V_{F_i,\eL}$ for each $i$ by~\eqref{Y3}. % 
This finishes the proof. 
\end{proof}  

Suppose that
 $\q$ satisfies the assumptions of Theorem~\ref{thm:k-T}  and has  the {\sl codim}--$2$ property.
Then, in particular, the set $\Omega_{\q^*}$ defined in that theorem is a big open subset of $\gt q^*$. 
Furthermore, 
$\Omega_{\q^*}=\gt q^*_{\sf reg}$ by a generalisation of the  {\it Kostant regularity criterion\/} \cite[Thm~9]{ko63} and 
$\sum_i \deg F_i=\bb(\q)$, see 
\cite[Thm~1.2]{Dima07}, \cite[Lemma~2.1]{contr}, \cite{ap}.

\begin{thm} \label{free}
Suppose that $\gt q$ satisfies the assumptions of Theorem~\ref{thm:k-T}  and has  the {\sl codim}--$2$ property.
Then $\gZ(p,p+l)$ is a polynomial ring with 
$\bb(\q,n)$ generators.
%%% 
\end{thm}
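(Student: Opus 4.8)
The plan is to produce an explicit generating set of $\gZ:=\gZ(p,p+l)$ of cardinality exactly $\bb(\q,n)$ and then to quote Theorem~\ref{trdeg} to compute $\trdeg\gZ$. The concluding step will use only the elementary fact that a finitely generated integral domain $A=\mK[g_1,\dots,g_N]$ with $\trdeg A=N$ is a polynomial ring in the $g_j$: an algebraic relation among the $g_j$ would force $\trdeg A=\trdeg\mK(g_1,\dots,g_N)<N$. Since $\gZ\subset\gS(\mW)$ is a domain, everything reduces to: (a) $\gZ$ is generated by $\bb(\q,n)$ elements; (b) $\trdeg\gZ=\bb(\q,n)$.

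For (a), I would first invoke Theorem~\ref{thm:k-T} to write $\mK[\q^*]^\q=\mK[F_1,\dots,F_m]$ with $m=\ind\q$ and $d_i:=\deg F_i$, and use the {\sl codim}--$2$ property to get $\sum_{i=1}^m d_i=\bb(\q)$, as recalled just before the statement. After the reduction explained following~\eqref{bound} one may take $l\in\{1,t\}$, and by Corollary~\ref{roots-ind} every point $(a,1-a)$, $a\in\mK$, of the pencil $L(p,p+l)$ is regular, so $\gZ=\mathsf{alg}\langle\cz_{\tilde p}\mid\tilde p\in\eL\rangle$ for $\eL:=\eL(p,p+l)$. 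Proposition~\ref{sym-lim} says each $\cz_{\tilde p}$ is generated by the polarisations $\bigcup_{i=1}^m\mathbb S[F_i,\tilde p]$, so $\gZ$ is generated by $\bigoplus_{i=1}^m V_{F_i,\eL}$, exactly as in the proof of Theorem~\ref{sovp-t}. Now~\eqref{dimY1} gives $\dim V_{F_i,\eL}=d_i(n-1)+1$ for every $i$; picking a basis of each summand yields a generating set of $\gZ$ of size $\sum_{i=1}^m\bigl(d_i(n-1)+1\bigr)=(n-1)\bb(\q)+\ind\q=\bb(\q,n)$.

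For (b), I note that the hypotheses of Theorem~\ref{trdeg} hold here: $\cz(\q)$ is a polynomial ring of Krull dimension $m=\ind\q$, so $\trdeg\cz(\q)=\ind\q$, and $\q$ has the {\sl codim}--$2$ property; therefore $\trdeg\gZ=\bb(\q,n)$. Combining (a), (b) and the elementary fact quoted at the outset, $\gZ$ is a polynomial ring in $\bb(\q,n)$ generators.

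The hard part is the accounting in (a), specifically the \emph{upper} bound $\dim V_{F_i,\eL}\le d_i(n-1)+1$ inside~\eqref{dimY1}: the lower bound $\ge d_i(n-1)+1$ is essentially Proposition~\ref{bound-dim}, but the matching upper bound is not visible from the definitions and is precisely why the $\gt{sl}_d$-with-determinant computation of Section~\ref{sec-bra} (Theorem~\ref{dim-F} and Lemma~\ref{lin}) together with the independence principle {\sf (Ip1)} are brought in. A second, milder point is to verify that $\gZ$ is generated by $\bigoplus_i V_{F_i,\eL}$ and not by something larger in each $\Pol(F_i)$ --- this is exactly what Proposition~\ref{sym-lim} provides, and it is already exploited in the proof of Theorem~\ref{sovp-t}.
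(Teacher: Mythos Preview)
Your proposal is correct and follows essentially the same route as the paper's proof: both argue that $\gZ$ is generated by $\bigoplus_{i=1}^m V_{F_i,\eL}$ via Proposition~\ref{sym-lim}, invoke~\eqref{dimY1} to count generators as $\sum_i\bigl(d_i(n-1)+1\bigr)=(n-1)\bb(\q)+m$, use the {\sl codim}--$2$ identity $\sum_i d_i=\bb(\q)$, and conclude algebraic independence from $\trdeg\gZ=\bb(\q,n)$ (Theorem~\ref{trdeg}). Your added commentary on where the upper bound in~\eqref{dimY1} comes from (the $\gt{sl}_d$ determinant computation plus {\sf(Ip1)}) and on the role of Proposition~\ref{sym-lim} is accurate and helpful, though not part of the formal argument.
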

\begin{proof}
We have $\gS(\q)^\q=\mK[F_1,\ldots,F_m]$, where each  $F_i$ is homogeneous and $\deg F_i=d_i$.
According to Theorem~\ref{trdeg}, $\trdeg\gZ(p,p+l)=\bb(\q,n)=(n-1)\bb(\gt q)+m$.

Set $\eL=\{ap+(1-a)(p+l) \mid a\in\mK\}$. Then $\gZ(p,p+l)=\mathsf{alg}\langle %%$ 
%%$
V_{F_i,\eL} \mid 1\le i\le m\rangle$.  According to \eqref{dimY1}, 
$\dim V_{F_i,\eL}=d_i(n-1)+1$. This 
means that $\gZ(p,p+l)$ has at most 
\[
\sum_{i=1}^m (d_i(n-1)+1)=m+(n-1)\sum_{i=1}^m d_i=m+(n-1)\bb(\q)
\] 
generators. Thereby these generators have to be algebraically independent. 
%%
%% .......%
\end{proof}

\begin{lm}\label{3-}
Suppose  that $\gt q$  has  the {\sl codim}--$3$ property. Then the Lie algebra 
$(\mW,\ell)$
with $\ell=[\,\,,\,]_{p}-[\,\,,\,]_{p+l}$ also has the {\sl codim}--$3$ property.
\end{lm}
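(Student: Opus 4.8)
The plan is to contract $(\mW,\ell)$ down to the two model brackets supplied by Lemma~\ref{contr-} and to verify the \textsl{codim}--$3$ property for each model, using that passing to a contraction cannot enlarge the codimension of the singular set.

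\emph{A transfer principle.} First I would record: if $\mathfrak a_{(0)}$ is a contraction of a finite-dimensional Lie algebra $\mathfrak a$ with $\ind\mathfrak a_{(0)}=\ind\mathfrak a$, then $\codim(\mathfrak a_{(0)})^*_{\sf sing}\le\codim\mathfrak a^*_{\sf sing}$. To prove this, realise the contraction by a family of Lie brackets $\ell_s$ on $V:=\mathfrak a$ that is polynomial in $s\in\mA^1$, with $(V,\ell_s)\cong\mathfrak a$ for $s\ne0$ and $(V,\ell_0)=\mathfrak a_{(0)}$; in our case $\ell_s$ is the rescaling $s^{-n}\ell_{(s)}$ (resp. $s^{1-n}\ell_{(s)}$) from Lemma~\ref{contr-}, whose matrix entries are polynomial in $s$ precisely because the limit exists, and rescaling a bracket by a nonzero constant does not change the isomorphism type. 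Set $r=\dim V-\ind\mathfrak a$. Since the index is constant along the family, $r$ is the generic rank of the Poisson tensor of $\ell_s$ for \emph{every} $s$, so the closed subvariety $\mathcal Y=\{(\gamma,s)\in V^*\times\mA^1\mid \rk\pi_{\ell_s}(\gamma)<r\}$ has fibre $(\mathfrak a_{(0)})^*_{\sf sing}$ over $0$ and a fibre isomorphic to $\mathfrak a^*_{\sf sing}$ over every $s\ne0$. The family $\ell_s$, $s\ne0$, is $\mK^\times$-equivariant — it is obtained by conjugating with the operators $\varphi_s$ — hence $\mK^\times$ acts on $\mathcal Y\cap(V^*\times\mK^\times)$ covering translation on $\mK^\times$, and so every irreducible component of that open part, and hence of $\overline{\mathcal Y}$, dominates $\mA^1$. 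A dominating component $Z$ has all fibres of dimension $\ge\dim Z-1$ and its generic fibre of dimension exactly $\dim Z-1$; since the fibres over $s\ne0$ are mutually isomorphic, this gives $\dim\mathfrak a^*_{\sf sing}=\max_Z(\dim Z-1)\le\dim\mathcal Y_0=\dim(\mathfrak a_{(0)})^*_{\sf sing}$.

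\emph{The two models.} Applying this with $\mathfrak a=(\mW,\ell)$ and $\mathfrak a_{(0)}$ the model from Lemma~\ref{contr-}, I then only need to treat the models, the index hypothesis being supplied by Lemma~\ref{ind-} (which applies to the models too, each being of the form $[\,\,,\,]_{p_1'}-[\,\,,\,]_{p_2'}$ with $\deg(p_1'-p_2')\le1$, so $\ind=\dim\q+(n-1)\ind\q=\ind(\mW,\ell)$). For $\deg l=1$ the model is $[\,\,,\,]_{t^n-t}-[\,\,,\,]_{t^n}\cong\q\langle n{-}1\rangle\oplus\q^{\rm ab}$. By Lemma~\ref{lm-codim} applied with $n-1$ in place of $n$, $\codim\q\langle n{-}1\rangle^*_{\sf sing}=\dim\q-\dim\q^*_{\sf sing}=\codim\q^*_{\sf sing}\ge3$; the abelian algebra $\q^{\rm ab}$ has empty singular set; and for a direct sum $(\mathfrak a\oplus\mathfrak b)^*_{\sf sing}=(\mathfrak a^*_{\sf sing}\times\mathfrak b^*)\cup(\mathfrak a^*\times\mathfrak b^*_{\sf sing})$, so the codimension of its singular set is $\min$ of the two, here $\ge3$. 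For $\deg l=0$ the model is $[\,\,,\,]_\infty=[\,\,,\,]_{t^n-1}-[\,\,,\,]_{t^n}$; the computation in the proof of Lemma~\ref{ind-} shows that for every $\xi\in\mW^*$ with $\bar\xi:=\xi|_{\q{\cdot}1}\in\q^*_{\sf reg}$ the kernel $\ker\pi_{1,-1}(\xi)$ equals the subspace in \eqref{1-1}, of dimension $(n-1)\ind\q+\dim\q=\ind(\mW,[\,\,,\,]_\infty)$, so such $\xi$ are regular. Hence $(\mW,[\,\,,\,]_\infty)^*_{\sf sing}$ lies in the preimage of $\q^*_{\sf sing}$ under the surjective linear map $\mW^*\to\q^*$, $\xi\mapsto\bar\xi$, whose codimension is $\codim\q^*_{\sf sing}\ge3$.

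\emph{The main obstacle.} The delicate part is the transfer principle: one must get the comparison of codimensions in the stated direction, rule out that an irreducible component of the rank-drop locus $\mathcal Y$ is concentrated over a single nonzero parameter value (which is exactly what the $\mK^\times$-equivariance of the contraction family excludes), and know that the generic rank does not drop at $s=0$ (which is the content of Lemma~\ref{ind-}). Once that is in place, the two model computations are routine.
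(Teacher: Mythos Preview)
Your approach matches the paper's: contract $\ell$ to one of the two models from Lemma~\ref{contr-}, verify the \textsl{codim}--$3$ property for each model directly, and transfer it back to $\ell$ using that $\ind(\mW,\ell)$ equals the index of the model (Lemma~\ref{ind-}). The paper carries out the transfer step tersely, via the structure matrices $\mathcal M_{(s)}$ and an appeal to \cite[(4.1)]{Y-imrn}; you spell it out by analysing the rank-drop locus $\mathcal Y\subset V^*\times\mA^1$.

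There is, however, one gap in your transfer principle. You assert that for a dominating component $Z\subset\mathcal Y$ the fibre $Z_0$ has dimension $\ge\dim Z-1$, and hence $\dim\mathcal Y_0\ge\max_Z(\dim Z-1)$. But the fibre-dimension inequality only applies at points \emph{in the image} of $Z\to\mA^1$, and a closed irreducible subset of $V^*\times\mA^1$ dominating $\mA^1$ need not meet $\{s=0\}$ (think of $\{(x,s):xs=1\}\subset\mA^2$). To close the gap, either projectivise---the Pfaffians cutting out $\mathcal Y$ are homogeneous in the $V^*$-coordinates, so one may pass to $\bar{\mathcal Y}\subset\mP(V^*)\times\mA^1$, whose projection to $\mA^1$ is proper and for which upper semicontinuity of fibre dimension on the target is standard---or push your $\mK^\times$-equivariance one step further: the action $c\cdot(\gamma,s)=(\varphi_c^*\gamma,cs)$ has only non-negative weights on $V^*\times\mA^1$, so every orbit closure meets $\{s=0\}$, and since each component of the $\mK^\times$-stable closed set $\mathcal Y$ is itself $\mK^\times$-stable, so does each $Z$. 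With either fix your argument goes through.
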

\begin{proof}
The Lie bracket $\ell$ can be contracted  either to 
$\ell_\infty:=[\,\,,\,]_{t^n-1}-[\,\,,\,]_{t^n}$ or to the bracket  $\ell':=[\,\,,\,]_{t^n-t}-[\,\,,\,]_{t^n}$, depending on $l$, 
see Lemma~\ref{contr-}. In any case, \[\ind\!(\mW,\ell)=\ind\!(\mW,\ell_\infty)=\ind\!(\mW,\ell')=
\dim\q+(n-1)\ind\q
\] by Lemma~\ref{ind-}. 
The Lie algebra $(\mW,\ell')$ is isomorphic to $\q\langle n{-}1\rangle\oplus\gt q^{\rm ab}$. Hence it has  
the {\sl codim}--$3$ property by Lemma~\ref{lm-codim}. If $\xi\in \mW^*$ and $\bar\xi=\xi|_{\q{\cdot}1}\in\q^*_{\sf reg}$, then $\xi\in\mW^*_{\ell_\infty,{\sf reg}}$ by \eqref{1-1}. Thereby $(\mW,\ell_\infty)$ also has  the {\sl codim}--$3$ property.  

Recall that $\ell_{(s)}(x,y)=\varphi_s^{-1}(\ell(\varphi_s(x),\varphi_s(y))$ for $s\in\mK^\times$ and any $x,y\in\mW$.
Let ${\mathcal M}_{(s)}$ be the structure matrix of the Lie  bracket $s^k\ell_{(s)}$, where $k=-n+\deg l$. %% if 
Then \[
\rk{\mathcal M}_{(s)}=(n-1)(\dim\gt q - \ind\gt q)\] for any nonzero $s$. 
Set $\tilde{\mathcal M}=\lim_{s\to 0}{\mathcal M}_{(s)}$.
Then $\tilde{\mathcal M}$ is the structure matrix of $\lim_{s\to 0}s^k\ell_{(s)}$, which is either $\ell_\infty$ or $\ell'$. Anyway 
%%% In 
 $\rk\tilde{\mathcal M} =\rk{\mathcal M}$. 
The  maximal nonzero minors of $\tilde{\mathcal M}$ are limits of  maximal nonzero 
minors of ${\mathcal M}_{(s)}$.  Therefore the Lie algebra $(\mW,\ell_{(s)})$ has the {\sl codim}--$3$ property for at least one $s\in\mK^\times$. Since $\varphi_s$ is an isomorphism of Lie algebras, we obtain 
$\dim\mW^*_{\ell,{\sf sing}}\le \dim\mW-3$, cf.~\cite[(4.1)]{Y-imrn}.
%% .
\end{proof}

\begin{thm} \label{max}
Suppose  that $\gt q$ satisfies the assumptions of Theorem~\ref{thm:k-T} and  has  the {\sl codim}--$3$ property.
Then $\gZ(p,p+l)$ is a maximal (w.r.t. inclusion) Poisson-commutative 
subalgebra of $(\gS(\mW),\{\,\,,\,\}_p)^{\gt q}$. 
%%%%  
\end{thm}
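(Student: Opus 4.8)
The plan is to prove that any Poisson-commutative subalgebra $\ca$ with $\gZ:=\gZ(p,p+l)\subseteq\ca\subseteq(\gS(\mW),\{\,\,,\,\}_p)^{\gt q}$ already equals $\gZ$. The first step is to match transcendence degrees. By Theorem~\ref{free}, $\gZ$ is a polynomial ring on $\bb(\q,n)$ algebraically independent homogeneous generators $G_1,\dots,G_{\bb(\q,n)}$, so $\trdeg\gZ=\bb(\q,n)$; by the bound~\eqref{bound} (valid for any Poisson-commutative subalgebra of $\gS(\mW)^{\q}$ by \cite[Prop.\,1.1]{m-y}), $\trdeg\ca\le\bb(\q,n)$; hence $\trdeg\ca=\trdeg\gZ=\bb(\q,n)$. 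Next, on a big open subset $U\subseteq\mW^*$ the computation in the proof of Theorem~\ref{trdeg} — with $V(\gamma)=\sum_{a}\ker\pi_{a,1-a}(\gamma)$ and Proposition~\ref{P} — gives $\textsl{d}_\gamma\gZ=V(\gamma)$ and $\dim V(\gamma)=\bb(\q,n)$. Since $\textsl{d}_\gamma\gZ\subseteq\textsl{d}_\gamma\ca$ while $\dim\textsl{d}_\gamma\ca\le\trdeg\ca=\bb(\q,n)$, I conclude $\textsl{d}_\gamma\ca=\textsl{d}_\gamma\gZ$ for every $\gamma\in U$.

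This reduces the whole statement to an algebro-geometric fact about $\gZ$ alone. Put $\overline\gZ=\{H\in\gS(\mW)\mid H\ \text{is algebraic over}\ \mathrm{Frac}(\gZ)\}$, which is a subalgebra of $\gS(\mW)$. The equality of differentials on $U$ forces $\ca\subseteq\overline\gZ$; moreover, since $\textsl{d}_\gamma\gZ$ is isotropic for $\pi_p(\gamma)$ (as $\gZ$ Poisson-commutes) and pairs trivially with $\q\cdot1$ under $\pi_p(\gamma)$ (as $\gZ\subseteq\gS(\mW)^{\q}$), the same argument applied to $H\in\overline\gZ$ shows that $\overline\gZ$ is \emph{itself} a Poisson-commutative subalgebra of $(\gS(\mW),\{\,\,,\,\}_p)^{\gt q}$ containing $\gZ$. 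Hence $\gZ$ is a maximal such subalgebra if and only if $\gZ=\overline\gZ$, and it remains to prove the latter.

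The remaining step is the main obstacle, and it is where the {\sl codim}--$3$ hypothesis is used. By Lemma~\ref{lm-codim} every $(\mW,[\,\,,\,]_{\tilde p})$ with $\deg\tilde p=n$ has the {\sl codim}--$2$ property, by Lemma~\ref{3-} the difference bracket $\ell=[\,\,,\,]_p-[\,\,,\,]_{p+l}$ has the {\sl codim}--$3$ property, so Lemma~\ref{lm-X}{\sf(ii)} applied to $L=L(p,p+l)$ yields $\dim\overline{\mW^*_{L,{\sf sing}}}\le\dim\mW-2$; together with the facts that $\{\gamma\mid\bar\gamma\in\q^*_{\sf sing}\}$ and the Jacobian degeneracy locus of $G_1,\dots,G_{\bb(\q,n)}$ are also of codimension $\ge2$ (from the assumptions on $\q$, cf.\ Theorem~\ref{thm:k-T} and the subsequent remarks), this gives that $\mW^*\setminus U$ has codimension $\ge2$. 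Over $U$ the morphism $\mu=(G_1,\dots,G_{\bb(\q,n)})\colon\mW^*\to\mA^{\bb(\q,n)}$ is a submersion and, by Proposition~\ref{P}, the conormal distribution $\gamma\mapsto V(\gamma)=\textsl{d}_\gamma\gZ$ is a smooth foliation whose leaves are the components of the fibres of $\mu|_U$. Given $H\in\overline\gZ$, the inclusion $\textsl{d}_\gamma H\in V(\gamma)$ on $U$ means $H$ is constant along these leaves; once one checks that the generic fibre of $\mu$ is irreducible, $H$ descends to a rational function on $\mA^{\bb(\q,n)}$ that is regular on the image of $U$, whose complement is of codimension $\ge2$, so by normality of $\mA^{\bb(\q,n)}$ this function is polynomial, i.e.\ $H\in\gZ$. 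I expect the genuinely delicate points to be precisely the two verifications just flagged — that the bad loci of $\mu$ (fibre-dimension jumps, non-reduced or reducible fibres) are confined to $\mW^*_{L,{\sf sing}}$, and that the generic fibre of $\mu$ is irreducible — after which the codimension bound of Lemma~\ref{lm-X}{\sf(ii)} closes the argument and yields $\ca=\overline\gZ=\gZ$.
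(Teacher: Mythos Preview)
Your overall strategy coincides with the paper's: reduce maximality to showing that $\gZ$ is algebraically closed in $\gS(\mW)$ (your $\gZ=\overline\gZ$), and use the {\sl codim}--$3$ hypothesis together with Lemmas~\ref{lm-codim}, \ref{3-}, \ref{lm-X}{\sf(ii)} to ensure that the differentials $\textsl{d}G_1,\ldots,\textsl{d}G_{\bb(\q,n)}$ are linearly independent on a \emph{big} open set. Two minor inaccuracies along the way: in your first paragraph you claim $U$ is big by citing the proof of Theorem~\ref{trdeg}, but that proof only produces a non-empty open set via Lemma~\ref{lm-X}{\sf(i)}; the bigness only becomes available once you invoke Lemma~\ref{lm-X}{\sf(ii)} in the third paragraph, so the logic is out of order. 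Also, under {\sl codim}--$3$ for $\q$, Lemma~\ref{lm-codim} gives {\sl codim}--$3$ (not merely {\sl codim}--$2$) for each $(\mW,[\,\,,\,]_{\tilde p})$, and it is this stronger statement that Lemma~\ref{lm-X}{\sf(ii)} requires for the generic $v\in\mP^1$.

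The substantive gap is exactly where you flag it. You reduce $\gZ=\overline\gZ$ to two unproven claims about the morphism $\mu=(G_1,\ldots,G_{\bb(\q,n)})$: that its generic fibre is irreducible, and that its bad locus is contained in a set of codimension $\ge 2$. Neither follows formally from what you have written, and the first is the heart of the matter --- it is a Skryabin-type statement and is not a triviality. The paper does not attempt your fibre argument at all; instead it cites \cite[Theorem~1.1]{ppy}, which says precisely that a polynomial subalgebra $\mK[G_1,\ldots,G_N]\subset\gS(V)$ whose Jacobian locus $\{\textsl{d}G_1\wedge\cdots\wedge\textsl{d}G_N=0\}$ has codimension $\ge 2$ is algebraically closed in $\gS(V)$. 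Once the big open set $U$ with $\dim\textsl{d}_\xi\gZ=\bb(\q,n)$ for $\xi\in U$ is in hand, that theorem applies directly and finishes the proof in one line. Your fibre-theoretic outline is essentially a sketch of how one would go about proving that cited result, but as it stands your argument is incomplete.
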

\begin{proof}
According to Theorem~\ref{free}, $\gZ=\gZ(p,p+l)$ is freely generated by homogeneous polynomials 
$H_1,\ldots,H_{\bb(\q,n)}$. Thus $\textsl{d}_\gamma \gZ(p,p+l) = \left<\textsl{d}_\gamma H_i \mid 1\le i\le \bb(\q,n)\right>$ for any $\gamma\in\mW^*$. Set $L=\left<\{\,\,,\,\}_{p},\{\,\,,\,\}_{p+l}\right>$.
Since $\q$ has  the {\sl codim}--$3$ property, $\dim\mW^*_{v,{\sf sing}}\le \dim\mW-3$ for any 
$v\in\mathbb P^1$ different from $(1\,{:}-\!1)$, see Lemma~\ref{lm-codim}.  
Lemma~\ref{3-} extends this inequality to $v=(1\,{:}-\!1)$. Now $\mW^*_{L,{\sf reg}}$ is a big open subset of $\mW^*$ by Lemma~\ref{lm-X}. Furthermore, 
\[U=\mW^*_{L,{\sf reg}}\cap \{\xi\in\mW^* \mid \bar\xi=\xi|_{\q{\cdot}1}\in\q^*_{\sf reg}\}
\]
 is also a big open subset. 

Take any $\xi\in U$. Then 
$\widetilde{\gt v}:=\ker\pi_{1,-1}(\xi)=\gt q\oplus\gt v_0$, where $\pi_{1,0}(\xi)(\q,\gt v_0)=0$, see \eqref{eq-inf-v}. Next  $\rk\!(\pi_{1,0}(\xi)|_{\widetilde{\gt v}})=\dim\q-\ind\q$ and 
$\dim V(\xi)=\bb(\q,n)$  for $V(\xi)=\sum_{a\in\mK}\ker\pi_{a,1-a}(\xi)$, see~\eqref{rkv}, \eqref{dimV}. 

For any Takiff Lie algebra $\qqk$, the set $\Omega_{\qqk^*}$ is a big open subset of $\qqk^*$, which coincides 
with $\qqk^*_{\sf reg}$, see \cite[Thm~2.2{\sf(ii)}]{k-T} and \cite[Sect.~2]{contr}. 
This implies the equality $\textsl{d}_\xi \cz_{\tilde p}=\ker\pi_{\tilde p}(\xi)$ 
for the Poisson tensor $\pi_{\tilde p}$ of any $\tilde p\in L$. Thereby $\textsl{d}_\xi \gZ=V(\xi)$ and 
$\dim \textsl{d}_\xi \gZ=\bb(\q,n)$.  %% 

Thus, the differentials $\textsl{d} H_i$ of the algebraically independent generators $H_i\in\gZ$ 
are linearly independent on a big open subset. Then by \cite[Theorem~1.1]{ppy}, 
$\gZ$ is an algebraically closed subalgebra of $\gS(\mW)$.

Suppose that $\gZ\subset \ca\subset  \gS(\mW)^{\q}$ and $\ca$ is Poisson-commutative. Then 
$\trdeg \ca\le \bb(\q,n)$ by~\cite[Prop.\,1.1]{m-y}, see also \eqref{bound}. Thereby $\gZ\subset \ca$ is an algebraic extension and we must have
$\gZ=\ca$.
\end{proof}  

\subsection{The case of $\gzu$\,} \label{s-gzu}
It is also possible to formulate an independence principle for $\gzu$.
If $F\in\gS^d(\gt q)$, then $\Psi(F[t])= \sum_{\vec k} C_{\vec k} F[\vec k,t]$, where the 
coefficients $C_{\vec k}\in\Z$ do not depend on $F$. Repeating the proof of Proposition~\ref{red-gzu}, we obtain 
the following statement. If  $\q$ satisfies the assumptions of Theorem~\ref{thm:k-T}, then 
\begin{equation} \label{T-1}
\gzu=\gZ(\wq,[0])=\mK[\calv{\circ}\tau^k(F_j[t]) \mid 1\le j\le m, \ k\ge 0].
\end{equation}

\begin{thm} \label{ft-gzu}
%%Suppose that
{\sf (i)} If
$\q$ satisfies the assumptions of Theorem~\ref{thm:k-T}, then %%we have \\[.2ex]
$\psi_p(\gzu)=\gZ(p,p+1)$. \\[.2ex]
{\sf (ii)} If $\gt q=\gt g$ is semisimple, then $\lim_{\esi\to 0}\psi_p(\gt z(\wg,\esi t+1))=\psi_p(\widetilde{\gzu})$. 
\end{thm}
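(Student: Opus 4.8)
The plan is to deduce part {\sf(i)} from the independence principle of Section~\ref{sec-Ind}, applied in its $\gzu$-version, and then to obtain part {\sf(ii)} from {\sf(i)} and Proposition~\ref{prop-gzu}. For {\sf(i)}, I would start from \eqref{T-1}: under the assumptions of Theorem~\ref{thm:k-T} one has $\gzu=\mK[\calv{\circ}\tau^k(F_j[t])\mid 1\le j\le m,\ k\ge 0]$, so $\psi_p(\gzu)=\mathsf{alg}\langle\bar V_{F_j,p}\mid 1\le j\le m\rangle$, where $\bar V_{F,p}:=\langle\psi_p(\calv{\circ}\tau^k(F[t]))\mid k\ge 0\rangle\subset\Pol(F)$. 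On the other side, Proposition~\ref{sym-lim} gives, for each $\tilde p$ in the pencil $\eL=\eL(p,p+1)$, a system of generators of $\cz_{\tilde p}$ consisting of polarisations of the $F_i$, so $\gZ(p,p+1)=\mathsf{alg}\langle V_{F_i,\eL}\mid 1\le i\le m\rangle$ in the notation of Section~\ref{sec-Ind}. Hence it suffices to prove $\bar V_{F,p}=V_{F,\eL}$ for every homogeneous $F\in\gS^d(\q)\setminus\{0\}$. As in Section~\ref{sec-Ind}, the coefficients in the expansion of $\psi_p(\calv{\circ}\tau^k(F[t]))$ along the polarisations $F[\vec k]$ depend only on $d$, so $\dim\bar V_{F,p}$ depends only on $d$; by \eqref{dimY1} the same holds for $\dim V_{F,\eL}=d(n-1)+1$. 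It is therefore enough to check the identity for the single test function $F(\xi)=\det(\xi)$ in case $\q=\gt{sl}_d$, which has degree $d$.

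For that $F$, both subspaces lie in the Poisson centraliser of $\bh[0,1]$ in $\Pol(F)$ with respect to $\{\,\,,\,\}_p$: for $V_{F,\eL}$ because $\bh[0,1]\in\gZ(p,p+1)$ (proof of Theorem~\ref{Ham-gzu}) and $V_{F,\eL}\subset\gZ(p,p+1)$; for $\bar V_{F,p}$ because $\bh[0,1]\in\psi_p(\gzu)$ by \eqref{gzu-qv}, $\bar V_{F,p}\subset\psi_p(\gzu)$, and $\psi_p(\gzu)$ is Poisson-commutative. The $\bh[0,1]$-analogue of Corollary~\ref{cl-F} --- whose ingredients are the $\bh[0,1]$-version of Theorem~\ref{dim-F}, proved as Lemma~\ref{claim2} with $\cX[1,a,b{+}1]$ replaced by $\cX[0,a,b{+}1]$, together with Lemma~\ref{lin} --- bounds this centraliser by $d(n-1)+1$. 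Since $\dim\bar V_{F,p}=\dim V_{F,\eL}=d(n-1)+1$ (the lower bound $\dim\bar V_{F,p}\ge d(n-1)+1$ being the leading-term count discussed below), all three spaces coincide, so $\bar V_{F,p}=V_{F,\eL}$ for the test $F$, hence for all $F$, and $\psi_p(\gzu)=\gZ(p,p+1)$. One can also see the inclusion $\psi_p(\gzu)\subset\gZ(p,p+1)$ directly: $\gzu=\lim_{\esi\to 0}\gZ(\wq,\esi t+1)$, $\psi_p$ is continuous, $\psi_p(\gZ(\wq,\esi t+1))=\gZ(p,p+\esi t+1)$ for $\esi$ near $0$ by Theorem~\ref{sovp-t} and Proposition~\ref{123}{\sf(1)} (the polynomial transformed under $t\mapsto\esi t+1$ has nonzero constant term there), and $\lim_{\esi\to 0}\gZ(p,p+\esi t+1)=\gZ(p,p+1)$ by Proposition~\ref{123}{\sf(2)}.

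For {\sf(ii)}, recall $\widetilde{\gzu}=\lim_{\esi\to 0}\gt z(\wg,\esi t+1)$, where $\gt z(\wg,\esi t+1)=\Psi_\esi(\gt z(\wg,t))$; the assertion is that $\psi_p$ commutes with this limit. The inclusion $\psi_p(\widetilde{\gzu})\subset\lim_{\esi\to 0}\psi_p(\gt z(\wg,\esi t+1))$ follows from continuity of $\psi_p$, so it suffices to show the two filtered subalgebras of $\U(\gt g[t]/(p))$ have the same associated graded. On the left, $\gr$ commutes with $\psi_p$ on nonzero images (as in the proof of Proposition~\ref{G-psi}) and with $\Psi_\esi$, so $\gr\!\bigl(\psi_p(\widetilde{\gzu})\bigr)=\psi_p\bigl(\gr\,\widetilde{\gzu}\bigr)=\psi_p(\gzu)=\gZ(p,p+1)$ by Proposition~\ref{prop-gzu} and part {\sf(i)}. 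On the right, $\gr\,\gt z(\wg,\esi t+1)=\gZ(\wg,\esi t+1)$, $\psi_p(\gZ(\wg,\esi t+1))=\gZ(p,p+\esi t+1)$ for $\esi$ near $0$ (Theorem~\ref{sovp-t}, Proposition~\ref{123}{\sf(1)}), and the relevant dimensions are locally constant near $\esi=0$, so $\gr\!\bigl(\lim_{\esi\to 0}\psi_p(\gt z(\wg,\esi t+1))\bigr)=\lim_{\esi\to 0}\gZ(p,p+\esi t+1)=\gZ(p,p+1)$ by Proposition~\ref{123}{\sf(2)}. Equal associated graded algebras plus the inclusion already noted give equality.

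The main obstacle is the dimension bookkeeping in {\sf(i)}: making the $\gzu$-independence principle precise enough that the degree-$d$ test case truly suffices and, above all, establishing the lower bound $\dim\bar V_{F,p}\ge d(n-1)+1$ for \emph{every} $p$, including those with $p(0)=0$. This is precisely where $\calv{\circ}\tau^k$ must replace $\tau^k$ (Lemma~\ref{pol-gZ} only covers $p(0)\ne 0$), and it requires a leading-term computation in the spirit of Lemma~\ref{gzu-des}: for each admissible $\bar t$-degree one must exhibit a linear combination of the $\psi_p(\calv{\circ}\tau^k(F[t]))$ whose top polarisation component, in the order of Lemma~\ref{pol-gZ}, is nonzero. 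In {\sf(ii)} the one delicate point is the interchange of $\gr$ with $\lim_{\esi\to 0}$, which is legitimate only because the two associated-graded computations above pin down the dimensions.
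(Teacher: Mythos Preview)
Your outline for part {\sf(i)} is close to the paper's but takes an unnecessary detour. The paper does exactly what you sketch as the ``direct'' route: first $\psi_p(\gzu)\subset\gZ(p,p+1)$ via the limit argument (Theorem~\ref{sovp-t}, Proposition~\ref{123}), then the reverse inclusion by showing $\tilde{\boldsymbol f}_{j,k}:=\psi_p{\circ}\calv{\circ}\tau^k(F_j[t])\in V_{F_j,\eL}$ and counting dimensions. The point you flag as the main obstacle---the lower bound $\dim\bar V_{F_j,p}\ge d_j(n-1)+1$ for arbitrary $p$---is in fact painless: since $\calv{\circ}\tau^k(F_j[t])$ is $t$-homogeneous of degree $k$, the highest $\bar t$-component of $\tilde{\boldsymbol f}_{j,k}$ is (a nonzero multiple of) $F_j^{[k]}$ for $0\le k\le d_j(n-1)$, and these are obviously linearly independent. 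No test case, no $\bh[0,1]$-centraliser bound is needed. Your proposed route through an ``$\bh[0,1]$-analogue of Corollary~\ref{cl-F}'' has a genuine gap: Lemma~\ref{claim2} handles only the degree-$2$ invariant, and the paper never proves an $\bh[0,1]$-version of Theorem~\ref{dim-F} for general $d$. That analogue is plausible but is real additional work, and the paper's argument bypasses it entirely.

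For part {\sf(ii)}, your strategy and the paper's diverge at the key step. You try to compute $\gr$ of the right-hand side directly and assert that $\gr$ commutes with $\lim_{\esi\to 0}$ because ``the relevant dimensions are locally constant''. That implication is false in general (constant filtered dimensions do not force $\gr\circ\lim=\lim\circ\gr$), and you give no argument specific to this situation. The paper avoids this trap: it only establishes the chain
\[
\gZ(p,p+1)=\psi_p(\gzu)=\psi_p(\gr\widetilde{\gzu})\subset\gr(\psi_p(\widetilde{\gzu}))\subset\gr\!\left(\lim_{\esi\to 0}\psi_p(\gt z(\wg,\esi t+1))\right)=:\ca
\]
and then closes the argument with Theorem~\ref{max}: since $\ca$ is a Poisson-commutative subalgebra of $(\gS(\mW),\{\,\,,\,\}_p)^{\gt g}$ containing the maximal such subalgebra $\gZ(p,p+1)$, equality holds throughout. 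You never invoke maximality, and without it your part {\sf(ii)} does not close.
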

\begin{proof}
%%% and then prove that , whenever 
{\sf (i)} We have 
\[\psi_p(\gzu)=\psi_p(\lim_{\esi\to 0} \gZ(\wq,\esi t+1))\subset \lim_{\esi\to 0} \psi_p(\gZ(\wq,\esi t{+}1)),
\]
where
$\psi_p(\gZ(\wq,\esi t{+}1))=\gZ(p,p{+}\esi t{+}1)$ for almost all $\esi\in\mK^\times$, see Theorem~\ref{sovp-t} and 
Proposition~\ref{123}. By the same proposition, $\lim\limits_{\esi\to 0} \gZ(p,p{+}\esi t{+}1)=\gZ(p,p+1)$. 
Thus $\psi_p(\gzu)\subset \gZ(p,p+1)$. 
Furthermore, we have  
\[\gZ(p,p+1)=\mathsf{alg}\langle %%$ 
%%$
V_{F_j,\eL} \mid 1\le j\le m\rangle\]
 with $\eL=\{ap+(1-a)(p+1) \mid a\in\mK\}$. 
By~\eqref{dimY1}, 
$\dim V_{F_j,\eL}=d_j(n-1)+1$. 

Next $\tilde{\boldsymbol f}\!_{j,k}:=\psi_p{\circ}\calv{\circ}\tau^k(F_j[t])\in \Pol(F_j)\cap  \gZ(p,p+1)$ for all $j$ and $k$.
Since the polynomials $F_1,\ldots,F_m$ are algebraically independent, 
\[
\Pol(F_j)\cap\mathsf{alg}\langle V_{F_i,\eL} \mid i\ne j\rangle =\{0\},
\]
cf.~\eqref{c}. Thus $\tilde{\boldsymbol f}\!_{j,k}\in V_{F_j,\eL}$. The highest $\bar t$-component
of $\tilde{\boldsymbol f}\!_{j,k}$ is $F_j^{[k]}$ if $k\le d(n{-}1)$.  These components are clearly linearly independent. 
Hence $\dim\left<\tilde{\boldsymbol f}\!_{j,k} \mid k\ge 0\right>\ge d_j(n{-}1)+1$ and 
$V_{F_j,\eL}\subset \psi_p(\gzu)$. The inclusion holds for each $j$, thereby 
$\psi_p(\gzu)=\gZ(p,p+1)$. 

%%% Next $\psi_p(\gzu)=\lim_{\esi\to 0}\gZ(p,p{+}\esi t{+}1)\subset \gZ(p,p+1)$.  
%%% {\sf (ii)} For any normalised $p\in\mK[t]$ of degree $n$, we have 

{\sf (ii)} Clearly $\psi_p(\widetilde{\gzu})=\psi_p(\lim_{\esi\to 0}\gt z(\wg,\esi t+1))\subset \lim_{\esi\to 0}\psi_p(\gt z(\wg,\esi t+1))$. 
Thereby it suffices to prove the equality 
%%inclusion 
$\gr\!(\psi_p(\widetilde{\gzu}))=\gr\!\!\left(\lim_{\esi\to 0}\psi_p(\gt z(\wg,\esi t+1))\right)$. 
We have 
\[
\gZ(p,p+1)\overset{\text{part~{\sf (i)}}}{=}\psi_p(\gzu)\overset{\text{Prop.~\ref{prop-gzu}}}{=}\psi_p(\gr\!(\widetilde{\gzu}))
\subset \gr\!(\psi_p(\widetilde{\gzu}))\subset \gr\!\!\left(\lim_{\esi\to 0}\psi_p(\gt z(\wg,\esi t+1))\right)=:\ca.
%
%%\gr\!(\lim_{\esi\to 0}\psi_p(\gt z(\wg,\esi t+1)))\subset \gr\!(\psi_p(\widetilde{\gzu})) \text{\colorbox{red}{$\subset\,$(?)$\,=$}}
%
%%\psi_p(\gzu)\overset{\text{Thm~\ref{ft-gzu}}}{=}\gZ(p,p+1).
\]
By Theorem~\ref{max}, $\gZ(p,p+1)$ is a maximal (w.r.t. inclusion) Poisson-commutative 
subalgebra of $(\gS(\mW),\{\,\,,\,\}_p)^{\gt g}$. Since $\ca$ is also Poisson-commutative  and 
$\ca\subset (\gS(\mW),\{\,\,,\,\}_p)^{\gt g}$, there is the equality $\gZ(p,p+1)=\ca$.
%% \colorbox{red}{........}
\end{proof}

As we know from Section~\ref{GA}, $\psi_p(\gt z(\wg,\esi t+1)$ is the Gaudin algebra 
$\cG(\esi a_1+1,\ldots,\esi a_n+1)$ with $\prod_{i=1}^n (t-a_i)=p$. Hence Theorem~\ref{ft-gzu}{\sf (ii)} states that 
$$\lim_{\esi\to 0}\cG(\esi a_1+1,\ldots,\esi a_n+1)=\psi_p(\widetilde{\gzu}).
$$ 
Furthermore, $\gr\!(\psi_p(\widetilde{\gzu}))=\gZ(p,p+1)$. Since the centre of $\gt g$ is trivial and 
$\psi_p(\widetilde{\gzu})\subset\U(\gt h)^{\gt g}$ with $\gt h=\gt g^{\oplus n}$, the algebra 
$\psi_p(\widetilde{\gzu})$ contains the Gaudin Hamiltonians associated with $\vec a$, cf. Theorem~\ref{Ham-gzu}. 
Reverting notation to the original setting of \cite{FFRe}, we obtain 
$\cG(\vec a)=\eus C(a_1^{-1},\ldots,a_n^{-1})$ and 
\begin{multline*}
\lim_{\esi\to 0} \eus C\left(\frac{1}{1+\esi a_1},\ldots,\frac{1}{1+\esi a_n}\right)=\lim_{\esi\to 0}\eus C(1-a_1\esi+\sum_{j\ge 2}(-a_1\esi)^j,\ldots, 1-a_n\esi+\sum_{j\ge 2}(-a_n\esi)^j)\overset{\text{\cite{G-07}}}{=}\\
\lim_{\esi\to 0}\eus C(-a_1\esi+\sum_{j\ge 2}(-a_1\esi)^j,\ldots, -a_n\esi+\sum_{j\ge 2}(-a_n\esi)^j)=\\
\lim_{\esi\to 0}\eus C(a_1+\sum_{j\ge 2}a_1^j(-\esi)^{j-1},\ldots, a_n+\sum_{j\ge 2}(a_n)^j(-\esi)^{j-1})=
\eus C(\vec a),
\end{multline*}
where for the last equality we have used the maximality of $\eus C(\vec a)$. 
Thus in the semisimple case, we have 
$\psi_p(\widetilde{\gzu})=\psi_p(\gt z(\wg,t^{-1}))$ and 
$\gZ(p,p+1)=\gr\!(\eus C(\vec a))=\psi_p(\gzu)$.
%% ..... probably, the Gaudin algebra $\cG(a_1^{-1},\ldots,a_n^{-1})$......... 

\subsection{A few words in conclusion}

\begin{Rem} 
{\sf (i)}  
 In order to extend our results to all Lie algebras, not necessary satisfying the assumptions of Theorem~\ref{thm:k-T},
one needs a better understanding of the interplay between   $\psi_p(\gZ(\wq,t))$ and $\gZ(p,p+t)$.  \\[.3ex] 
{\sf (ii)} An interesting question is, whether $\gZ(\q,t)$ has a quantisation.  In order to extend the method of Feigin and Frenkel~\cite{ff},
%% Probably 
one has to assume that $\q$ is quadratic. %% in order to get a positive answer. 
We note that 
for the centralisers $\gt g_\gamma$ with $\gamma\in\gt g$, the problem is settled, affirmatively, in 
%%% 
\cite{ap}, although $\gt g_\gamma$ is not always a quadratic Lie algebra. %%%%% 
\end{Rem}

\end{document}